\documentclass[11pt,letterpaper]{article}
\usepackage[letterpaper, left=1in, right=1in, top=1in, bottom=1in]{geometry}
\usepackage[utf8]{inputenc}
\usepackage[T1]{fontenc}
\usepackage{authblk}

\PassOptionsToPackage{hypertexnames=false}{hyperref}  
\usepackage{parskip}
\usepackage[colorlinks=true, linkcolor=blue!70!black, citecolor=blue!70!black,urlcolor=black,breaklinks=true]{hyperref}
\usepackage{microtype}
\usepackage{hhline}
\usepackage{enumerate}

\makeatletter
\newcommand{\neutralize}[1]{\expandafter\let\csname c@#1\endcsname\count@}
\makeatother

\usepackage{algorithm}

\usepackage[backend=bibtex,style=alphabetic,natbib=true,
  maxcitenames=99,maxbibnames=99,giveninits=true]{biblatex}
\usepackage{amsthm}
\usepackage{mathtools}
\usepackage{amsmath}
\usepackage{bbm}
\usepackage{amsfonts}
\usepackage{amssymb}

\usepackage{xpatch}

\newtheorem*{theorem*}{Theorem}
\newtheorem*{lemma*}{Lemma}
\newtheorem{theorem}{Theorem}
\newtheorem{lemma}[theorem]{Lemma}
\newtheorem{proposition}[theorem]{Proposition}

\newtheorem{corollary}{Corollary}

\newtheorem{example}{Example}
\theoremstyle{remark}

\theoremstyle{definition}

\newtheorem{remark}{Remark}

\AtBeginEnvironment{remark}{%
  \pushQED{\qed}%
}
\AtEndEnvironment{remark}{\popQED}

\makeatletter
  \renewenvironment{proof}[1][Proof]%
  {%
   \par\noindent{\bfseries\upshape {#1.}\ }%
  }%
  {\unskip\nobreak\hfill\qed\par}
  \makeatother

\newtheorem{openproblem}{Open Problem}

\xpatchcmd{\proof}{\itshape}{\normalfont\proofnameformat}{}{}
\newcommand{\proofnameformat}{\bfseries}

\usepackage[nameinlink,capitalize]{cleveref}
\crefname{openproblem}{open problem}{open problems}
\Crefname{openproblem}{Open Problem}{Open Problems}

\crefformat{equation}{#2(#1)#3}
\Crefformat{equation}{#2(#1)#3}
\usepackage{mdframed}
\usepackage{lipsum}

\Crefformat{figure}{#2Figure #1#3}
\Crefname{assumption}{Assumption}{Assumptions}
\Crefformat{assumption}{#2Assumption #1#3}
\usepackage[customcolors]{hf-tikz}
\usepackage{crossreftools}
\Crefformat{figure}{#2Figure #1#3}
\Crefname{assumption}{Assumption}{Assumptions}
\Crefformat{assumption}{#2Assumption #1#3}
\usepackage[customcolors]{hf-tikz}
\usepackage{crossreftools}
\usepackage{xparse}

\ExplSyntaxOn
\DeclareDocumentCommand{\XDeclarePairedDelimiter}{mm}
 {
  \__egreg_delimiter_clear_keys: 
  \keys_set:nn { egreg/delimiters } { #2 }
  \use:x 
   {
    \exp_not:n {\NewDocumentCommand{#1}{sO{}m} }
     {
      \exp_not:n { \IfBooleanTF{##1} }
       {
        \exp_not:N \egreg_paired_delimiter_expand:nnnn
         { \exp_not:V \l_egreg_delimiter_left_tl }
         { \exp_not:V \l_egreg_delimiter_right_tl }
         { \exp_not:n { ##3 } }
         { \exp_not:V \l_egreg_delimiter_subscript_tl }
       }
       {
        \exp_not:N \egreg_paired_delimiter_fixed:nnnnn 
         { \exp_not:n { ##2 } }
         { \exp_not:V \l_egreg_delimiter_left_tl }
         { \exp_not:V \l_egreg_delimiter_right_tl }
         { \exp_not:n { ##3 } }
         { \exp_not:V \l_egreg_delimiter_subscript_tl }
       }
     }
   }
 }

\keys_define:nn { egreg/delimiters }
 {
  left      .tl_set:N = \l_egreg_delimiter_left_tl,
  right     .tl_set:N = \l_egreg_delimiter_right_tl,
  subscript .tl_set:N = \l_egreg_delimiter_subscript_tl,
 }

\cs_new_protected:Npn \__egreg_delimiter_clear_keys:
 {
  \keys_set:nn { egreg/delimiters } { left=.,right=.,subscript={} }
 }

\cs_new_protected:Npn \egreg_paired_delimiter_expand:nnnn #1 #2 #3 #4
 {
  \mathopen{}
  \mathclose\c_group_begin_token
   \left#1
   #3
   \group_insert_after:N \c_group_end_token
   \right#2
   \tl_if_empty:nF {#4} { \c_math_subscript_token {#4} }
 }
\cs_new_protected:Npn \egreg_paired_delimiter_fixed:nnnnn #1 #2 #3 #4 #5
 {
  \mathopen{#1#2}#4\mathclose{#1#3}
  \tl_if_empty:nF {#5} { \c_math_subscript_token {#5} }
 }
\ExplSyntaxOff

\XDeclarePairedDelimiter{\supnorm}{
  left=\lVert,
  right=\rVert,
  subscript=\infty
  }

\usepackage[utf8]{inputenc} 
\usepackage[T1]{fontenc}    
\usepackage{lmodern}
\usepackage{url}            
\usepackage{booktabs}       
\usepackage{amsfonts}       
\usepackage{nicefrac}       
\usepackage{microtype}      
\usepackage{authblk}

\usepackage{enumitem}

\usepackage{xurl}
\usepackage{dsfont}
\usepackage{etoolbox}
\usepackage{comment}
\newtoggle{draft}
\togglefalse{draft}

\usepackage{color-edits}

\usepackage{mathrsfs}

\usepackage{algorithm}
\usepackage{verbatim}
\usepackage[noend]{algpseudocode}

\usepackage{multicol}

\usepackage{colortbl}
\usepackage{bbm}
\usepackage{setspace}
\usepackage{epsfig,comment}
\usepackage{mathtools}
\usepackage[subnum]{cases}
\usepackage{enumerate}
\usepackage{bbm,bm}
\usepackage{amsmath,amssymb,mathrsfs}
\usepackage{color}

\usepackage{pgfplots}
\pgfplotsset{width=10cm,compat=1.9}
\usepgfplotslibrary{fillbetween}

\usepgfplotslibrary{external}
\newcommand{\R}{\mathbb{R}} 
\newcommand{\E}{\mathbb{E}}
\newcommand{\prob}{\mathbb{P}}
\newcommand{\Sp}{\mathbb{S}^{d-1}}

\newcommand{\NNN}{N^{\frac{2}{d-1}}}

\DeclareMathOperator{\conv}{conv}
\DeclareMathOperator{\cone}{cone}

\DeclareMathOperator{\vol}{vol}

\DeclareMathOperator{\ldel}{ldel}

\DeclareMathOperator{\ldiv}{ldiv}

\newcommand{\ldivs}{\ldiv^{s}_{d-1}}
\DeclareMathOperator{\as}{as}

\usepackage{cleveref}

\providecommand{\delc}{\operatorname{del}}

\providecommand{\Var}{\operatorname{Var}}
\providecommand{\med}{\operatorname{med}}
\providecommand{\tr}{\operatorname{tr}}

\usepackage{transparent}

\usepackage{inconsolata}
\usepackage[scaled=.90]{helvet}
\usepackage{xspace}

\usepackage{pifont}

\usepackage{tikz-cd}
    \renewcommand{\P}{{\mathbb P}}

    \newcommand{\B}{{\mathcal{B}}}

    \newcounter{rcnt}[section]

    \newcommand{\sgn}{\operatorname{sign}}

    \def\ddefloop#1{\ifx\ddefloop#1\else\ddef{#1}\expandafter\ddefloop\fi}
    \def\ddef#1{\expandafter\def\csname c#1\endcsname{\ensuremath{\mathcal{#1}}}}
    \ddefloop ABCDEFGHIJKLMNOPQRSTUVWXYZ\ddefloop
\title{Approximating the Euclidean Ball by Polytopes:\\
Surprising Rates in High Dimensions}

\author[1]{Yegor Gorodzha}
\author[2]{Steven Hoehner}
\author[3]{Gil Kur}
\affil[1,3]{ETH Z\"urich}
\affil[2]{Longwood University}
\hypersetup{
  pdftitle={Approximating the Euclidean Ball by Polytopes: Surprising Rates in High Dimensions},
  pdfauthor={Yegor Gorodzha, Steven Hoehner, and Gil Kur},
  pdfsubject={STOC 2027 submission},
  pdfkeywords={polytope approximation, Euclidean ball, random polytopes, vertex complexity}
}

\begin{document}
\pagenumbering{roman}
\date{}
\maketitle
\begingroup
\renewcommand\thefootnote{}%
\footnotetext{2020 \emph{Mathematics Subject Classification}: Primary 52A27; Secondary 52A23, 52A22, 68U05, 68Q32.}%
\footnotetext{\emph{Key words and phrases}: Polytope approximation, Euclidean ball, random polytopes, Laguerre--Delone triangulation number, symmetric difference metric, surface area deviation.}%
\endgroup
\begin{abstract}
\normalsize
\setlength{\parskip}{0.55\baselineskip}
We provide sharp estimates on the number of vertices needed to approximate the Euclidean unit ball $B_d\subset\R^d$ with relative symmetric-difference volume error at most $\varepsilon\in(0,1)$. We show that the minimum polytope vertex budget $N_\varepsilon(d)$ satisfies
\[
 \log N_\varepsilon(d)
 \asymp d\log\left(1+\frac{1}{\varepsilon\sqrt d}\right).
\]
In particular, $\exp(O(\sqrt d/\varepsilon))$ vertices suffice for every $\varepsilon\in(0,1)$ and all sufficiently large $d$. We also prove that achieving a fixed error $\varepsilon\in(0,1)$ requires at least $\Omega\left(\bigl(\tfrac{1+\varepsilon}{2\sqrt\varepsilon}\bigr)^{d-1}\right)$ facets, an exponential number in $d$. We extend our vertex approximation result to Gaussian symmetric-difference error (cf.~\citet*{KOS08}), and give a positive answer to the question of \citet*{DNS24Gaussian} about subexponential vertex complexity for approximating Euclidean balls.

We also disprove a Macbeath-type extremality principle for unrestricted vertex approximation. Whereas the ball uniformly maximizes the \emph{inscribed} approximation error among all convex bodies \cite{Macbeath}, we show that approximating the hypercube $[-1,1]^d$ to relative error $\varepsilon$, with $\delta:=1-\varepsilon$, requires
$
 N\ge\frac{\delta}{2}\exp\!\left(\Omega\left(\frac{\delta^2d}{\log^2(2/\delta)}\right)\right)
$
vertices whenever $d\ge16\log(2/\delta)$. Thus, when $d$ is large, the cube is harder to approximate than the ball in this sense.

Finally, we determine the sharp dimensional order of the classical Laguerre--Delone triangulation number, showing that $\mathrm{ldel}_{d-1}=\Theta(d^{-1/2})$ as $d\to\infty$. This improves both the upper bound of \citet*{LSW} and the lower bound of \citet*{Boroczky-2000} by a factor of order $\sqrt d$. By classical asymptotic approximation theorems \cite{Ludwig1999}, this improvement applies to every convex body in $\R^d$ with $C^2$ boundary and positive Gaussian curvature.

\end{abstract}
\clearpage
\setcounter{tocdepth}{2}
\tableofcontents
\clearpage
\pagenumbering{arabic}
\section{Introduction}
\label{sec:introduction}

A convex polytope in $\R^d$ has two dual descriptions. It is the convex hull of its vertices and the intersection of its bounding halfspaces. How much does the choice of description affect the complexity of approximating a high-dimensional convex body? For the Euclidean ball, polarity interchanges vertices and facets and preserves Banach--Mazur distance. When accuracy is measured instead by the volume of the symmetric difference, the two descriptions turn out to behave very differently. At any fixed relative volume error $\varepsilon\in(0,1)$, we show that $\exp(\Theta_\varepsilon(\sqrt d))$ vertices are both necessary and sufficient, whereas $\exp(\Theta_\varepsilon(d))$ facets are required \citep{Kur2017}. Allowing the vertices to lie outside the ball is essential for this subexponential complexity, since inscribed polytopes require $\exp(\Theta_\varepsilon(d\log d))$ vertices \citep{GRS-umbrellas,HSW-dual}.

Under the standard Gaussian measure $\gamma_d$, the symmetric difference of two sets has measure equal to the probability that their indicators disagree. This is the error used both in learning convex sets from labeled examples \citep{KOS08} and in testing convexity, whether the tester sees random examples \citep{CFSS2017} or can make membership queries \citep{CDNSW2025}. In proper learning the hypothesis must itself be convex. One way to describe such a hypothesis is by a short list of halfspaces. For instance, if every defining halfspace has its boundary within distance $r$ of the origin, the number of halfspaces needed for Gaussian approximation depends only on $r$ and the target error \citep{DNOS2024}.

\citet*{DNS24Gaussian} showed that the centered ball of Gaussian measure $1/2$ can be approximated using $\exp(O_\varepsilon(\sqrt d))$ halfspaces at sufficiently small fixed error, and this order is necessary. They asked whether a comparable vertex budget suffices. Our volume bound answers this question: because relative volume error dominates relative Gaussian error for every centered ball, an origin-symmetric polytope $Q$ with $\exp(O(\sqrt d/\varepsilon))$ vertices, given by \Cref{thm:finite-budget}, satisfies $\gamma_d(RQ\triangle RB_d)\le\varepsilon\,\gamma_d(RB_d)$ for all $R>0$; see \Cref{cor:gaussian-vertex-approximation} below and \Cref{subsec:volume-gaussian}.

\paragraph{The approximation problem.}
Write $B_d=\{x\in\R^d:\|x\|_2\le1\}$ and $|A|=\vol_d(A)$. For a convex body $K\subset\R^d$ and a polytope $P\subset\R^d$, define the relative symmetric-difference error by
\begin{equation}\label{Eq:RelK}
 \Delta_K(P):=\frac{|K\triangle P|}{|K|},
 \qquad \Delta_d(P):=\Delta_{B_d}(P).
\end{equation}
Let $\mathcal P_d(N)$ and $\overline{\mathcal P}_d(M)$ denote the classes of polytopes in $\R^d$ in unrestricted position having at most $N$ vertices and at most $M$ facets, respectively. Because $\Delta_K(P)$ is invariant under simultaneous invertible affine transformations of $K$ and $P$, every bound for $B_d$ applies equally to all ellipsoids. Our primary goal is to determine the minimum vertex budget $N_\varepsilon(d)$ for which there exists $P\in\mathcal P_d(N)$ with $\Delta_d(P)\le\varepsilon$, allowing the dimension $d$ and the target accuracy to vary together.

\paragraph{Non-asymptotic bounds.}
Our first result, \Cref{thm:finite-budget}, establishes that
\[
 \log N_\varepsilon(d)
 \asymp d\log\left(1+\frac1{\varepsilon\sqrt d}\right).
\]
Here $\asymp$ means equality up to absolute constants, for all sufficiently small $\varepsilon$ which may depend on $d$. This single formula captures three distinct complexity regimes depending on the order of the term $\varepsilon\sqrt d$. When $d\gg1/\varepsilon^2$, the expression simplifies to $\log N_\varepsilon(d)\asymp\sqrt d/\varepsilon$. At the transition scale $\varepsilon\asymp d^{-1/2}$, the budget is exponential, and for $\varepsilon\ll d^{-1/2}$ it grows superexponentially with $\log N_\varepsilon(d)\asymp d\log(1/(\varepsilon\sqrt d))$. We note separately that the upper bound in \Cref{thm:finite-budget} holds for any \(\varepsilon\in(0,1)\), so in particular \(\exp(O(\sqrt d/\varepsilon))\) vertices always suffice.

The upper bound uses a deterministic dilation of the convex hull of independent uniform random antipodal points on the unit sphere $\mathbb S^{d-1}\subset\R^d$. By analyzing the exact distribution of a random point on the boundary of this construction, we find that a cancellation between its radial and tangential fluctuations allows for subexponential budgets. The lower bound combines a local simplex comparison with the Carbery--Wright anti-concentration inequality to show that any polytope with small volume error must cover at least half of $\mathbb S^{d-1}$ by spherical caps centered at its vertex directions.

\paragraph{Failure of Macbeath-type extremality.}
Our subexponential vertex bound for the ball also alters the comparison between different convex bodies. A classical theorem of Macbeath \citep{Macbeath} states that at every vertex count $N$, the Euclidean ball maximizes the optimal relative-volume error of an inscribed $N$-vertex polytope among all convex bodies in $\R^d$. It is natural to ask whether an analogous extremality principle holds when the containment constraint $P\subset K$ is dropped, at least up to an absolute multiplicative constant.

We show that no such principle holds. Already the hypercube $[-1,1]^d$ provides a counterexample: \Cref{thm:cube-obstruction} proves that for every fixed $\varepsilon\in(0,1)$ and all sufficiently large $d$, any polytope $P\in\mathcal P_d(N)$ with $\Delta_{[-1,1]^d}(P)\le\varepsilon$ must have at least $\exp(c_\varepsilon d)$ vertices, regardless of where its vertices are placed. Consequently, a budget of $\exp(C_\varepsilon\sqrt d)$ vertices suffices to approximate the ball to any prescribed constant error $\varepsilon\in(0,1)$, whereas for the cube the optimal relative error at the same vertex budget tends to $1$ as $d\to\infty$. Removing the containment constraint thus reverses the relative complexity of these two bodies:
\begin{center}
\begin{tabular}{lcc}
\toprule
Vertex placement & Euclidean ball & Cube \\
\midrule
Inscribed & $\exp(\Theta_\varepsilon(d\log d))$
          & $\exp(\Theta_\varepsilon(d))$ \\
Unrestricted & $\exp(\Theta_\varepsilon(\sqrt d))$
             & $\exp(\Theta_\varepsilon(d))$ \\
\bottomrule
\end{tabular}
\end{center}
Here $0<\varepsilon<1$ is fixed; the inscribed bound for the ball is due to \citet{GRS-umbrellas} and \citet{HSW-dual}, and the upper bound for the cube is immediate from its $2^d$ vertices. Furthermore, we show in \Cref{app:cube-obstruction} that the same exponential lower bound holds for a sequence of smooth convex bodies with real-analytic boundaries and everywhere positive Gaussian curvature, demonstrating that the failure of extremality is not an artifact of the cube's flat facets or corners.

\paragraph{Asymptotic results.}
In fixed dimension $d$, \citet*[Theorem 1]{Ludwig1999} establishes the existence of the limits
\begin{equation}\label{eq:intro-limits}
 \mathcal E_d:=\lim_{N\to\infty}N^{\frac{2}{d-1}}
       \inf_{P\in\mathcal P_d(N)}\Delta_d(P),\qquad
 \mathcal F_d:=\lim_{M\to\infty}M^{\frac{2}{d-1}}
       \inf_{P\in\overline{\mathcal P}_d(M)}\Delta_d(P).
\end{equation}
While the facet coefficient satisfies $\mathcal F_d=\Theta(1)$ \citep{LSW,Kur2017}, previous bounds on the vertex coefficient left a gap of order $d$, namely $c/d\le\mathcal E_d\le C$ \citep{Boroczky-2000,LSW}. In \Cref{Thm:Main}, we prove that $\mathcal E_d=\Theta(d^{-1/2})$, which also determines the dimensional order of the Laguerre--Delone triangulation number $\ldel_{d-1}$. This is the universal dimensional constant in classical asymptotic approximation formulas, so our result sharpens the asymptotic rate for \emph{every} convex body in $\R^d$ with $C^2$ boundary and positive Gaussian curvature (\Cref{cor:convex-bodies-lower-bd}; see \citet{GruberI,GruberII,Ludwig1999}). Comparing $\ldel_{d-1}=\Theta(d^{-1/2})$ with the classical inscribed Delone constant $\delc_{d-1}=\Theta(d)$ \citep{GruberI,GRS-umbrellas,MankiewiczSchuett2001} shows that for every such body, removing the containment restriction $P\subset K$ reduces the asymptotic vertex-approximation error by a factor of order $d^{3/2}$.

Both bounds in \Cref{Thm:Main} arise from fluctuations within each facet simplex that were discarded in earlier estimates. On the lower-bound side, we prove a direct connection to inscribed approximation and quantization, where the order is well known \citep{MankiewiczSchuett2001,Zador1982}. The upper bound constructs a rescaled random polytope similar to the non-asymptotic case, refining the analysis of \citet*{LSW}. In both arguments, fluctuations at scale $d^{-3/2}$ govern the paraboloid approximation and yield a volume-error coefficient of order $d^{-1/2}$. As a further consequence of this local analysis, \Cref{Thm:Simplicial} shows that restricting a facet approximant to have only simplicial facets worsens the order of its asymptotic error coefficient.

\paragraph{Organization.}
\Cref{sec:main-results} states our main results, \Cref{sec:discussion} discusses the geometry of the random construction, the cube obstruction and numerical comparisons, and \Cref{sec:proof-outline} explains the proof ideas for the ball. Full proofs of the main theorems are given in \Cref{sec:full-proofs}, followed by further applications in \Cref{sec:applications}, extended comparisons with prior work in \Cref{sec:extended-context}, remarks and open problems in \Cref{sec:prior-work-remarks}, and technical proofs in \Cref{sec-appendix}.

\section{Main results}
\label{sec:main-results}

Throughout the paper, all logarithms are natural, and constants referred to as absolute are independent of the dimension $d$, the complexity budget, and the error parameter $\varepsilon$. Lower-dimensional polytopes have relative volume error $1$. Throughout, the constructions used for our upper bounds can be chosen to be origin-symmetric polytopes. The proof of \Cref{Thm:Main} uses a non-symmetric construction to simplify exposition. We define the minimum vertex budget for approximating the Euclidean unit ball $B_d$ to relative error $\varepsilon$ by
\[
 N_\varepsilon(d):=\min\bigl\{N\in\mathbb N:
       \exists P\in\mathcal P_d(N)\text{ with }\Delta_d(P)\le\varepsilon\bigr\}.
\]

\subsection{Non-asymptotic vertex budgets}

\begin{theorem}\label{thm:finite-budget}
There are absolute constants $c,C>0$, $d_0\ge3$, and $\varepsilon_0\in(0,1)$ such that for all $d\ge d_0$ and $0<\varepsilon\le\varepsilon_0$,
\begin{equation}\label{eq:finite-budget}
 c d\log\left(1+\frac1{\varepsilon\sqrt d}\right)
 \le\log N_\varepsilon(d)
 \le C d\log\left(1+\frac1{\varepsilon\sqrt d}\right).
\end{equation}
The upper bound holds for every $0<\varepsilon<1$ and is attained by an origin-symmetric polytope. Moreover, for every $0<\varepsilon<1$ and every $d\ge\max\bigl\{d_0,4\log\frac{8}{1-\varepsilon}\bigr\}$,
\begin{equation}\label{eq:budget-large-error}
 \log N_\varepsilon(d)
 \ge\frac{c\sqrt d}{\log\bigl(8/(1-\varepsilon)\bigr)}
      -\log\frac8{3(1-\varepsilon)}.
\end{equation}
In particular, for each fixed $0<\varepsilon<1$, we have $N_\varepsilon(d)=\exp(\Theta_\varepsilon(\sqrt d))$ as $d\to\infty$.
\end{theorem}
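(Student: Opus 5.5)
The argument has two halves: an upper bound from a dilated random antipodal construction, and a lower bound from a covering argument on $\mathbb S^{d-1}$.

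\emph{Upper bound.} Let $N=2n$ and draw $u_1,\dots,u_n$ independently and uniformly from $\mathbb S^{d-1}$. Set $P_r=r\conv\{\pm u_1,\dots,\pm u_n\}$, where $r\ge1$ is deterministic and will be chosen later. Write $\rho(\theta)$ for the radial function of $Q=\conv\{\pm u_i\}$ in direction $\theta$. Then
\[
 \Delta_d(P_r)=\E_\theta\bigl|\min(1,r\rho(\theta))^d-\max(1,r\rho(\theta))^d\bigr|,
\]
so it is enough to control the distribution of $\rho(\theta)$ at a fixed direction $\theta$. The boundary point $\rho(\theta)\theta$ lies on a facet through $d$ vertices near $\theta$. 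Its height $h$, the distance of the facet hyperplane from the origin, satisfies $1-h\approx \alpha^2/2$, where $\alpha$ is the angular cap radius determined by $n\,\sigma(\mathrm{cap}_\alpha)\asymp d$. This gives $\alpha^2\asymp n^{-2/(d-1)}$ up to factors.

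The point is that $\rho(\theta)=h/\cos\phi$, where $\phi$ is the angle between $\theta$ and the facet normal. The deficit $1-h$ is deterministic to leading order, of size $\alpha^2$. The fluctuation of $\rho$, coming from both the height and the tangential position of $\theta$ inside the facet simplex, has size only about $\alpha^2/\sqrt d$; this is the cancellation mentioned in the introduction. So I would choose $r=1/\E\rho$, which makes $\log(r\rho)$ concentrate at scale $\alpha^2/\sqrt d$, and then $\Delta_d\lesssim d\cdot\alpha^2/\sqrt d=\sqrt d\,\alpha^2$. Requiring $\sqrt d\,\alpha^2\le\varepsilon$ together with $\alpha^2\approx n^{-2/d}\cdot(\text{poly})$ gives $\log n\asymp d\log(1+1/(\varepsilon\sqrt d))$. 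In the regime $\varepsilon\sqrt d\gg1$ one instead uses $\log(1/\alpha^2)\approx\alpha^{-2}\cdot$ (cap measure $e^{-d\alpha^2/2}$), which yields $\log n\asymp d\alpha^2\asymp\sqrt d/\varepsilon$. That matches the stated formula, since $d\log(1+x)\asymp dx$ for small $x$.

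\emph{Main obstacle.} The hard step is the fluctuation computation. I would represent the facet via the Blaschke--Petkantschin/Miles formula for random polytopes on the sphere. That formula expresses the law of the facet containing $\theta$ as a density in $(h,\text{simplex})$, proportional to $\Delta_{d-1}\cdot(1-\sigma(\mathrm{cap}))^{n-d}$. Under this law, $\log\rho$ should have variance $O(\alpha^4/d)$, and one needs a tail bound strong enough to control the exponential moment $\E e^{d|\log r\rho|}$.

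\emph{Lower bound.} Let $P$ have vertices $v_1,\dots,v_N$ with $\Delta_d(P)\le\varepsilon$. For a direction $\theta$, consider the vertices whose directions are within angle $\beta$ of $\theta$. If none are, the supporting structure forces $\rho_P(\theta)$ to be noticeably below or above $1$: locally $P$ near $\theta$ is contained in a simplex cone whose vertices are angularly far. Carbery--Wright anti-concentration, applied to the polynomial that determines $\rho_P$ along random $\theta$ in a facet cone, shows that $\rho_P^d$ cannot be within $1\pm\varepsilon$ except on a small measure set. Hence the caps of radius $\beta$ around the vertex directions must cover at least half of $\mathbb S^{d-1}$, with $\beta^2\asymp\varepsilon/\sqrt d$ (respectively $\beta$ of order $\log(8/(1-\varepsilon))/\sqrt d$ in the large-error regime). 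Cap measure estimates then give $N\ge \tfrac12\sigma(\mathrm{cap}_\beta)^{-1}$, so $\log N\gtrsim d\log(1/\beta)$ or $d\beta^2$ accordingly. This yields both \eqref{eq:finite-budget} and \eqref{eq:budget-large-error}.
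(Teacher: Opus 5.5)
Your architecture matches the paper's: a dilated random antipodal hull for the upper bound, and a covering of half of $\mathbb S^{d-1}$ by caps around vertex directions for the lower bound. The quantitative core is wrong in both halves, though.

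\textbf{Upper bound.} Your fluctuation scale is too large by a factor of $d$. Write $|Y|^2=H+(1-H)\mathcal Q_d$ for a boundary point, and let $1-t_0$ denote the typical deficit $1-H$. Both $H-t_0$ and $(1-t_0)(\mathcal Q_d-q_d)$ are of order $(1-t_0)d^{-3/2}$. The reasons are that $L=-\log(1-H)$ has a log-density with curvature of order $d^3$ near its mode, and that $\mathcal Q_d$ has mean $q_d\approx 2/d$ with fluctuations of order $d^{-3/2}$. Hence $d$ times the relative fluctuation of $|Y|^2$ is $(1-t_0)/(t_0\sqrt d)$, not $\sqrt d\,\alpha^2$.

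With your scale, the stated conclusion does not follow from your own estimates. The conditions $\sqrt d\,\alpha^2\le\varepsilon$ and $\alpha^2\approx n^{-2/(d-1)}$ give $\log n\approx\frac d2\log(\sqrt d/\varepsilon)$, which is of order $d\log d$ at fixed $\varepsilon$.

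The relevant regime is also not the small-cap one. At fixed $\varepsilon$ the facets lie at squared height $t_0\asymp 1/(\varepsilon\sqrt d)$. It is the factor $1/t_0$, together with $\log M\approx\frac d2\log\frac1{1-t_0}\approx\frac{dt_0}2$, that produces $\sqrt d/\varepsilon$. Your last step silently reinterprets $\alpha^2$ as this height.

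The step you flag as the main obstacle is left undone, and it is where the work lies. The inner region needs sub-Gaussian lower tails at scale $d^{-3/2}$ for both $L$ and $\mathcal Q_d$. The paper gets these from the curvature bound to the left of the mode, and from an Andr\'eief-determinant Laplace transform for $\mathcal Q_d$. It also needs a separate negative-moment estimate on $\{H<t_0/2\}$. Finally, sampling a fixed direction $\theta$ instead of using cone-volume weights makes the hit point carry the weight $|y|^{-d}$. That weight destroys the independence of $H$ and $\mathcal Q_d$.

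\textbf{Lower bound.} The covering you claim is false. Caps of angular radius $\beta$ with $\beta^2\asymp\varepsilon/\sqrt d$ covering half the sphere would force $\log N\gtrsim\frac{d-1}2\log(\sqrt d/\varepsilon)$. This contradicts the $\exp(O_\varepsilon(\sqrt d))$ upper bound.

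The missing idea is that angular distance of the vertices from $\theta$ does not by itself push $\rho_P(\theta)$ away from $1$. Take $d$ vertices at cosine $h$ from $\theta$ and radius $1/h$, arranged as a regular simplex around $\theta$; they span a simplex whose centroid is the unit vector $\theta$. These spikes are exactly what makes subexponential budgets possible. So one must control the barycentric energy $\sum_i\lambda_i|v_i|^2$ of boundary points. Once this energy is at most $1+C\varepsilon\sqrt d$, Cauchy--Schwarz gives a vertex direction with cosine at least $|y|(1+C\varepsilon\sqrt d)^{-1/2}$, which is the correct cap size.

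The paper controls the energy through a simplex inequality absent from your sketch. Let $Q$ be Greenwood's statistic of the barycentric coordinates and $\mu$ its median. The sign of $\mu-Q$ is orthogonal to every affine function. This yields $\mathbb{E}|W_T+L_T|\ge\beta_m\,\mathbb{E}W_T$ with $\beta_m\asymp m^{-3/2}$, uniformly over simplex shapes and affine corrections. Consequently $\||Y_F|^2-1\|_2\gtrsim d^{-3/2}\,\bigl|\frac1d\sum_i|v_i|^2-1\bigr|$. Carbery--Wright only converts such an $L^2$ lower bound into a probability bound; it cannot supply the bound.

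\textbf{Errors close to one.} For $\varepsilon$ near $1$ the origin need not lie in $P$, so the radial function $\rho_P$ is not available. The paper instead triangulates $P$ into $d$-simplices and bounds $|P|$ inside the shell $\sqrt{1-2a/d}\le|x|\le1$. It uses caps of height of order $(a\sqrt d)^{-1/2}$, whose measure is $e^{-c\sqrt d/a}$. Your choice $\beta\asymp\log(8/(1-\varepsilon))/\sqrt d$ together with $\log N\gtrsim d\beta^2$ gives a bound that does not grow with $d$.
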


The two-sided bound \eqref{eq:finite-budget} holds uniformly when $\varepsilon$ varies with $d$. The proof is given in \Cref{sec:finite-budget-proof}.

\begin{theorem}\label{thm:finite-all-N}
There is an absolute constant $c>0$ such that for every $d\ge3$ and every $N\ge1$,
\begin{equation}\label{eq:finite-all-N}
 \inf_{P\in\mathcal P_d(N)}\Delta_d(P)
 \ge\frac c{\sqrt d}\,N^{-\frac{2}{d-1}}.
\end{equation}
\end{theorem}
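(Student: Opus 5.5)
We aim to show that every polytope $P$ with at most $N$ vertices has $|P\triangle B_d|\ge \frac{c}{\sqrt d}N^{-2/(d-1)}|B_d|$. The plan is to reduce to a local problem on the sphere and then apply a quantization-type lower bound.

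\textbf{Reduction.} First, we may assume $\Delta_d(P)$ is small, say at most a small absolute constant $\eta$; otherwise the bound is trivial for $N\ge 2$, since $N^{-2/(d-1)}\le 1$ and $c/\sqrt d\le \eta$. Also $P$ must contain the origin in its interior. Write the boundary of $P$ as the union of its facets, triangulated into $(d-1)$-simplices. Each simplex $S$, coned from the origin, covers a spherical region $\Omega_S\subset\mathbb S^{d-1}$, and these regions tile $\mathbb S^{d-1}$. We express the error in polar coordinates:
\[
|P\triangle B_d|=\int_{\mathbb S^{d-1}}\Bigl|\tfrac{1-\rho_P(u)^d}{d}\Bigr|\,du ,
\]
where $\rho_P$ is the radial function. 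On each $\Omega_S$, $\rho_P$ is the radial function of a hyperplane, i.e.\ $\rho_P(u)=h_S/\langle u,n_S\rangle$.

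\textbf{Key local estimate.} The claim is that for a simplex whose region $\Omega_S$ has normalized measure $\sigma_S$, the local contribution is at least $\frac{c}{\sqrt d}\,\sigma_S^{1+2/(d-1)}\,|\mathbb S^{d-1}|^{-2/(d-1)}$, up to the normalization. The $1/\sqrt d$ comes from the following mechanism. Set $t=1-\rho$. The quantity $t$ varies quadratically across the cap, as $\frac12|u-n_S|^2$ plus a constant. Its fluctuation over a cap of angular radius $r$ has standard deviation of order $r^2/\sqrt d$, not $r^2$, because $|u-n|^2$ concentrates on high-dimensional caps. The error integrand $|1-\rho^d|/d\approx|t|$ holds only while $dt\lesssim1$. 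Optimizing the constant $h_S$ (the height of the hyperplane) then leaves an $L^1$ deviation of order the standard deviation, $r^2/\sqrt d$. To make this rigorous, I would apply Carbery--Wright anti-concentration to the quadratic polynomial $t(u)$ on the cap, as the paper's lower-bound sketch suggests, together with the fact that the best constant approximation costs at least $c\,\mathrm{sd}$.

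Care is needed with the nonlinearity $\rho^d$. When $d|t|\gg1$ the integrand is larger, not smaller: $|1-\rho^d|/d\ge \min(|t|,1/d)$, roughly. So it suffices to lower-bound $\int\min(|t-\text{const}|,1/d)$. Because the fluctuations are $r^2/\sqrt d$, this is fine whenever $r^2/\sqrt d\lesssim 1/d$. Larger caps contribute at least a constant times $\sigma_S/d$. That is already $\ge \frac{c}{\sqrt d}\sigma_S^{1+2/(d-1)}$ as long as $\sigma_S\ge$ a threshold, and there we can use an alternative crude bound.

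\textbf{Summation.} Relating the cap radius to the area via $r\asymp\sigma_S^{1/(d-1)}$ (valid for small caps, with caps of large $\sigma_S$ handled crudely), we sum the local bound $\sum_S \sigma_S^{1+2/(d-1)}$ subject to $\sum\sigma_S=1$. By convexity (Jensen), this sum is at least $F^{-2/(d-1)}$, where $F$ is the number of simplices. The main obstacle is that $F$ is not $N$: a polytope with $N$ vertices can have up to about $N^{d/2}$ facets. I would circumvent this as in the inscribed/Zador argument. Each region $\Omega_S$ lies within the spherical cap determined by the circumscribed ball of its simplex, so the relevant fluctuation is controlled by the distance from $u$ to the nearest vertex direction. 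The local error can then be rewritten as at least $\frac{c}{\sqrt d}\int_{\mathbb S^{d-1}}\min_i|u-v_i/|v_i||^2\,du$, capped at $1/d$, where $v_i$ are the vertices. The lower bound then follows from the quantization bound: $N$ points on the sphere have mean squared distance at least $c\,N^{-2/(d-1)}$, because caps of radius $r$ around $N$ points cover measure at most $N r^{d-1}\cdot$const. Establishing this vertex-nearest-point reformulation of the simplex fluctuation, so that the count is $N$ rather than $F$, is the step I expect to be hardest.
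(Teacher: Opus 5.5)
Your outline has the right shape (a quadratic fluctuation bound on each boundary simplex, a passage to nearest vertex directions so that $N$ rather than the facet count enters, then cap counting). However, the two steps that carry the proof are missing or wrong.

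\textbf{The source of the factor $d^{-1/2}$.} The mechanism you give is incorrect. On a cap, or any ball-like region, of radius $r$, concentration makes $|u-n|^2$ fluctuate by about $2r^2/d$, not $r^2/\sqrt d$. Even the best affine correction leaves an $L^1$ residual of order $r^2/d$: the constant $1-2^{-2/m}\sim 2\log 2/m$ in Lemma~\ref{lem:ldiv-centroid-annihilator} is attained for balls. Also, ``best constant approximation costs at least $c\,\mathrm{sd}$'' is not the relevant statement, because a facet's hyperplane contributes an affine, not a constant, degree of freedom. The gain is a property of simplices. The paper gets it from an explicit affine annihilator, the sign of $\mu_k-Q_k$ for Greenwood's statistic: for every affine $L_T$ and every simplex shape, $\E|W_T+L_T|\ge\beta_{d-1}\E W_T$ with $\beta_{d-1}\asymp d^{-3/2}$ and $W_T(y)=\sum_i\lambda_i|y-v_i|^2$ (Theorem~\ref{thm:expanded-finite-transfer}). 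Now $W_T$ dominates the squared distance to the nearest vertex and, for well-shaped simplices, is comparable to the squared circumradius $R^2$. So the residual is $\asymp d^{-3/2}R^2$. It is of order $d^{-1/2}r^2$ only in terms of the volume radius $r\asymp\sigma_S^{1/(d-1)}$, which is smaller than $R$ by at least a factor $\asymp\sqrt d$.

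\textbf{Consequences for the bridge to vertices.} Your plan uses ``$r^2/\sqrt d$'' for both radii. In volume-radius form, Jensen only yields the simplex count; this is exactly the paper's simplicial-facet bound. In nearest-vertex form the weight must be $d^{-3/2}$. As literally written, with the unnormalized $du$ of your polar formula and the cap at $1/d$, your reformulated inequality would give $\Delta_d(P)\gtrsim\min\{1,\sqrt d\,N^{-2/(d-1)}\}$, which contradicts \eqref{eq:intro-main-ball}. Nor can containment of $\Omega_S$ in the circumscribed cap convert a volume-radius estimate into a vertex-direction one: it only bounds nearest-vertex distances from above, and at the scale $R$.

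\textbf{How the paper closes this.} It never needs a pointwise reformulation. It applies the transfer inequality with $h\equiv1$ on each boundary simplex and inverts it: $|\frac1d\sum_i|v_i|^2-1|\le(1+\beta_{d-1}^{-1})\E\bigl||Y_F|^2-1\bigr|$ (Lemma~\ref{lem:finite-simplex}). Carbery--Wright together with $|1-s^{-d/2}|\ge c\min\{1,d|s-1|\}$ then handles the nonlinearity you worried about and gives \eqref{eq:finite-one-simplex}. Consequently:
\begin{itemize}
\item error $\varepsilon$ puts most cone weight on simplices with $\frac1d\sum_i|v_i|^2\le1+C\varepsilon\sqrt d$;
\item Markov's inequality controls the barycentric energy $E_F(y)=|y|^2+\sum_i\lambda_i|y-v_i|^2$ on most of $\partial P$;
\item Cauchy--Schwarz \eqref{eq:finite-direction} places a set of directions of measure at least $1/2$ inside $N$ caps of height $(1-8\varepsilon)^{1/d}(1+2\zeta)^{-1/2}$, with $\zeta=C\varepsilon\sqrt d$, centered at the vertex directions.
\end{itemize}
For $\zeta\le1/4$ this gives $2N(3\zeta)^{(d-1)/2}\ge1$, i.e.\ $\varepsilon\ge c\,d^{-1/2}N^{-2/(d-1)}$; otherwise $\varepsilon\gtrsim d^{-1/2}$ directly. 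Neither the simplex areas $\sigma_S$ nor the number of simplices ever enters, so your Jensen step should be dropped. Your final cap-counting step is fine and is the same as the paper's.
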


For $N\ge\exp(Cd)$ with a sufficiently large absolute constant $C>0$, the refined upper bound \eqref{eq:finite-sphere-budget-refined} matches \eqref{eq:finite-all-N} up to an absolute constant factor, showing that $\inf_{P\in\mathcal P_d(N)}\Delta_d(P)\asymp d^{-1/2}N^{-\frac{2}{d-1}}$ uniformly in this range.

\paragraph{The facet comparison.}\label{R:FacetsBall}
Let $M_\varepsilon(d)$ denote the minimum number of facets of a polytope $P\in\overline{\mathcal P}_d(M)$ satisfying $\Delta_d(P)\le\varepsilon$. For $0<\varepsilon<1$, a refinement of the facet lower bound of \citet{Kur2017}, proved in \Cref{ss:looseEndKur}, gives
\begin{equation}\label{eq:facet-budget-explicit}
 M_\varepsilon(d)\ge
 \frac{(1+\varepsilon)d|B_d|}{2|B_{d-1}|}
 \left(\frac{1+\varepsilon}{2\sqrt\varepsilon}\right)^{d-1}
 = (1+o_d(1)) \cdot \sqrt{2\pi d\varepsilon}
 \left(\frac{1+\varepsilon}{2\sqrt\varepsilon}\right)^d,
 \qquad
 d\ge\left(\frac{1+\varepsilon}{1-\varepsilon}\right)^2.
\end{equation}
Combined with the upper bound of \citet[Remark~2.2]{Kur2017}, this shows that $M_\varepsilon(d)=\exp(\Theta_\varepsilon(d))$ for every fixed $\varepsilon\in(0,1)$, with
\begin{equation}\label{eq:rate}
 \liminf_{d\to\infty}\frac{\log M_\varepsilon(d)}d
 \ge\log\frac{1+\varepsilon}{2\sqrt\varepsilon}>0.
\end{equation}

\subsection{Failure of Macbeath-type extremality}

\begin{theorem}\label{thm:cube-obstruction}
For every $0<\varepsilon<1$ and every integer $d\ge16\log\frac{2}{1-\varepsilon}$, any polytope $P\in\mathcal P_d(N)$ with $\Delta_{[-1,1]^d}(P)\le\varepsilon$ satisfies
\[
 N\ge\frac{1-\varepsilon}2\exp\!\left(
   \frac{d}{64\bigl(1+\frac{8}{1-\varepsilon}\log\frac{2}{1-\varepsilon}\bigr)^2}
 \right).
\]
Consequently, there is no absolute constant $A>0$ such that
\begin{equation}\label{eq:failed-macbeath}
 \inf_{P\in\mathcal P_d(N)}\Delta_K(P)
 \le A\inf_{Q\in\mathcal P_d(N)}\Delta_d(Q)
\end{equation}
holds for all dimensions $d$, all vertex budgets $N$, and all convex bodies $K\subset\R^d$.
\end{theorem}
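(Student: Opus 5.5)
The plan is to reduce the statement to a union bound over vertices, with Hoeffding's inequality controlling each vertex. Write $K=[-1,1]^d$, $\delta:=1-\varepsilon$, and let $X$ be uniform on $K$. Since $\Delta_K(P)\le\varepsilon$, the set $A:=P\cap K$ has $|A|\ge\delta|K|$, so $\P(X\in P)\ge\delta$. The basic observation is a separation step. If $x\in P=\conv(V)$, then the linear functional $y\mapsto\langle x,y\rangle$ is maximized over $P$ at some vertex. Hence some vertex $v\in V$ satisfies $\langle v,x\rangle\ge\|x\|_2^2$. Consequently
\[
 \delta\le\P(X\in P)\le\sum_{v\in V}\P\bigl(\langle v,X\rangle-\|X\|_2^2\ge0\bigr),
\]
and it suffices to show that each summand is exponentially small in $d$.

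\textbf{Step 1 (vertices of bounded size).} Suppose $\|v\|_\infty\le t$. Then $\langle v,X\rangle-\|X\|_2^2=\sum_i(v_iX_i-X_i^2)$ is a sum of independent terms. Each term has mean $-\E X_i^2=-1/3$ and takes values in an interval of length at most $2t+1$. Hoeffding's inequality then gives
\[
 \P\bigl(\langle v,X\rangle\ge\|X\|_2^2\bigr)\le\exp\!\Bigl(-\frac{2(d/3)^2}{d(2t+1)^2}\Bigr)\le\exp\!\Bigl(-\frac{d}{c\,t^2}\Bigr).
\]
Combined with the union bound, this yields $N\ge\delta\exp(d/(ct^2))$. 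The target bound, with $t\asymp 1+\frac{8}{\delta}\log\frac2\delta$ and the loss of a factor $1/2$ in the prefactor, has exactly this shape.

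\textbf{Step 2 (far vertices, the main obstacle).} Vertices of $P$ may lie far outside $K$, for instance at the tips of thin needles, and then Hoeffding gives nothing. I would show that far vertices can only serve a small part of $A$. I would set $t:=1+\frac{8}{\delta}\log\frac2\delta$ and $V_{\rm near}:=V\cap tK$. The goal is to prove that the set $A':=A\cap\conv(V_{\rm near})$, or a suitable substitute, still has measure at least $\frac\delta2|K|$; then Step 1 applies with $V_{\rm near}$ alone.

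The first approach I would try runs as follows. Take $x\in A$ and write it as a convex combination of vertices. If the total weight $\lambda$ on far vertices is not small, push $x$ toward the barycenter $w$ of the far part, along the segment inside $P$. Points $x+s(w-x)$ with $s$ of order $t^{-1}$ then exit the cube, because $\|w\|_\infty>t$. Applying this push uniformly to a measurable family of such points is a contraction-like map whose Jacobian decays like $(1-s)^{d}$. This gives a lower bound on $|P\setminus K|$ in terms of the measure of the bad set. That lower bound is incompatible with $|P\setminus K|\le\varepsilon|K|$, unless the bad set has measure at most $\frac\delta2|K|$.

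A cleaner alternative is to avoid the convex combination altogether. One applies the separation step directly with the vertex $v$ maximizing $\langle\cdot,x\rangle$. When $\|v\|_\infty$ is large, the segment $[x,v]\subset P$ leaves $K$ within length $O(1)$. A Fubini argument over such segments then bounds the measure of points whose maximizing vertex is far by a multiple of $|P\setminus K|/\delta$. The factor $\frac{8}{\delta}\log\frac2\delta$ in $t$ should come from optimizing the exiting scale $s$ against the $(1-s)^d$ volume loss at level $\delta/2$. I expect the exact bookkeeping here to be the delicate part, and the condition $d\ge16\log\frac2\delta$ is presumably what makes it close.

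\textbf{Step 3 (the consequence \eqref{eq:failed-macbeath}).} Fix $\varepsilon\in(0,1)$ and take $N=\exp(C_\varepsilon\sqrt d)$. By \Cref{thm:finite-budget}, $\inf_{Q\in\mathcal P_d(N)}\Delta_d(Q)\le\varepsilon$. On the other hand, the cube bound forces $\inf_{P\in\mathcal P_d(N)}\Delta_K(P)\to1$ as $d\to\infty$. So for any $A>0$, choose $\varepsilon$ small enough that $A\varepsilon<1$; then the inequality \eqref{eq:failed-macbeath} fails for all sufficiently large $d$.
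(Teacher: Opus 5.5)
\textbf{There is a genuine gap in Step 2, and it is where all the difficulty lies.} Neither of your two mechanisms proves the claim $|A\cap\conv(V_{\rm near})|\ge\frac\delta2|K|$, and both fail for the same reason of scale.

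To contradict $|P\setminus K|\le\varepsilon|K|$ by moving a set of measure $\gtrsim\delta|K|$ out of the cube, the map must be nearly volume preserving. A homothety of ratio $1-s$ has Jacobian $(1-s)^d$, so you need $s\lesssim\delta/d$. With $s\asymp1/t$, as you propose, the Jacobian is $e^{-\Theta(d/t)}$, and $|P\setminus K|\ge(1-s)^d|\text{bad set}|$ gives nothing. With $s\lesssim\delta/d$, pushing toward a vertex $v$ with $|v_j|>t$ changes $x_j$ by only $O(t\delta/d)$. So only points within $O(t\delta/d)$ of the face $\{\pm x_j=1\}$ leave $K$, which is a vanishing fraction for any single coordinate $j$. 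The segment/Fubini variant fails the same way: for a typical $x$, the segment $[x,v]$ exits $K$ only after a fraction $\asymp1/t$ of its length, where the swept cone has already lost a factor $(1-c/t)^d$ of volume.

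There are two further problems with the first sketch. The barycenter $w$ of the far part of a convex combination need not satisfy $\|w\|_\infty>t$, since far vertices can be far in different coordinates or with opposite signs and average out. Also, a point-dependent push is not a homothety, so neither injectivity nor the Jacobian $(1-s)^d$ is available.

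\textbf{The missing idea is to count coordinates rather than vertices.} Tiny pushes do work collectively. Suppose $\sup_{x\in P}\sigma_ix_i>R$ for every $i$ in a set $J_R$ of $m$ coordinates. For uniform $X$, the event that $\max_{i\in J_R}\sigma_iX_i$ is within $\asymp\delta(R-1)/d$ of $1$ then has probability at least $1-e^{-m\delta(R-1)/(4d)}$. The paper uses this as follows:
\begin{enumerate}
\item Partition $P\cap K$ according to which $i\in J_R$ attains $\max_{i\in J_R}\sigma_ix_i$.
\item On the $i$-th piece, apply the fixed homothety $x\mapsto(1-\tau)x+\tau z_i$, with $\tau=\delta/(2d)$ and $z_i\in P$ realizing the excursion. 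The images are disjoint, as the functionals $\sigma_ie_i-\sigma_je_j$ show.
\item Conclude that $|J_R|\le\frac{4d}{\delta(R-1)}\log\frac2\delta$.
\end{enumerate}
With $R=1+\frac8\delta\log\frac2\delta$, this leaves a set $I$ of $k\ge d/2$ coordinates on which all of $P$ lies in $[-R,R]$. The projection $\pi_IP$ has at most $N$ vertices, all in $[-R,R]^k$, and by Fubini it still covers at least $\delta2^k$ of $[-1,1]^k$.

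Your Step 1 then applies verbatim in those $k$ coordinates. It gives $N\ge\delta\exp\bigl(d/(9(2R+1)^2)\bigr)$, which beats the stated bound since $9(2R+1)^2\le64R^2$ for $R\ge3/2$. Your near/far split by $\ell_\infty$ over all $d$ coordinates is the wrong decomposition. It discards vertices that are large in only a few coordinates, which are harmless for Hoeffding. It also forces an unproven claim about which points far vertices serve, which the paper never needs. Step 3 is fine once the cube bound is established.
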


For each fixed $\varepsilon\in(0,1)$, the lower bound in \Cref{thm:cube-obstruction} grows exponentially in $d$, matching the $2^d$ vertex count of the cube itself up to the constant in the exponent. In \Cref{app:cube-obstruction}, we also construct a sequence of smooth convex bodies $K_d\subset\R^d$ with everywhere positive Gaussian curvature satisfying the same bound, which shows that \eqref{eq:failed-macbeath} fails on this class as well. These bodies are constructed to be close to the cube, with curvature concentrated near the corners and nearly flat elsewhere.

\subsection{Sharp dimensional order of the asymptotic coefficients}

Ludwig's asymptotic theorem \citep{Ludwig1999} relates the ball limits $\mathcal E_d$ and $\mathcal F_d$ in \eqref{eq:intro-limits} to the universal Laguerre--Delone and Laguerre--Dirichlet--Voronoi constants $\ldel_{d-1}$ and $\ldiv_{d-1}$ by
\begin{equation}\label{eq:intro-normalization}
 \frac{\ldel_{d-1}}{\mathcal E_d}
 =\frac{\ldiv_{d-1}}{\mathcal F_d}
 =\frac2d(d|B_d|)^{-\frac{2}{d-1}}
 =\frac{1+O(d^{-1})}{\pi e}.
\end{equation}
These constants govern the leading asymptotic approximation error for every convex body with $C^2$ boundary and positive Gaussian curvature (see \Cref{cor:convex-bodies-lower-bd}). Our next result gives a sharp dimensional rate for the vertex coefficient.
\begin{theorem}\label{Thm:Main}\label{mainThm}
As $d\to\infty$,
\begin{equation}\label{eq:intro-main-classical}
 \frac1{\pi e}\sqrt{\frac2\pi}-o(1)
 \le\sqrt d\,\ldel_{d-1}
 \le\frac2{\pi e}\sqrt{\frac2\pi}+o(1).
\end{equation}
Equivalently,
\begin{equation}\label{eq:intro-main-ball}
 \sqrt{\frac2\pi}-o(1)
 \le\sqrt d\,\mathcal E_d
 \le2\sqrt{\frac2\pi}+o(1).
\end{equation}
\end{theorem}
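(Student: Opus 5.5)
By the normalization \eqref{eq:intro-normalization}, which gives $\ldel_{d-1}=\mathcal E_d\,(1+O(d^{-1}))/(\pi e)$, it suffices to prove \eqref{eq:intro-main-ball}: that is, $\sqrt{2/\pi}-o(1)\le\sqrt d\,\mathcal E_d\le2\sqrt{2/\pi}+o(1)$. Both directions work in the asymptotic local regime $N\to\infty$ with $d$ fixed. There the error of an $N$-vertex polytope localizes to facets of diameter $\asymp N^{-1/(d-1)}$. Near a boundary point of $B_d$, the sphere is replaced by the paraboloid $x_d=\tfrac12|y|^2$, $y\in\R^{d-1}$, and each facet is replaced by a piecewise-affine function $\ell$ on a Laguerre-type cell decomposition of $\R^{d-1}$. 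The error density is then $\int|\tfrac12|y|^2-\ell(y)|\,dy$, normalized per unit area at vertex density $N/|\partial B_d|$. So \eqref{eq:intro-main-ball} reduces to a statement about the best piecewise-linear approximation of $q(y)=\tfrac12|y|^2$ by an arbitrary (not necessarily interpolating) convex piecewise-affine function whose linear pieces are indexed by the vertices of the polytope.

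\textbf{Upper bound.} I would follow the paper's random construction: take $N$ i.i.d.\ uniform points on $\Sp$, form their convex hull, and dilate by the deterministic factor $1+\lambda N^{-2/(d-1)}$ with $\lambda$ optimized. Following \citet{LSW}, the expected error $\E|B_d\triangle (1+\lambda N^{-2/(d-1)})\conv(X_1,\dots,X_N)|$ is computed through the radial distance $h$ from a uniformly random boundary direction to the boundary of the hull. Via the Blaschke--Petkantschin/facet-integral formula, in the paraboloid limit the depth $h$ of a random point under the facet above it has an explicit gamma-type law, obtained from the Poisson hyperplane description of the lower convex hull of the lifted Poisson process. Write $h=\mu\,(1+Z)$ with $\mu\asymp d^{-1}\cdot$(scale) and fluctuation $Z$ of order $d^{-1/2}$; the cancellation between radial and tangential fluctuations mentioned in the introduction is what makes $Z$ this small. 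Dilating so that the shifted depth is centered at its median turns the error into $\E|h-\mu|$ times the normalization, which equals $\sqrt{2/\pi}\cdot$(standard deviation) by a central-limit approximation for the gamma law with shape of order $d$. Tracking the constants (this is where the factor $2\sqrt{2/\pi}$ arises, since the centering is not optimal for a triangulation) gives the upper bound. The sharp-constant CLT step for the gamma law is routine once the exact law is in hand.

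\textbf{Lower bound.} For a fixed polytope, I would decompose along its facets. Each facet $F$ with outer normal $u$ lies at height $h_F$, and the cap of $B_d$ beyond it projects to a cell in $\R^{d-1}$. On each cell, the error $\int|q-\ell|$ is bounded below by $\int|q-\bar q|$ over the cell, where $\bar q$ is the best affine $L^1$ fit. This relates the problem to an inscribed Delone-type quantity: a quadratic minus its best affine approximant on a cell behaves like the inscribed error minus its mean. For a convex cell $C$ of volume $v$, one needs
\[
\min_{\ell}\int_C|q-\ell|\;\ge\;(\sqrt{2/\pi}-o(1))\,\frac{1}{\sqrt d}\cdot\frac{1}{|C|}\int_C\int_C\tfrac12|y-z|^2\,dz\,dy\cdot\frac{1}{d}\cdot(\text{normalization}),
\]
that is, an $L^1$-versus-$L^2$ comparison. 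Concentration of measure (via Carbery--Wright anti-concentration for the degree-2 polynomial $q-\ell$ on the convex cell) shows that $|q-\ell|$ is comparable to its standard deviation, with the Gaussian ratio $\sqrt{2/\pi}$ in the limit. The remaining term is the second-moment quantization functional $\sum_C\int_C|y-c_C|^2$, which by Zador's theorem \citep{Zador1982} and the known high-dimensional quantizer constant ($\asymp d/(2\pi e)$ times volume$^{2/(d-1)}$) is bounded below. Combining these with the normalization gives $\sqrt d\,\mathcal E_d\ge\sqrt{2/\pi}-o(1)$.

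\textbf{Main obstacle.} I expect the hardest step to be the sharp $L^1$/$L^2$ comparison on arbitrary cells, uniformly over all cells. Carbery--Wright gives the comparison only up to constants, whereas the sharp constant $\sqrt{2/\pi}$ requires a central-limit theorem for a quadratic form over a high-dimensional convex body. I would try first to handle this with the thin-shell/CLT for convex bodies, applied after showing that most of the error mass comes from nearly isotropic cells. Cells that are far from isotropic would be controlled by a cruder Carbery--Wright bound, together with the fact that such cells only increase the quantization term.
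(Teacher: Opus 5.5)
Your lower bound does not control the vertex budget. In Ludwig's local formulation of $\ldel_m$ (\Cref{lem:expanded-exact-laguerre-correspondence}), the affine pieces of $h$ are the \emph{facets}, and $N$ bounds the number of \emph{vertices} of the induced subdivision. A convex piecewise-affine function with at most $N$ affine pieces, as in your reduction, is instead the facet problem $\ldiv_m$, whose coefficient $\mathcal F_d$ is of order one.

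Your argument then bounds the error cell by cell by the centroid second moment $\int_C|y-c_C|^2\,dy$ and applies Zador's theorem. That quantizes with one center per facet cell, so the bound you get is in terms of the number of facets, not of $N$. For a polytope with $N$ vertices that number is not $O_d(N)$: by the upper bound theorem it can be of order $N^{\lfloor d/2\rfloor}$. Even for the near-optimal random polytope it is about $\eta_dN$ with $\eta_d^{2/(d-1)}\sim2\pi d$ by \eqref{eq:intro-Kd-asymptotic}, which already costs a factor $2\pi d$ against $N^{-2/(d-1)}$.

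The missing idea is to compare with inscribed interpolation on the \emph{same vertex set}. Triangulate the affine regions of $h$ without adding vertices. On each simplex $T$, test $h-q=W_T+L_T$ against the sign $\sigma=\sgn(\med Q_{m+1}-Q_{m+1})$ of Greenwood's statistic of the barycentric coordinates. This $\sigma$ is orthogonal to every affine function and satisfies $\E[\sigma W_T]=\beta_m\E W_T$ for every simplex shape, so $e_N^{\ldel}(I)\ge\beta_m e_N^{\delc}(I)$. The vertex count then enters through $\delc_m\sim m/(2\pi e)$. Note that $W_T=\sum_i\lambda_i|x-v_i|^2\ge\operatorname{dist}(x,V)^2$ for the vertex set $V$, so this is quantization by the $N$ vertices rather than by cell centroids. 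The factor $\beta_m\sim2\sqrt{2/\pi}\,m^{-3/2}$ comes from the CLT for Greenwood's statistic, a fixed law independent of the cell geometry.

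This also removes your ``main obstacle'', which cannot be resolved as you propose. Thin-shell estimates give Gaussian linear marginals, not Gaussian behaviour of the quadratic $q-\ell$. On a Euclidean ball cell of radius $\rho$, the variable $\frac m2(1-|Y-g|^2/\rho^2)$ is asymptotically exponential, so the sharp $L^1/L^2$ ratio there tends to $\log2<\sqrt{2/\pi}$.

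Your upper bound uses the right construction but the wrong fluctuation law. By \Cref{lem:expanded-power}, the depth of the paraboloid below a facet simplex $T$ at $x$ is $W_T(x)=R_T^2-|x-z_T|^2$. So in the limit $N\to\infty$, under the facet-cone law, it is proportional to $T_d^{2/(d-1)}(1-\mathcal Q_d)$; see \eqref{eq:fixed-d-exact-absolute-moment}. Besides the gamma-distributed cap size $T_d\sim\operatorname{Gamma}(d,1)$, it carries the position of the point relative to the circumcenter. That position is $\mathcal Q_d=|\sum_i\Lambda_iU_i|^2$ under the squared-volume-weighted simplex law, independent of $T_d$.

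On the $d^{3/2}$ scale the radial factor contributes variance $4$ and the angular factor variance $12$. This is what gives $\sqrt d\,C_d\to\frac12\E|\mathcal N(0,16)|=2\sqrt{2/\pi}$. A gamma-only computation yields $\sqrt{2/\pi}$, which underestimates the error of this construction, since $\E|X+Y-c|\ge\E|X-c|$ for independent centered $Y$. It therefore does not give an upper bound. The factor $2$ also does not come from suboptimal centering: centering at the median instead of the mean changes $C_d$ only by $o(d^{-1/2})$.

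The substantive step you omit is the angular CLT $d^{3/2}(\mathcal Q_d-q_d)\Rightarrow\mathcal N(0,12)$ of \Cref{lem:angular-clt}. It has to handle the determinantal coupling of the vertices; the paper does this by comparing with a Gaussian simplex carrying the same squared-volume weight. You also need the second-moment bounds of \Cref{lem:angular-moments}, which give the uniform integrability required to pass from the CLT to the first absolute moment.
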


The upper and lower bounds of \Cref{Thm:Main} are proved in \Cref{sec:upper-bound,sec:lower-bound}. 
For comparison, the previously known asymptotic bounds for facet approximation \citep{LSW,Kur2017} are
\begin{equation}\label{eq:intro-facet-bounds}
 \frac14\le\mathcal F_d\le C_{\mathrm{fac}}+O(d^{-1/2}),\qquad
 C_{\mathrm{fac}}:=\int_0^1\frac{1-2^{-t}}t\,dt
             +\int_1^\infty\frac{2^{-t}}t\,dt\approx0.968.
\end{equation}
In \Cref{sec:ldiv-transfer}, an application of our tools improves the lower bound to $\mathcal F_d\ge\log2 - o(1)$.

\paragraph{Simplicial polytopes.}
In facet approximation, near-optimal polytopes are expected to be quite regular, in the sense that their \((d-1)\)-dimensional facets themselves are exponentially complex, meaning that most of them have a number of $(d-2)$-faces exponential in $d$. This intuition goes back to work of \citet*{EFR1964}, and to the fact that, with relatively few facets, each facet must itself approximate the ball well. In the context of our vertex results, this motivates us to consider approximation under a prescribed facet budget with a restriction on ``facet complexity''. Concretely, we require all facets to be simplices. Note that we can always assume this without changing the number of vertices and with an arbitrarily small change in the error, e.g. by applying a small random perturbation to each vertex.

To this end, let $\overline{\mathcal P}^{\,\mathrm s}_d(M)$ denote the class of \emph{simplicial} polytopes in $\R^d$ with at most $M$ facets, and set
\[
 s_{d,M}:=\frac{2M^{\frac{2}{d-1}}}{(d|B_d|)^{(d+1)/(d-1)}}
       \inf_{P\in\overline{\mathcal P}^{\,\mathrm s}_d(M)}|B_d\triangle P|,
\]
 together with $\underline{\ldivs}:=\liminf_{M\to\infty}s_{d,M}$ and $\overline{\ldivs}:=\limsup_{M\to\infty}s_{d,M}$.

\begin{theorem}\label{Thm:Simplicial}
The simplicial facet coefficients satisfy
\begin{equation}\label{eq:intro-simplicial}
 \frac2{e^2}\sqrt{\frac2\pi}
 \le\liminf_{d\to\infty}\frac{\underline{\ldivs}}{\sqrt d}
 \le\limsup_{d\to\infty}\frac{\overline{\ldivs}}{\sqrt d}
 \le\frac4e\sqrt{\frac2\pi}.
\end{equation}
In particular, both $\underline{\ldivs}$ and $\overline{\ldivs}$ have order $\sqrt d$.
\end{theorem}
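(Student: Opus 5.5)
We prove the two bounds in \eqref{eq:intro-simplicial} separately, reducing both to the local per-facet analysis behind \Cref{Thm:Main}. The starting point is Ludwig-type localization. For a simplicial polytope with $M$ facets and small error, almost all facets are small near-regular pieces that approximate a paraboloid locally. The error therefore splits into a sum over facets of a local functional of each facet simplex $F$. This functional is the volume between $F$ (a $(d-1)$-simplex in a hyperplane at height $h$) and the osculating paraboloid. After the affine normalization used in \eqref{eq:intro-normalization}, the problem reduces to minimizing
\[
\frac{\sum_F \int_F |q(x)-h_F|\,dx}{\bigl(\sum_F |F|\bigr)^{(d+1)/(d-1)}}\,M^{2/(d-1)},
\]
where $q(x)=\tfrac12\|x\|^2$ is taken in local coordinates centered at the facet's touching direction. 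The normalization constant $2/(d|B_d|)^{(d+1)/(d-1)}$ in $s_{d,M}$ is exactly the one that converts this sum into $\ldivs$. So the whole task is to show that the optimal per-facet constant for simplices is $\Theta(\sqrt d)$ times the scale $|F|^{(d+1)/(d-1)}$.

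\textbf{Upper bound.} Take the random construction from the proof of the upper bound in \Cref{Thm:Main}: a rescaled convex hull of $n$ i.i.d.\ uniform points on $\Sp$. This polytope is simplicial almost surely. Its expected number of facets is asymptotically $\beta_d n$, where $\beta_d$ is the known (Buchta--Müller / Rényi–Sulanke type) constant for facets of random polytopes on the sphere, and $\beta_d^{2/(d-1)}\to$ an explicit limit of order $d$. Expressing the error bound $\mathcal E_d\le 2\sqrt{2/\pi}\,d^{-1/2}$ in terms of $M\approx\beta_d n$ multiplies the coefficient by $\beta_d^{2/(d-1)}$. From Wendel/Efron-type formulas one gets $\beta_d^{2/(d-1)}\sim 2d/e$ up to the precise constant. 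Combining this with $1/(\pi e)$-type normalization factors from \eqref{eq:intro-normalization} should give $\frac4e\sqrt{2/\pi}\sqrt d$. I expect the constant bookkeeping here to be routine, but it must be done carefully.

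\textbf{Lower bound.} This is the main obstacle. In the vertex lower bound of \Cref{Thm:Main}, each facet is compared with an inscribed simplex and quantization, via \citet{MankiewiczSchuett2001} and \citet{Zador1982}. That comparison produces the $d^{-1/2}$ fluctuation gain. For simplicial facets the roles reverse: the facet count per unit area is tied to how much surface each simplex covers. The key step is a per-simplex inequality, which I would establish first. For any $(d-1)$-simplex $F$ and height $h$,
\[
\int_F|q-h|\ge c\sqrt d\,\frac{|F|^{(d+1)/(d-1)}}{d}\cdot\frac{2}{e^2}\sqrt{\tfrac2\pi}\cdot(\text{normalization}).
\]
To prove it, I would use that $q$ restricted to a random point of $F$ (barycentric Dirichlet coordinates) has fluctuations of order $d^{-1/2}$ relative to its mean. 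This is the same Carbery--Wright / Gaussian CLT for Dirichlet-weighted quadratics used earlier, and it yields $\E|q-h|\ge c\,\mathrm{sd}(q)$. Isodiametric/regular-simplex extremality, namely that the regular simplex minimizes the second moment $\int_F\|x-c\|^2$ at fixed volume, then gives $\mathrm{sd}(q)$ in terms of $|F|^{2/(d-1)}$. The regular simplex's normalized second moment is $\sim d/(2\pi e^{2})$-ish, versus $d/(2\pi e)$ for the ball. This extra factor of $e$ relative to the Voronoi (ball-like) cells is what should produce the $2/e^2$ constant. Summing over facets with Hölder's inequality, using $\sum_F|F|^{(d+1)/(d-1)}\ge M^{-2/(d-1)}(\sum|F|)^{(d+1)/(d-1)}$, completes the bound. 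Making the CLT anti-concentration uniform over all simplex shapes, not just near-regular ones, is the delicate point. I would handle it by the Carbery--Wright inequality applied to the degree-2 polynomial $q$ under the log-concave uniform measure on $F$.
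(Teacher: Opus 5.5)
The upper bound follows the paper's route, but the lower bound has a genuine gap at the per-simplex inequality, which is the core of the theorem.

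\textbf{Lower bound.} In the local picture, the foot $p_Fu_F$ of the perpendicular can lie anywhere relative to $T_F$. As the foot and the height vary, $|x-a|^2-h$ runs through $|x|^2$ plus \emph{all} affine functions. So you need $\inf_{\ell\ \mathrm{affine}}\int_T|q-\ell|\,dx\gtrsim d^{-3/2}\int_TW_T\,dx$ uniformly over all simplex shapes. A fluctuation bound for $q$ about its mean is not enough.

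Greenwood's CLT controls this affine residual only for the regular simplex, where $W_T\propto1-Q_d$. For a general simplex, $W_T=\sum_{i<j}\lambda_i\lambda_j|v_i-v_j|^2$ has arbitrary nonnegative weights, and nothing in your sketch bounds its distance from the affine functions. You also cannot assume that the facets of an arbitrary competitor are near-regular.

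Carbery--Wright does not fill this gap. It only gives $\|f\|_{L^1}\ge c\|f\|_{L^2}$ for quadratics, so it converts an $L^2$ lower bound into an $L^1$ one but supplies no lower bound on $\|q-\ell\|_{L^2}$ itself. Moreover, because $c$ is an unspecified absolute constant, this route could never produce the constant $\frac2{e^2}\sqrt{2/\pi}$.

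The missing idea is a certificate that depends only on barycentric coordinates. The sign $\sigma_d=\sgn(\mu_d-Q_d)$ annihilates every affine function on $T$, and by permutation symmetry $\E[\sigma_d\Lambda_i\Lambda_j]=\beta_{d-1}\E[\Lambda_i\Lambda_j]$. This gives $\int_T|h-q|\ge\beta_{d-1}\int_TW_T$ for every simplex and every affine $h$ (\Cref{thm:expanded-finite-transfer}), with the sharp $\beta_{d-1}\sim2\sqrt{2/\pi}\,d^{-3/2}$.

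Your regular-simplex second-moment extremality is then the right complement, since $\E|Y-c|^2=\tr M_T/(d(d+1))$. Concretely, $\int_TW_T=\frac{|T|}{d+1}\tr M_T$, and AM--GM on the eigenvalues of $M_T$, with $\det M_T=((d-1)!)^2|T|^2/d$, gives $\alpha^{\mathrm s}_{d-1}|T|^{1+2/(d-1)}$. The factor $e^{-2}$ comes from $((d-1)!)^{2/(d-1)}\sim d^2/e^2$, not from a comparison with the ball. To finish: integrate in polar coordinates over the facet cones, expand $1-t^{-d/2}$ at $t=1$ with $h=1-p_F^2$, and apply Jensen over at most $M$ facets.

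\textbf{Upper bound.} The construction is the paper's, but two points need fixing.

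First, the facet constant of \citet{BuchtaMullerTichy1985} satisfies $\eta_d^{2/(d-1)}=2\pi d(1+o(1))$, not $2d/e$. The value $2d/e$ is its product with the normalization $\frac2d(d\kappa_d)^{-2/(d-1)}=\frac{1+O(d^{-1})}{\pi e}$. Multiplying by the random-construction coefficient $C_d\sim2\sqrt{2/\pi}\,d^{-1/2}$ (not $\mathcal E_d$) gives $\frac4e\sqrt{2/\pi}\sqrt d$. Inserting a further $1/(\pi e)$, as you suggest, would give the wrong constant.

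Second, you need a single realization that has both small error and few facets. Averaging $X_{d,N}/\E X_{d,N}+\frac1{d-1}F_{d,N}/\E F_{d,N}$ yields one with error at most $\frac d{d-1}\E X_{d,N}$ and at most $d\,\E F_{d,N}$ facets. This costs only a factor $d^{2/(d-1)}\to1$. Then interpolate between consecutive budgets $M_N$ by monotonicity.
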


Thus, compared to \eqref{eq:intro-facet-bounds}, requiring the facets of an approximating polytope to be simplices increases the asymptotic facet coefficient by a factor of order $\sqrt d$. The main proof step is an estimate on the number of facets of the random polytope. The argument is given in \Cref{sec:simplicial-facets}.

\section{Discussion}
\label{sec:discussion}

In high dimensions, approximating the ball by polytopes usually requires exponential complexity in either description, because guarantees often amount to covering the sphere by caps. By \citet[Theorem~2.1]{Ball1997} and polarity, every origin-symmetric polytope with $N$ vertices or $N$ facets is at Banach--Mazur distance at least $\sqrt{d/(2\log N)}$ from the ball. Polytopes inscribed in the ball with polynomially many vertices capture only an exponentially small fraction of its volume \citep{BaranyFuredi1988}. In fixed dimension, approximation within Hausdorff distance $\eta$ needs and admits $\Theta(\eta^{-(d-1)/2})$ vertices or facets \citep{Dudley-1,BronshteinIvanov:1975}, a bound that also underlies data structures for approximate polytope membership \citep{AFM2011}; \citet*{AFM2024} make the dependence on $d$ explicit. For relative volume without containment, \citet{Kur2017} showed that $\exp(\Theta_\varepsilon(d))$ facets are necessary and sufficient, and that the optimal error tends to one with $\exp(o(d))$ facets.

We compare the two descriptions at a deliberately sensitive point. Combining \eqref{eq:facet-budget-explicit} with Kur's upper bound gives $\log M_\varepsilon(d)\asymp d\log(1/\varepsilon)$ for $0<\varepsilon\le1/8$, while \Cref{thm:finite-budget} gives $\log N_\varepsilon(d)\asymp d\log(1+1/(\varepsilon\sqrt d))$. For $\varepsilon\le1/d$ the two exponents agree up to absolute constants, and at budgets $N\ge e^{Cd}$, with $C$ a large absolute constant, the optimal errors are of order $d^{-1/2}N^{-2/(d-1)}$ for vertices (\Cref{thm:finite-all-N} and \eqref{eq:finite-sphere-budget-refined}) and $N^{-2/(d-1)}$ for facets (\eqref{eq:facet-budget-explicit} and \citet[Remark~2.2]{Kur2017}). With exponential budgets, the two descriptions thus follow the same cap-covering law and differ by a factor $\sqrt d$, the same factor that separates $\mathcal E_d$ from $\mathcal F_d$. This factor sits inside the logarithm, and it changes the picture once $\varepsilon\sqrt d$ is large: at fixed $\varepsilon$, $\exp(\Theta_\varepsilon(\sqrt d))$ vertices suffice, while the same number of facets leaves an error close to one. The rest of this section concerns this regime.

Throughout, $\varepsilon\in(0,1)$ is fixed, and we consider 
\[
 K=\conv\{\pm U_1,\ldots,\pm U_M\},
 \qquad
 U_1,\ldots,U_M\overset{\mathrm{iid}}{\sim}
 \operatorname{Unif}(\mathbb S^{d-1}).
\]
$P=R^{-1}K$ is the volume-balancing dilation, $R^d|B_d|=\E|K|$. We take $\ell:=\log M\sim A_\varepsilon\sqrt d$, with $A_\varepsilon$ large enough that $\E\Delta_d(P)\le\varepsilon$, as in the proof of \Cref{thm:finite-budget}. \Cref{fig:random-polytopes-3d} shows this model and its facet counterpart in $\R^3$.

\paragraph{Spikes that carry little volume.}
The approximant is far from round. Its vertices lie at distance $R^{-1}\asymp_\varepsilon d^{1/4}$ from the origin, while with probability tending to one it contains a ball $c_\varepsilon B_d$ of constant radius (\Cref{lem:fixed-accuracy-inradius}). Hence $d_{\mathrm{BM}}(P,B_d)\asymp_\varepsilon d^{1/4}$. By the Banach--Mazur bound above, this distortion is forced at subexponential budgets. The radius $d^{1/4}$ is also consistent with the vertex lower bound. Adjoining a point at distance $\rho>1$ to the ball adds volume only over the spherical cap of height $1/\rho$ around its direction, and the proof of the lower bound shows that caps of height about $(\varepsilon\sqrt d)^{-1/2}$ around the vertex directions must cover half of the sphere (\Cref{sec:proof-outline}). Heuristically, the vertices must reach out to radius about $(\varepsilon\sqrt d)^{1/2}$, which is the order of the radius at which the random construction places them. Related geometric questions can also be studied for rescaled symmetric Gaussian polytopes using the facet-distribution methods of \citet{Fleury2012} and \citet{KurPathak2026}.

\begin{figure}[t]
  \centering
  \includegraphics[width=0.6\linewidth]{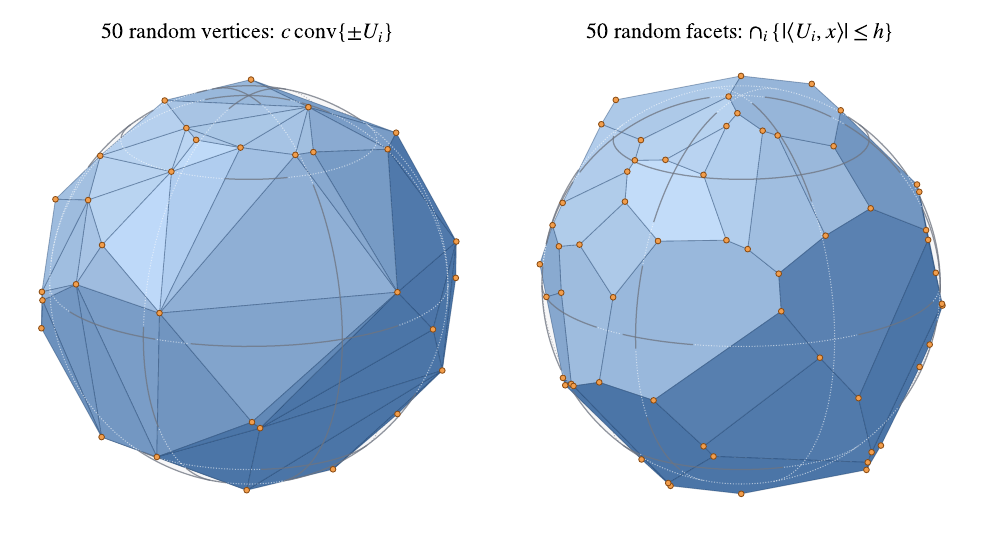}
  \caption{Random approximations in $\R^3$ from the same 25 antipodal pairs of directions: 50 vertices (left, relative error $0.114$) and 50 facets (right, relative error $0.083$), each at its optimal scale. The high-dimensional vertex advantage is not yet visible here.}
  \label{fig:random-polytopes-3d}
\end{figure}

Why do such spikes cost so little volume? Choose a facet of $K$ with probability proportional to the volume of the cone it spans with the origin, and then a uniform point $Y$ on that facet. Under this law $\nu$, polar integration gives $\E\Delta_d(P)=\E_\nu|1-(R/|Y|)^d|$ (\Cref{sec:finite-budget-outline}). A point of a facet $\conv\{v_1,\ldots,v_d\}$ is close to a vertex only if one of its barycentric coordinates $\lambda_i$ is large, and for a uniform point of a $(d-1)$-simplex, $\Pr\{\lambda_i\ge t\}=(1-t)^{d-1}$. The spikes therefore occupy an exponentially small fraction of each facet. On the rest of the boundary, the radius is controlled at exactly the precision that the error requires: with $r^2=t_0+(1-t_0)q_d$ the deterministic comparison radius from the proof,
\[
 \Bigl\|\frac{|Y|^2}{r^2}-1\Bigr\|_{L^2(\nu)}\asymp\frac{\varepsilon}{d}
\]
by \Cref{lem:boundary-second-moment}, and this is the scale at which $(r/|Y|)^d$ changes by $O(\varepsilon)$. On the outward part of the boundary, the integrand $1-(R/|Y|)^d$ is at most one, however far $Y$ lies from the origin. Relative volume thus resolves the boundary radius at scale $1/d$ but caps the penalty for outward excursions, so it is nearly blind to where the vertices are. Higher moments of the radial law do detect the spikes: a heuristic calculation suggests $\|\,|Y|^2/r^2-1\|_{L^p(\nu)}\gtrsim\varepsilon p^2/d^{3/2}$ for $C\sqrt d\le p\le d/8$. Integrating along rays also shows that a radially nonincreasing density never weights the symmetric difference more heavily than relative volume does (\Cref{lem:radial-measure-comparison}). This is why one polytope approximates every centered ball under Gaussian measure (\Cref{cor:gaussian-vertex-approximation}).

\paragraph{Maxima versus averages.}
Polarity does not transfer the construction to facets. The polar body $P^\circ=\{x:|\langle x,U_i\rangle|\le R,\ i\le M\}$ has $2M=\exp(O_\varepsilon(\sqrt d))$ facets, all at distance $R\asymp_\varepsilon d^{-1/4}$ from the origin, and the same Banach--Mazur distance $d^{1/4}$ from the ball, yet by \citet{Kur2017} every dilation of it has relative volume error tending to one. The random facet model shows the mechanism. Its radial function in a direction $u$ is $h/S(u)$, where $S(u)=\max_{i\le M}|\langle U_i,u\rangle|$ is an extreme value with relative fluctuations of order $1/\ell\asymp_\varepsilon d^{-1/2}$ (\Cref{lem:vw-dilation-scales}). Raised to the $d$-th power, such fluctuations leave no dilation with small error. In the vertex model, a boundary point is a barycentric average of $d$ vertices, and averaging brings the radial fluctuations down to $\varepsilon/d$. The boundary of a facet polytope is set by maxima, that of a vertex polytope by averages, and only averages reach the precision $1/d$ needed for relative volume at a subexponential budget.

\paragraph{Why the cube resists.}
\Cref{thm:cube-obstruction} shows that the cube admits no such shortcut, and its proof shows why spikes are expensive there. If some point of $P$ pushes the $i$-th coordinate far beyond $1$, convexity forces $P$ to contain small homothetic copies of part of the cube, shifted toward that point and lying outside the cube. Hence a polytope with error bounded away from one stays inside $[-R_\delta,R_\delta]$ in at least half of the coordinates, for an explicit constant $R_\delta$ (\Cref{lem:cube-excursions}). A vertex $w$ confined in this way helps to cover a uniform point $X$ of $[-1,1]^k$ only if $\langle\operatorname{sign}(X),w\rangle\ge k/4$, which by Hoeffding's inequality holds for a fraction at most $e^{-k/(32R_\delta^2)}$ of the $2^k$ sign patterns (\Cref{lem:cube-bounded-hull}). Each vertex thus serves exponentially few orthants. For the ball, a vertex at radius $\rho$ serves a cap of measure about $e^{-d/(2\rho^2)}$, and because spikes are cheap, $\rho$ can grow to order $d^{1/4}$. The smooth bodies of \Cref{app:cube-obstruction} behave like the cube, so the obstruction is the global shape rather than the corners. Which geometric properties allow subexponential vertex complexity remains open.

The comparison reverses the one suggested by fixed-dimensional rates. If $H_{d,N}$ is the convex hull of $N$ independent uniform points in $H_d$, Dwyer's bounds \citep{Dwyer1988}, together with Efron's identity \citep{Efron1965}, give
\[
    \frac{\E\vol_d(H_d\setminus H_{d,N})}{\vol_d(H_d)}
    =\Theta_d\left(\frac{(\log N)^{d-1}}{N}\right)
    \qquad(N\to\infty).
\]
For every fixed $d\ge4$, this decays faster than the optimal ball error, whose rate is $N^{-2/(d-1)}$. Moreover, optimal approximation of the cube is exact once $N\ge2^d$. These observations do not give a comparison uniform in $d$: at the subexponential budgets sufficient for any fixed accuracy on the ball, the optimal relative error for the cube tends to one.

\paragraph{Numerical budgets.}
\Cref{tab:numerical-budgets} compares two random models built from the same directions: the vertex model $cK$ and the facet model $\{x:|\langle U_i,x\rangle|\le h,\ i\le M\}$, each with $N=2M$ elements and with its scale optimized for expected relative error. The entries estimate the smallest $N$ with expected error at most $\varepsilon$. They are computed from the boundary law and from a one-dimensional radial formula for the facet model (\Cref{app:numerical-budgets}), not by building polytopes, which would be infeasible at these sizes.

\begin{table}[t]
  \centering
\begin{tabular}{crrrrrr}
\toprule
 & \multicolumn{2}{c}{$\varepsilon=0.5$} & \multicolumn{2}{c}{$\varepsilon=0.25$} & \multicolumn{2}{c}{$\varepsilon=0.1$} \\
\cmidrule(lr){2-3}\cmidrule(lr){4-5}\cmidrule(lr){6-7}
$d$ & V & F & V & F & V & F \\
\midrule
4 & $22$ & $22$ & $58$ & $50$ & $210$ & $160$ \\
8 & $138$ & $174$ & $810$ & $1204$ & $1.1\cdot10^{4}$ & $2.1\cdot10^{4}$ \\
16 & $1568$ & $7714$ & $3.4\cdot10^{4}$ & $5.9\cdot10^{5}$ & $5.4\cdot10^{6}$ & $3.1\cdot10^{8}$ \\
32 & $3.8\cdot10^{4}$ & $1.2\cdot10^{7}$ & $6.8\cdot10^{6}$ & $1.1\cdot10^{11}$ & $7.3\cdot10^{10}$ & $5.0\cdot10^{16}$ \\
64 & $2.6\cdot10^{6}$ & $2.3\cdot10^{13}$ & $1.2\cdot10^{10}$ & $3.0\cdot10^{21}$ & $1.6\cdot10^{17}$ & $9.9\cdot10^{32}$ \\
\bottomrule
\end{tabular}
  \caption{Estimated budgets for random spherical vertices (V) and random facets (F), each at its optimized scale. Numerical details are given in \Cref{app:numerical-budgets}.}
  \label{tab:numerical-budgets}
\end{table}

In dimension $4$ the two models are comparable, and at the two smaller errors the facets are slightly better, as in \Cref{fig:random-polytopes-3d}. The vertex model is already ahead in dimension $8$. At $d=64$ and $\varepsilon=0.25$ the estimates are $1.2\cdot10^{10}$ vertices against $3.0\cdot10^{21}$ facets, and at $\varepsilon=0.1$ the gap exceeds fifteen orders of magnitude.

\paragraph{Few vertices, many facets.}
The vertex description of $P$ is far more compact than its own facet description. For the convex hull $\widetilde P_N$ of $N$ independent uniform points on $\mathbb S^{d-1}$, \citet[Theorem~4.3]{BonnetOReilly2022} show that
$$\E f_{d-1}(\widetilde P_N)=[(4\pi+o(1))\log(N/d)]^{(d-1)/2}$$
when $d=o(N)$ and $\log N=o(d)$. At $\log N=\Theta_\varepsilon(\sqrt d)$ this gives $\log\E f_{d-1}(\widetilde P_N)=\frac d4\log d+O_\varepsilon(d)$. In algorithmic terms, facet enumeration for this family has output superexponentially larger than its input, and also larger than the $\exp(\Theta_\varepsilon(d))$ facets that suffice for an optimal facet approximant. Our construction uses antipodal pairs, which this theorem does not cover; the corresponding count over signed candidate facets suggests the same leading order.

\Cref{Thm:Simplicial} suggests why these facets do not form a good facet approximation. The random hull is almost surely simplicial, and under a prescribed facet budget, simplicial facets increase the asymptotic coefficient by a factor of order $\sqrt d$. A vertex description can afford a huge number of small simplicial facets because each of them is specified implicitly by $d$ points from a short list.

\paragraph{The normalization matters beyond volume.}
Tuning the radius to precision $1/d$ has a cost for other criteria. Let $D_w(C,D):=\int_{\mathbb S^{d-1}}|h_C-h_D|\,d\sigma$ be the $L^1$ distance between support functions, where $\sigma$ is normalized surface measure; for origin-symmetric bodies it is half the average difference of directional widths. Besides $P=R^{-1}K$, consider $P_{\mathrm w}=a^{-1}K$ with $a=\E h_K(e_1)$, which matches the mean width of the ball in expectation. Both scales are asymptotic to $\sqrt{d/(2\ell)}$, yet $a/R-1=(1+o(1))\frac{\log d}{2\ell}$ (\Cref{lem:vw-dilation-scales}). Because volume scales with the $d$-th power of the dilation, this small relative difference decides between the two criteria:
\[
 \E D_w(P,B_d)=(1+o(1))\frac{\log d}{2\ell},
 \qquad
 \E D_w(P_{\mathrm w},B_d)\asymp\frac1\ell,
 \qquad
 \frac{\E|P_{\mathrm w}|}{|B_d|}=\exp\Bigl(-(1+o(1))\frac{d\log d}{2\ell}\Bigr)\to0.
\]
Here $1/\ell$ is also the best width error over all deterministic dilations of $K$. The volume-balanced polytope thus loses a factor $\log d$ in width, and the width-balanced one loses almost all of its volume. More generally, no deterministic dilation of $K$ keeps expected relative volume error below a fixed $\varepsilon<1$ while attaining width error of order $1/\ell\asymp d^{-1/2}$ (\Cref{cor:vw-incompatible-dilations}). At exponential budgets, one construction can serve several criteria. A polytope with $(1-\eta)B_d\subset Q\subset B_d$ controls volume, width, and Hausdorff distance at once, although volume error $\varepsilon$ then requires $\eta\asymp\varepsilon/d$ and the inscribed budget. In the inscribed and circumscribed settings, a single polytope attains the optimal asymptotic orders for all intrinsic-volume deviations simultaneously \citep{BHK}, and Kur's random facet polytopes are optimal up to absolute constants for both volume and surface-area deviation \citep{Kur2017}. The subexponential vertex construction lacks this flexibility, at least among its dilations. We do not know whether some polytope with $\exp(C_\varepsilon\sqrt d)$ vertices achieves relative volume error at most $\varepsilon$ and support-function error $O(d^{-1/2})$ simultaneously.

\section{Proof ideas}
\label{sec:proof-outline}

Two main ingredients underlie our results for the Euclidean ball: a universal lower bound for quadratic approximation on a simplex, and an exact decomposition for the radial law of a random spherical hull. The first shows that no choice of vertex heights can reduce the local curvature error on a simplex below a $d^{-3/2}$ fraction of the inscribed interpolation error, while the second identifies the radial and tangential fluctuations that achieve this scale in a random polytope. Both the non-asymptotic bounds of \Cref{thm:finite-budget,thm:finite-all-N} and the asymptotic bounds of \Cref{Thm:Main,Thm:Simplicial} follow from these two mechanisms. Complete proofs are given in \Cref{sec:full-proofs}.

\subsection{A simplex inequality that cancels every affine correction}

Let $T=\conv\{v_1,\ldots,v_{m+1}\}\subset\R^m$ be an $m$-dimensional simplex, let $q(x)=|x|^2$, and write $\lambda_1(x),\ldots,\lambda_{m+1}(x)$ for the barycentric coordinates of $x\in T$. The affine function $\mathcal I_Tq$ that interpolates $q$ at the vertices of $T$ lies above $q$ on $T$, with nonnegative interpolation gap
\[
 W_T(x):=\mathcal I_Tq(x)-q(x)
       =\sum_{1\le i<j\le m+1}|v_i-v_j|^2\lambda_i(x)\lambda_j(x).
\]
Any affine approximant $h$ on $T$ differs from $\mathcal I_Tq$ by an affine correction $L_T:=h-\mathcal I_Tq$, so its pointwise error is $q-h=-(W_T+L_T)$. The central difficulty in proving a lower bound for unrestricted vertex approximation is to bound $\int_T|W_T+L_T|\,dx$ from below uniformly over all affine corrections $L_T$ and all simplex shapes $T$.

Let $Y$ be uniformly distributed on $T$, so that its barycentric coordinate vector $\Lambda=(\Lambda_1,\ldots,\Lambda_{m+1})$ is uniformly distributed on the standard simplex in $\R^{m+1}$. Set $Q=\sum_{i=1}^{m+1}\Lambda_i^2$, let $\mu=\med(Q)$ be a median of $Q$, and define the bounded sign variable $\sigma=\sgn(\mu-Q)\in\{-1,1\}$. By the definition of the median, $\E\sigma=0$. By permutation symmetry of the coordinates of $\Lambda$, it follows that $\E[\sigma\Lambda_i]=\frac{1}{m+1}\E\sigma=0$ for every $i$, and hence $\E[\sigma L_T(Y)]=0$ for every affine function $L_T$. Applying the same permutation symmetry to the off-diagonal products $\Lambda_i\Lambda_j$ ($i\ne j$), which satisfy $2\sum_{i<j}\Lambda_i\Lambda_j=1-Q$, gives
\begin{equation}\label{eq:intro-simplexwise-variance}
 \E|W_T(Y)+L_T(Y)|\ge\E[\sigma W_T(Y)]
       =\beta_m\E W_T(Y),\qquad
 \beta_m:=\frac{m+2}{m}\E|Q-\mu|.
\end{equation}
In the language of $L^1$ approximation \parencites{Pinkus1989}[Section~6]{KhavinsonMcCarthyShapiro2000}, $\sigma$ serves as an explicit dual certificate orthogonal to the space of affine functions on $T$. By the central limit theorem for Greenwood's statistic $Q$ \citep{Moran1947,Holst1979}, $m^{3/2}(Q-\frac{2}{m+1})\Rightarrow\mathcal N(0,4)$, which yields $\beta_m\sim2\sqrt{2/\pi}\,m^{-3/2}$ as $m\to\infty$.

Crucially, the factor $\beta_m$ in \eqref{eq:intro-simplexwise-variance} is completely independent of the Euclidean geometry of $T$. Because $\sigma$ is constructed solely from the barycentric coordinates $\Lambda$, the same lower bound holds regardless of how elongated or irregular $T$ may be, while the Euclidean edge lengths $|v_i-v_j|^2$ remain inside the nonnegative quantity $\E W_T(Y)$. This cleanly separates the freedom to adjust vertex heights from the cost of interpolating a curved boundary.

In Ludwig's local paraboloid formulation on the unit cube $I=[0,1]^m$, any convex piecewise-affine approximant $h$ with at most $N$ vertices on $I$ can be triangulated into simplices without introducing additional vertices. Summing \eqref{eq:intro-simplexwise-variance} over the simplices of this triangulation compares the $L^1$ error $\int_I|h-q|\,dx$ directly with the prescribed-height Delone interpolation error on the same vertex set, yielding $\ldel_m\ge\beta_m\delc_m$. Combining this transfer inequality with the asymptotic relation $\delc_m\sim m/(2\pi e)$ of \citet{MankiewiczSchuett2001} proves the lower bound in \Cref{Thm:Main}. Notice that this argument relies on the fact that a single affine function $L_T$ must be used across the entire simplex $T$. Relaxing this requirement by allowing the affine correction to vary independently from ray to ray forfeits the $\sqrt d$ factor; see \Cref{sec:additional-remarks} for a comparison with the radial approach of \citet{Boroczky-2000}.

\subsection{One random boundary law for both upper bounds}
\label{sec:finite-budget-outline}

For the finite-budget upper bound, let $K=\conv\{\pm U_i:1\le i\le M\}$, where $U_1,\ldots,U_M$ are independent and uniformly distributed on $\mathbb S^{d-1}$, and define the volume-balancing radius $R$ by $R^d|B_d|=\E|K|$. Then $P=R^{-1}K$ is an origin-symmetric polytope with at most $N=2M$ vertices and expected volume $|B_d|$. We weight the facets of $K$ across random realizations by the volumes of the cones they span with the origin, and sample a boundary point $Y$ uniformly from the chosen facet. Under this cone-volume-weighted probability measure $\nu$, integrating in polar coordinates over the facet cones gives
\begin{equation}\label{eq:sketch-error-radial}
 \frac{\E|K\triangle rB_d|}{\E|K|}
       =\E_\nu\left|1-(r/|Y|)^d\right|\qquad(r>0),
\end{equation}
and replacing the comparison radius $r$ by the volume-balancing radius $R$ loses at most a factor of $2$ by the triangle inequality.

Applying the spherical Blaschke--Petkantschin formula (Lemma~\ref{lem:Miles-BP-1}) under $\nu$ separates the squared facet height $H$ from an independent tangential term $\mathcal Q_d$, giving
\[
 |Y|^2=H+(1-H)\mathcal Q_d,\qquad
 q_d:=\E\mathcal Q_d=\frac{2(d-1)}{d(d+1)},\qquad
 \Var(\mathcal Q_d)=O(d^{-3}).
\]
Here $H\in(0,1)$ is the squared distance of the facet's supporting hyperplane from the origin, and $\mathcal Q_d$ is the squared Euclidean norm of a uniform barycentric combination of $d$ random unit vectors in $\mathbb S^{d-2}$ whose joint distribution is weighted by the squared $(d-1)$-dimensional volume of their simplex. One power of the simplex volume comes from the spherical Blaschke--Petkantschin Jacobian and the second comes from weighting the facet by its cone volume.

The marginal density of $H$ under $\nu$ is likewise explicit: writing $G_d(t)=\Pr\{U_1^2\le t\}$ for the slab probability of a uniform unit vector $U=(U_1,\ldots,U_d)\in\mathbb S^{d-1}$, the density of $H$ on $(0,1)$ is proportional to
\[
 (1-t)^{d(d-1)/2-1}G_d(t)^{M-d},\qquad 0<t<1,
\]
where the first factor comes from the spherical Jacobian and cone-volume weight of a candidate facet at squared height $t$, and the second factor is the probability that the remaining $M-d$ antipodal pairs lie in the slab $|\langle x,u\rangle|\le\sqrt t$. Because this density factors from the angular integral, $H$ and $\mathcal Q_d$ are independent under $\nu$, reducing the global volume error of the random polytope to the fluctuations of two independent scalar random variables.

\paragraph{Finite budgets.}
Let $L_0$ denote the mode of the logarithmic variable $L=-\log(1-H)$, and set $t_0=1-e^{-L_0}$. Differentiating the log-density of $L$ relates the mode $t_0$ to the antipodal pair budget $M$:
\begin{equation}\label{eq:intro-vertex-cost}
 \log M=\frac{d-1}{2}\log\frac1{1-t_0}+O(\log d).
\end{equation}
Choosing the deterministic comparison radius $r$ given by $r^2=t_0+(1-t_0)q_d$, we decompose the fluctuation of $|Y|^2$ around $r^2$ as
\[
 |Y|^2-r^2=(H-t_0)(1-\mathcal Q_d)
             +(1-t_0)(\mathcal Q_d-q_d),
\]
which has first absolute moment $\E_\nu||Y|^2-r^2|=O((1-t_0)d^{-3/2})$. On the outer region $\{|Y|\ge r\}$, this first-moment bound directly controls $1-(r/|Y|)^d$ in \eqref{eq:sketch-error-radial}. On the inner region $\{|Y|<r\}$, however, the negative power $(r/|Y|)^d$ can amplify small inward deviations of $|Y|^2$, so second-moment bounds alone do not suffice. Instead, we establish sub-Gaussian lower-tail bounds on the $d^{-3/2}$ scale for both $L-L_0$ and $\mathcal Q_d-q_d$, together with a separate negative-moment estimate on the rare event $\{H<t_0/2\}$. Combining these bounds gives
\[
 \E\Delta_d(P)\le C\frac{1-t_0}{t_0\sqrt d}
 \qquad(t_0\ge C_0/\sqrt d).
\]
Setting the target mode at $t_*=(1+c\varepsilon\sqrt d)^{-1}$ and choosing $M$ via the mode equation \eqref{eq:finite-sphere-budget-exact} establishes the upper bound in \Cref{thm:finite-budget} across all accuracy regimes.

\paragraph{The asymptotic coefficient.}
For fixed $d$ and $N\to\infty$, we analyze the dilated random spherical hull of \citet*{LSW},
\begin{equation}\label{eq:intro-random-construction}
 P_{d,N}=\lambda_{d,N}\conv\{Z_1,\ldots,Z_N\},\qquad
 Z_i\stackrel{\mathrm{iid}}{\sim}\mathrm{Unif}( \mathbb{S}^{d-1}),
\end{equation}
with volume-balancing factor
\begin{equation}\label{eq:intro-radial-factor}
 \lambda_{d,N}^d=\frac{|B_d|}{\E|\conv\{Z_1,\ldots,Z_N\}|}.
\end{equation}
Let $s$ denote the normalized surface measure of the spherical cap cut off by a facet's supporting hyperplane. As $N\to\infty$ at fixed $d$, the rescaled cap measure $t=Ns$ under the facet-cone law converges in distribution to a $\operatorname{Gamma}(d,1)$ random variable $T_d$ independent of the angular variable $\mathcal Q_d$. Setting $g_d=\E T_d^{\frac{2}{d-1}}$ and $Y_d=T_d^{\frac{2}{d-1}}(1-\mathcal Q_d)/g_d$, the exact fixed-dimensional limit of $N^{\frac{2}{d-1}}\E\Delta_d(P_{d,N})$ is proportional to the centered absolute first moment $d\E|Y_d-\E Y_d|$; see \eqref{eq:fixed-d-exact-absolute-moment}. After rescaling by $d^{3/2}$, the radial factor $T_d^{\frac{2}{d-1}}/g_d-1$ and the angular factor $\mathcal Q_d-q_d$ converge to independent centered Gaussian laws of variances $4$ and $12$, respectively. Consequently, $d^{3/2}(Y_d-\E Y_d)\Rightarrow\mathcal N(0,16)$, and uniform integrability via exact second moments yields
\begin{equation}\label{eq:intro-random-error}
 C_d:=\lim_{N\to\infty}N^{\frac{2}{d-1}}\E\Delta_d(P_{d,N})
       =\left(2\sqrt{\frac2\pi}+o(1)\right)d^{-\frac{1}{2}}.
\end{equation}
In contrast, the truncated-cone bound of \citet*{LSW} replaces $\mathcal Q_d$ by $0$, leaving an uncentered term of order $d^{-1}$ in place of a centered fluctuation of order $d^{-3/2}$.

The limiting angular variance $12$ itself admits a natural decomposition. By comparing the volume-weighted spherical simplex with a Gaussian simplex carrying the same squared-volume weight (\Cref{app:angular-calculations}), we show that $d^{3/2}(\mathcal Q_d-q_d)$ splits into two asymptotically independent Gaussian components: a barycentric component of variance $4$ coming from Greenwood's statistic $Q_d=\sum_{i=1}^d\Lambda_i^2$, and a radial Gaussian component of variance $8$.

\subsection{From local error to a global vertex lower bound}

To prove the non-asymptotic lower bound when $\Delta_d(P)\le\varepsilon\le\varepsilon_0<1/2$, observe first that the origin must lie in $\operatorname{int}P$, and triangulate $\partial P$ into $(d-1)$-simplices without adding new vertices. On each boundary simplex $F=\conv\{v_1,\ldots,v_d\}$, combining the single-simplex bound \eqref{eq:intro-simplexwise-variance} with the degree-two Carbery--Wright anti-concentration inequality \citep{CarberyWright2001} gives
\[
 \E\left|1-|Y_F|^{-d}\right|
 \ge c\min\left\{1,
    \frac{|d^{-1}\sum_i|v_i|^2-1|}{\sqrt d}\right\},
\]
where $Y_F$ is uniformly distributed on $F$. Integrating in polar coordinates expresses $\Delta_d(P)$ as the cone-volume-weighted sum of $\E|1-|Y_F|^{-d}|$ over all boundary simplices $F$. Using Markov's inequality to discard simplices with large average squared vertex norm as well as boundary points where $|y|$ or the barycentric energy $\sum_{i=1}^d\lambda_i(y)|v_i|^2$ deviates excessively from $1$, we retain a set of unit directions of spherical measure at least $1/2$ on which every boundary point $y=\sum_{i=1}^d\lambda_i v_i$ satisfies
\[
 |y|\ge1-O(\varepsilon/d),\qquad
 \sum_{i=1}^d\lambda_i|v_i|^2\le1+C\varepsilon\sqrt d.
\]
By the Cauchy--Schwarz inequality, every such point $y$ lies within angular cosine at least
\[
 \max_{1\le i\le d}\left\langle\frac y{|y|},\frac{v_i}{|v_i|}\right\rangle
 \ge\frac{|y|}{\bigl(\sum_{i=1}^d\lambda_i|v_i|^2\bigr)^{1/2}}
 \ge\frac{1-O(\varepsilon/d)}{\sqrt{1+C\varepsilon\sqrt d}}
\]
of one of the vertices of $F$. Thus, spherical caps of height $h\approx(1+C\varepsilon\sqrt d)^{-\frac{1}{2}}$ centered at the at most $N$ vertex directions of $P$ cover at least half of $\mathbb S^{d-1}$. Applying standard spherical cap measure bounds in the large-cap and small-cap regimes yields $\log N\ge cd\log(1+1/(\varepsilon\sqrt d))$. Moreover, when $\varepsilon\sqrt d$ is small, the small-cap bound $(1-h^2)^{(d-1)/2}\le(C\varepsilon\sqrt d)^{(d-1)/2}$ gives
\[
 \frac12\le N \cdot (C\varepsilon\sqrt d)^{(d-1)/2},
\]
which rearranges directly to the all-$N$ lower bound $\varepsilon\ge c d^{-\frac{1}{2}}N^{-\frac{2}{d-1}}$ of \Cref{thm:finite-all-N}, while for $\varepsilon\sqrt d$ bounded away from zero the inequality holds trivially by adjusting $c$.

\paragraph{Errors close to one.}
When $\varepsilon\in[\varepsilon_0,1)$, the origin need not lie in $P$, so we instead triangulate the full-dimensional polytope $P$ into $d$-simplices without adding vertices and bound the volume of $P$ inside the thin spherical shell $$\{x\in B_d:\sqrt{1-2a/d}\le|x|\le1\}.$$ On simplices whose average squared vertex norm exceeds $1+Cd^{3/2}(a/d)$, the Carbery--Wright inequality bounds the fraction of simplex volume lying in the shell. On the remaining simplices, Markov's inequality bounds the barycentric energy $\sum_{i=1}^{d+1}\lambda_i(x)|v_i|^2$, placing the remaining shell points into cones over spherical caps centered at the vertex directions of $P$. Combining the resulting shell volume bound with $\Delta_d(P)=1+(|P|-2|P\cap B_d|)/|B_d|$ gives
\[
 \Delta_d(P)\ge1-2e^{-a}-2N\exp(-c\sqrt d/a),
 \qquad 1\le a\le d/4.
\]
Setting $a=\log(8/(1-\varepsilon))$ yields \eqref{eq:budget-large-error}; see \eqref{eq:finite-shell-obstruction}--\eqref{eq:finite-root-obstruction} for the details.

\label{page:main-exposition-end}
\section{Full Proofs}
\label{sec:full-proofs}
\subsection{Background and notation}

 In the proofs of our main results, we will use the following result of M\"uller \cite{Muller-1990}.

 \begin{lemma}\label{lem:muller-vol}
     Let $X_1,\ldots,X_N\stackrel{\mathrm{i.i.d.}}{\sim}\mathrm{Unif}(\Sp)$, and set $P_N=\conv\{X_1,\ldots,X_N\}$. Then
     \begin{align*}
         \lim_{N\to\infty}\NNN\left(|B_d|-\E|P_N|\right) = \frac{(d-1)(|\partial B_d|)^{\frac{d+1}{d-1}}}{(|B_{d-1}|)^{\frac{2}{d-1}}}\frac{\Gamma\left(d+1+\frac{2}{d-1}\right)}{2(d+1)!}.
     \end{align*}
 \end{lemma}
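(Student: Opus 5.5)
The plan is to derive the statement from a facet decomposition of the missed volume, combined with the spherical Blaschke--Petkantschin formula (as in Lemma~\ref{lem:Miles-BP-1}) and a Laplace-type asymptotic in the facet height. This is Müller's original route, and it is also the computation underlying \eqref{eq:intro-radial-factor}.

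\textbf{Step 1 (facet decomposition).} With probability $1-O(2^{-N})$ the origin lies in $\operatorname{int}P_N$. On that event,
\[
 |B_d\setminus P_N|=\sum_{F}\Bigl(|\cone(F)\cap B_d|-\tfrac1d h_F|F|\Bigr),
\]
where the sum runs over the facets $F$ of $P_N$ and $h_F$ is the distance of $\operatorname{aff}F$ from the origin. The complementary event contributes $o(N^{-2/(d-1)})$, so it can be ignored. Since $P_N$ is almost surely simplicial, exchangeability gives
\[
 |B_d|-\E|P_N| = \binom Nd\,\E\bigl[\mathbf 1\{\conv(X_1,\dots,X_d)\text{ is a facet}\}\,\Phi(X_1,\dots,X_d)\bigr]+o(N^{-\frac{2}{d-1}}),
\]
where $\Phi$ denotes the bracketed cone-minus-pyramid volume. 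The facet event says that the remaining $N-d$ points avoid the cap cut off by the hyperplane. Its conditional probability is $(1-s(h))^{N-d}$, where $s(h)$ is the normalized cap measure (counting both sides is unnecessary once $h>0$).

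\textbf{Step 2 (Blaschke--Petkantschin).} I parametrize the $d$ points by the hyperplane $\{x:\langle x,u\rangle=h\}$, with $u\in\Sp$ and $h\in(0,1)$, together with $d$ points on the $(d-2)$-sphere of radius $\rho=\sqrt{1-h^2}$. The Jacobian is $(d-1)!\,|F|$ times a power of $\rho$ divided by $\sqrt{1-h^2}$. The quantity $\Phi$ depends only on $h$ and on the shape of $F$. For $h$ near $1$, the paraboloid approximation gives
\[
 \Phi=\int_F\bigl(1-\langle y,\cdot\rangle\text{-radius}\bigr)\,dy\,(1+o(1)),
\]
so that $\Phi\approx\int_F\tfrac12(\rho^2-|y-hu|^2)\,dy$, which is a constant multiple of $\rho^2|F|$. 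The angular integral then reduces to Miles' second moment of the volume of a random simplex inscribed in $\mathbb S^{d-2}$, which is explicit in Gamma functions. What remains is a one-dimensional integral in $h$ of the form
\[
 \binom Nd\int_0^1 c_d\,\rho^{\alpha_d}\,(1-s(h))^{N-d}\,\frac{dh}{\sqrt{1-h^2}},
\]
with an explicit exponent $\alpha_d$.

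\textbf{Step 3 (Laplace asymptotics).} As $h\to1$, $s(h)\sim\frac{|B_{d-1}|}{|\partial B_d|}\rho^{d-1}$, and only $h$ with $\rho^{d-1}\asymp 1/N$ matters. I will substitute $t=Ns$, so that $\rho^2\sim(t|\partial B_d|/(N|B_{d-1}|))^{2/(d-1)}$ and $(1-s)^{N-d}\to e^{-t}$. The integral then becomes
\[
 N^{-\frac{2}{d-1}}\int_0^\infty t^{d-1+\frac{2}{d-1}}e^{-t}\,dt=N^{-\frac{2}{d-1}}\,\Gamma\bigl(d+\tfrac2{d-1}\bigr),
\]
which, after combining with the $\binom Nd/N^d$ and $(d-1)!$ prefactors, produces $\Gamma(d+1+\frac2{d-1})/(d+1)!$ together with the powers $|\partial B_d|^{(d+1)/(d-1)}|B_{d-1}|^{-2/(d-1)}$. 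The remaining task is bookkeeping: match the constants from Miles' moment and from $\Phi$ against the stated right-hand side.

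\textbf{Main obstacle.} The delicate step is justifying the limit in Step 3, which requires dominated convergence. On $\{h\le 1-\eta\}$, the factor $(1-s)^{N-d}\le e^{-cN}$ kills everything. Near $h=1$, I need uniform bounds: $1-s\le e^{-s}$, the relative errors in $s\sim c\rho^{d-1}$ and in the paraboloid approximation of $\Phi$ are $O(\rho^2)$, and the ratio $\binom Nd (N-d)^{\dots}/N^d$ tends to $1/d!$. Getting the precise leading constant of $\Phi$, including the cone-versus-cap correction, is the point where I expect a slip is most likely, so I would double-check it against the planar case $d=2$, where the answer is classical.
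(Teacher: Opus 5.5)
The paper does not prove this lemma; it quotes it from Müller. Your architecture (Wendel to discard $o\notin\operatorname{int}P_N$, exchangeability, spherical Blaschke--Petkantschin, the substitution $t=Ns$, a Gamma limit) is the one the paper itself uses in Proposition~\ref{prop:expectation}. The gap is in Step~2, and it breaks the constant.

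For $d\ge3$, the leading term $\tfrac12\int_F(\rho^2-|y-hu|^2)\,dy$ of $\Phi$ is \emph{not} a shape-independent multiple of $\rho^2|F|$. Write the vertices as $hu+\rho u_i$ and set $\bar u=\frac1d\sum_iu_i$. Lemma~\ref{lem:expanded-power}, together with $\E\Lambda_i\Lambda_j=\frac1{d(d+1)}$, gives
\[
 \frac12\int_F\bigl(\rho^2-|y-hu|^2\bigr)\,dy=\frac{d}{2(d+1)}\,\rho^2|F|\,\bigl(1-|\bar u|^2\bigr).
\]
This depends on how far the centroid of $F$ lies from the centre of its circumsphere. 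After Blaschke--Petkantschin supplies the second factor $|F|$, the angular integral is therefore $\int|[u_1,\ldots,u_d]|^2(1-|\bar u|^2)\,d\sigma_{d-1}^d$, not Miles' second moment.

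The missing input is the weighted moment $\int|[u_1,\ldots,u_d]|^2\,|\sum_iu_i|^2\,d\sigma_{d-1}^d=(d-2)M_d$, with $M_d$ as in \eqref{eq:simplex-normalizing-moment}. Equivalently, $\E\langle U_i,U_j\rangle=-\frac2{d(d-1)}$ under the squared-volume law, as in \eqref{eq:angular-moments}. This turns the angular factor into $\frac{d^2-d+2}{d(d+1)}=1-q_d$. That factor is exactly what converts your $\Gamma(d+\frac2{d-1})$ into $\Gamma(d+1+\frac2{d-1})=\frac{d^2-d+2}{d-1}\,\Gamma(d+\frac2{d-1})$; compare \eqref{eq:centering-identity}. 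No combination of $\binom Nd/N^d$, $(d-1)!$, Miles' constant and the cap constants produces the factor $d^2-d+2$, so your closing bookkeeping cannot succeed as stated. Your planar check will not catch this: for $d=2$ every inscribed chord is centred at its foot point $hu$, so $\bar u=0$ identically and the shape dependence is invisible.

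Either of two repairs works.

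(i) Keep the cone--cap decomposition, and compute the weighted centroid moment by expanding the augmented determinant into signed minors and using parity, as in the proof of Lemma~\ref{lem:angular-moments}.

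(ii) Use pyramids instead, via $\E|P_N|=\frac1d\E\sum_Fh_F|F|$. After Blaschke--Petkantschin only $|F|^2$ appears, so Miles' moment does suffice. In the cap variable the integral is $\frac{d|B_d|}{A_d^{d-1}}\binom Nd\int_0^{1/2}h\rho^{(d-1)^2}(1-s)^{N-d}\,ds$, with $A_d=|\partial B_d|/|B_{d-1}|$. You must then expand to second order. The expansions $s=A_d^{-1}\rho^{d-1}\bigl(1+\frac{d-1}{2(d+1)}\rho^2+O(\rho^4)\bigr)$ and $h=1-\frac{\rho^2}2+O(\rho^4)$ give $h\rho^{(d-1)^2}=(A_ds)^{d-1}\bigl(1-\frac{d^2-d+2}{2(d+1)}\rho^2+O(\rho^4)\bigr)$. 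The zeroth-order term returns $|B_d|$ up to exponentially small errors, because $\binom Nd\int_0^1s^{d-1}(1-s)^{N-d}\,ds=\frac1d$. The next term yields the stated constant, and the same factor $d^2-d+2$ reappears.

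Two minor points. First, Wendel gives $\Pr(o\notin\operatorname{int}P_N)=O(N^{d-1}2^{-N})$, not $O(2^{-N})$, and the discarded event must be bounded together with the factor $\binom Nd$; it is $O(N^{2d}2^{-N})$, as in \eqref{eq:facet-error-exact}. Second, a $d$-tuple whose remaining points all lie beyond $\operatorname{aff}F$ is still a facet when $h>0$. In that case $o\notin P_N$, so it falls into the same negligible event.
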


 The next result is also due to M\"uller \cite{Muller-1990}.

 \begin{lemma}\label{lem:muller-SA}
      Let $X_1,\ldots,X_N\stackrel{\mathrm{i.i.d.}}{\sim}\mathrm{Unif}(\Sp)$, and set $P_N=\conv\{X_1,\ldots,X_N\}$. Then
      \begin{align*}
          \lim_{N\to\infty}\NNN\left(|\partial B_d|-\E|\partial P_N|\right)=\frac{d-1}{d+1}\frac{(|\partial B_d|)^{\frac{d+1}{d-1}}}{(|B_{d-1}|)^{\frac{2}{d-1}}}\frac{\Gamma\left(d+\frac{2}{d-1}\right)}{2(d-2)!}.
      \end{align*}
\end{lemma}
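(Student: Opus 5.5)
The statement is M\"uller's asymptotic formula for the mean surface-area deficit of the random inscribed polytope $P_N$. I would prove it with the same cap-integral scheme as \Cref{lem:muller-vol}, replacing volume by surface area.

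\textbf{Step 1: an exact facet formula.} Write $|\partial P_N|$ as a sum over facets. Almost surely $P_N$ is simplicial, and $F=\conv\{X_{i_1},\ldots,X_{i_d}\}$ is a facet exactly when the other $N-d$ points lie on one side of its affine hull. This gives
\[
 \E|\partial P_N|=\binom Nd\int_{(\Sp)^d}\vol_{d-1}(\conv\{x_1,\ldots,x_d\})\bigl[(1-s)^{N-d}+s^{N-d}\bigr]\,d\sigma^d ,
\]
where $s=s(x_1,\ldots,x_d)$ is the normalized measure of the smaller cap cut off by the hyperplane.

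\textbf{Step 2: change of variables.} Apply the spherical Blaschke--Petkantschin formula (Lemma~\ref{lem:Miles-BP-1}) to parametrize the hyperplane by its unit normal $u$ and height $h\in(0,1)$. The hyperplane meets the sphere in a $(d-2)$-sphere of radius $\rho=\sqrt{1-h^2}$. The Jacobian contributes a factor $\rho^{-1}$ times $\vol_{d-1}(\conv)$, so the integrand carries the squared simplex volume $\vol_{d-1}^2\propto\rho^{2(d-1)}$ times a dimensional moment $m_d=\E\,\vol_{d-1}(\text{simplex of $d$ uniform points on }\mathbb S^{d-2})^2$. This moment is explicit (Miles). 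The result is a one-dimensional integral
\[
 \E|\partial P_N|=c_d\binom Nd\int_0^1(1-h^2)^{\frac{d(d-1)}{2}-1}\bigl[(1-s(h))^{N-d}+s(h)^{N-d}\bigr]\,dh .
\]
As a consistency check, the same integrand without the Beta-weight correction produces $|\partial B_d|$ as $N\to\infty$.

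\textbf{Step 3: rewrite as a deficit.} Write $|\partial B_d|-\E|\partial P_N|$ as a difference, either through the identity above or by comparing each facet with the spherical cap region it projects to radially. Then substitute $s$ for $h$ near $h=1$, using $s\sim\frac{|B_{d-1}|}{|\partial B_d|}\rho^{d-1}$, i.e. $\rho^2\approx\bigl(s|\partial B_d|/|B_{d-1}|\bigr)^{2/(d-1)}$. The localized integral becomes
\[
 N^d\int s^{d-1+\frac{2}{d-1}}(1-s)^{N-d}\,ds\sim\frac{\Gamma\bigl(d+\frac{2}{d-1}\bigr)}{N^{2/(d-1)}}.
\]
This is where $\Gamma\bigl(d+\frac2{d-1}\bigr)$ and the constant $(|\partial B_d|)^{\frac{d+1}{d-1}}/|B_{d-1}|^{\frac2{d-1}}$ arise. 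The $(d-2)!$ comes from $\binom Nd\sim N^d/d!$ combined with $m_d$.

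\textbf{Main obstacle.} The delicate step is extracting the first-order correction in $\rho^2$ rather than the leading term. Two effects enter at order $N^{-2/(d-1)}$: the expansion $s(h)=\frac{|B_{d-1}|}{|\partial B_d|}\rho^{d-1}\bigl(1+\frac{d-1}{2(d+1)}\rho^2+\cdots\bigr)$ and the radial projection factor $h^{-(d-1)}$. They must be combined to produce the factor $\frac{d-1}{d+1}$.

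An alternative route avoids redoing the expansion. Cauchy's formula gives $|\partial P_N|=\frac{|\partial B_d|}{|B_{d-1}|}\E_u|\text{proj}_{u^\perp}P_N|$, which reduces the problem to projection deficits. Alternatively, use the cone decomposition $|P_N|=\frac1d\sum_F h_F|F|$ together with \Cref{lem:muller-vol}, so that only the extra factor $h_F=1-\frac{\rho^2}{2}+\cdots$ needs controlling. I would try this cone-volume relation first, since it turns the surface deficit into the volume deficit of \Cref{lem:muller-vol} plus one further Gamma-moment term of the same form. The two constants differ by exactly such a term, which gives the ratio $\frac{\Gamma(d+\frac2{d-1})}{(d+1)(d-2)!}$ versus $\frac{\Gamma(d+1+\frac2{d-1})}{(d+1)!}$. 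Dominated convergence away from $h\approx1$ is routine, since $(1-s)^{N-d}$ decays exponentially there.
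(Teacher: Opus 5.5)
The paper does not prove this lemma: like \Cref{lem:muller-vol}, it is quoted from M\"uller \cite{Muller-1990}. The route you say you would try first, the cone-volume reduction, is a sound derivation that reproduces the stated constant. By the divergence theorem with signed facet heights, $|\partial P_N|=d|P_N|+\sum_F(1-h_F)|F|$ for every polytope. Using $|\partial B_d|=d|B_d|$, this gives exactly
\[
 |\partial B_d|-\E|\partial P_N|=d\bigl(|B_d|-\E|P_N|\bigr)-\E\sum_F(1-h_F)|F|.
\]
The last term is already of order $N^{-2/(d-1)}$, so only its leading asymptotics are needed. After \Cref{lem:Miles-BP-1}, the $|F|$-weighted facet law has weight $\rho(s)^{(d-1)^2}(1-s)^{N-d}\,ds\approx(A_ds)^{d-1}(1-s)^{N-d}\,ds$, where $A_d=|\partial B_d|/|B_{d-1}|$, so $Ns\Rightarrow\operatorname{Gamma}(d,1)$. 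Combine this with $1-h_F=\frac{\rho_F^2}{2}(1+O(\rho_F^2))$ and $\rho^2\sim(A_ds)^{2/(d-1)}$, and apply dominated convergence after $t=Ns$, exactly as in the proof of \Cref{prop:expectation}. This gives $N^{2/(d-1)}\E\sum_F(1-h_F)|F|\to\frac12|\partial B_d|A_d^{2/(d-1)}g_d$ with $g_d=\Gamma(d+\frac{2}{d-1})/\Gamma(d)$. Now $d$ times the constant of \Cref{lem:muller-vol} equals $\frac{d^2-d+2}{2(d+1)}|\partial B_d|A_d^{2/(d-1)}g_d$. The difference is therefore $\frac{(d-1)^2}{2(d+1)}|\partial B_d|A_d^{2/(d-1)}g_d$, which is exactly the right-hand side of the lemma. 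Compared with redoing M\"uller's computation, this route gains an exact cancellation of the leading term $|\partial B_d|$, so no second-order expansion is needed. The price is that the result is derived from \Cref{lem:muller-vol} rather than independently of it.

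Do not, however, follow the mechanism in your ``main obstacle'' paragraph; it is wrong for either version of Steps 1--3. Suppose you compare each facet $F$ with its radial projection $\pi(F)\subset\mathbb S^{d-1}$. The Jacobian is $h_F|y|^{-d}$, and $h_F^{-(d-1)}$ is only its value at the foot point. Writing $|y|^2=1-\rho^2(1-\mathcal Q_d)$ and averaging over the facet law gives $|\pi(F)|-|F|\approx|F|\rho^2\frac{(d-1)-d\,\E\mathcal Q_d}{2}$. It is the tangential term $\frac d2q_d=\frac{d-1}{d+1}$, with $q_d$ from \Cref{lem:angular-moments}, that turns $\frac{d-1}{2}$ into $\frac{(d-1)^2}{2(d+1)}$. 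Using $h_F^{-(d-1)}$ alone overestimates the constant by the factor $\frac{d+1}{d-1}$, and in this route the second-order term of $s(h)$ does not enter at this order.

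Suppose instead you subtract the exact integral of Step 2 from $|\partial B_d|$. Then no radial factor appears at all. Up to exponentially small terms, the leading part is identified through $\binom Nd\int_0^1s^{d-1}(1-s)^{N-d}\,ds=\frac1d$. The entire correction comes from $\rho^{(d-1)^2}=(A_ds)^{d-1}\bigl(1-\frac{(d-1)^2}{2(d+1)}\rho^2+\cdots\bigr)$, that is, from the $s(h)$ expansion raised to the power $d-1$. Either way, the two effects you list are not the ones to combine.

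A minor point: relative to the unnormalized measure on the $(d-2)$-sphere $\partial B_d\cap H$, the Jacobian in \Cref{lem:Miles-BP-1} is $(d-1)!\,|F|\rho^{-d}$, not $|F|\rho^{-1}$. Your exponent $\frac{d(d-1)}2-1$ is nonetheless correct.
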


Another key ingredient is the following Blaschke--Petkantschin formula due to Miles \cite{Miles-1971}.
\begin{lemma}\label{lem:Miles-BP-1}
    Suppose that $x_1,\ldots,x_d\in\R^d$ lie in a hyperplane $H$ with normal $\xi$, and let $p$ denote the distance of $H$ to the origin. Then
\[    
d\mu_{\partial B_d}(x_1)\cdots d\mu_{\partial B_d}(x_d) = (d-1)!\frac{|[x_1,\ldots,x_d]|}{(1-p^2)^{d/2}}\,d\mu_{\partial B_d\cap H}(x_1)\cdots d\mu_{\partial B_d\cap H}(x_d)\,dp\,d\mu_{\partial B_d}(\xi).
\]
\end{lemma}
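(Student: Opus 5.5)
The plan is to deduce the spherical formula from the classical \emph{affine} Blaschke--Petkantschin formula in $\R^d$, by restricting each of the $d$ points to the sphere with a coarea (thin-shell) argument. The affine formula states that for $x_1,\ldots,x_d\in\R^d$ in general position, spanning a hyperplane $H=H(\xi,p)=\{x:\langle x,\xi\rangle=p\}$,
\[
 dx_1\cdots dx_d=(d-1)!\,|[x_1,\ldots,x_d]|\,d\lambda_H(x_1)\cdots d\lambda_H(x_d)\,dp\,d\mu_{\partial B_d}(\xi),
\]
where $\lambda_H$ is $(d-1)$-dimensional Lebesgue measure on $H$ and $|[x_1,\ldots,x_d]|$ is the $(d-1)$-volume of the simplex. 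I would either quote this or derive it directly.

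\textbf{Step 1 (affine formula).} To derive it, parametrize $x_i=p\xi+y_i$ with $y_i\in\xi^\perp$ and compute the Jacobian of the map $(\xi,p,y_1,\ldots,y_d)\mapsto(x_1,\ldots,x_d)$. Moving $\xi$ along a tangent vector $w\perp\xi$ requires rotating each $y_i$, which contributes the normal components $\langle y_i,w\rangle$. Moving $p$ contributes $\xi$ to every point. The resulting block determinant is the determinant of the $d\times d$ matrix with rows $(1,y_i)$ in coordinates of $\xi^\perp$. This equals $(d-1)!$ times the simplex volume, because the span of the tangential variations of $\xi$ together with $p$ is exactly the space of affine functions on $H$ evaluated at the $y_i$. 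The degenerate configurations, where the points are affinely dependent, form a null set and can be ignored.

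\textbf{Step 2 (restricting to the sphere).} I would write $d\mu_{\partial B_d}(x)=\lim_{\eta\to0}\frac1{2\eta}\mathbf 1\{||x|-1|<\eta\}\,dx$ for each factor. Then I would apply the affine formula to products of test functions $\prod_i f_i(x_i)\frac1{2\eta}\mathbf 1\{||x_i|-1|<\eta\}$. For fixed $(\xi,p)$ with $0\le p<1$, a point of $H$ satisfies $|x|^2=p^2+|y|^2$. In polar coordinates in $H$ around $p\xi$, the shell condition becomes a condition on $|y|$ near $\rho:=\sqrt{1-p^2}$, with $d|x|/d|y|=|y|/|x|\to\rho$. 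Hence
\[
 \frac1{2\eta}\int_H f(x)\mathbf 1\{||x|-1|<\eta\}\,d\lambda_H(x)\longrightarrow\frac1\rho\int_{\partial B_d\cap H}f\,d\mu_{\partial B_d\cap H}.
\]
Each of the $d$ points contributes a factor $\rho^{-1}$, which gives $(1-p^2)^{-d/2}$. The simplex volume is continuous, so in the limit it is evaluated at the points on the sphere. This yields the stated identity for continuous test functions, and therefore as an identity of measures.

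\textbf{Main obstacle.} I expect the delicate part to be justifying the interchange of the limit $\eta\to0$ with the integral over $(\xi,p)$. The factor $\rho^{-d}$ blows up as $p\to1$, so dominated convergence near tangency is not automatic. I would handle this by first proving the identity for test functions supported on configurations with $p\le1-\delta$, where all bounds are uniform. Then I would let $\delta\to0$ by monotone convergence on both sides, which is valid since everything is nonnegative. The set $\{p=1\}$ is null on the left-hand side, because it forces all $x_i$ to coincide.
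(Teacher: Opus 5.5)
Your proposal is correct. The paper itself gives no proof of this lemma; it imports the formula from Miles (1971), so your argument is a self-contained derivation of something the paper only cites.

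\textbf{Comparison of routes.} You reduce the spherical formula to the flat affine Blaschke--Petkantschin formula. With $p>0$ and unnormalized $d\mu_{\partial B_d}(\xi)$, its constant is exactly $(d-1)!$, as you state. You then obtain each factor $\rho^{-1}=(1-p^2)^{-1/2}$ from a thin-shell/coarea limit inside $H$. This makes the structure of the Jacobian transparent: $(d-1)!\,|[x_1,\ldots,x_d]|$ is the flat Jacobian, and $(1-p^2)^{-d/2}$ is the product of the $d$ coarea factors.

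The natural alternative is to compute directly the Jacobian of $(\xi,p,w_1,\ldots,w_d)\mapsto(p\xi+\sqrt{1-p^2}\,w_i)_{i\le d}$, with $w_i\in\xi^\perp\cap\partial B_d$. This is the parametrization the paper later uses in the proof of Lemma~\ref{lem:finite-sphere-facets}. It avoids any limiting argument but requires a second, intrinsic differential-geometric computation on the sphere.

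\textbf{The truncation step.} Cutting off at $p\le1-\delta$ is the right device. It removes both the blow-up of $\rho^{-d}$ near tangency and the hyperplanes with $p\in(1,1+\eta)$, which meet the shell but not the sphere.

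\textbf{One point to state explicitly.} A cutoff $\chi\bigl(p(x_1,\ldots,x_d)\bigr)$ is discontinuous at affinely dependent configurations. So passing to the limit $\eta\to0$ on the left-hand side is not justified by continuity of the test function alone. It uses that these configurations are null for $\mu_{\partial B_d}^{\otimes d}$, via the portmanteau theorem.
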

The following lemma is also due to Miles \cite{Miles-1971}.
\begin{lemma}\label{lem:Miles-2nd-moment}
    \begin{align*}
        \int_{r\partial B_d}\cdots&\int_{r\partial B_d}\left(|[x_1,\ldots,x_{d+1}]|\right)^2\,d\mu_{r\partial B_d}\cdots d\mu_{r\partial B_d}
        =\frac{(d+1)r^{d^2+2d-1}}{d!d^d}\left(|\partial B_d|\right)^{d+1}
    \end{align*}
\end{lemma}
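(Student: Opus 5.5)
The plan is to turn the integral into an expectation over independent uniform points on the sphere $r\,\partial B_d$, write the squared simplex volume as a squared determinant, and evaluate the expected squared determinant of a matrix with independent rows using only second moments. Write $X_1,\ldots,X_{d+1}$ for independent points uniformly distributed on $r\,\partial B_d$. Then
\[
\int_{r\partial B_d}\cdots\int_{r\partial B_d}|[x_1,\ldots,x_{d+1}]|^2\,d\mu_{r\partial B_d}\cdots d\mu_{r\partial B_d}
=\bigl(r^{d-1}|\partial B_d|\bigr)^{d+1}\,\E|[X_1,\ldots,X_{d+1}]|^2 .
\]
This reduces the claim to computing a single moment.

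First, I use the standard formula $|[x_1,\ldots,x_{d+1}]|=\frac1{d!}|\det A|$. Here $A$ is the $(d+1)\times(d+1)$ matrix whose $i$-th row is $(1,x_i^{\mathsf T})$. Next, I expand $(\det A)^2$ by the Leibniz formula as a double sum over permutations $\sigma,\tau$ of $\{0,1,\ldots,d\}$:
\[
(\det A)^2=\sum_{\sigma,\tau}\sgn\sigma\,\sgn\tau\prod_{i}A_{i\sigma(i)}A_{i\tau(i)} .
\]
The rows are independent, so the expectation of each product factors over $i$. The $i$-th factor equals $M_{\sigma(i)\tau(i)}$, where $M=\E[(1,X^{\mathsf T})^{\mathsf T}(1,X^{\mathsf T})]$ is the common second-moment matrix of a row. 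By symmetry of the uniform law on the sphere, $\E X=0$ and $\E XX^{\mathsf T}=\frac{r^2}{d}I_d$. Hence $M=\operatorname{diag}(1,r^2/d,\ldots,r^2/d)$.

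The remaining step is the identity $\sum_{\sigma,\tau}\sgn\sigma\,\sgn\tau\prod_i M_{\sigma(i)\tau(i)}=(d+1)!\det M$. To see it, reindex with $\pi=\tau\circ\sigma^{-1}$. For each fixed $\sigma$, the inner sum over $\pi$ equals $\det M$, and there are $(d+1)!$ choices of $\sigma$. In our diagonal case this is even simpler, because only the terms with $\sigma=\tau$ survive. Therefore
\[
\E(\det A)^2=(d+1)!\Bigl(\frac{r^2}{d}\Bigr)^d,\qquad
\E|[X_1,\ldots,X_{d+1}]|^2=\frac{(d+1)\,r^{2d}}{d!\,d^d}.
\]

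Multiplying by $(r^{d-1}|\partial B_d|)^{d+1}$ gives the total power of $r$ as $2d+(d-1)(d+1)=d^2+2d-1$. This yields exactly the stated constant $\frac{(d+1)r^{d^2+2d-1}}{d!\,d^d}|\partial B_d|^{d+1}$. The only point requiring care is the bookkeeping in the permutation identity for $\E(\det A)^2$. Because $M$ is diagonal, that step collapses to $\sigma=\tau$, so I expect no real obstacle.
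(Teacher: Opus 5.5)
Your argument is correct and complete. The reduction to $\bigl(r^{d-1}|\partial B_d|\bigr)^{d+1}\,\E|[X_1,\ldots,X_{d+1}]|^2$ uses the unnormalized surface measure, as the statement intends. The factorization of $\E\prod_i A_{i\sigma(i)}A_{i\tau(i)}$ over rows is legitimate, since each factor depends on a single independent row. With $M=\operatorname{diag}(1,r^2/d,\ldots,r^2/d)$ you get $\E(\det A)^2=(d+1)!\,(r^2/d)^d$, and the exponent and constant come out exactly as stated.

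The paper gives no proof of this lemma. It quotes the formula from \cite{Miles-1971}, where it is a special case of general moment formulas for volumes of isotropic random simplices, obtained by Blaschke--Petkantschin-type integral geometry.

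Your route gives up that generality in exchange for an elementary, self-contained argument. Because only the second moment is needed, independence of the rows reduces everything to the single matrix $M$ through the identity $\E(\det A)^2=(d+1)!\det M$. The cost is that the method yields nothing about higher moments, which Miles's approach handles. The gain is a by-product: a Schur complement in $M$ shows that the answer depends on the vertex law only through $\det\operatorname{Cov}(X)$, so the value holds for any vertex distribution with the same covariance.

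The same permutation-expansion mechanism is what the paper itself uses in \Cref{app:angular-calculations} to compute $c_{d-1}$ and $\E_0\Delta(U)^2=dc_{d-1}$. Your computation therefore also independently confirms the normalizing constant $M_d$ in \eqref{eq:simplex-normalizing-moment}, which the paper derives from this lemma in dimension $d-1$.
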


\paragraph{The common angular law.}
The analysis of both random constructions leads to the same volume-weighted distribution on $(d-1)$-dimensional simplices inscribed in $\partial B_{d-1}$.
Let $\sigma_{d-1}$ denote the uniform probability measure on
$\partial B_{d-1}$ (namely the uniform measure on $\{-1,1\}$ when $d=2$), and set
\begin{equation}\label{eq:simplex-normalizing-moment}
 M_d:=\int|[u_1,\ldots,u_d]|^2\,d\sigma_{d-1}(u_1)\cdots d\sigma_{d-1}(u_d)
     =\frac{d}{(d-1)!(d-1)^{d-1}},
\end{equation}
where the second equality follows from Lemma~\ref{lem:Miles-2nd-moment} applied in dimension $d-1$.
Let $(U_1,\ldots,U_d)$ have joint density
$|[u_1,\ldots,u_d]|^2/M_d$ with respect to
$\sigma_{d-1}^d$. Independently, let
$\Lambda=(\Lambda_1,\ldots,\Lambda_d)\sim\operatorname{Dirichlet}_d(1,\ldots,1)$ be uniformly distributed on the standard simplex in $\R^d$, and define
\begin{equation}\label{eq:weighted-simplex-Q}
 \mathcal Q_d:=\left|\sum_{i=1}^d\Lambda_iU_i\right|^2\in[0,1].
\end{equation}

\begin{lemma}\label{lem:angular-moments}
For every $d\ge2$,
\begin{align}
 q_d:=\E\mathcal Q_d&=\frac{2(d-1)}{d(d+1)},\label{eq:Q-mean}\\
 \E\mathcal Q_d^2&=\frac{4(d-1)(d+7)}{(d+1)^2(d+2)(d+3)},
 \label{eq:Q-second-moment}\\
 \operatorname{Var}(\mathcal Q_d)
 &=\frac{4(d-1)(3d^2-d+6)}{d^2(d+1)^2(d+2)(d+3)}
 \le\frac{12}{(d-1)^3}.
 \label{eq:Q-variance}
\end{align}
In particular, $d^3\operatorname{Var}(\mathcal Q_d)\to12$ as $d\to\infty$.
\end{lemma}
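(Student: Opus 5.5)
The plan is to reduce everything to weighted moments of pairwise squared distances, using the identity already seen in the simplex-inequality discussion. Since every $U_i$ lies on the unit sphere and $\sum_i\Lambda_i=1$, expanding the square gives
\[
 \mathcal Q_d=1-\sum_{1\le i<j\le d}\Lambda_i\Lambda_j D_{ij},\qquad D_{ij}:=|U_i-U_j|^2 .
\]
Indeed, $\mathcal Q_d$ is $q$ evaluated at a barycentric combination whose vertex interpolant equals $1$, so this is $1-W_T$. Because $\Lambda$ is independent of $(U_i)$, both moments factor into Dirichlet moments times weighted moments of the $D_{ij}$.

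\textbf{Dirichlet moments.} We use the standard values $\E\Lambda_i\Lambda_j=\frac1{d(d+1)}$ for $i\ne j$, together with
\[
 \E\Lambda_1^2\Lambda_2^2=\frac{4\,(d-1)!}{(d+3)!},\qquad
 \E\Lambda_1^2\Lambda_2\Lambda_3=\frac{2\,(d-1)!}{(d+3)!},\qquad
 \E\Lambda_1\Lambda_2\Lambda_3\Lambda_4=\frac{(d-1)!}{(d+3)!}.
\]

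\textbf{Mean.} The law of $(U_i)$ is exchangeable, so
\[
 q_d=1-\frac{d-1}{2(d+1)}\,\E D_{12}.
\]
Hence \eqref{eq:Q-mean} is equivalent to
\[
 \E D_{12}=\frac{2(d^2-d+2)}{d(d-1)}.
\]

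\textbf{Second moment.} Expanding $\bigl(\sum_{i<j}\Lambda_i\Lambda_jD_{ij}\bigr)^2$ and grouping pairs of edges by overlap type leaves three weighted quantities:
\begin{itemize}
 \item $A=\E D_{12}^2$ (same edge);
 \item $B=\E D_{12}D_{13}$ (edges sharing a vertex);
 \item $C=\E D_{12}D_{34}$ (disjoint edges; this term is only present for $d\ge4$).
\end{itemize}
Their multiplicities are $\binom d2$, $d(d-1)(d-2)$ and $\frac14 d(d-1)(d-2)(d-3)$ respectively. Then $\E\mathcal Q_d^2$ is an explicit combination of $1$, $\E D_{12}$, $A$, $B$ and $C$.

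\textbf{Computing the weighted moments.} I would lift $u_i\mapsto\tilde u_i=(u_i,1)\in\R^d$, so that $(d-1)!\,|[u_1,\ldots,u_d]|=|\det(\tilde u_1,\ldots,\tilde u_d)|$. Write $\det^2=\sum_{\pi,\tau}\sgn\pi\,\sgn\tau\prod_i\tilde u_{i,\pi(i)}\tilde u_{i,\tau(i)}$. Since $D_{ij}=2-2\langle u_i,u_j\rangle$, each needed integral is a sum of products of independent sphere moments of degree at most $4$ per vector. These moments are $\E u_k^2=\frac1{d-1}$, $\E u_k^4=\frac{3}{(d-1)(d+1)}$ and $\E u_k^2u_l^2=\frac1{(d-1)(d+1)}$; all odd moments vanish.

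A cleaner alternative is to integrate out one vertex at a time. As a function of $u_1$, $\det^2$ is a quadratic form in $\tilde u_1$, with matrix determined by the cofactor vector of the remaining points. So $\E_{u_1}[\det^2\,|u_1-u_2|^{2k}]$ reduces to sphere moments of degree at most $4$. Iterating this, and using Miles' normalization $M_d$ from \eqref{eq:simplex-normalizing-moment} to divide out, should yield $\E D_{12}$, $A$, $B$ and $C$ as rational functions of $d$. As a sanity check on the mean, $\E\langle U_1,U_2\rangle$ must be negative of order $1/d$, since the weighting favours spread-out simplices.

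\textbf{Conclusion and main obstacle.} Substituting these values gives \eqref{eq:Q-second-moment}, and subtracting $q_d^2$ gives the exact variance in \eqref{eq:Q-variance}. The asymptotic claim is then immediate, because the leading term of the variance is $\frac{12d^3}{d^7}\cdot d^0$, i.e.\ $12/d^3$. The inequality $\operatorname{Var}(\mathcal Q_d)\le 12/(d-1)^3$ should follow by clearing denominators and checking that a polynomial with nonnegative coefficients in $d-1$ results, with small $d$ handled directly (for $d=2$ the law is the uniform measure on $\{-1,1\}$).

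I expect the main obstacle to be the bookkeeping for $A$, $B$ and $C$. In the permutation expansion of $\det^2$, the degree-four factors on a shared vertex produce many cross terms with mixed cycle structure. I would first attempt the vertex-by-vertex conditioning, which keeps the degree per vector at most $4$, and would verify the resulting formulas against the cases $d=2,3$ by direct integration.
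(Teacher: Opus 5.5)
Your reduction is correct, and it is the paper's reduction in other coordinates. With $g_{ij}=\langle U_i,U_j\rangle$ you have $D_{ij}=2-2g_{ij}$. So $\mathcal Q_d=1-\sum_{i<j}\Lambda_i\Lambda_jD_{ij}$ is the paper's split $\mathcal Q_d=Q_d+2\sum_{i<j}\Lambda_i\Lambda_jg_{ij}$, and your $\E D_{12},A,B,C$ are affine in the weighted moments $\E g_{12}$, $\E g_{12}^2$, $\E g_{12}g_{13}$, $\E g_{12}g_{34}$. Your multiplicities and Dirichlet moments are right. Your coefficient check for the variance bound also works for all $d\ge2$ with no case analysis: with $x=d-1$, $12d^2(d+1)^2(d+2)(d+3)-4(d-1)^4(3d^2-d+6)=136x^5+772x^4+2100x^3+2928x^2+2064x+576$.

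The genuine gap is that the four weighted moments, which are the entire content of the lemma, are never computed. For the mean you only restate \eqref{eq:Q-mean} as an equivalent value of $\E D_{12}$. For the second moment you only assert that the conditioning ``should yield'' rational functions. Neither route is shown to close. For instance, integrating out $u_1$ in $B$ leaves the unweighted integral of $|w'|^2\langle u_2,u_3\rangle$, where $w'$ is the spatial part of the cofactor vector of $(u_2,1),\ldots,(u_d,1)$. Evaluating that requires further structure.

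The missing idea, which is how the paper proceeds, is the parity structure of the expansion $\Delta(U)=\sum_aD_a(U)$ along the row of ones. Here $D_a(U)$ are the paper's minors, not your $D_{ij}$. Each $D_a(U)$ does not involve $u_a$ and is linear in every other column. Hence $D_aD_b$ with $a\ne b$ is odd in exactly $u_a$ and $u_b$, while $D_a^2$ is even in every column. Write $\E_0$ for the unweighted expectation; matching parities then kills almost everything:
\begin{itemize}
\item $\E_0[\Delta^2g_{12}]$ reduces to $2\E_0[D_1D_2g_{12}]$. Writing $D_1=\langle w,u_2\rangle$ and $D_2=-\langle w,u_1\rangle$, with $w$ the common cofactor vector of $u_3,\ldots,u_d$, gives $\E g_{12}=-2/(d(d-1))$.
\item In $\E_0[\Delta^2g_{12}g_{13}]$ only the $D_2D_3$ terms survive, and their cofactor vector is orthogonal to $u_1$. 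Also $g_{12}g_{34}$ is odd in four columns. So both mixed moments vanish.
\item $\E g_{12}^2$ involves only diagonal terms, evaluated with $\E_0[D_k^2\mid u_1,u_2]=\frac{(d-3)!}{(d-1)^{d-3}}(1-g_{12}^2)$ for $k\ge3$.
\end{itemize}
This gives $\E D_{12}=\frac{2(d^2-d+2)}{d(d-1)}$, $B=C=\frac{4(d^2-d+4)}{d(d-1)}$ and $A=4\bigl(1+\frac{d^2+3d+8}{d(d-1)(d+1)}\bigr)$. Substituting these into your expansion does give \eqref{eq:Q-second-moment}.

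Two small slips. First, $\E\langle U_1,U_2\rangle=-2/(d(d-1))$ is of order $d^{-2}$, not $d^{-1}$. Your own formula for $\E D_{12}$ already implies this, so your sanity check would reject the correct value. Second, the variance's denominator has degree $6$, so its leading term is $12d^3/d^6$, not $12d^3/d^7$.
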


\begin{lemma}\label{lem:angular-clt}
As $d\to\infty$,
\begin{equation}\label{eq:Q-clt}
 d^{3/2}(\mathcal Q_d-q_d)\Rightarrow\mathcal N(0,12).
\end{equation}
\end{lemma}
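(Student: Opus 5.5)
We need to prove the central limit theorem $d^{3/2}(\mathcal Q_d-q_d)\Rightarrow\mathcal N(0,12)$. The plan is the one sketched in \Cref{sec:finite-budget-outline}. First we replace the volume-weighted spherical simplex by a Gaussian one. Then we split $\mathcal Q_d$ into a barycentric Greenwood part and a radial part, prove a CLT for each, and show that the two parts are asymptotically independent.

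\emph{Gaussian representation.} Let $G_1,\ldots,G_d$ be i.i.d.\ standard Gaussian vectors in $\R^{d-1}$, reweighted by $|[G_1,\ldots,G_d]|^2$. Write $G_i=\rho_iU_i$. The squared simplex volume is not homogeneous in the $\rho_i$, so the radial and angular parts do not decouple exactly. We therefore use a different route. Since $|[u_1,\ldots,u_d]|^2$ is a polynomial in the Gram entries $\langle u_i,u_j\rangle$, we can write it via the Cayley--Menger determinant. Expanding
\[
 \mathcal Q_d=\sum_i\Lambda_i^2+\sum_{i\ne j}\Lambda_i\Lambda_j\langle U_i,U_j\rangle
 =Q_d+\sum_{i\ne j}\Lambda_i\Lambda_j\langle U_i,U_j\rangle
\]
shows that $\mathcal Q_d$ is Greenwood's statistic $Q_d$ plus a cross term $S_d$. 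The cross term is a weighted degenerate $U$-statistic in the inner products. Under uniform (unweighted) $U_i$, these inner products are nearly independent $\mathcal N(0,1/(d-1))$ variables. The squared-volume weight shifts their mean to $-1/(d-1)$, as for a regular simplex, since $\sum_iU_i$ is pushed toward $0$.

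\emph{CLT for each part.} For the Greenwood part, the statement already recalls $d^{3/2}(Q_d-2/(d+1))\Rightarrow\mathcal N(0,4)$, by \citet{Moran1947}, \citet{Holst1979}. For the cross term, we condition on $\Lambda$ and write
\[
 S_d=\Bigl|\sum_i\Lambda_iU_i\Bigr|^2-\sum_i\Lambda_i^2 .
\]
Under the Gaussian comparison, $\sum_i\Lambda_iG_i$ given $\Lambda$ is Gaussian with covariance $\bigl(\sum_i\Lambda_i^2\bigr)I_{d-1}$, up to the weight. Its squared norm is therefore a scaled $\chi^2_{d-1}$ variable, with relative fluctuation $\sqrt{2/(d-1)}$ and absolute fluctuation of order $d^{-1}\cdot d^{-1/2}$. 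Combined with the normalization $\rho_i^2\approx1+O(d^{-1/2})$, this produces the "radial" Gaussian of variance $8$.

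I would make this step rigorous in one of two ways, depending on which is cleaner.
\begin{enumerate}
\item Method of moments: compute the conditional cumulants of $d^{3/2}S_d$ given $\Lambda$ using Miles-type integrals such as \Cref{lem:Miles-2nd-moment}, and show they converge to those of $\mathcal N(0,8)$ for almost every $\Lambda$.
\item Explicit Gaussian comparison: show that the total variation distance between the law of $(\langle U_i,U_j\rangle)$ and the corresponding normalized Gaussian Gram law tends to zero after rescaling.
\end{enumerate}
Because the conditional limit does not depend on $\Lambda$, it is asymptotically independent of $Q_d$. This gives total variance $4+8=12$, consistent with \Cref{lem:angular-moments}.

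\emph{Main obstacle.} The hardest step is controlling the squared-volume weight in the conditional CLT for $S_d$. The weight is a degree-$2(d-1)$ polynomial in the Gram entries, and it is not a small perturbation. I would handle it with the Gaussian trick. For Gaussian vectors, the density proportional to $\det(\text{Gram of differences})\times(\text{Gaussian density})$ is again tractable: by Bartlett decomposition, weighting by the squared volume only changes the $\chi^2$ degrees of freedom of the triangular factors by one each. This yields an exact representation of $\mathcal Q_d$ through independent $\chi^2$ and Beta variables. From that representation, \Cref{lem:angular-moments} serves as a consistency check on the variances, and a Lindeberg CLT for sums of independent terms finishes the proof. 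Tightness and uniform integrability follow from the bound $\Var(\mathcal Q_d)\le12/(d-1)^3$ in \Cref{lem:angular-moments}.
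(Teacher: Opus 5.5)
\textbf{There is a genuine gap: you never transfer the Gaussian representation back to the sphere.} Your last paragraph finds the right mechanism, and it is the one the paper uses. You tilt i.i.d.\ Gaussian columns $G_1,\ldots,G_d\in\R^{d-1}$ by the squared simplex volume, and after an orthogonal rotation $GO=(g,M)$ the weight becomes $d\det(M)^2$, from which an exact law can be read off. But that exact law is for the Gaussian analogue $\mathcal Q_d^{\mathrm G}=(d-1)^{-1}|\sum_i\Lambda_iG_i|^2$, not for $\mathcal Q_d$, which is built from unit vectors. So your claim that this ``yields an exact representation of $\mathcal Q_d$'' is false. Your own opening remark, that the weight is not homogeneous in the radii so radii and directions do not decouple, already signals the problem.

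The transfer back to the sphere is the bulk of the paper's proof. It has two parts:
\begin{itemize}
\item Expanding $\Delta(G)=\sum_iD_i(V)\prod_{j\ne i}\rho_j$ with $V_i=G_i/\rho_i$ and integrating out the radii shows that the directions under the tilt have law $\omega_{d-1}\mathcal L(U_1,\ldots,U_d)+(1-\omega_{d-1})\nu_{d-1}$, with $1-\omega_{d-1}=O(d^{-1})$. Hence they are $O(d^{-1})$-close to the target law in total variation.
\item Using the orthogonality of distinct minors, radial functionals under the tilt are averages over $k$ of product laws with $\rho_k\sim\chi_{d-1}$ and $\rho_i\sim\chi_{d+1}$ for $i\ne k$. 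From this one gets $\E_*\bigl(\mathcal Q_d^{\mathrm G}-|\sum_i\Lambda_iV_i|^2\bigr)^2=O(d^{-4})$, which is negligible at scale $d^{-3/2}$.
\end{itemize}
Without an argument of this kind you have a CLT for the wrong variable.

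Your two fallback routes do not close the gap. A method-of-moments computation of all conditional cumulants of $d^{3/2}S_d$ under a degree-$2(d-1)$ weight is not a concrete plan. A total-variation comparison of the array $(\langle U_i,U_j\rangle)$ with an absolutely continuous Gaussian array is impossible. The Gram matrix of $d$ unit vectors in $\R^{d-1}$ is singular, so that array lies on a Lebesgue-null hypersurface.

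\textbf{Several of your heuristics are also wrong and would mislead you if used.}
\begin{itemize}
\item The weight does not shift $\E\langle U_i,U_j\rangle$ to $-1/(d-1)$. The exact value is $-2/(d(d-1))$ (see the proof of Lemma~\ref{lem:angular-moments}). A shift of $-1/(d-1)$ would give $\E\mathcal Q_d=1/(d+1)$, contradicting $q_d\sim2/d$.
\item Multiplying a $\chi^2_k$ density by the variable gives $\chi^2_{k+2}$. So the squared-volume tilt raises each Bartlett degree of freedom by two, not one.
\item The variance-$8$ component does not come from the normalization $\rho_i^2/(d-1)\approx1$, which contributes only $O(d^{-2})$ in $L^2$. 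It is the $\chi^2_{d-1}$ fluctuation of $|\sum_i\Lambda_iG_i|^2/Q_d$, that is, the cross term $S_d$.
\end{itemize}

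\textbf{A cleaner route to the exact Gaussian law.} Rather than Bartlett, use right-orthogonal invariance of the tilted $M$: $Mw\stackrel{\mathrm{law}}{=}|w|\,Me_1$, where $Me_1$ has density $\frac{|x|^2}{d-1}$ times the standard Gaussian density. This gives $\mathcal Q_d^{\mathrm G}\stackrel{\mathrm{law}}{=}Q_d(Y_{d-1}+B_dW)/(d-1)$, with $Y_{d-1}\sim\chi^2_{d-1}$, $W\sim\chi^2_2$, and $B_d$ conditionally Bernoulli$(1-1/(dQ_d))$.
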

The proofs of Lemmas~\ref{lem:angular-moments} and~\ref{lem:angular-clt} are given in \Cref{app:angular-calculations}.
\subsection{A local inequality on a simplex}
\label{sec:simplex-tools}

Write $q(x)=|x|^2$. We establish here a local comparison on a simplex that underlies our lower bounds for finite vertex budgets, the Laguerre--Delone coefficient, and simplicial facets. The estimate requires no convexity assumption on the approximating function. Throughout this subsection, $m\ge1$ denotes the dimension of a simplex and $k=m+1$ is its number of vertices.

For a nondegenerate simplex $T=\conv\{v_1,\ldots,v_k\}\subset\R^m$, let $\lambda_1(x),\ldots,\lambda_k(x)$ denote the barycentric coordinates of $x\in T$, so that $x=\sum_{i=1}^k\lambda_i(x)v_i$ with $\lambda_i(x)\ge0$ and $\sum_{i=1}^k\lambda_i(x)=1$. Let
\[
 \mathcal I_Tq(x):=\sum_{i=1}^k\lambda_i(x)|v_i|^2
\]
be the affine function on $T$ that interpolates $q$ at the vertices of $T$, and define the nonnegative interpolation gap by
\begin{equation}\label{eq:expanded-power-definition}
 W_T(x):=\mathcal I_Tq(x)-q(x).
\end{equation}

\begin{lemma}\label{lem:expanded-power}
For every $x\in T$,
\begin{equation}\label{eq:expanded-power-identities}
 W_T(x)=\sum_{1\le i<j\le k}\lambda_i(x)\lambda_j(x)|v_i-v_j|^2
       =\sum_{i=1}^k\lambda_i(x)|x-v_i|^2
       =R_T^2-|x-z_T|^2,
\end{equation}
where $z_T$ and $R_T$ are the circumcenter and circumradius of $T$.
\end{lemma}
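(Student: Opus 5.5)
The plan is a direct algebraic verification using only $\sum_i\lambda_i=1$ and $x=\sum_i\lambda_i v_i$, with $\lambda_i=\lambda_i(x)$ throughout.

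\emph{Second expression.} Expand
\[
\sum_i\lambda_i|x-v_i|^2=|x|^2\sum_i\lambda_i-2\Bigl\langle x,\sum_i\lambda_iv_i\Bigr\rangle+\sum_i\lambda_i|v_i|^2 .
\]
The two constraints reduce the first two terms to $|x|^2-2|x|^2=-|x|^2$. The right-hand side is therefore $\mathcal I_Tq(x)-q(x)=W_T(x)$.

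\emph{First expression.} Compute the symmetric double sum
\[
\sum_{i,j}\lambda_i\lambda_j|v_i-v_j|^2=2\sum_i\lambda_i|v_i|^2-2\Bigl|\sum_i\lambda_iv_i\Bigr|^2=2\bigl(\mathcal I_Tq(x)-|x|^2\bigr).
\]
This uses $\sum_j\lambda_j=1$ and $\sum_{i,j}\lambda_i\lambda_j\langle v_i,v_j\rangle=|x|^2$. The diagonal terms vanish, so the ordered sum is twice the sum over $i<j$, and the factor $2$ cancels.

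\emph{Circumcenter form.} Apply the second identity, now expanding each $|x-v_i|^2$ about $z_T$:
\[
|x-v_i|^2=|x-z_T|^2-2\langle x-z_T,\,v_i-z_T\rangle+|v_i-z_T|^2 .
\]
Since $|v_i-z_T|=R_T$ for every $i$, the last term contributes $R_T^2$ after weighting by $\lambda_i$. The cross term contributes $-2\langle x-z_T,\sum_i\lambda_i(v_i-z_T)\rangle=-2|x-z_T|^2$. Summing gives $|x-z_T|^2-2|x-z_T|^2+R_T^2=R_T^2-|x-z_T|^2$.

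Nondegeneracy of $T$ guarantees that the circumcenter $z_T$ exists and is unique. I expect no real obstacle; the only care needed is the factor of $2$ between ordered and unordered pairs. These identities also give an immediate check that $W_T\ge0$ on $T$.
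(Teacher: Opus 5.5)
Your proposal is correct and follows essentially the same route as the paper. The first two identities come from the same symmetric double-sum and direct expansion computations, and your expansion of $\sum_i\lambda_i|x-v_i|^2$ about $z_T$ is equivalent to the paper's substitution $v_i=z_T+w_i$ with $|w_i|=R_T$, using $\sum_i\lambda_i(v_i-z_T)=x-z_T$.
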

\begin{proof}
Using $\sum_{i=1}^k\lambda_i=1$ and $x=\sum_{i=1}^k\lambda_iv_i$, we have
\[
 W_T(x)=\sum_{i,j=1}^k\lambda_i\lambda_j\bigl(|v_i|^2-\langle v_i,v_j\rangle\bigr)
 =\frac12\sum_{i,j=1}^k\lambda_i\lambda_j|v_i-v_j|^2
 =\sum_{1\le i<j\le k}\lambda_i\lambda_j|v_i-v_j|^2,
\]
and similarly
\[
 \sum_{i=1}^k\lambda_i|x-v_i|^2
 =|x|^2-2\left\langle x,\sum_{i=1}^k\lambda_iv_i\right\rangle+\sum_{i=1}^k\lambda_i|v_i|^2
 =\sum_{i=1}^k\lambda_i|v_i|^2-|x|^2=W_T(x).
\]
For the third identity, write $v_i=z_T+w_i$ with $|w_i|=R_T$. Then $x-z_T=\sum_{i=1}^k\lambda_iw_i$, and expanding $\sum_{i=1}^k\lambda_i|z_T+w_i|^2-|z_T+(x-z_T)|^2$ cancels the constant and linear terms in $z_T$, leaving $R_T^2-|x-z_T|^2$.
\end{proof}

If $Y$ is uniformly distributed on $T$, then its barycentric coordinate vector $\Lambda=(\lambda_1(Y),\ldots,\lambda_k(Y))$ is uniformly distributed on the standard simplex in $\R^k$, that is, $\Lambda\sim\operatorname{Dirichlet}_k(1,\ldots,1)$. Its mixed moments are given by the classical Dirichlet formula \citep[Section~XI.4.1, pp.~593--595]{Devroye1986}
\begin{equation}\label{eq:expanded-dirichlet-moment-formula}
 \E\prod_{i=1}^k\Lambda_i^{r_i}
 =\frac{\Gamma(k)}{\Gamma(k+\sum_{i=1}^k r_i)}\prod_{i=1}^k\Gamma(1+r_i),
 \qquad r_i\ge0.
\end{equation}

Define Greenwood's statistic, its median, and the associated sign by
\begin{equation}\label{eq:expanded-Q}
 Q_k:=\sum_{i=1}^k\Lambda_i^2,\qquad
 \mu_k:=\med(Q_k),\qquad
 \sigma_k:=\sgn(\mu_k-Q_k).
\end{equation}
The statistic $Q_k$ attains its minimum $1/k$ at the centroid of the simplex and approaches $1$ near the vertices. Because its level sets have $(k-1)$-dimensional measure zero and its cumulative distribution function is strictly increasing on $[1/k,1]$, the median $\mu_k$ is unique and divides the simplex into two sets of equal volume.

\begin{lemma}\label{lem:expanded-annihilator}
For every $1\le i\le k$ and every $1\le j\le k$ with $j\ne i$,
\begin{align}
 \E\sigma_k=0,\qquad &\E(\Lambda_i\sigma_k)=0,
 \label{eq:expanded-affine-annihilation}\\
 \E(\Lambda_i\Lambda_j\sigma_k)
 =\beta_m\E(\Lambda_i\Lambda_j),\qquad
 &\beta_m:=\frac{m+2}{m}\E|Q_{m+1}-\mu_{m+1}|=\frac{k+1}{k-1}\E|Q_k-\mu_k|.
 \label{eq:expanded-quadratic-correlation}
\end{align}
\end{lemma}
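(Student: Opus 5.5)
The argument uses only two facts: the definition of the median and the exchangeability of the Dirichlet vector $\Lambda\sim\operatorname{Dirichlet}_k(1,\ldots,1)$. I would treat the three identities in order.

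\emph{Step 1: $\E\sigma_k=0$.} As noted before the lemma, the law of $Q_k$ is continuous with distribution function strictly increasing on $[1/k,1]$. Hence $\Pr\{Q_k=\mu_k\}=0$ and $\Pr\{Q_k<\mu_k\}=\Pr\{Q_k>\mu_k\}=\tfrac12$, so $\sigma_k$ is a symmetric sign and $\E\sigma_k=0$.

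\emph{Step 2: $\E(\Lambda_i\sigma_k)=0$.} The statistic $Q_k$, and hence $\sigma_k$, is invariant under permutations of the coordinates of $\Lambda$. The law of $\Lambda$ is also permutation invariant, so $\E(\Lambda_i\sigma_k)$ does not depend on $i$. Summing over $i$ and using $\sum_i\Lambda_i=1$ gives
\[
 k\,\E(\Lambda_i\sigma_k)=\E\sigma_k=0 .
\]

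\emph{Step 3: the quadratic correlation.} By the same exchangeability, $\E(\Lambda_i\Lambda_j\sigma_k)$ is the same for every pair $i\ne j$. There are $k(k-1)$ ordered pairs, and $\sum_{i\ne j}\Lambda_i\Lambda_j=1-Q_k$. Therefore
\[
 k(k-1)\,\E(\Lambda_i\Lambda_j\sigma_k)=\E[(1-Q_k)\sigma_k]=-\E[Q_k\sigma_k],
\]
where the last equality uses Step 1.

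Now $\E[(Q_k-\mu_k)\sigma_k]=\E[\mu_k\sigma_k]-\E[(\mu_k-Q_k)\sigma_k]$. The first term vanishes by Step 1, and $(\mu_k-Q_k)\sigma_k=|Q_k-\mu_k|$. Hence $-\E[Q_k\sigma_k]=\E|Q_k-\mu_k|$.

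For the right-hand side of \eqref{eq:expanded-quadratic-correlation}, the Dirichlet formula \eqref{eq:expanded-dirichlet-moment-formula} with $r_i=r_j=1$ gives
\[
 \E(\Lambda_i\Lambda_j)=\frac{\Gamma(k)}{\Gamma(k+2)}=\frac{1}{k(k+1)}.
\]
Dividing the two expressions,
\[
 \frac{\E(\Lambda_i\Lambda_j\sigma_k)}{\E(\Lambda_i\Lambda_j)}=\frac{k+1}{k-1}\,\E|Q_k-\mu_k|.
\]
With $k=m+1$ this is $\frac{m+2}{m}\E|Q_{m+1}-\mu_{m+1}|=\beta_m$.

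The case $k=1$ does not arise because $m\ge1$. There is no real obstacle here. The only points needing care are the exact count $k(k-1)$ of ordered off-diagonal pairs and the sign convention $\sigma_k=\sgn(\mu_k-Q_k)$, which is what makes $\E[Q_k\sigma_k]$ equal to $-\E|Q_k-\mu_k|$ and not $+\E|Q_k-\mu_k|$.
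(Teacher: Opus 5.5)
Your proof is correct and follows the paper's argument essentially verbatim: the median gives $\E\sigma_k=0$, exchangeability with $\sum_i\Lambda_i=1$ and $\sum_{i\ne j}\Lambda_i\Lambda_j=1-Q_k$ gives the rest, and you finish by dividing by the Dirichlet moment. The differences are cosmetic: you count ordered pairs and compute $\E(\Lambda_i\Lambda_j)=1/(k(k+1))$ directly rather than through $\E\Lambda_i^2$. There is one small slip: the left-hand side of your displayed identity should read $\E[Q_k\sigma_k]$, which coincides with $\E[(Q_k-\mu_k)\sigma_k]$ only after Step 1 is used. Your conclusion $-\E[Q_k\sigma_k]=\E|Q_k-\mu_k|$ is nonetheless right.
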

\begin{proof}
By the definition of the median, $\E\sigma_k=0$. Since the law of $\Lambda$ and the function $\sigma_k$ are both invariant under coordinate permutations, the expectations $\E(\Lambda_i\sigma_k)$ are equal for all $i$, and their sum is $\E\bigl(\sum_{i=1}^k\Lambda_i\sigma_k\bigr)=\E\sigma_k=0$.
Similarly, all off-diagonal expectations $\E(\Lambda_i\Lambda_j\sigma_k)$ with $i\ne j$ are equal. Using $\sum_{1\le i<j\le k}\Lambda_i\Lambda_j=(1-Q_k)/2$ and $\E\sigma_k=0$, we obtain
\[
 \sum_{1\le i<j\le k}\E(\Lambda_i\Lambda_j\sigma_k)
 =\frac12\E[(1-Q_k)\sigma_k]
 =\frac12\E|Q_k-\mu_k|.
\]
On the other hand, \eqref{eq:expanded-dirichlet-moment-formula} gives $\E\Lambda_i^2=2/(k(k+1))$ and hence
\[
 \sum_{1\le i<j\le k}\E(\Lambda_i\Lambda_j)
 =\frac{1-\E Q_k}{2}=\frac{k-1}{2(k+1)}.
\]
Dividing the two sums yields \eqref{eq:expanded-quadratic-correlation}.
\end{proof}

\begin{theorem}\label{thm:expanded-finite-transfer}
Let $D\subset\R^m$ be a polyhedral domain equipped with a conforming triangulation $\mathcal T$. Let $h:D\to\R$ be continuous and affine on every simplex of $\mathcal T$, and let $\mathcal I_{\mathcal T}q$ denote the continuous function on $D$ that agrees with $\mathcal I_Tq$ on each $T\in\mathcal T$. Then
\begin{equation}\label{eq:expanded-finite-transfer}
 \int_D|h(x)-q(x)|\,dx
 \ge\beta_m\int_D\bigl(\mathcal I_{\mathcal T}q(x)-q(x)\bigr)\,dx.
\end{equation}
\end{theorem}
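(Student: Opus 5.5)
The plan is to prove \eqref{eq:expanded-finite-transfer} one simplex at a time and then sum over $\mathcal T$. Since both integrals are additive over the simplices of $\mathcal T$, and simplices intersect only in sets of measure zero, it suffices to show, for every $T\in\mathcal T$,
\[
 \int_T|h(x)-q(x)|\,dx\ge\beta_m\int_T W_T(x)\,dx .
\]
Continuity of $h$ and of $\mathcal I_{\mathcal T}q$ is not needed for this; it only makes the global functions well defined.

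Fix $T$ and set $L_T:=h|_T-\mathcal I_Tq$, which is affine on $T$. Then $q-h=-(W_T+L_T)$ on $T$. Let $Y$ be uniform on $T$ with barycentric vector $\Lambda\sim\operatorname{Dirichlet}_k(1,\ldots,1)$, and let $\sigma_k=\sgn(\mu_k-Q_k)$ as in \eqref{eq:expanded-Q}. Since $|\sigma_k|\le1$,
\[
 \E|W_T(Y)+L_T(Y)|\ge\E[\sigma_k(W_T(Y)+L_T(Y))].
\]

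The affine term drops out. Every affine function on $T$ can be written as $L_T(Y)=\sum_i c_i\Lambda_i$ for constants $c_i$ (its values at the vertices). By \eqref{eq:expanded-affine-annihilation}, $\E[\sigma_k\Lambda_i]=0$ for each $i$, so $\E[\sigma_kL_T(Y)]=0$.

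For the quadratic term, use the first identity in Lemma~\ref{lem:expanded-power}: $W_T(Y)=\sum_{i<j}\Lambda_i\Lambda_j|v_i-v_j|^2$. The coefficients $|v_i-v_j|^2$ are deterministic, so \eqref{eq:expanded-quadratic-correlation} gives
\[
 \E[\sigma_kW_T(Y)]=\sum_{i<j}|v_i-v_j|^2\,\beta_m\E[\Lambda_i\Lambda_j]=\beta_m\E W_T(Y).
\]
Multiplying by $|T|$ turns these expectations into integrals over $T$, which is exactly the per-simplex inequality. Summing over $T\in\mathcal T$ then gives \eqref{eq:expanded-finite-transfer}.

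I expect no step to be a real obstacle, since Lemma~\ref{lem:expanded-annihilator} already does the substantive work. The points that need care are the following.
\begin{itemize}
\item Representing an arbitrary affine $L_T$ in barycentric coordinates, which holds because the simplex is nondegenerate.
\item Using the first identity of Lemma~\ref{lem:expanded-power}, so that the edge lengths enter only as fixed nonnegative weights. This is what makes $\beta_m$ independent of the shape of $T$.
\item Degenerate simplices, which have measure zero and contribute nothing.
\item The fact that $\sigma_k$ is built from each simplex's own barycentric coordinates, so a different test function on each piece is legitimate.
\end{itemize}
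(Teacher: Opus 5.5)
Your proposal is correct and follows essentially the same route as the paper: on each simplex you write $h-q=W_T+L_T$, test against the bounded sign $\sigma_k$ from Lemma~\ref{lem:expanded-annihilator} so that the affine part vanishes and $\beta_m\E W_T(Y)$ comes out of the edge-length expansion of $W_T$, and then multiply by $|T|$ and sum over $\mathcal T$. Your extra remarks (the barycentric representation of $L_T$, shape-independence via the first identity in Lemma~\ref{lem:expanded-power}, and the fact that continuity is not needed) only make explicit steps the paper leaves implicit.
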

\begin{proof}
On each simplex $T=\conv\{v_1,\ldots,v_k\}\in\mathcal T$, write $h-q=W_T+L_T$, where $L_T(x)=\sum_{i=1}^k\lambda_i(x)(h(v_i)-q(v_i))$ is affine. Since $|\sigma_k|\le1$, Lemmas~\ref{lem:expanded-power} and~\ref{lem:expanded-annihilator} give
\begin{equation}\label{eq:expanded-local-certificate}
 \frac1{|T|}\int_T|h(x)-q(x)|\,dx
 =\E|W_T(Y)+L_T(Y)|
 \ge\E\bigl[\sigma_k(W_T(Y)+L_T(Y))\bigr]
 =\beta_m\E W_T(Y).
\end{equation}
Multiplying by $|T|$ and summing over all $T\in\mathcal T$ yields \eqref{eq:expanded-finite-transfer}.
\end{proof}

\begin{proposition}
\label{prop:expanded-beta-asymptotic}
As $m\to\infty$,
\begin{equation}\label{eq:expanded-beta-asymptotic}
 \beta_m=\left(2\sqrt{\frac2\pi}+o(1)\right)m^{-3/2}.
\end{equation}
Moreover, there are absolute constants $c,C>0$ such that
\begin{equation}\label{eq:beta-uniform}
 c m^{-3/2}\le\beta_m\le C m^{-3/2},\qquad m\ge1.
\end{equation}
\end{proposition}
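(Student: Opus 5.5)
The plan is to reduce \eqref{eq:expanded-beta-asymptotic} to a central limit theorem with uniform integrability for Greenwood's statistic, and to obtain \eqref{eq:beta-uniform} from explicit moment bounds. Write $k=m+1$ and use the representation $\Lambda=(E_1,\ldots,E_k)/S$, where the $E_i$ are i.i.d.\ standard exponentials and $S=\sum_i E_i$. Then
\[
 kQ_k=\frac{k^{-1}\sum_i E_i^2}{(k^{-1}S)^2}.
\]
By the CLT and the delta method, $\sqrt k\,(kQ_k-2)\Rightarrow\mathcal N(0,\tau^2)$. The variance comes from the linearization $E_i^2-2-4(E_i-1)$, which gives
\[
 \tau^2=\Var(E^2-4E)=\E E^4-8\E E^3+16\E E^2-(\E E^2-4)^2=24-48+32-4=4.
\]
Hence $k^{3/2}(Q_k-2/k)\Rightarrow\mathcal N(0,4)$, which matches the statement quoted from \citet{Moran1947,Holst1979}; I would cite that result directly and keep the computation above only as a check.

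Next, I would pass from the CLT to the centered absolute moment. Set $Z_k:=k^{3/2}(Q_k-\mu_k)$. The medians converge: $k^{3/2}(\mu_k-2/k)\to0$, because the median of the standard limiting law $\mathcal N(0,4)$ is $0$ and its distribution function is continuous and strictly increasing there. Therefore $Z_k\Rightarrow\mathcal N(0,4)$.

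For uniform integrability I would use second moments. One has $\E Z_k^2\le 2k^3\Var Q_k+2k^3(\E Q_k-\mu_k)^2$, and by the median inequality $|\E Q_k-\mu_k|\le\sqrt{\Var Q_k}$. So it suffices to know $\Var Q_k=O(k^{-3})$. This is an exact computation from \eqref{eq:expanded-dirichlet-moment-formula}, using $\E\Lambda_i^4$ and $\E\Lambda_i^2\Lambda_j^2$, and it gives
\[
 \Var Q_k=\frac{4(k-1)}{(k+1)^2(k+2)(k+3)}.
\]
Consequently
\[
 k^{3/2}\,\E|Q_k-\mu_k|\to\E|N(0,4)|=2\sqrt{2/\pi}.
\]
Since $\beta_m=\frac{k+1}{k-1}\E|Q_k-\mu_k|$ and $k^{3/2}\sim m^{3/2}$, this yields \eqref{eq:expanded-beta-asymptotic}.

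For the uniform bounds \eqref{eq:beta-uniform}, the asymptotics handle all $m\ge m_0$, so only finitely many $m$ remain. For each of these, $\beta_m>0$ because $Q_k$ is nonconstant and has a continuous distribution. The upper bound also follows directly for every $m$ from
\[
 \E|Q_k-\mu_k|\le 2\sqrt{\Var Q_k}\le Ck^{-3/2}.
\]
The main obstacle is justifying the convergence of the medians together with the uniform integrability. I handle the medians with the argument from weak convergence to a law with a continuous, strictly increasing distribution function at its median. For uniform integrability, the exact variance formula is the cleanest route, and the mixed Dirichlet moments are the only computation needed.
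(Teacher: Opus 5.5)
Your proposal is correct and follows essentially the same route as the paper: you cite the Greenwood--Moran CLT, use the exact Dirichlet variance $\Var Q_k=\frac{4(k-1)}{(k+1)^2(k+2)(k+3)}$ for uniform integrability, show the rescaled medians converge to $0$, and use positivity of $\beta_m$ for the finitely many small $m$. Your delta-method check of the limiting variance $4$ and your direct upper bound $\E|Q_k-\mu_k|\le 2\sqrt{\Var Q_k}$ are harmless additions that the paper leaves implicit.
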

\begin{proof}
By the central limit theorem for Greenwood's statistic \citep{Moran1947,Holst1979}, as $k\to\infty$,
\begin{equation}\label{eq:expanded-greenwood-clt}
 k^{3/2}(Q_k-2/k)\Rightarrow\mathcal N(0,4).
\end{equation}
Furthermore, the Dirichlet moment formula \eqref{eq:expanded-dirichlet-moment-formula} gives
\[
 \operatorname{Var}(Q_k)
 =\frac{4(k-1)}{(k+1)^2(k+2)(k+3)},
 \qquad
 \E\bigl[k^3(Q_k-2/k)^2\bigr]\le5.
\]
Since the limiting normal distribution has a unique median at zero, the rescaled medians satisfy $k^{3/2}(\mu_k-2/k)\to0$. The uniform second-moment bound implies uniform integrability, and therefore
\begin{equation}\label{eq:expanded-Q-mad-asymptotic}
 k^{3/2}\E|Q_k-\mu_k|
 \longrightarrow\E|\mathcal N(0,4)|=2\sqrt{\frac2\pi},
\end{equation}
which proves \eqref{eq:expanded-beta-asymptotic}. Since $Q_{m+1}$ is nonconstant, we have $\beta_m>0$ for every $m\ge1$, and combining this with \eqref{eq:expanded-beta-asymptotic} gives \eqref{eq:beta-uniform}.
\end{proof}

\subsection{Proofs of Theorems~\ref{thm:finite-budget} and~\ref{thm:finite-all-N}}
\label{sec:finite-budget-proof}

We first prove the upper bound by analyzing a random inscribed polytope, and then establish the covering lower bound and its extension to errors close to one.
We keep the notation $\Delta_d$ and $N_\varepsilon(d)$ from the statements and write $\kappa_d=|B_d|$. Throughout this subsection, $\sigma$ denotes normalized surface measure on $\mathbb S^{d-1}$, and $c,C$ denote positive absolute constants whose values may change from line to line.

\subsubsection{The random upper construction}
\label{sec:finite-equal-radii}

We begin by reducing the expected volume error to a radial boundary estimate, which we then deduce from two concentration lemmas proved in \Cref{sec:finite-concentration-proofs}. Throughout this subsection, $d$ is assumed to be sufficiently large.

Let $U_1,\ldots,U_M$ be independent and uniformly distributed on $\mathbb S^{d-1}$, with $M\ge2d$, and set $K=\operatorname{conv}\{\pm U_i:i\le M\}$. Write
\[
 G_d(t):=\sigma\{u\in\mathbb S^{d-1}:u_1^2\le t\},\qquad
 G_d'(t)=\frac{t^{-1/2}(1-t)^{(d-3)/2}}
                   {B(\frac12,\frac{d-1}{2})},
\]
so that $G_d(t)$ is the probability that a random unit vector lies in the slab $|u_1|\le\sqrt t$.

To express the volume error in terms of a single random boundary point, we weight the facets of $K$ across random realizations by their cone volumes and sample a point $Y$ uniformly on the chosen facet. More precisely, we define a probability measure $\nu$ by
\begin{equation}
 \mathbb E_\nu f(Y)=
 \frac{\mathbb E\displaystyle\sum_{F\text{ facet of }K}
       |\operatorname{conv}\{o,F\}|\,
       \frac{1}{|F|}\int_F f(y)\,dy}
      {\mathbb E|K|}.
 \label{eq:finite-weighted-law}
\end{equation}
Almost surely, $K$ is simplicial and contains the origin in its interior. For every deterministic comparison radius $r>0$, radial projection over the facet cones gives
\begin{equation}
 \mathbb E_\nu\bigl|1-(r/|Y|)^d\bigr|
 =\frac{\mathbb E|K\mathbin\triangle rB_d|}
        {\mathbb E|K|}.
 \label{eq:finite-error-identity}
\end{equation}
Indeed, integrating in polar coordinates over the cone spanned by each facet $F$ at distance $h_F$ from the origin gives
\[
 |K\mathbin\triangle rB_d|
 =\sum_F\frac{h_F}{d}\int_F\bigl|1-r^d|y|^{-d}\bigr|\,dy,
\]
and taking expectations and using the definition of $\nu$ in \eqref{eq:finite-weighted-law} yields \eqref{eq:finite-error-identity}.

\begin{lemma}
\label{lem:finite-sphere-facets}
Under \eqref{eq:finite-weighted-law},
\[
 |Y|^2=H+(1-H)\mathcal Q_d.
\]
Here $H$ is the squared distance of the supporting hyperplane from the origin, with density proportional to
\[
 (1-t)^{d(d-1)/2-1}G_d(t)^{M-d},\qquad 0<t<1,
\]
and $\mathcal Q_d$ is independent of $H$ and follows the squared-simplex-volume angular law defined in \eqref{eq:weighted-simplex-Q}. In particular, $0\le \mathcal Q_d\le1$.
\end{lemma}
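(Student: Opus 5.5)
We write $\mathbb E_\nu f(Y)$ explicitly as an integral over ordered $d$-tuples of the $2M$ points $\pm U_i$, and then change variables with Miles' formula (Lemma~\ref{lem:Miles-BP-1}).

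\textbf{Facet expansion.} Almost surely $K$ is simplicial and every facet is $\conv\{\epsilon_1U_{i_1},\ldots,\epsilon_dU_{i_d}\}$ for distinct indices and signs. Such a tuple spans a facet exactly when the remaining $M-d$ pairs $\pm U_j$ lie strictly inside the slab bounded by its hyperplane and the parallel hyperplane through the origin's reflection. Antipodal symmetry makes this slab condition symmetric. By exchangeability and sign symmetry, the numerator of \eqref{eq:finite-weighted-law} equals a combinatorial constant times
\[
 \int_{(\mathbb S^{d-1})^d}\frac{p\,|F|}{d}\,\frac1{|F|}\int_F f\,dy\;G_d(p^2)^{M-d}\,d\sigma(x_1)\cdots d\sigma(x_d),
\]
with $F=\conv\{x_1,\ldots,x_d\}$ at distance $p$ from $o$. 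Here we used $|\conv\{o,F\}|=p|F|/d$, and the $M-d$ pairs contribute $G_d(p^2)^{M-d}$ independently.

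\textbf{Change of variables.} Apply Lemma~\ref{lem:Miles-BP-1}: the Jacobian contributes $|F|(1-p^2)^{-d/2}$, and the $x_i$ range over the $(d-2)$-sphere $\partial B_d\cap H$ of radius $\rho=\sqrt{1-p^2}$. Write $x_i=p\xi+\rho u_i$ with $u_i\in\mathbb S^{d-2}\subset\xi^\perp$. Then two facts follow.
\begin{itemize}
 \item $|F|=\rho^{d-1}|[u_1,\ldots,u_d]|$, and surface measure on the small sphere is $\rho^{d-2}$ times $\sigma_{d-1}$ up to constants.
 \item A uniform point $y\in F$ is $y=p\xi+\rho\sum\Lambda_iu_i$ with $\Lambda$ uniform Dirichlet, so $|y|^2=p^2+(1-p^2)|\sum\Lambda_iu_i|^2$.
\end{itemize}
Collecting the weights, the angular variables carry density proportional to $|[u_1,\ldots,u_d]|^2$: one power comes from the Jacobian and one from the cone volume. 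This is exactly the law in \eqref{eq:weighted-simplex-Q}. The integral over $\xi$ is trivial by rotation invariance.

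\textbf{Density of $H$.} The remaining factor in $p$ is
\[
 p\,\rho^{2(d-1)}\rho^{d(d-2)}\rho^{-d}G_d(p^2)^{M-d}
 =p\,(1-p^2)^{(d^2-d-2)/2}G_d(p^2)^{M-d}.
\]
Substituting $t=p^2$ (so $dt=2p\,dp$) gives a density proportional to $(1-t)^{d(d-1)/2-1}G_d(t)^{M-d}$. Because the integrand factorizes into a function of $t$ times a function of $(u,\Lambda)$, $H$ and $\mathcal Q_d$ are independent, and $\mathcal Q_d\in[0,1]$ by convexity of the norm.

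\textbf{Main obstacle.} The hard part is the bookkeeping of the $\rho$-exponents. Three sources must be tracked precisely:
\begin{itemize}
 \item the Jacobian, which gives $\rho^{d-1}\cdot\rho^{-d}$;
 \item the measure scaling on the radius-$\rho$ sphere, which gives $(\rho^{d-2})^d$;
 \item the second power of the volume, from the cone weight, which gives $\rho^{d-1}$.
\end{itemize}
Matching these must produce the stated exponent. A second point to check is that the facet condition for antipodal pairs is exactly the slab event of probability $G_d(p^2)$ per pair, and is not one-sided.
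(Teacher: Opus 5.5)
Your proposal is correct and follows the same route as the paper. You expand $\nu$ over candidate facets using the slab probability $G_d(p^2)^{M-d}$, apply Miles' formula with $x_i=p\xi+\rho u_i$, and take one simplex-volume factor from the Jacobian and one from the cone weight. Your exponent count ($\rho^{-1}\cdot\rho^{d(d-2)}$, i.e. the paper's $(1-h^2)^{(d^2-2d-1)/2}$, times $p\rho^{d-1}$ from the cone volume) gives $p(1-p^2)^{(d^2-d-2)/2}$, which becomes the stated density after substituting $t=p^2$.
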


\begin{proof}
We apply the spherical Blaschke--Petkantschin formula, Lemma~\ref{lem:Miles-BP-1}, writing the facet vertices as $hu+\sqrt{1-h^2}\,w_i$ with $u\in\mathbb S^{d-1}$ and $w_1,\ldots,w_d\in u^\perp\cap\mathbb S^{d-1}$. The spherical change of variables contributes one factor of the $(d-1)$-dimensional simplex volume together with $(1-h^2)^{(d^2-2d-1)/2}$, while weighting by the cone volume supplies a second simplex-volume factor and $h(1-h^2)^{(d-1)/2}$. Conditioned on a candidate facet with supporting hyperplane at distance $h$ from the origin, the remaining $M-d$ antipodal pairs lie in the slab $|\langle x,u\rangle|\le h$ with probability $G_d(h^2)^{M-d}$. Consequently, the joint weight of $(h,w_1,\ldots,w_d)$ is proportional to
\[
 h(1-h^2)^{d(d-1)/2-1}G_d(h^2)^{M-d}
 |\operatorname{conv}\{w_1,\ldots,w_d\}|^2.
\]
The substitution $t=h^2$ separates the distribution of the squared height $H$ from the angular law of $(w_1,\ldots,w_d)$, and sampling $Y$ uniformly inside the facet introduces the independent Dirichlet weights appearing in \eqref{eq:weighted-simplex-Q}.
\end{proof}

To locate the typical value of $H$ while keeping track of the factor $1-H$, we set $L=-\log(1-H)$, let $L_0$ denote the mode of the density of $L$, and write $t_0=1-e^{-L_0}$. Passing to the logarithmic coordinate $L$ multiplies the density of $H$ by $1-t$, which shifts the exponent of $1-t$ by one, so that $t_0$ satisfies the mode equation
\begin{equation}
 (M-d)\frac{(1-t_0)G_d'(t_0)}{G_d(t_0)}=\frac{d(d-1)}2.
 \label{eq:finite-sphere-log-mode}
\end{equation}
The left-hand side of \eqref{eq:finite-sphere-log-mode} decreases strictly from infinity to zero on $(0,1)$, so $t_0$ is uniquely determined. As in the fixed-dimensional upper bound (\Cref{sec:upper-bound}), we define the volume-balancing radius $R_{d,M}$ by
\[
 R_{d,M}^d\kappa_d=\mathbb E|K|,
\]
so that $R_{d,M}^{-1}K$ has expected volume $\kappa_d$.

\begin{proposition}
\label{prop:finite-equal-radii}
There exist absolute constants $C_0,C_1,d_0>0$ such that for all $d\ge d_0$ and all $M$ whose associated mode $t_0$ satisfies $t_0\ge C_0/\sqrt d$,
\begin{equation}
 \mathbb E\Delta_d(R_{d,M}^{-1}K)
 \le C_1\frac{1-t_0}{t_0\sqrt d}.
 \label{eq:finite-sphere-all-accuracies}
\end{equation}
Consequently, for $d\ge d_0$ and every $0<\varepsilon\le\varepsilon_0$,
\begin{equation}
 \log N_\varepsilon(d)\le
 C d\log\left(1+\frac1{\varepsilon\sqrt d}\right).
 \label{eq:finite-budget-all-upper}
\end{equation}
\end{proposition}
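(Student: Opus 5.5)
We bound $\E\Delta_d(R_{d,M}^{-1}K)$ through the identity \eqref{eq:finite-error-identity} with a deterministic comparison radius, and then reduce the volume-balanced error to that of the comparison radius. Fix $r^2:=t_0+(1-t_0)q_d$. Since $\E|K|=\kappa_dR_{d,M}^d$, scaling gives $\E\Delta_d(R_{d,M}^{-1}K)=\E|K\triangle R_{d,M}B_d|/\E|K|$. By the triangle inequality, $|K\triangle R_{d,M}B_d|\le|K\triangle rB_d|+|rB_d\triangle R_{d,M}B_d|$. Moreover, $\bigl||rB_d|-\E|K|\bigr|\le\E|K\triangle rB_d|$. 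The volume-balanced error is therefore at most $2\E_\nu|1-(r/|Y|)^d|$. It suffices to show
\[
 \E_\nu\bigl|1-(r/|Y|)^d\bigr|\le C\frac{1-t_0}{t_0\sqrt d}.
\]

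By Lemma~\ref{lem:finite-sphere-facets}, $|Y|^2-r^2=(H-t_0)(1-\mathcal Q_d)+(1-t_0)(\mathcal Q_d-q_d)$, with $H$ and $\mathcal Q_d$ independent. The angular term is handled by Lemma~\ref{lem:angular-moments}, which gives $\E|\mathcal Q_d-q_d|\le Cd^{-3/2}$. For the radial term we pass to $L=-\log(1-H)$. Its log-density is $\phi(L)=-\tfrac{d(d-1)}2L+(M-d)\log G_d(1-e^{-L})$ up to a constant. We would show that $\phi$ is concave with $-\phi''(L)\gtrsim d^2\cdot(\text{something})$ near the mode. Using \eqref{eq:finite-sphere-log-mode}, the curvature equals $\tfrac{d(d-1)}2$ times the derivative of $\log\bigl((1-t)G_d'/G_d\bigr)$ in $L$. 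For $t_0\ge C_0/\sqrt d$ this derivative is of order $t_0^{-1}$ or larger, since $(1-t)G'_d/G_d$ behaves like the Gaussian-tail ratio at level $\sqrt{dt}$. Consequently $L-L_0$ is sub-Gaussian at scale $\sqrt{t_0}/d$ or better. Since $H-t_0\approx(1-t_0)(L-L_0)$, this yields $\E|H-t_0|\lesssim(1-t_0)\,d^{-3/2}/t_0$ at worst. Hence $\E_\nu\bigl||Y|^2-r^2\bigr|\le C(1-t_0)d^{-3/2}/t_0$.

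We next convert this into the error bound. On $\{|Y|\ge r\}$ we have $1-(r/|Y|)^d\le\tfrac d2\,(|Y|^2-r^2)/r^2$. Since $r^2\ge t_0$, the first-moment bound gives a contribution of $C(1-t_0)/(t_0\sqrt d)$. On $\{|Y|<r\}$ the integrand $(r^2/|Y|^2)^{d/2}-1$ can blow up, and this is the main obstacle. Write $|Y|^2/r^2=1-Z$. On $\{Z\le1/2\}$, we bound $(1-Z)^{-d/2}-1\le dZ\,e^{dZ}$ and use Cauchy--Schwarz with exponential moments of $dZ$. These exponential moments are finite thanks to sub-Gaussian lower tails at scale $d^{-3/2}/t_0$ for both $H-t_0$ and $\mathcal Q_d-q_d$. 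We get the lower tail of $\mathcal Q_d$ from its representation as a volume-weighted Dirichlet quadratic form; the paper defers these tail bounds to the concentration lemmas, and we would prove them via log-concavity of the Dirichlet law together with a bounded-difference argument. The event $\{H<t_0/2\}$ has probability $e^{-cd^2t_0}$ by concavity of $\phi$. On it we need a negative-moment estimate $\E_\nu[|Y|^{-d};H<t_0/2]$. Here we use $|Y|^2\ge(1-H)\mathcal Q_d$ together with the explicit density of $H$, which kills the polynomial blow-up, since $\mathcal Q_d\ge1/d\cdot(\text{small})$ fails only with tiny probability under the squared-volume weight.

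Finally, for \eqref{eq:finite-budget-all-upper} we choose $t_*=(1+c\varepsilon\sqrt d)^{-1}$, clipped so that $t_*\ge C_0/\sqrt d$, which is fine when $\varepsilon\le\varepsilon_0$. Then \eqref{eq:finite-sphere-all-accuracies} gives error at most $C_1c\varepsilon$, which is at most $\varepsilon$ for small $c$. Since the left side of \eqref{eq:finite-sphere-log-mode} is monotone, we pick $M$ with mode $t_0\ge t_*$. By \eqref{eq:intro-vertex-cost}, $\log M=\tfrac{d-1}2\log\frac1{1-t_0}+O(\log d)$. Also $\log\frac1{1-t_*}=\log\bigl(1+\frac1{c\varepsilon\sqrt d}\bigr)\asymp\log\bigl(1+\frac1{\varepsilon\sqrt d}\bigr)$. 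The $O(\log d)$ term is absorbed because $\log(1+1/(\varepsilon\sqrt d))\gtrsim d^{-1/2}$ only matters when $\varepsilon\lesssim1$; otherwise we use $M\ge2d$ directly. Since some realization attains at most the expected error and $N=2M$, this gives $\log N_\varepsilon(d)\le Cd\log(1+1/(\varepsilon\sqrt d))$.
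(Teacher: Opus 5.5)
Your architecture is the paper's: the factor-$2$ reduction to $r^2=t_0+(1-t_0)q_d$, the decomposition of $|Y|^2-r^2$, the outer/inner/small-height split, and the choice of $t_*$. The gap is that the quantitative core is at the wrong scale, and this is fatal.

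\textbf{Height concentration.} In the logarithmic coordinate, with $b(z)=e^{-z}G_d'(1-e^{-z})/G_d(1-e^{-z})$, one has exactly
\[
 (\log b)'(z)=-\Bigl(\frac{d-1}{2}+\frac{1}{2(e^z-1)}+b(z)\Bigr).
\]
The $t_0^{-1}$-size piece is only $\frac1{2(e^{L_0}-1)}=\frac{1-t_0}{2t_0}$. The dominant term is $\frac{d-1}2$, which comes from the factor $(1-t)^{(d-3)/2}$ in $G_d'$.

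Since $b$ is decreasing and $(M-d)b(L_0)=\frac{d(d-1)}2$, this gives $-\phi''\ge\frac{d(d-1)^2}{4}$ on all of $\{z\le L_0\}$. Hence $\P(L<L_0-v)\le Ce^{-cd^3v^2}$. Together with the weaker tail $e^{-c\min(d^3v^2,d^2v)}$ above the mode, this yields $\E|H-t_0|\le C(1-t_0)d^{-3/2}$ uniformly in $t_0$.

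Your curvature $d^2/t_0$ only gives scale $\sqrt{t_0}/d$. That scale does not imply your stated bound $\E|H-t_0|\lesssim(1-t_0)d^{-3/2}/t_0$ once $t_0\gg d^{-1/3}$. Even granting that bound, multiplying by $d/(2r^2)\le d/(2t_0)$ gives an outer contribution $(1-t_0)/(t_0^2\sqrt d)$, not $(1-t_0)/(t_0\sqrt d)$. At $t_0\asymp(\varepsilon\sqrt d)^{-1}$, which is the regime producing subexponential budgets, the extra factor $t_0^{-1}\asymp\varepsilon\sqrt d$ destroys the estimate.

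\textbf{Inner region.} This fails for the same reason. After writing $r^2-|Y|^2\le(1-t_0)(e^{D_H}-1+D_Q)$, you need $\E e^{kD}\le C$ for $k\asymp d(1-t_0)/t_0$, which reaches $d^{3/2}/C_0$. This holds only if both inward deviations $D_H=(L_0-L)_+$ and $D_Q=(q_d-\mathcal Q_d)_+$ have tails $e^{-cd^3v^2}$. At your scale $d^{-3/2}/t_0$, the only available bound at $t_0=C_0/\sqrt d$ is of order $e^{cd/C_0^4}$.

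The lower tail of $\mathcal Q_d$ does not depend on $t_0$ at all. What is needed is Lemma~\ref{lem:finite-tilted-simplex}. A bounded-differences argument does not apply there, because $U_1,\ldots,U_d$ are coupled by the squared-volume weight and contribute fluctuations of the same order $d^{-3/2}$. The paper instead uses Andr\'eief's identity for the Laplace transform of $|\sum_iE_iU_i|^2$.

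\textbf{The event $\{H<t_0/2\}$.} Your bound $\P(H<t_0/2)\le e^{-cd^2t_0}$ carries no factor $1-t_0$. But \eqref{eq:finite-sphere-all-accuracies} is claimed for $t_0$ arbitrarily close to $1$, which is exactly what \eqref{eq:finite-budget-all-upper} requires when $\varepsilon\ll d^{-1/2}$. The paper measures distance in the logarithmic coordinate, $D=\log\frac{1-t_0/2}{1-t_0}$, obtains the tail $e^{-cd^3D^2}$, and uses $1-t_0\ge\frac12e^{-D}$.

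For the negative moment, $|Y|^2\ge(1-H)\mathcal Q_d$ is the wrong lower bound. With positive probability the origin lies in the interior of $\conv\{U_1,\ldots,U_d\}$, where $\sum_i\Lambda_iU_i$ has a bounded positive density, so $\E\mathcal Q_d^{-d/2}=\infty$. Use $|Y|^2\ge H$ instead, together with $(\log f_H)'(t)\ge d/t$ on $(0,t_0/2]$. This follows from the mode relation and gives $f_H(t)\le f_H(t_0/2)(2t/t_0)^d$. That is what makes $\E[(r^2/H)^{d/2}\mathbbm{1}\{H<t_0/2\}]\le C(1-t_0)/\sqrt d$.
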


Rewriting \eqref{eq:finite-sphere-log-mode} explicitly expresses the number of antipodal pairs $M$ required to place the mode at $t_0$:
\begin{equation}
 \begin{aligned}
 M-d&=\frac{d(d-1)}2 B\left(\tfrac12,\tfrac{d-1}2\right)
       \sqrt{t_0}\,G_d(t_0)(1-t_0)^{-(d-1)/2},\\
 \log\frac{M-d}{d}
 &=\frac{d-1}{2}\log\frac1{1-t_0}+\frac12\log(dt_0)+O(1).
 \end{aligned}
 \label{eq:finite-sphere-budget-exact}
\end{equation}
Here we used that $G_d(t_0)\ge1/2$ for $t_0\ge d^{-1/2}$ and large $d$, and that $B(\frac12,\frac{d-1}{2})\asymp d^{-1/2}$.

To reduce \eqref{eq:finite-sphere-all-accuracies} to an estimate at the deterministic radius $r$ given by $r^2=t_0+(1-t_0)q_d$, observe that
\[
 \kappa_d|r^d-R_{d,M}^d|
 \le\mathbb E|K\mathbin\triangle rB_d|.
\]
By the triangle inequality and \eqref{eq:finite-error-identity}, it follows that
\begin{equation}
 \mathbb E\Delta_d(R_{d,M}^{-1}K)
 \le2\mathbb E_\nu\bigl|1-(r/|Y|)^d\bigr|.
 \label{eq:finite-volume-balance}
\end{equation}
It therefore suffices to show that
\begin{equation}
 \mathbb E_\nu\bigl|1-(r/|Y|)^d\bigr|
 \le C\frac{1-t_0}{t_0\sqrt d}.
 \label{eq:finite-radial-numerator}
\end{equation}
Keeping the factor $1-t_0$ in \eqref{eq:finite-radial-numerator} is what yields the logarithmic dependence on $1/(\varepsilon\sqrt d)$ as $\varepsilon\to0$.

By \eqref{eq:Q-mean} and \eqref{eq:Q-variance}, the mean and variance of $\mathcal Q_d$ satisfy
\begin{equation}
 q_d=\mathbb E \mathcal Q_d=\frac{2(d-1)}{d(d+1)}\quad\text{and}\quad 
 \operatorname{Var}(\mathcal Q_d)\le\frac{12}{(d-1)^3}.
 \label{eq:finite-X-moments}
\end{equation}
Combined with the first-moment bound for $H-t_0$ below, these estimates suffice to control the outer region where $|Y|\ge r$. In the inner region $|Y|<r$, the negative $d$th power $(r/|Y|)^d$ requires exponential lower-tail bounds for both $\mathcal Q_d$ and $H$.

\begin{lemma}
\label{lem:finite-tilted-simplex}
For the angular variable $\mathcal Q_d$ in Lemma~\ref{lem:finite-sphere-facets}, there exist absolute constants $C,c>0$ such that
\[
 \mathbb P\bigl((d-1)\mathcal Q_d\le2-s\bigr)
 \le Ce^{-c(d-1)s^2},\qquad 0<s<2.
\]
\end{lemma}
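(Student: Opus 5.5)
The plan is to write $\mathcal Q_d$ as a Greenwood-type quadratic form and control the lower tail by a Chernoff argument. Expand
\[
 \mathcal Q_d=\sum_i\Lambda_i^2+\sum_{i\ne j}\Lambda_i\Lambda_j\langle U_i,U_j\rangle .
\]
Since $|U_i|=1$, this equals
\[
 \mathcal Q_d=1-\tfrac12\sum_{i,j}\Lambda_i\Lambda_j|U_i-U_j|^2 ,
\]
which is exactly the quantity $\mathcal I_Tq-W_T$ from Lemma~\ref{lem:expanded-power} evaluated at the simplex $T=\conv\{U_i\}$.

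Equivalently, by the third identity of Lemma~\ref{lem:expanded-power}, $\mathcal Q_d=1-R_T^2+|x-z_T|^2$, where $x=\sum\Lambda_iU_i$. Here the vertices lie on the unit sphere of $u^\perp$, so $z_T=0$ and $R_T=1$ when viewed in $\R^{d-1}$. Hence $\mathcal Q_d=|x|^2$, consistent with the definition. The point of the decomposition is to separate the barycentric part from the geometric part.

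To exploit this separation, I would represent $\Lambda_i=E_i/S$ with $E_i$ i.i.d.\ standard exponentials and $S=\sum E_i$. Then
\[
 (d-1)\mathcal Q_d\ge (d-1)\,\frac{|\sum_i E_iU_i|^2}{S^2}.
\]
The numerator is a quadratic form $E^{\top}GE$, where $G$ is the Gram matrix of the $U_i$.

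The second step is to handle the squared-volume tilt. Since $\det G=((d-1)!)^2|[U_1,\ldots,U_d]|^2$ up to an affine correction, the tilt becomes manageable. I would compare the tilted law of $(U_i)$ with a Gaussian model $g_i/\sqrt{d-1}$ carrying the same $|\det|^2$-type weight, as the paper indicates is done in \Cref{app:angular-calculations}. In that model, $\sum_iE_iU_i$, conditionally on $E$, is close to a Gaussian vector in $\R^{d-1}$ with covariance $\frac{|E|^2}{d-1}I$. The squared-volume weight only inflates the norm slightly; it acts like a $\chi^2$ with $d+1$ rather than $d-1$ degrees of freedom, which accounts for the mean shift from $1/(d-1)$ to roughly $2/d$ after normalization.

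This gives the lower tail in terms of standard variables: $(d-1)\mathcal Q_d\approx |E|^2\chi^2_{d+O(1)}/S^2$. The $\chi^2$ factor has lower-tail deviations of relative size $s$ with probability at most $e^{-cds^2}$ by Laurent--Massart. The ratio $|E|^2/S^2\cdot d$ concentrates near $2$, and its lower tail is sub-Gaussian at scale $d^{-1/2}$. The reason is that $|E|^2$ and $S$ are sums of independent variables: $|E|^2$ has heavy upper tails, but we only need its lower tail, which is sub-Gaussian because $E_i^2\ge0$ (via the bound $\log\E e^{-\theta E_i^2}\le-\theta\E E_i^2+\theta^2\E E_i^4/2$). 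The upper tail of $S$ is sub-exponential, and Bernstein's inequality gives $e^{-cds^2}$ for $s<2$. A union bound over these three events yields $\mathbb P((d-1)\mathcal Q_d\le 2-s)\le Ce^{-c(d-1)s^2}$.

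The main obstacle is making the Gaussian comparison rigorous for lower tails, since the spherical constraint $|U_i|=1$ and the squared-volume weight are not exactly Gaussian. As a fallback, I would use a conditional argument: condition on $\Lambda$ and bound $\mathbb P(|\sum\Lambda_iU_i|^2\le a)$ under the tilted law. I would rewrite $|\sum\Lambda_iU_i|^2=1-\sum_{i<j}\Lambda_i\Lambda_j|U_i-U_j|^2$ and note that the tilt $|[U]|^2$ favors large pairwise distances. Then I would obtain a Laplace-transform bound $\E\exp(\theta\sum\Lambda_i\Lambda_j\langle U_i,U_j\rangle\cdot(\text{scale}))$ by integrating out one $U_i$ at a time: each $U_i$ enters $|[U]|^2$ through a quadratic in its component orthogonal to the others, and the conditional law of $\langle U_i,v\rangle$ is a beta-type law that is sub-Gaussian with variance $O(1/d)$. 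Since $\Lambda$ is independent of $U$, this would give sub-Gaussian concentration of the cross term around its mean at scale $|\Lambda|^2/\sqrt d\asymp d^{-3/2}$. Combined with the Greenwood lower tail for $\sum\Lambda_i^2$ above, this would close the argument.
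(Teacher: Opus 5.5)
Your first step is also the paper's first step: write $\Lambda_i=E_i/S$ with i.i.d.\ exponentials, so that $|Z|^2=S^2\mathcal Q_d$ for $Z=\sum_iE_iU_i$. This is an equality, not only ``$\ge$'', and $S\sim\mathrm{Gamma}(d,1)$ is independent of $\mathcal Q_d$. Removing $S$ afterwards by a union bound is fine. The gap is in the step that carries all the difficulty: an exponential lower tail for $|Z|^2$ under the spherical law tilted by $|[U_1,\ldots,U_d]|^2$.

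In the Gaussian model your union bound does work. There $(d-1)\mathcal Q_d^{\mathrm G}=Q_d(Y_{d-1}+B_dW)\ge Q_dY_{d-1}$, with $Y_{d-1}\sim\chi^2_{d-1}$ independent of $\Lambda$. (The value $2$ comes from Greenwood's statistic, $dQ_d\to2$, not from the tilt. The tilt only changes $\chi^2_{d-1}$ to $\chi^2_{d+1}$ on part of the mixture, an $O(d^{-1})$ effect.)

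Nothing you invoke transfers this tail to $\mathcal Q_d$. The mixture \eqref{eq:angular-mixture} does give a one-sided domination $\mathcal L(U_1,\ldots,U_d)\le\omega_{d-1}^{-1}\mathcal L_*(V_1,\ldots,V_d)$, so tails of $\mathcal Q_d^{\mathrm{ang}}$ under $\mathbb P_*$ would suffice. However, replacing the unit vectors $V_i$ by $G_i/\sqrt{d-1}$ is controlled only in $L^2$, by \eqref{eq:column-normalization-error}. Chebyshev then gives $\mathbb P_*\bigl(|\mathcal Q_d^{\mathrm G}-\mathcal Q_d^{\mathrm{ang}}|>s/(2d)\bigr)=O(d^{-2}s^{-2})$, a polynomial bound rather than $e^{-c(d-1)s^2}$. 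Under $\mathbb P_*$ the radii are coupled to the directions through $\Delta(G)^2$, so an exponential version of this comparison would be a new estimate, not a routine step.

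Your fallback does not close the gap either. You integrate out one $U_i$ at a time using its conditional law given all the other points (a quadratic tilt in the normal direction of the opposite facet). This does not factor the Laplace transform: after $U_i$ is integrated out, the remaining points carry a new configuration-dependent weight. Sub-Gaussian concentration of the cross term at scale $d^{-3/2}$ under the dependent, tilted law is exactly what has to be proved. A change of measure to independent uniform points does not help either, since $|[U]|^2/M_d$ reaches size $e^{cd}$ and its second moment grows with $d$.

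The missing idea is to use the determinantal form of the weight. With $a(w)=(1,\sqrt{d-1}\,w)$, one has $|[U_1,\ldots,U_d]|^2\propto\det[a(U_1),\ldots,a(U_d)]^2$ and $\E\,a(w)a(w)^{\mathsf T}=I_d$ under the uniform law. (Your ``$\det G$'' should be this augmented determinant; the $d\times d$ Gram matrix of the $U_i$ is singular.)

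Keep the exponentials random rather than conditioning on $\Lambda$. Averaging $e^{\mathrm i\langle v,Z\rangle}$ over them puts the same factor $(1-\mathrm i\langle v,U_j\rangle)^{-1}$ in every column. Andr\'eief's identity then gives the exact formula $\E e^{\mathrm i\langle v,Z\rangle}=\det\E_w\bigl[a(w)a(w)^{\mathsf T}/(1-\mathrm i|v|w_1)\bigr]$. By reflection symmetry this factors as $\mathsf D(|v|)\mathsf R(|v|)^{d-2}$, where $\mathsf R(t)=(d-1)\E[w_2^2/(1+t^2w_1^2)]$ and $\mathsf D(t)=(d-1)\E[w_1^2/(1+t^2w_1^2)]$.

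Writing $e^{-\lambda|Z|^2}=\E_ge^{\mathrm i\sqrt{2\lambda}\langle g,Z\rangle}$ with $g$ standard Gaussian turns this into a Laplace transform. It is bounded via $\mathsf R(t)\le1-t^2/(d+1)+3t^4/((d+1)(d+3))$. A Chernoff bound, together with $\mathbb P(S\le d)>1/2$ and the independence of $S$ and $\mathcal Q_d$, finishes the proof. Conditioning on $\Lambda$, as in your fallback, puts different functions in different columns and destroys this single-determinant structure.
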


Passing to the logarithmic variable $L=-\log(1-H)$ captures the multiplicative fluctuations of $1-H$ as $t_0\uparrow1$, while a separate estimate on the event $\{H<t_0/2\}$ controls the singularity of $|Y|^{-d}$ near the origin.

\begin{lemma}
\label{lem:finite-sphere-height}
For $t_0\ge d^{-1/2}$, $v>0$, and absolute constants $c,C>0$,
\begin{equation}
 \begin{aligned}
 \mathbb P(L<L_0-v)&\le Ce^{-cd^3v^2},\\
 \mathbb E(L-L_0)_+
 +\mathbb E\left[(L_0-L)_+e^{(L_0-L)_+}\right]&\le Cd^{-3/2},\\
 \mathbb E|H-t_0|&\le C(1-t_0)d^{-3/2}.
 \end{aligned}
 \label{eq:finite-log-height-tails}
\end{equation}
Moreover, for every $s\in[t_0,1]$,
\begin{equation}
 \mathbb P(H<t_0/2)
 +\mathbb E\left[(s/H)^{d/2}\mathbbm{1}\{H<t_0/2\}\right]
 \le\frac{C(1-t_0)}{\sqrt d}.
 \label{eq:finite-sphere-small-height}
\end{equation}
\end{lemma}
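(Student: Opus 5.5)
The plan is to work entirely with the explicit density of $L=-\log(1-H)$ from Lemma~\ref{lem:finite-sphere-facets} and exploit its log-concavity. Since $dt=(1-t)\,dL$, the density of $L$ on $(0,\infty)$ is proportional to $e^{\varphi(L)}$, where
\[
 \varphi(L)=-\tfrac{d(d-1)}{2}L+(M-d)\log G_d(1-e^{-L}).
\]
Write $g(L):=(1-t)G_d'(t)/G_d(t)$ with $t=1-e^{-L}$. Then $\varphi'(L)=-\frac{d(d-1)}2+(M-d)g(L)$, and the mode equation \eqref{eq:finite-sphere-log-mode} reads $(M-d)g(L_0)=d(d-1)/2$.

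The key computation is the logarithmic derivative of $g$, taken with respect to $L$ (that is, applying $(1-t)\,d/dt$). Using the explicit formula for $G_d'$,
\[
 (\log g)'(L)=-1-\tfrac{d-3}{2}-\tfrac{1-t}{2t}-\tfrac{(1-t)G_d'(t)}{G_d(t)}\le-\tfrac{d-1}{2}.
\]
Hence $g$ is decreasing and $\varphi''=(M-d)g\,(\log g)'<0$, so $\varphi$ is concave.

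\emph{Lower tail.} For $L\le L_0$ we have $(M-d)g(L)\ge(M-d)g(L_0)=d(d-1)/2$. Therefore $\varphi''(L)\le-d(d-1)^2/4$, and so $\varphi(L)\le\varphi(L_0)-c d^3(L_0-L)^2$.

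\emph{Upper side.} For $L\ge L_0$, integrating $(\log g)'\le-(d-1)/2$ gives $g(L)\le g(L_0)e^{-(d-1)(L-L_0)/2}$. Hence $\varphi'(L)\le-\frac{d(d-1)}2\bigl(1-e^{-(d-1)(L-L_0)/2}\bigr)$. This gives Gaussian decay at rate $d^3$ while $L-L_0\lesssim1/d$, and exponential decay at rate $d^2$ beyond that.

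\emph{Normalization.} To turn these pointwise bounds into probability bounds, I need a lower bound on $Z=\int e^{\varphi}$. For this I will show $|\varphi''|\le Cd^3$ on $[L_0-d^{-3/2},L_0]$. The terms $(1-t)/(2t)\le C\sqrt d$ and $(1-t)G_d'/G_d\le Cd$ are harmless there, because $t_0\ge d^{-1/2}$ and $G_d(t_0)\ge1/2$. Consequently $Z\ge c d^{-3/2}e^{\varphi(L_0)}$. Dividing the pointwise bounds by $Z$ yields $\mathbb P(L<L_0-v)\le Ce^{-cd^3v^2}$. It also yields $\mathbb E(L-L_0)_+\le Cd^{-3/2}$, where the region beyond $1/d$ contributes at most $O(d^{-2})$. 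The term $\mathbb E[(L_0-L)_+e^{(L_0-L)_+}]$ is bounded by integrating the sub-Gaussian tail against the growth of $ve^v$, which is negligible at scale $v\sim d^{-3/2}$.

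\emph{Bound on $|H-t_0|$.} The third line of \eqref{eq:finite-log-height-tails} follows from
\[
 |H-t_0|=|e^{-L_0}-e^{-L}|\le(1-t_0)|L-L_0|e^{(L_0-L)_+}.
\]

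\emph{Small heights.} For \eqref{eq:finite-sphere-small-height}, note that $H<t_0/2$ forces $L_0-L\ge\log\frac{1-t_0/2}{1-t_0}\ge\max\{t_0/2,\,L_0-\log2\}$. The Gaussian bound then gives
\[
 \mathbb P(H<t_0/2)\le C\exp\bigl(-cd^3\max\{t_0^2,(L_0-\log 2)_+^2\}\bigr).
\]
This is at most $\exp(-cd^2)$ and is superexponentially small in $L_0$, hence far below $(1-t_0)/\sqrt d=e^{-L_0}/\sqrt d$.

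\emph{Main obstacle: the negative moment.} The step I expect to be hardest is bounding $\mathbb E[(s/H)^{d/2}\mathbbm 1\{H<t_0/2\}]$, because the factor $t^{-d/2}$ blows up at $t\to0$ and the Gaussian bound in $L$ alone cannot absorb it. I will split $\{H<t_0/2\}$ at a small threshold $t_1$.

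On $[t_1,t_0/2]$, bound $(s/t)^{d/2}\le t_1^{-d/2}$ and use the tail $\exp(-cd^3t_0^2)\le\exp(-cd^2)$. This costs only $\exp(\frac d2\log(1/t_1))$ when $t_1=d^{-C}$.

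On $(0,t_1)$, work directly with the density of $H$ in $t$ and use the factor $G_d(t)^{M-d}\le(C\sqrt{dt})^{M-d}$, valid since $G_d(t)\le\sqrt t/B(\tfrac12,\tfrac{d-1}2)$. Because $M-d\ge d$, the integrand $t^{-d/2}(C\sqrt{dt})^{M-d}$ is integrable and tiny. I will compare it with $Z$ via $e^{\varphi(L_0)}\ge(1-t_0)^{d(d-1)/2}2^{-(M-d)}$ and the normalization bound above. The care needed is in checking that the $(1-t_0)^{d(d-1)/2}$ factor in this comparison and the size of $M$ given by \eqref{eq:finite-sphere-budget-exact} still leave a final bound of order $(1-t_0)/\sqrt d$.
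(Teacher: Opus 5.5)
Your proof of \eqref{eq:finite-log-height-tails} follows the paper's argument. It uses the same logarithmic derivative of $g$ (the paper's $b$), the same curvature bound $-\varphi''\ge d(d-1)^2/4$ to the left of the mode, the same exponential decay of $g$ to the right, and the same pointwise bound on $|H-t_0|$.

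The one change is that you lower-bound the normalizing integral on $[L_0-d^{-3/2},L_0]$ rather than on $[L_0,L_0+d^{-3/2}]$. On that side you also need $(M-d)g\le Cd^2$, which you did not state. It holds because $|(\log g)'|\le Cd$ there, so $g(L)\le e^{Cd^{-1/2}}g(L_0)$. To the right of the mode, the paper gets $g\le g(L_0)$ directly from monotonicity.

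The genuinely different part is the negative moment in \eqref{eq:finite-sphere-small-height}.
\begin{itemize}
\item The paper combines the mode relation with $G_d'(t)/G_d(t)\ge(1-t)^{(d-3)/2}/(2t)$ to show $(\log f_H)'(t)\ge d/t$ on $(0,t_0/2]$. Hence $f_H(t)\le f_H(t_0/2)(2t/t_0)^d$. This single domination cancels the weight $t^{-d/2}$ and reduces everything to the Gaussian bound on $f_H(t_0/2)$, with no threshold to tune.
\item You instead split at $t_1=d^{-C}$. Near zero you compare $G_d(t)^{M-d}\le(C\sqrt{dt})^{M-d}$ directly with $e^{\varphi(L_0)}\ge(1-t_0)^{d(d-1)/2}2^{-(M-d)}$. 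This avoids a second differential inequality but requires the bookkeeping you flagged.
\end{itemize}

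That check does succeed. By \eqref{eq:finite-sphere-budget-exact}, $M-d\ge cd^{5/4}e^{(d-1)L_0/2}\ge cd^{5/4}\bigl(1+\tfrac{(d-1)L_0}{2}\bigr)$. So with $t_1=d^{-3}$, the factor $(C\sqrt{dt_1})^{M-d}\le e^{-c(M-d)\log d}$ beats all of the following, uniformly in $L_0\ge d^{-1/2}$ for large $d$: $e^{d(d-1)L_0/2}$, $t_1^{-d/2}$, $Cd^{3/2}$, and the target $e^{-L_0}/\sqrt d$.

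One point needs care on the middle range $[t_1,t_0/2]$. You must keep the $(L_0-\log 2)_+^2$ part of the Gaussian exponent, not just the bound $e^{-cd^2}$. The reason is that $1-t_0=e^{-L_0}$ can be arbitrarily small when $\varepsilon$ is small. With that term retained, $d^{3d/2}\exp\bigl(-cd^3\max\{t_0^2,(L_0-\log 2)_+^2\}\bigr)\le e^{-L_0}/\sqrt d$ for large $d$. Without it, the middle-range bound fails once $L_0\gg d^2$.

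In short, your route uses only crude bounds plus the size of $M$. The paper's route gives a uniform pointwise polynomial decay of $f_H$ on the whole small-height range.
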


\begin{proof}[Proof of Proposition~\ref{prop:finite-equal-radii}]
We prove \eqref{eq:finite-radial-numerator} using Lemmas~\ref{lem:finite-tilted-simplex} and~\ref{lem:finite-sphere-height}, and then choose $M$ via the mode equation \eqref{eq:finite-sphere-budget-exact}. All expectations below are taken under $\nu$, with $r^2=t_0+(1-t_0)q_d$.

We first bound the contribution from the outer region $\{|Y|\ge r\}$. Since
\[
 |Y|^2-r^2=(H-t_0)(1-\mathcal Q_d)+(1-t_0)(\mathcal Q_d-q_d),
\]
the bounds in \eqref{eq:finite-X-moments} and \eqref{eq:finite-log-height-tails} give
\[
 \begin{aligned}
 \mathbb E\bigl||Y|^2-r^2\bigr|
 &\le\mathbb E|H-t_0|+(1-t_0)\mathbb E|\mathcal Q_d-q_d|\\
 &\le C(1-t_0)d^{-3/2}.
 \end{aligned}
\]
On the event $\{|Y|\ge r\}$, we have
\[
 0\le1-(r/|Y|)^d
 \le\frac{d}{2r^2}\bigl(|Y|^2-r^2\bigr),
\]
and since $r^2\ge t_0$, the contribution of $\{|Y|\ge r\}$ to \eqref{eq:finite-radial-numerator} is at most $C(1-t_0)/(t_0\sqrt d)$.

On the complementary event $\{|Y|<r\}$, we use the exponential lower tails to control $(r/|Y|)^d-1$. Define the non-negative inward deviations
\[
 D_H:=(L_0-L)_+,\qquad D_Q:=(q_d-\mathcal Q_d)_+,
\]
which are independent by Lemma~\ref{lem:finite-sphere-facets}. Since $(d-1)q_d\le2$, Lemmas~\ref{lem:finite-tilted-simplex} and~\ref{lem:finite-sphere-height} imply that $\mathbb P(D_H>v)+\mathbb P(D_Q>v)\le Ce^{-cd^3v^2}$ for all $v>0$. Integrating this tail bound shows that for $D\in\{D_H,D_Q\}$,
\begin{equation}
 \mathbb E e^{kD}\le C,\qquad
 \mathbb E(De^{kD})\le Cd^{-3/2},\qquad 0\le k\le d^{3/2}.
 \label{eq:finite-sphere-tail-moments}
\end{equation}
On the event $\{H\ge t_0/2,\,|Y|<r\}$, the identity $1-H=(1-t_0)e^{L_0-L}$ gives
\[
 \begin{aligned}
 r^2-|Y|^2&\le(1-t_0)(e^{D_H}-1+D_Q),\\
 0<d\log\frac{r}{|Y|}
 &\le\frac{d(1-t_0)}{t_0}(e^{D_H}-1+D_Q).
 \end{aligned}
\]
When $D_H\le1$, we have $e^{D_H}-1\le3D_H$. Using the elementary inequality $e^z-1\le ze^z$ for $z>0$, the independence of $D_H$ and $D_Q$, and \eqref{eq:finite-sphere-tail-moments} with $k=3d(1-t_0)/t_0\le d^{3/2}$ (for $C_0\ge3$), we bound the expectation on $\{H\ge t_0/2,\,|Y|<r,\,D_H\le 1\}$ by
\[
 \frac{Cd(1-t_0)}{t_0}d^{-3/2}
 =\frac{C(1-t_0)}{t_0\sqrt d}.
\]
When $D_H>1$ and $H\ge t_0/2$, we have $(1-t_0)e^{D_H}=1-H\le1$ and $D_Q\le q_d\le2/d$, so $d\log(r/|Y|)\le Cd/t_0$. Using $e^z-1\le ze^z$ once more, we bound the expectation on $\{H\ge t_0/2,\,|Y|<r,\,D_H>1\}$ by
\[
 \begin{aligned}
 \frac{Cd(1-t_0)}{t_0}e^{Cd/t_0}
   \mathbb E\left[(e^{D_H}+D_Q)\mathbbm{1}\{D_H>1\}\right]
 &\le\frac{Cd(1-t_0)}{t_0}e^{Cd/t_0-cd^3}\\
 &\le\frac{C(1-t_0)}{t_0\sqrt d},
 \end{aligned}
\]
where the bound on $\mathbb E[(e^{D_H}+D_Q)\mathbbm{1}\{D_H>1\}]$ follows from \eqref{eq:finite-log-height-tails} and the factor $e^{-cd^3}$ absorbs $e^{Cd/t_0}$ since $t_0\ge d^{-1/2}$.

It remains to consider the event $\{H<t_0/2\}$. Since $|Y|^2\ge H$, we have
\[
 \bigl|1-(r/|Y|)^d\bigr|
 \le1+(r^2/H)^{d/2}.
\]
Because $r^2\in[t_0,1]$, the estimate \eqref{eq:finite-sphere-small-height} in Lemma~\ref{lem:finite-sphere-height} bounds the contribution of $\{H<t_0/2\}$ by $C(1-t_0)/\sqrt d$. Combining the three regions establishes \eqref{eq:finite-radial-numerator}, and \eqref{eq:finite-volume-balance} then gives \eqref{eq:finite-sphere-all-accuracies}.

Finally, to achieve expected error at most $\varepsilon$, we set $t_*=\bigl(1+\frac{\varepsilon\sqrt d}{2C_1}\bigr)^{-1}$ and take $M-d$ to be the ceiling of the right-hand side of \eqref{eq:finite-sphere-budget-exact} evaluated at $t_*$. Since the mode $t_0$ increases with $M$, this choice satisfies $t_0\ge t_*$. For $\varepsilon_0$ sufficiently small and $d_0$ sufficiently large, we have $M\ge2d$ and $t_*\ge C_0/\sqrt d$, and \eqref{eq:finite-sphere-all-accuracies} guarantees that the expected error is at most $C_1(1-t_*)/(t_*\sqrt d)=\varepsilon/2$. With $N=2M$ vertices, \eqref{eq:finite-sphere-budget-exact} gives
\begin{equation}
 \log N\le\frac{d-1}{2}\log\left(1+\frac{2C_1}{\varepsilon\sqrt d}\right)
          +\frac32\log d+C.
 \label{eq:finite-sphere-budget-refined}
\end{equation}
For $\varepsilon\le\varepsilon_0$, the $\frac32\log d+C$ term is absorbed into $Cd\log(1+1/(\varepsilon\sqrt d))$. Choosing a realization whose error does not exceed the expectation completes the proof of \eqref{eq:finite-budget-all-upper}.
\end{proof}

For $\varepsilon_0<\varepsilon<1$, use the construction at accuracy
$\varepsilon_0$. The inequality
$\log(1+b x)\le b\log(1+x)$ for $b\ge1$ gives
\[
 \log\left(1+\frac1{\varepsilon_0\sqrt d}\right)
 \le\frac{\varepsilon}{\varepsilon_0}
       \log\left(1+\frac1{\varepsilon\sqrt d}\right)
 \le\frac1{\varepsilon_0}
       \log\left(1+\frac1{\varepsilon\sqrt d}\right).
\]
Absorbing the absolute factor $\varepsilon_0^{-1}$ proves the upper
bound in \Cref{thm:finite-budget} throughout $0<\varepsilon<1$.

\subsubsection{From simplex errors to spherical caps}

To prove the lower bound, we cover at least half of the unit sphere by spherical caps centered at the directions of the vertices of $P$. A simplex-level $L^2$ estimate controls the average squared norm of the vertices of most facets, and standard spherical cap bounds then force a lower bound on the number of vertices.

We use the degree-two case of the Carbery--Wright inequality \cite[Theorem~8, with $q=4$]{CarberyWright2001}: for every log-concave probability measure $\mu$ on $\mathbb R^m$ and every quadratic polynomial $f$,
\begin{equation}
 \mu\{|f|\le t\}\le C_0
 \left(\frac{t}{\|f\|_{L^2(\mu)}}\right)^{1/2},\qquad t>0,
 \label{eq:finite-CW}
\end{equation}
where $C_0\ge1$ is an absolute constant. In particular, \eqref{eq:finite-CW} applies to the uniform probability measure on any simplex in its affine hull.

\begin{lemma}
\label{lem:finite-simplex}
Let $Y$ be uniformly distributed on a simplex with $k\ge3$ vertices $v_1,\ldots,v_k$. For an absolute constant $c>0$,
\[
 \bigl\|\,|Y|^2-1\,\bigr\|_2
 \ge\frac{c}{(k-1)^{3/2}}
       \left|\frac1k\sum_i|v_i|^2-1\right|.
\]
\end{lemma}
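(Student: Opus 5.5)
The idea is to use the annihilator certificate of Lemma~\ref{lem:expanded-annihilator}, this time in $L^2$ form. Write $m=k-1$ and work in the affine hull of the simplex $T$, viewed as an $m$-dimensional Euclidean space. Let $\Lambda$ be the barycentric coordinate vector of $Y$, and set $a_i:=|v_i|^2-1$ and $\bar a:=\frac1k\sum_i a_i$. By Lemma~\ref{lem:expanded-power},
\[
 |Y|^2-1=\sum_i\Lambda_i a_i-W_T(Y)
 =\sum_i\Lambda_i a_i-\sum_{i<j}\Lambda_i\Lambda_j|v_i-v_j|^2 .
\]
The first sum is affine in $Y$ and the second is $W_T(Y)\ge0$. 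The goal is a lower bound on $\||Y|^2-1\|_2$ in terms of $|\bar a|$ alone, uniformly over simplex shapes. Because $\|\sigma_k\|_2=1$ and $\sigma_k$ annihilates affine functions,
\[
 \||Y|^2-1\|_2\ge\bigl|\E[\sigma_k(|Y|^2-1)]\bigr|=\beta_m\E W_T(Y).
\]
This controls the curvature part but not $\bar a$ itself. So I combine it with a second test function, the constant $1$:
\[
 \||Y|^2-1\|_2\ge|\E(|Y|^2-1)|=\bigl|\bar a-\E W_T(Y)\bigr| .
\]
Here I used $\E\Lambda_i=1/k$.

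The argument then splits into two cases, with $\bar a$ playing the role of the quantity in the statement. In the case $|\bar a|\le 2\E W_T$, the first bound gives $\||Y|^2-1\|_2\ge\beta_m|\bar a|/2$. In the case $|\bar a|>2\E W_T$, the second bound gives $\||Y|^2-1\|_2\ge|\bar a|/2$. Taking the minimum, $\||Y|^2-1\|_2\ge\min\{\beta_m,1\}|\bar a|/2$. Proposition~\ref{prop:expanded-beta-asymptotic}, specifically \eqref{eq:beta-uniform}, gives $\beta_m\ge c\,m^{-3/2}=c(k-1)^{-3/2}$, and this is exactly the claimed bound.

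Two details need checking. First, the certificate inequality must hold in $L^2$. By Cauchy--Schwarz, $\E[\sigma_k f]\le\|\sigma_k\|_2\|f\|_2=\|f\|_2$, and $\sigma_k^2=1$ almost surely, so this is fine. Second, the identity $\E[\sigma_k W_T]=\beta_m\E W_T$ must hold for arbitrary edge lengths. This follows from \eqref{eq:expanded-quadratic-correlation} by linearity over the pairs $i<j$: the edge weights $|v_i-v_j|^2$ are arbitrary nonnegative constants, but each pair carries the same correlation factor $\beta_m$.

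Since the paper states the lemma only for $k\ge3$, I would also confirm that nothing in the argument depends on the simplex being full-dimensional beyond having uniform barycentric coordinates; the Dirichlet law holds in any affine hull. I do not expect a real obstacle here. The main point is to recognise that the constant test function handles the regime where the affine offset dominates, while $\sigma_k$ handles the regime where curvature dominates.
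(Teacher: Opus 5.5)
Your proof is correct and is essentially the paper's argument: both rely on the two test functions $\sigma_k$ (via Lemma~\ref{lem:expanded-annihilator}) and the constant $1$, which give the bounds $\beta_{k-1}\E W_T(Y)$ and $|\bar a-\E W_T(Y)|$. The differences are cosmetic. The paper works in $L^1$, adds the two bounds by the triangle inequality to get the factor $(1+\beta_{k-1}^{-1})^{-1}$, and then passes to $L^2$ via $\|\cdot\|_1\le\|\cdot\|_2$; you instead use Cauchy--Schwarz directly in $L^2$ with a case split, and because $|Y|^2-1=\sum_i\Lambda_ia_i-W_T(Y)$ holds in ambient coordinates, you can skip the paper's remark about parametrizing the affine hull.
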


\begin{proof}
We apply Theorem~\ref{thm:expanded-finite-transfer} on the affine hull of the simplex with the constant affine approximant $h\equiv 1$. Because an orthonormal parametrization of the affine hull changes the ambient squared norm $|y|^2$ only by an affine function of the intrinsic coordinates, Theorem~\ref{thm:expanded-finite-transfer} applies directly to $|Y|^2-1$. Writing $\Lambda=(\Lambda_1,\ldots,\Lambda_k)$ for the barycentric coordinates of $Y$, we obtain
\[
 \mathbb E\bigl||Y|^2-1\bigr|
 \ge\beta_{k-1}\mathbb E\left(\sum_i\Lambda_i|v_i|^2-|Y|^2\right),
\]
where the integrand on the right-hand side is the non-negative interpolation gap. Since $\mathbb E\Lambda_i=1/k$, the triangle inequality gives
\[
 \left|\frac1k\sum_i|v_i|^2-1\right|
 \le(1+\beta_{k-1}^{-1})\mathbb E\bigl||Y|^2-1\bigr|.
\]
Combining this with the lower bound $\beta_{k-1}\ge c(k-1)^{-3/2}$ from \eqref{eq:beta-uniform} and the inequality $\|\cdot\|_1\le\|\cdot\|_2$ completes the proof.
\end{proof}

We also record two standard upper bounds on the normalized surface measure of a spherical cap:
\begin{equation}
 \sigma\{u:u_1\ge h\}\le e^{-dh^2/2}\quad(h>0)
 \qquad\text{and}\qquad 
 \sigma\{u:u_1\ge h\}\le(1-h^2)^{(d-1)/2}\quad(h\ge\tfrac12).
 \label{eq:finite-caps}
\end{equation}
The first estimate is \citet[Lemma~2.2]{Ball1997}. The second follows from the cap-area bound in \citet[proof of Corollary~3.2(iii)]{BoroczkyWintsche2003} with angular radius $\arccos h$, since for $d\ge4$ and $h\ge1/2$ its hypothesis $h\ge d^{-1/2}$ is satisfied and its prefactor $[h\sqrt{2\pi(d-1)}]^{-1}$ is at most one. For $d=3$, the exact formula $\sigma\{u:u_1\ge h\}=(1-h)/2$ gives the same inequality.

Finally, if a nonzero point $x=\sum_i\lambda_iv_i$ is a convex combination of $v_1,\ldots,v_k$ satisfying $\sum_i\lambda_i|v_i|^2\le A$, then
\begin{equation}
 \max_{v_i\ne0}\left\langle\frac{x}{|x|},\frac{v_i}{|v_i|}\right\rangle
 \ge\frac{|x|}{\sqrt A}.
 \label{eq:finite-direction}
\end{equation}
Indeed, if $\mu$ denotes the maximum on the left-hand side, then the Cauchy--Schwarz inequality gives
\[
 |x|=\sum_i\lambda_i\left\langle\frac{x}{|x|},v_i\right\rangle
 \le\mu\sum_i\lambda_i|v_i|
 \le\mu\sqrt A.
\]

\subsubsection{The covering lower bound}

Suppose $\Delta_d(P)<1/2$. Then the origin must lie in the interior of $P$, since otherwise a separating hyperplane through $o$ would leave at least half of $B_d$ outside $P$. We triangulate each facet of $P$ without introducing new vertices \cite[Section~4.3.2]{DeLoeraRambauSantos2010Triangulations}. For each boundary simplex $F=\operatorname{conv}\{v_1,\ldots,v_d\}$ in this triangulation, let $Y_F$ be uniformly distributed on $F$ and define its normalized cone weight by
\[
 w_F=\frac{|\operatorname{conv}\{o,F\}|}{\kappa_d}.
\]
The radial projection from $F$ to $\mathbb S^{d-1}$ has Jacobian $h_F|y|^{-d}$, where $h_F$ is the distance from the origin to the supporting hyperplane of $F$. Integrating in polar coordinates over the cones $\operatorname{conv}\{o,F\}$ gives
\begin{equation}
 \Delta_d(P)=\sum_Fw_F\mathbb E\bigl|1-|Y_F|^{-d}\bigr|,\qquad
 \sum_Fw_F=\frac{|P|}{\kappa_d}\le1+\Delta_d(P).
 \label{eq:finite-cones}
\end{equation}
Moreover, for any measurable subset $E\subset F\cap\{y:|y|\ge r\}$, the set of unit directions $\{y/|y|:y\in E\}$ has $\sigma$-measure at most $r^{-d}w_F\mathbb P(Y_F\in E)$.

For all $s>0$ and $d\ge3$, elementary calculus gives the pointwise bound
\begin{equation}
 |1-s^{-d/2}|\ge c\min\{1,d|s-1|\}.
 \label{eq:finite-g}
\end{equation}
Indeed, for $s<1$ we have $s^{-d/2}\ge1+(d/2)(1-s)$, for $1\le s\le2$ we have $\log s\ge(s-1)/2$, and for $s\ge2$ we have $1-s^{-d/2}\ge1-2^{-d/2}$. Applying the Carbery--Wright inequality \eqref{eq:finite-CW} to $f(y)=|y|^2-1$ at level $t=\|\,|Y_F|^2-1\,\|_2/(4C_0^2)$ shows that $||Y_F|^2-1|\ge\|\,|Y_F|^2-1\,\|_2/(4C_0^2)$ with probability at least $1/2$. Combining this with \eqref{eq:finite-g} and Lemma~\ref{lem:finite-simplex} yields
\begin{equation}
 \mathbb E\bigl|1-|Y_F|^{-d}\bigr|
 \ge c\min\left\{1,
       \frac{\left|\frac1d\sum_i|v_i|^2-1\right|}{\sqrt d}\right\}.
 \label{eq:finite-one-simplex}
\end{equation}

We now convert \eqref{eq:finite-one-simplex} into a covering of the sphere. Suppose that $\Delta_d(P)\le\varepsilon\le\varepsilon_0$. We fix a large absolute constant $C_{\mathrm{cov}}$ and a small absolute constant $\varepsilon_0\le1/16$, and set
\[
 r=(1-8\varepsilon)^{1/d},\qquad
 \zeta=C_{\mathrm{cov}}\varepsilon\sqrt d.
\]
We call a boundary simplex $F$ \emph{bad} if $\frac1d\sum_i|v_i|^2>1+\zeta/64$, and \emph{good} otherwise. By \eqref{eq:finite-cones} and \eqref{eq:finite-one-simplex}, the total cone weight of the bad simplices satisfies
\[
 \sum_{F\text{ bad}}w_F
 \le\max\left\{\frac{64}{cC_{\mathrm{cov}}},
                       \frac{\varepsilon_0}{c}\right\}\le\frac1{32}.
\]
On each good simplex $F$, let $\lambda_1(y),\ldots,\lambda_d(y)$ denote the barycentric coordinates of $y\in F$ and consider the barycentric energy
\[
 E_F(y):=\sum_i\lambda_i(y)|v_i|^2,
 \qquad E_F(y)-|y|^2=\sum_i\lambda_i(y)|v_i-y|^2\ge0.
\]
The expectation of $E_F(Y_F)$ is $\frac1d\sum_i|v_i|^2\le1+\zeta/64$. Since $(1-s)_+\le\frac2d|1-s^{-d/2}|$ for $s>0$, we have
\[
 \mathbb E\bigl(E_F(Y_F)-|Y_F|^2\bigr)
 \le\frac\zeta{64}+\frac2d\mathbb E\bigl|1-|Y_F|^{-d}\bigr|.
\]
Summing over $F$ using \eqref{eq:finite-cones} and applying Markov's inequality together with \eqref{eq:finite-g}, we obtain
\begin{align*}
 \sum_Fw_F\mathbb P(|Y_F|^2>1+\zeta)
 &\le\frac{\varepsilon}{c\min\{1,d\zeta\}}\le\frac1{1024},\\
 \sum_{F\text{ good}}w_F
 \mathbb P\bigl(E_F(Y_F)-|Y_F|^2>\zeta\bigr)
 &\le\frac{1+\varepsilon_0}{64}
       +\frac2{C_{\mathrm{cov}}d^{3/2}}\le\frac1{32},
\end{align*}
once $C_{\mathrm{cov}}$ is chosen sufficiently large and $\varepsilon_0$ sufficiently small.

Now consider the radial function $\rho_P$ of $P$ on $\mathbb S^{d-1}$. The set of directions where $\rho_P<r$ has $\sigma$-measure at most $1/8$, because $|\rho_P^d-1|\ge8\varepsilon$ on that set and $\int_{\mathbb S^{d-1}}|\rho_P^d-1|\,d\sigma=\Delta_d(P)\le\varepsilon$. Among the remaining directions with $\rho_P\ge r$, those that meet a bad simplex or meet a good simplex at a point $y$ with $|y|^2>1+\zeta$ or $E_F(y)-|y|^2>\zeta$ have total $\sigma$-measure at most
\[
 r^{-d}\left(\frac1{32}+\frac1{1024}+\frac1{32}\right)
 \le\frac1{16}+\frac1{512}+\frac1{16}.
\]
Consequently, a set of directions of $\sigma$-measure greater than $1/2$ meets some good simplex $F$ at a point $y$ satisfying $|y|\ge r$ and $E_F(y)\le1+2\zeta$. By \eqref{eq:finite-direction}, every such direction lies in a spherical cap of height
\begin{equation}
 h=\frac{r}{\sqrt{1+2\zeta}}
 \label{eq:finite-cover-height}
\end{equation}
centered at the direction of one of the vertices of $P$.

Since $P$ has at most $N$ vertices and these $N$ caps cover at least half of $\mathbb S^{d-1}$, the cap bounds in \eqref{eq:finite-caps} imply that
\begin{equation}
 \begin{aligned}
 \log(2N)&\ge\frac{d}{4(1+2\zeta)},\\
 \log(2N)&\ge\frac{d-1}{2}\log\frac1{3\zeta}\qquad(\zeta\le\tfrac14).
 \end{aligned}
 \label{eq:finite-cap-consequences}
\end{equation}
Here the first inequality uses $r^d=1-8\varepsilon\ge1/2$, which gives $h^2\ge1/(2(1+2\zeta))$, and the second uses $1-r^2\le32\varepsilon/d\le\zeta$, which gives $1-h^2\le3\zeta$ and $h\ge1/2$ when $\zeta\le1/4$. For $\zeta\ge1/36$, the first bound in \eqref{eq:finite-cap-consequences} is at least $\frac{d}{296}\log(1+\zeta^{-1})$. For $\zeta<1/36$, the second bound in \eqref{eq:finite-cap-consequences} together with $(3\zeta)^{-2}\ge1+\zeta^{-1}$ gives $\log(2N)\ge\frac{d-1}{4}\log(1+\zeta^{-1})$. By the concavity of $t\mapsto\log(1+t)$, we have
\[
 \log(1+\zeta^{-1})\ge C_{\mathrm{cov}}^{-1}
           \log\left(1+\frac1{\varepsilon\sqrt d}\right).
\]
For $\varepsilon\le\varepsilon_0$, this lower bound grows at least as $c\sqrt d$, which absorbs the additive $\log 2$ for large $d$ and proves the lower bound in \eqref{eq:finite-budget} for $\varepsilon\le\varepsilon_0$.

To deduce Theorem~\ref{thm:finite-all-N}, we apply \eqref{eq:finite-cap-consequences} with $\varepsilon=\Delta_d(P)\le\varepsilon_0$. When $\zeta=C_{\mathrm{cov}}\varepsilon\sqrt d\le1/4$, the second inequality in \eqref{eq:finite-cap-consequences} rearranges to
\[
 \varepsilon\ge\frac{(2N)^{-2/(d-1)}}{3C_{\mathrm{cov}}\sqrt d},
\]
whereas when $C_{\mathrm{cov}}\varepsilon\sqrt d>1/4$ we immediately have $\varepsilon>1/(4C_{\mathrm{cov}}\sqrt d)$. Adjusting the absolute constant to cover $\varepsilon>\varepsilon_0$ and noting that $2^{-2/(d-1)}\ge1/2$ for $d\ge3$ proves \eqref{eq:finite-all-N}.

Finally, when $C_{\mathrm{cov}}\varepsilon\sqrt d\le1/4$, combining the small-cap lower bound in \eqref{eq:finite-cap-consequences} with the refined upper bound \eqref{eq:finite-sphere-budget-refined} shows that both bounds share the leading term $\frac{d-1}{2}\log\frac1{\varepsilon\sqrt d}$ up to an $O(d)$ remainder.

\subsubsection{Accuracies close to one}

It remains to establish the fixed-accuracy lower bound in Theorem~\ref{thm:finite-budget} for all $\varepsilon\in(0,1)$. To treat errors close to one, we triangulate the full-dimensional polytope $P$ itself into $d$-simplices without introducing new vertices \cite[Section~4.3.2]{DeLoeraRambauSantos2010Triangulations}. Fix $0<w<1$ and consider the spherical shell $\{x\in B_d:r\le|x|\le1\}$, where $r=\sqrt{1-w}$. On any simplex $S=\operatorname{conv}\{v_1,\ldots,v_{d+1}\}$ of the triangulation satisfying $\frac1{d+1}\sum_{i=1}^{d+1}|v_i|^2>T$ with $T>1$, applying Lemma~\ref{lem:finite-simplex} and the Carbery--Wright inequality \eqref{eq:finite-CW} bounds the fraction of $S$ lying in the shell by $C\sqrt{d^{3/2}w/(T-1)}$. On every remaining simplex with $\frac1{d+1}\sum_{i=1}^{d+1}|v_i|^2\le T$, Markov's inequality implies that the subset of points $x\in S$ whose barycentric energy $\sum_i\lambda_i(x)|v_i|^2$ exceeds $4T$ has relative volume at most $1/4$. By \eqref{eq:finite-direction}, every other point of $P$ in the shell lies in the cone over one of the $N$ spherical caps of height $r/(2\sqrt T)$ centered at the vertex directions of $P$. Summing the volumes of these regions gives
\[
 \frac{|P\cap B_d|}{\kappa_d}
 \le(1-w)^{d/2}+Ne^{-d(1-w)/(8T)}
 +\frac{|P|}{\kappa_d}
   \left[C\sqrt{\frac{d^{3/2}w}{T-1}}+\frac14\right].
\]
We choose $T=1+C d^{3/2}w$ with the absolute constant $C$ large enough that the bracketed factor is at most $1/2$. Using the identity
\[
 \Delta_d(P)=1+\frac{|P|}{\kappa_d}
 -2\frac{|P\cap B_d|}{\kappa_d},
\]
we obtain
\begin{equation}
 \Delta_d(P)\ge1-2(1-w)^{d/2}
 -2N\exp\left(-\frac{d(1-w)}{8(1+C d^{3/2}w)}\right).
 \label{eq:finite-shell-obstruction}
\end{equation}
The same inequality holds trivially if $P$ is lower-dimensional, since then $\Delta_d(P)=1$. Setting $w=2a/d$ with $1\le a\le d/4$ yields
\begin{equation}
 \Delta_d(P)\ge1-2e^{-a}-2N e^{-c\sqrt d/a}.
 \label{eq:finite-root-obstruction}
\end{equation}
For any fixed $\varepsilon\in(0,1)$, choosing $a=\log(8/(1-\varepsilon))$ shows that for all sufficiently large $d$, every polytope with $N$ vertices and $\Delta_d(P)\le\varepsilon$ must satisfy
\[
 \log N\ge\frac{c\sqrt d}{\log(8/(1-\varepsilon))}
             -\log\frac8{3(1-\varepsilon)}.
\]
Combined with the upper bound at accuracy $\min\{\varepsilon,\varepsilon_0\}$, this shows that $\log N_\varepsilon(d)\asymp_\varepsilon\sqrt d$ for every fixed $\varepsilon\in(0,1)$ and completes the proof of Theorem~\ref{thm:finite-budget}.

\subsection{Proof of Theorem~\ref{mainThm}: upper bound}
\label{sec:upper-bound}

We refine the facet-cone calculation of \citet[Theorem~1]{LSW} by keeping the exact radial integral over each facet cone rather than bounding it by a cylinder over the supporting hyperplane. Retaining the tangential quadratic term $|y|^2$ along the facet produces a cancellation between the radial and tangential fluctuations that lowers the dimensional constant.

\begin{proposition}\label{prop:expectation}
Let $P_N=\conv\{X_1,\ldots,X_N\}$ be the convex hull of independent uniform random points on $\partial B_d$, and define the volume-balancing radius $R_{d,N}$ by $R_{d,N}^d\kappa_d=\E|P_N|$. For every fixed $d\ge2$, the limit
\[
 C_d:=\lim_{N\to\infty}N^{\frac{2}{d-1}}
               \E\Delta_d(R_{d,N}^{-1}P_N)
\]
exists. With $A_d=d\kappa_d/\kappa_{d-1}$, $g_d=\Gamma(d+\frac{2}{d-1})/\Gamma(d)$, and $T_d\sim\operatorname{Gamma}(d,1)$ independent of $\mathcal Q_d$, we have
\begin{equation}\label{eq:fixed-d-exact-absolute-moment}
 C_d=\frac{dA_d^{\frac{2}{d-1}}}2
      \E\left|T_d^{\frac{2}{d-1}}(1-\mathcal Q_d)-g_d(1-q_d)\right|.
\end{equation}
Moreover,
\begin{equation}\label{eq:expectation-limit-formula}
 \lim_{d\to\infty}\sqrt d\,C_d=2\sqrt{\frac2\pi}.
\end{equation}
\end{proposition}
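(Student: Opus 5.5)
We follow the template of \Cref{sec:finite-equal-radii}, now without antipodal pairs and with $N\to\infty$ at fixed $d$. As in \eqref{eq:finite-weighted-law}, we define the facet-cone-weighted law $\nu_N$ of a boundary point $Y$ of $P_N$. The identity \eqref{eq:finite-error-identity} gives $\E|P_N\triangle rB_d|/\E|P_N|=\E_{\nu_N}|1-(r/|Y|)^d|$ for every deterministic $r$. Repeating the proof of Lemma~\ref{lem:finite-sphere-facets} with Lemma~\ref{lem:Miles-BP-1} gives $|Y|^2=H+(1-H)\mathcal Q_d$, with $\mathcal Q_d$ independent of $H$ and distributed by \eqref{eq:weighted-simplex-Q}. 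Here $H$ has density proportional to $(1-t)^{d(d-1)/2-1}\,s(t)^{N-d}$, where $s(t)$ is the normalized measure of the spherical cap beyond height $\sqrt t$.

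Next we pass to the rescaled cap variable $\tau=N\,\sigma(\text{cap})$. The cap measure near $t=1$ is $\sim \frac{\kappa_{d-1}}{d\kappa_d}(1-t)^{(d-1)/2}$, so $1-H=(A_d\tau/N)^{2/(d-1)}(1+o(1))$. The density of $\tau$ is then proportional to $\tau^{d-1}(1-\tau/N)^{N-d}\to\tau^{d-1}e^{-\tau}$, hence $\tau\Rightarrow T_d\sim\mathrm{Gamma}(d,1)$. The exponent $d-1$ comes from $(d(d-1)/2)\cdot\frac{2}{d-1}$, with the Jacobian adjusting it to $d-1$; checking this exponent carefully is routine bookkeeping.

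With $\varepsilon_N:=(A_d/N)^{2/(d-1)}$ we get $1-|Y|^2=\varepsilon_N\,T^{2/(d-1)}(1-\mathcal Q_d)(1+o(1))$. We choose $r$ with $1-r^2=\varepsilon_N g_d(1-q_d)$ and use $|1-(r/|Y|)^d|\approx\frac d2|r^2-|Y|^2|$ at first order. This gives
\[
N^{2/(d-1)}\E_{\nu}|1-(r/|Y|)^d|\to\tfrac d2A_d^{2/(d-1)}\E\bigl|T_d^{2/(d-1)}(1-\mathcal Q_d)-g_d(1-q_d)\bigr|.
\]
The same linearization shows $R_{d,N}$ is asymptotically $r$ up to $o(\varepsilon_N)$: Lemma~\ref{lem:muller-vol} gives the mean of $1-|Y|^2$, and since $\E T_d^{2/(d-1)}=g_d$, the signed error vanishes at first order. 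So replacing $r$ by $R_{d,N}$ costs $o(N^{-2/(d-1)})$, which yields \eqref{eq:fixed-d-exact-absolute-moment}. Justifying the exchange of limit and expectation needs uniform integrability. For this we use the tail $\nu(H<1/2)\le Ce^{-cN}$ to control the singular region where $(r/|Y|)^d$ blows up, and dominated convergence on the rescaled variable elsewhere, using that $\tau^{2/(d-1)}$ has uniformly bounded moments.

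For \eqref{eq:expectation-limit-formula}, write $Y_d=T_d^{2/(d-1)}(1-\mathcal Q_d)/g_d$. Then $C_d=\frac d2A_d^{2/(d-1)}g_d\,\E|Y_d-\E Y_d|$, and $A_d^{2/(d-1)}g_d\to1$ since $A_d^{1/d}\to1$ and $g_d\sim d^{2/(d-1)}\to1$.

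\begin{itemize}
\item \emph{Radial factor.} Write $T_d^{2/(d-1)}/g_d=\exp(\frac{2}{d-1}\log T_d)/g_d$. Since $\sqrt d(\log T_d-\log d)\Rightarrow\mathcal N(0,1)$, the radial factor equals $1+\frac{2}{d}Z_1\sqrt d\cdot d^{-1}\cdots$, i.e.\ $d^{3/2}(T_d^{2/(d-1)}/g_d-1)\Rightarrow\mathcal N(0,4)$.
\item \emph{Angular factor.} By Lemma~\ref{lem:angular-clt}, $d^{3/2}(\mathcal Q_d-q_d)\Rightarrow\mathcal N(0,12)$.
\end{itemize}

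By independence and $q_d=O(1/d)$, the product satisfies $d^{3/2}(Y_d-\E Y_d)\Rightarrow\mathcal N(0,16)$. Uniform integrability follows from exact second moments: the Gamma moments and \eqref{eq:Q-variance} give $d^3\Var Y_d=O(1)$. Hence $d^{3/2}\E|Y_d-\E Y_d|\to4\sqrt{2/\pi}$, and so $\sqrt d\,C_d\to\frac12\cdot4\sqrt{2/\pi}=2\sqrt{2/\pi}$.

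The main obstacle is the fixed-$d$ limit: making the Gamma convergence of the cap variable and the linearization rigorous inside an absolute value. That requires uniform integrability of $N^{2/(d-1)}|1-(r/|Y|)^d|$ under $\nu_N$, and control of the $R_{d,N}$ versus $r$ discrepancy to $o(N^{-2/(d-1)})$.
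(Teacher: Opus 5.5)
Your large-$d$ argument (radial $\mathcal N(0,4)$, angular $\mathcal N(0,12)$, uniform integrability from exact second moments, $A_d^{2/(d-1)}g_d\to1$) is the same as the paper's. For the fixed-$d$ limit you take a genuinely different route.

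\textbf{How the paper proceeds.} It never works with the two-sided quantity $\E_\nu|1-(r/|Y|)^d|$. Instead it uses the volume balance to write $\E|R_{d,N}B_d\triangle P_N|=2\E|R_{d,N}B_d\setminus P_N|$. It expresses the right side as a sum over candidate facets of the exact cone-cap volume $V_{d,N}(F)\le\kappa_d$. Dominated convergence in $t=Ns$ then gives $\E[T_d^{2/(d-1)}(1-\mathcal Q_d)-b_d]_+$, with the centering $b_d$ taken from M\"uller's volume asymptotics. Finally it checks the gamma identity $b_d=g_d(1-q_d)=\E[T_d^{2/(d-1)}(1-\mathcal Q_d)]$ and uses $2\E[Z]_+=\E|Z|$.

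\textbf{What each route buys.} In the paper's route only the part inside the ball appears, so the integrand is bounded and there is no singularity at the origin. Your two-sided route reuses the finite-budget machinery directly. You may also take $r=R_{d,N}$ from the start, since $\E\Delta_d(R_{d,N}^{-1}P_N)=\E|P_N\triangle R_{d,N}B_d|/\E|P_N|$. If you then identify $R_{d,N}$ through $R_{d,N}^{-d}=\E_\nu|Y|^{-d}$ (the polar identity, up to an exponentially small error), you recover M\"uller's asymptotics rather than needing them. The price is that you must handle the origin singularity and the non-symmetric geometry yourself, and two steps do not go through as written.

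\textbf{First repair: the origin.} The identity \eqref{eq:finite-error-identity} used that the antipodal hull contains the origin in its interior almost surely. $P_N$ does not, so the facet cones need not tile $\R^d$. You need an error term on $\{o\notin\operatorname{int}P_N\}$. By Wendel's theorem this event has probability $O_d(N^{d-1}2^{-N})$, which is negligible even after multiplying by $\binom Nd$.

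\textbf{Second repair: the singular region.} A bound $\nu(H<1/2)\le Ce^{-cN}$ does not control $\E_\nu[(r/|Y|)^d\mathbbm 1\{H<1/2\}]$, because the integrand is unbounded there. Unlike the antipodal case, where $G_d(t)^{M-d}$ vanishes at $t=0$, here the factor $(1-s(t))^{N-d}$ tends to $2^{-(N-d)}>0$. (Your ``$s(t)^{N-d}$'' should read $(1-s(t))^{N-d}$, as your later $\tau$-density already assumes.) Consequently $\E_\nu H^{-d/2}=\infty$, and the finite-budget trick $|Y|^2\ge H$ fails. Use instead the solid-angle identity $\frac{h_F}{d}\int_F|y|^{-d}\,dy=|\cone(F)\cap B_d|\le\kappa_d$. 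With $s_0>0$ the normalized measure of a cap of height $1/\sqrt2$, it gives
\[
 \E_\nu\bigl[(r/|Y|)^d\mathbbm 1\{H<1/2\}\bigr]\le C(d)\binom Nd(1-s_0)^{N-d},
\]
which is exponentially small in $N$. This is exactly the estimate the paper uses for caps with $p\le\sqrt3/2$.

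With these two patches the rest works:
\begin{itemize}
\item Your dominated convergence on $\{H\ge1/2\}$ goes through. Use $(1-H)^{(d-1)/2}\le A_d\tau/N$ and, for $N\ge2d$, domination by $C(d)\tau^{d-1}e^{-\tau/2}$.
\item Your linearization inside the absolute value goes through.
\item Replacing $r$ by $R_{d,N}$ costs at most $|R_{d,N}^d-r^d|\,\E_\nu|Y|^{-d}=o(N^{-2/(d-1)})$.
\end{itemize}
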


\begin{proof}
\paragraph{Exact facet reduction.}
By the choice of $R_{d,N}$, we have $\E|R_{d,N}B_d\setminus P_N|=\E|P_N\setminus R_{d,N}B_d|$, so that
\begin{equation}\label{eq:expectation-reduction-caps}
 \E|R_{d,N}B_d\triangle P_N|
 =2\E|R_{d,N}B_d\setminus P_N|.
\end{equation}
Given $d$ points $x_1,\ldots,x_d\in\partial B_d$ in general position, we orient their affine hull as $H=\{x\in\R^d:\langle x,\xi\rangle=p\}$ with $\xi\in\partial B_d$ and $p>0$, and set $H^+=\{x:\langle x,\xi\rangle\le p\}$ and $H^-=\{x:\langle x,\xi\rangle\ge p\}$, so that $H^+$ contains the origin and $H^-$ lies beyond the hyperplane. Write $F=[x_1,\ldots,x_d]$ and $F_0=F-p\xi\subset\xi^\perp$. Parametrizing $\cone(F)$ by $x=t(p\xi+y)$ with $t>0$ and $y\in F_0$, which has Jacobian $pt^{d-1}$, and integrating over $1\le t\le R_{d,N}/\sqrt{p^2+|y|^2}$ gives the exact cone-cap volume
\begin{equation}\label{eq:cap-cone-exact-integral}
 V_{d,N}(F):=|R_{d,N}B_d\cap H^-\cap\cone(F)|
 =\frac pd\int_{F_0}
    \left[\left(\frac{R_{d,N}}{\sqrt{p^2+|y|^2}}\right)^d-1\right]_+dy.
\end{equation}
Replacing $\sqrt{p^2+|y|^2}$ by $p$ in \eqref{eq:cap-cone-exact-integral} recovers the truncated-cone upper bound used in \cite{LSW}. Here we retain the dependence on $|y|^2$.

Let $\sigma_d$ denote the uniform probability measure on $\partial B_d$, let $s(p)=\sigma_d(H^-)$ be the normalized surface area of the spherical cap cut off by $H$, and define
\[
 J_{d,N}:=\binom Nd\int V_{d,N}([x_1,\ldots,x_d])
                         (1-s(p))^{N-d}\,d\sigma_d^d(x_1,\ldots,x_d).
\]
Conditioned on $x_1,\ldots,x_d$, the factor $(1-s(p))^{N-d}$ is the probability that the remaining $N-d$ sample points lie in $H^+$. On the event $\{o\in\operatorname{int}P_N\}$, the simplices $F=[x_{i_1},\ldots,x_{i_d}]$ for which all remaining points lie in $H^+$ are precisely the facets of $P_N$, and their cones $\cone(F)$ partition $\R^d$. By Wendel's theorem \citep{Wendel}, the complementary event $\{o\notin\operatorname{int}P_N\}$ has probability $2^{-N+1}\sum_{j=0}^{d-1}\binom{N-1}j$. Since $V_{d,N}(F)\le R_{d,N}^d\kappa_d\le\kappa_d$, we obtain
\begin{equation}\label{eq:facet-error-exact}
 \left|\E|R_{d,N}B_d\triangle P_N|-2J_{d,N}\right|
 \le2\kappa_d\left(1+\binom Nd\right)
       \Pr(o\notin\operatorname{int}P_N)
 =O_d(N^{2d}2^{-N}),
\end{equation}
which is negligible as $N\to\infty$ for any fixed $d$.

\paragraph{The fixed-dimensional limit.}
Write $r=\sqrt{1-p^2}$ for the radius of the $(d-1)$-sphere $\partial B_d\cap H$, and express the vertices of $F$ as $x_i=p\xi+ru_i$ with $u_1,\ldots,u_d\in\xi^\perp\cap\partial B_d$. Passing to barycentric coordinates on $[u_1,\ldots,u_d]$ rewrites \eqref{eq:cap-cone-exact-integral} as
\[
 V_{d,N}(F)=r^{d-1}|[u_1,\ldots,u_d]|\,\E_\Lambda
 h_{d,N}\Bigl(p,\Bigl|\sum_{i=1}^d\Lambda_i u_i\Bigr|^2\Bigr),
\]
where $\Lambda\sim\operatorname{Dirichlet}(1,\ldots,1)$ and
\begin{equation}\label{eq:h-definition}
 h_{d,N}(p,z):=\frac pd
 \left[\left(\frac{R_{d,N}}{\sqrt{1-r^2(1-z)}}\right)^d-1\right]_+.
\end{equation}
We now apply the spherical Blaschke--Petkantschin formula, Lemma~\ref{lem:Miles-BP-1}. Its Jacobian contributes a second factor of $|[u_1,\ldots,u_d]|$, so integrating over $u_1,\ldots,u_d$ replaces $|\sum_{i=1}^d\Lambda_i u_i|^2$ by the random variable $\mathcal Q_d$ defined in \eqref{eq:weighted-simplex-Q}. Collecting the powers of $r$ gives the exponent $d(d-2)+2(d-1)-d=d^2-d-2$. Since $ds/dp=-((d-1)\kappa_{d-1}/(d\kappa_d))r^{d-3}$, changing variables from $p$ to the cap measure $s=s(p)$ and evaluating the angular normalizing constant via \eqref{eq:simplex-normalizing-moment} yields
\begin{equation}\label{eq:J-s-integral}
 2J_{d,N}=\frac{2d^2\kappa_d}{A_d^{d-1}}\binom Nd
 \int_0^{1/2}(1-s)^{N-d}r(s)^{(d-1)^2}
              \E h_{d,N}(p(s),\mathcal Q_d)\,ds.
\end{equation}
Because the surface area element of $\partial B_d$ over the disk $rB_{d-1}$ lies between $1$ and $1/p$, the cap measure $s(p)$ and cap radius $r(s)$ satisfy
\begin{equation}\label{eq:cap-area-radius}
 \frac{r^{d-1}}{A_d}\le s(p)\le\frac{r^{d-1}}{A_dp},
 \qquad r(s)^2\sim(A_ds)^{\frac{2}{d-1}}\quad\text{as }s\downarrow0.
\end{equation}
Set $c_{d,N}=1-R_{d,N}$ and $b_d=\frac{d-1}{d+1}\frac{\Gamma(d+1+\frac{2}{d-1})}{\Gamma(d+1)}$. Combining Lemma~\ref{lem:muller-vol} with the expansion $1-R_{d,N}^d=dc_{d,N}+O_d(c_{d,N}^2)$ gives
\begin{equation}\label{eq:shrinkage-leading-term}
 \lim_{N\to\infty}N^{\frac{2}{d-1}}c_{d,N}
 =\frac12 A_d^{\frac{2}{d-1}}b_d.
\end{equation}
For $p\ge\sqrt3/2$, a Taylor expansion uniform in $z\in[0,1]$ yields
\begin{equation}\label{eq:h-local-expansion}
 h_{d,N}(p,z)
 =\left[\frac{r^2(1-z)}2-c_{d,N}\right]_+
   +O_d((r^2+c_{d,N})^2),
 \qquad 0\le h_{d,N}(p,z)\le C(d)r^2.
\end{equation}
The contribution to $J_{d,N}$ from caps with $p\le\sqrt3/2$ is at most $\binom Nd\kappa_d(1-s(\sqrt3/2))^{N-d}$, which decays exponentially in $N$. On the remaining interval $0<s\le s(\sqrt3/2)$, we substitute $t=Ns$ in \eqref{eq:J-s-integral}. By \eqref{eq:cap-area-radius} and \eqref{eq:h-local-expansion}, for $N\ge2d$ the rescaled integrand is dominated by $C(d)e^{-t/2}t^{d-1+\frac{2}{d-1}}$, so the dominated convergence theorem together with \eqref{eq:shrinkage-leading-term} gives
\begin{equation}\label{eq:fixed-d-positive-part}
 \lim_{N\to\infty}2J_{d,N}N^{\frac{2}{d-1}}
 =d\kappa_d A_d^{\frac{2}{d-1}}
       \E\left[T_d^{\frac{2}{d-1}}(1-\mathcal Q_d)-b_d\right]_+.
\end{equation}
By the functional equation of the gamma function and \eqref{eq:Q-mean}, the constant $b_d$ equals the expectation of $T_d^{\frac{2}{d-1}}(1-\mathcal Q_d)$:
\begin{equation}\label{eq:centering-identity}
 b_d=g_d\frac{d^2-d+2}{d(d+1)}
     =g_d(1-q_d)=\E\left[T_d^{\frac{2}{d-1}}(1-\mathcal Q_d)\right].
\end{equation}
Since $2\E[Z]_+=\E|Z|$ for every centered random variable $Z$, combining \eqref{eq:fixed-d-positive-part} and \eqref{eq:centering-identity} with \eqref{eq:facet-error-exact} and $R_{d,N}^{-d}\to1$ establishes \eqref{eq:fixed-d-exact-absolute-moment}.

\paragraph{The dimensional coefficient.}
To determine the large-$d$ limit of $C_d$, set $L_d=T_d^{\frac{2}{d-1}}/g_d$, so that $\E L_d=1$. By the central limit theorem for $T_d\sim\operatorname{Gamma}(d,1)$ and the delta method \citep[Theorem~3.1]{VanDerVaart1998} applied to $x\mapsto\log x$ at $x=1$, we have $\sqrt d\log(T_d/d)\Rightarrow\mathcal N(0,1)$ as $d\to\infty$. Since $g_d/d^{\frac{2}{d-1}}=1+O(d^{-2})$, expanding the exponential around zero yields
\begin{equation}\label{eq:gamma-small-power-clt}
 d^{3/2}(L_d-1)
 =d^{3/2}\frac{2}{d-1}\log(T_d/d)+o_{\Pr}(1)
 \Rightarrow\mathcal N(0,4).
\end{equation}
By the independence of $T_d$ and $\mathcal Q_d$ and the angular central limit theorem in Lemma~\ref{lem:angular-clt}, it follows that
\begin{equation}\label{eq:combined-clt}
 Z_d:=d^{3/2}\bigl(L_d(1-\mathcal Q_d)-(1-q_d)\bigr)
 \Rightarrow\mathcal N(0,16).
\end{equation}
To pass from convergence in distribution to convergence of absolute first moments, we compute $\operatorname{Var}(L_d)$. Expressing $\E L_d^2=\frac{\Gamma(d+\frac{4}{d-1})\Gamma(d)}{\Gamma(d+\frac{2}{d-1})^2}$ via the trigamma function $\psi_1(x)=(\log\Gamma)''(x)=\sum_{j\ge0}(x+j)^{-2}=x^{-1}+O(x^{-2})$ gives
\[
 \log\bigl(1+\operatorname{Var}(L_d)\bigr)
 =\int_0^{\frac{2}{d-1}}\int_0^{\frac{2}{d-1}}\psi_1(d+u+v)\,du\,dv
 =\frac{4}{d(d-1)^2}(1+O(d^{-1})),
\]
and hence $d^3\operatorname{Var}(L_d)\to4$. By the independence of $L_d$ and $\mathcal Q_d$ together with Lemma~\ref{lem:angular-moments}, we obtain
\[
 \E Z_d^2
 =d^3\Bigl((1-q_d)^2\operatorname{Var}(L_d)
           +\bigl(1+\operatorname{Var}(L_d)\bigr)\operatorname{Var}(\mathcal Q_d)\Bigr)
 \longrightarrow16.
\]
Boundedness of $\E Z_d^2$ implies that the sequence $(Z_d)$ is uniformly integrable, so \eqref{eq:combined-clt} yields
\begin{equation}\label{eq:absolute-moment-limit}
 \lim_{d\to\infty}\E|Z_d|=4\sqrt{\frac2\pi}.
\end{equation}
Finally, Stirling's formula gives $A_d=\sqrt{2\pi d}(1+O(d^{-1}))$ and $g_d=d^{\frac{2}{d-1}}(1+O(d^{-2}))$, so that $A_d^{\frac{2}{d-1}}g_d\to1$. Substituting \eqref{eq:absolute-moment-limit} into \eqref{eq:fixed-d-exact-absolute-moment} gives
\[
 \sqrt d\,C_d=\frac12 A_d^{\frac{2}{d-1}}g_d\,\E|Z_d|
 \longrightarrow 2\sqrt{\frac2\pi},
\]
which proves \eqref{eq:expectation-limit-formula}.
\end{proof}

Since the optimal deterministic error is at most the expected error of the random construction, we have $\mathcal E_d\le C_d$. By \eqref{eq:intro-normalization} and Stirling's formula,
\[
 \frac d2(d\kappa_d)^{\frac{2}{d-1}}=\pi e(1+O(d^{-1})),
\]
and therefore $\limsup_{d\to\infty}\sqrt d\,\ldel_{d-1}\le\frac{2}{\pi e}\sqrt{\frac2\pi}$, which proves the upper bound in Theorem~\ref{mainThm}.
\subsection{Proof of Theorem~\ref{Thm:Main}: lower bound}
\label{sec:lower-bound}

Having established the single-simplex comparison in \Cref{sec:simplex-tools}, we now combine it with Ludwig's local paraboloid formulation of best polytopal approximation and pass to the fixed-dimensional localization limit.

\paragraph{The local formulation.}\label{sec:local-framework}
Let $m=d-1$, $I=[0,1]^m$, and $q(x)=|x|^2$. For each integer $N\ge1$, define
\begin{align}
 e_N^{\delc}(I)
 &:=\inf_{\substack{\mathcal T\text{ conformingly triangulates }I\\
                    \#\mathcal T^{(0)}\le N}}
       \int_I(\mathcal I_{\mathcal T}q-q)\,dx,
 \label{eq:expanded-finite-del}\\
 e_N^{\ldel}(I)
 &:=\inf_{\substack{h:I\to\R\text{ convex, piecewise affine}\\
                    \#\operatorname{Vert}(h)\le N}}
       \int_I|h-q|\,dx.
 \label{eq:expanded-finite-ldel}
\end{align}
Here $\operatorname{Vert}(h)$ denotes the set of all vertices of the subdivision of $I$ into maximal affine regions of $h$ after clipping to $I$, including those on $\partial I$. In the first problem the function values at the vertices of the triangulation are prescribed to lie on the paraboloid $q$, corresponding to inscribed approximation, whereas in the second problem both the horizontal positions and the heights of the affine pieces are free to vary.

\begin{lemma}[Local Laguerre--Delone formulation]
\label{lem:expanded-exact-laguerre-correspondence}
Every convex piecewise-affine function $h$ on $I$ can be written in the form
\begin{equation}\label{eq:expanded-max-affine-laguerre}
 h(x)=\max_j\{2a_j\cdot x-|a_j|^2+s_j\}.
\end{equation}
After redundant affine pieces are removed, the maximal affine regions of $h$ on $I$ are the clipped Laguerre cells
\[
 C_j=I\cap\bigcap_\ell
 \{x\in\R^m:|x-a_j|^2-s_j\le|x-a_\ell|^2-s_\ell\},
\]
and the $L^1$ approximation error satisfies
\begin{equation}\label{eq:expanded-exact-laguerre-error}
 \int_I|h-q|\,dx
 =\int_I\left|\min_j\{|x-a_j|^2-s_j\}\right|\,dx.
\end{equation}
With the vertex-counting convention above, the asymptotic constants $\delc_m$ and $\ldel_m$ are given by the fixed-dimensional limits
\begin{equation}\label{eq:expanded-localization}
 \delc_m=\lim_{N\to\infty}N^{2/m}e_N^{\delc}(I),\qquad
 \ldel_m=\lim_{N\to\infty}N^{2/m}e_N^{\ldel}(I).
\end{equation}
\end{lemma}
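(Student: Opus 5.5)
The representation \eqref{eq:expanded-max-affine-laguerre} and the identity \eqref{eq:expanded-exact-laguerre-error} are algebraic, so we treat them first. Any convex piecewise-affine $h$ on $I$ is a maximum of finitely many affine functions $\ell_j(x)=b_j\cdot x+c_j$. Setting $a_j=b_j/2$ and $s_j=c_j+|a_j|^2$ rewrites $\ell_j(x)=2a_j\cdot x-|a_j|^2+s_j$, which gives \eqref{eq:expanded-max-affine-laguerre}. Subtracting $q(x)=|x|^2$ inside the maximum and completing the square yields
\[
 \ell_j(x)-|x|^2=-(|x-a_j|^2-s_j),
\]
so $h(x)-q(x)=-\min_j\{|x-a_j|^2-s_j\}$ pointwise, and integrating the absolute value gives \eqref{eq:expanded-exact-laguerre-error}. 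The same identity shows that $\ell_j$ is the maximizing piece at $x$ if and only if $|x-a_j|^2-s_j\le|x-a_\ell|^2-s_\ell$ for all $\ell$. Hence, after discarding pieces that are never strictly maximal on $I$, the maximal affine regions are exactly the clipped Laguerre (power) cells $C_j$.

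For the localization \eqref{eq:expanded-localization}, we plan to invoke the framework of \citet{GruberI,GruberII} and \citet{Ludwig1999}, rather than reprove it. In that framework $\delc_m$ and $\ldel_m$ are defined as limits of $N^{2/m}$ times the optimal error of approximating the paraboloid $q$ over the unit cube. For $\ldel_m$ the admissible functions are convex piecewise-affine functions with free vertex heights, and for $\delc_m$ they are piecewise-affine interpolants with vertices on the graph. The task is to check that our definitions \eqref{eq:expanded-finite-del}--\eqref{eq:expanded-finite-ldel} match theirs up to $o(N^{-2/m})$, and we proceed in three steps.

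\begin{enumerate}
\item \textbf{Existence of the limits.} We use the standard subadditivity argument. Subdivide $I$ into $n^m$ subcubes of side $1/n$. Since $q(x)-q(x_0)-\nabla q(x_0)\cdot(x-x_0)=|x-x_0|^2$, the problem is translation invariant modulo affine terms. Combined with the scaling $q(\lambda x)=\lambda^2q(x)$, this gives
\[
 e_{n^mN'}(I)\le e_{N'}(I)\,n^{-2}+(\text{boundary correction}),
\]
and hence the existence of $\lim N^{2/m}e_N(I)$.
\item \textbf{Vertex-counting convention.} Our count includes the vertices created by clipping to $\partial I$. For a Laguerre subdivision with $N$ cells, the number of clipped vertices on $\partial I$ is $O(N^{(m-1)/m})=o(N)$ for near-optimal configurations, after a boundary-layer modification. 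Thus this convention changes $N$ only by a factor $1+o(1)$, which leaves $N^{2/m}e_N$ asymptotically unchanged.
\item \textbf{Convexity versus triangulations.} For $\delc$, Delone triangulations give convex interpolants, so restricting to conforming triangulations with vertices on the paraboloid matches Gruber's convex-interpolant problem. The reason is that the lifted lower convex hull of the vertex set is exactly the Delone-type interpolant, and triangulating it introduces no new vertices.
\end{enumerate}

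We expect the main obstacle to be step 2: controlling the vertices created where cells meet $\partial I$, without losing the correct constant. Our first attempt is a concatenation argument. Place a near-optimal configuration for a slightly smaller cube $(1-\eta)I$, and pad it with a boundary layer of width $\eta$ that uses $o(N)$ extra vertices and contributes $O(\eta)\,N^{-2/m}$ error. Then let $\eta\to0$ after $N\to\infty$.
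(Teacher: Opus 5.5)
Your derivation of \eqref{eq:expanded-max-affine-laguerre}, of the Laguerre-cell description, and of \eqref{eq:expanded-exact-laguerre-error} is exactly the paper's argument: the same substitution $a_j=b_j/2$, $s_j=c_j+|a_j|^2$, followed by completing the square. For \eqref{eq:expanded-localization} the paper does nothing beyond citing Ludwig's localized vertex formulation (Section~2, Lemmas~3--4) and Gruber's inscribed counterpart, with one remark that the budget counts the vertices created by clipping. So your proof is complete, in the same sense as the paper's, at the point where you invoke that framework. Your Steps~1--3 are not in the paper. You are right that the clipping convention is the delicate point, since the paper only asserts compatibility. As written, however, several of these steps would not go through.

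\textbf{Step 1.} The glued function $q(x_0)+\nabla q(x_0)\cdot(x-x_0)+n^{-2}\tilde h(n(x-x_0))$ on the subcubes is in general neither continuous nor convex across the interfaces. It is therefore not admissible in \eqref{eq:expanded-finite-ldel}. For $\delc$, the analogous gluing is non-conforming, but you can repair it by passing to the lower convex hull of the union of the lifted vertex sets, which lies below every local interpolant. For $\ldel$ that device fails, because lowering $h$ increases $|h-q|$ wherever $h<q$. Since existence of the limits is part of what you cite, you should simply drop this step.

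\textbf{Step 2.} You count Laguerre cells, but cells are the affine pieces (facets of the lifted polytope). The budget in \eqref{eq:expanded-finite-ldel} counts vertices of the subdivision instead. Without a regularity argument, nothing forces a near-optimal configuration to have only $O(N^{(m-1)/m})$ vertices on $\partial I$. In your padding remedy, a layer of width $\eta$ has volume of order $\eta$, so keeping its error at $O(\eta)N^{-2/m}$ costs on the order of $\eta N$ extra vertices, not $o(N)$. This is harmless, since $\eta\to0$ afterwards. The real missing point is how the padded function stays convex on all of $I$. The inner affine pieces extend into the layer, and new padding pieces may become maximal inside $(1-\eta)I$; you control neither the vertices nor the error this creates.

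\textbf{Step 3.} You only show that the Delone interpolant is admissible in \eqref{eq:expanded-finite-del}, that is, $e_N^{\delc}(I)$ is at most the convex-interpolant quantity. The reverse inequality is the direction the transfer $\ldel_m\ge\beta_m\delc_m$ actually uses. It follows from a convexity observation: for a fixed vertex set, the lower convex hull $g$ of the lifted points satisfies $g(x)\le\sum_i\lambda_i(x)|v_i|^2=\mathcal I_{\mathcal T}q(x)$ on every simplex of any triangulation $\mathcal T$ with that vertex set.
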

\begin{proof}
Writing $h(x)=\max_j(p_j\cdot x+b_j)$ and setting $a_j=p_j/2$ and $s_j=b_j+|a_j|^2$ gives $q(x)-h(x)=\min_j(|x-a_j|^2-s_j)$, from which \eqref{eq:expanded-max-affine-laguerre}, the description of the maximal affine regions as Laguerre cells, and \eqref{eq:expanded-exact-laguerre-error} follow immediately. Conversely, any finite collection of weighted sites $(a_j,s_j)$ defines a convex piecewise-affine function via \eqref{eq:expanded-max-affine-laguerre}.

The second limit in \eqref{eq:expanded-localization} is Ludwig's localized vertex formulation \citep[Section~2, Lemmas~3--4]{Ludwig1999} (see also \citet[Theorem~2]{BoroczkyLudwig1999} for the planar case), and the first is its classical inscribed counterpart \citep{GruberI,GruberII}. Note that in \eqref{eq:expanded-finite-ldel} the complexity budget is the total number of vertices of the induced polyhedral subdivision of $I$, including vertices created by clipping the Laguerre cells against $\partial I$.
\end{proof}

Geometrically, the paraboloid $q(x)=|x|^2$ arises as the universal local model of a smooth convex boundary with positive Gaussian curvature: after subtracting a supporting hyperplane and rescaling the tangential coordinates by the second fundamental form, the leading Taylor term of the boundary height is $|x|^2$, and the exponent $2/m$ in \eqref{eq:expanded-localization} reflects a quadratic error over cells of typical diameter $N^{-1/m}$.

\begin{theorem}[Dimensionwise transfer]\label{thm:expanded-main-transfer}
For every $m\ge1$,
\begin{equation}\label{eq:expanded-main-transfer}
 \ldel_m\ge\beta_m\delc_m.
\end{equation}
More precisely, for every finite vertex budget $N\ge1$,
\begin{equation}\label{eq:expanded-finite-constant-transfer}
 e_N^{\ldel}(I)\ge\beta_m e_N^{\delc}(I).
\end{equation}
\end{theorem}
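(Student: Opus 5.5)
The finite inequality \eqref{eq:expanded-finite-constant-transfer} is the main step; \eqref{eq:expanded-main-transfer} then follows by multiplying by $N^{2/m}$ and letting $N\to\infty$, using the two limits in \eqref{eq:expanded-localization} from Lemma~\ref{lem:expanded-exact-laguerre-correspondence}.

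To prove \eqref{eq:expanded-finite-constant-transfer}, fix an admissible convex piecewise-affine $h$ on $I$ with $\#\operatorname{Vert}(h)\le N$; it suffices to show $\int_I|h-q|\,dx\ge\beta_m e_N^{\delc}(I)$ and then take the infimum over $h$. The maximal affine regions of $h$, clipped to $I$, form a polyhedral subdivision of $I$ into convex polytopes whose vertices all lie in $\operatorname{Vert}(h)$. I would refine this subdivision into a conforming triangulation $\mathcal T$ of $I$ using no new vertices, for instance a pulling (or placing) triangulation in the sense of \cite[Section~4.3.2]{DeLoeraRambauSantos2010Triangulations}, whose vertex set is exactly the vertex set of the subdivision. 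Then $\#\mathcal T^{(0)}\le N$, and $h$ is continuous and affine on every simplex of $\mathcal T$, since each simplex lies inside a single cell of the subdivision.

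Theorem~\ref{thm:expanded-finite-transfer} applied with $D=I$ and this $\mathcal T$ gives
\[
 \int_I|h-q|\,dx\ge\beta_m\int_I(\mathcal I_{\mathcal T}q-q)\,dx\ge\beta_m e_N^{\delc}(I),
\]
where the second inequality holds because $\mathcal T$ is admissible in \eqref{eq:expanded-finite-del}. Note that the heights of $h$ play no role: they are absorbed into the affine corrections $L_T$ that the certificate $\sigma_k$ annihilates.

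The main obstacle is the combinatorial step, namely producing a conforming triangulation of $I$ that refines the clipped subdivision and adds no vertices. Triangulating each cell independently may fail to be conforming across shared faces. The pulling triangulation avoids this: order all vertices globally and cone recursively from the first vertex of each face. This triangulates each face consistently, so simplices from adjacent cells meet along common faces. I also need to check that the vertex convention in \eqref{eq:expanded-finite-ldel}, which counts clipping vertices on $\partial I$, matches the vertex count of $\mathcal T$; this is exactly why the convention was chosen. Degenerate cases, such as lower-dimensional cells or redundant pieces, only help or can be discarded.
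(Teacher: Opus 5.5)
Your proposal is correct and follows the paper's proof essentially verbatim. A pulling triangulation of the clipped maximal affine regions of $h$ gives a conforming triangulation $\mathcal T$ of $I$ with $\mathcal T^{(0)}\subset\operatorname{Vert}(h)$. Theorem~\ref{thm:expanded-finite-transfer} then yields $\int_I|h-q|\,dx\ge\beta_m e_N^{\delc}(I)$, and taking the infimum over $h$ and passing to the limits in \eqref{eq:expanded-localization} gives \eqref{eq:expanded-main-transfer}.
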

\begin{proof}
Given any convex piecewise-affine function $h$ on $I$ with $\#\operatorname{Vert}(h)\le N$, a pulling triangulation of its maximal affine regions produces a conforming triangulation $\mathcal T$ of $I$ with $\mathcal T^{(0)}\subset\operatorname{Vert}(h)$, and in particular introduces no additional vertices \citep[Section~4.3.2, Lemma~4.3.6]{DeLoeraRambauSantos2010Triangulations}. Applying Theorem~\ref{thm:expanded-finite-transfer} to $\mathcal T$ and taking the infimum over $h$ yields \eqref{eq:expanded-finite-constant-transfer}. Multiplying \eqref{eq:expanded-finite-constant-transfer} by $N^{2/m}$ and letting $N\to\infty$ via \eqref{eq:expanded-localization} gives \eqref{eq:expanded-main-transfer}.
\end{proof}

Finally, combining the high-dimensional asymptotic $\delc_m\sim\frac{m}{2\pi e}$ of \citet[Theorem~2]{MankiewiczSchuett2001} with Proposition~\ref{prop:expanded-beta-asymptotic} gives
\[
 \ldel_m\ge
 \left(\frac{\sqrt{2/\pi}}{\pi e}+o(1)\right)m^{-1/2},
\]
which completes the proof of the lower bound in Theorem~\ref{Thm:Main}.

\paragraph{Remark.}\label{rem:sharp-low-dim}
The inequality \eqref{eq:expanded-main-transfer} is an equality for $m=1$ and $m=2$. Direct integration on the standard simplex gives
\[
 \beta_1=\frac38,\qquad
 \beta_2=\frac13-\frac{\sqrt3}{4\pi}.
\]
In particular, for $m=2$ the quantity $Q_3-1/3$ is the squared distance from the centroid in an equilateral triangle of side length $\sqrt2$ and area $\sqrt3/2$. The disk of half the triangle's area centered at the centroid has radius $(4\pi/\sqrt3)^{-1/2}<1/\sqrt6$, so it lies strictly inside the inscribed circle and gives $\E|Q_3-\mu_3|=\frac16-\frac{\sqrt3}{8\pi}$. Combining these values with the classical constants $\delc_1=1/6$ and $\delc_2=1/(2\sqrt3)$ yields
\[
 \beta_1\delc_1=\frac1{16}=\ldel_1,\qquad
 \beta_2\delc_2=\frac1{6\sqrt3}-\frac1{8\pi}=\ldel_2
\]
\citep{Ludwig1999,BoroczkyLudwig1999}. On a regular simplex $T$, the interpolation gap $W_T$ is a constant multiple of $1-Q_{m+1}$, so shifting the affine interpolant $\mathcal I_T q$ vertically by the median of $W_T$ achieves equality in Theorem~\ref{thm:expanded-finite-transfer}. Because congruent intervals tile $\R$ and congruent equilateral triangles tile $\R^2$, these optimal single-simplex configurations assemble into globally conforming tilings for $m=1,2$.

\section{Additional applications}
\label{sec:applications}

\paragraph{Connection to Gaussian error.}
Writing $f_0(Q)$ for the number of vertices of a polytope $Q$, our next result transfers the upper bound of \Cref{thm:finite-budget} to the standard Gaussian measure $\gamma_d$ and, more generally, to radially nonincreasing probability measures.

\begin{corollary}[Gaussian and radial measures]
\label{cor:gaussian-vertex-approximation}
There are absolute constants $C,d_0>0$ such that for every $d\ge d_0$ and every $0<\varepsilon<1$, there exists an origin-symmetric polytope $Q\subset\R^d$ with
\[
 \log f_0(Q)\le Cd\log\left(1+\frac1{\varepsilon\sqrt d}\right)
 \le\frac{C\sqrt d}{\varepsilon}
\]
satisfying, simultaneously for every $R>0$,
\begin{equation}\label{eq:gaussian-vertex-approximation}
 \gamma_d(RQ\triangle RB_d)
 \le\varepsilon\,\gamma_d(RB_d)\le\varepsilon.
\end{equation}
Moreover, the same polytope $Q$ satisfies $\mu(RQ\triangle RB_d)\le\varepsilon\,\mu(RB_d)$ for every $R>0$ and every probability measure $\mu$ on $\R^d$ with a radially symmetric, nonincreasing density.
\end{corollary}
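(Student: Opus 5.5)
The corollary follows by taking $Q$ to be the origin-symmetric polytope supplied by \Cref{thm:finite-budget}, so that $\Delta_d(Q)\le\varepsilon$ and $\log f_0(Q)\le Cd\log(1+1/(\varepsilon\sqrt d))$. The second inequality $Cd\log(1+1/(\varepsilon\sqrt d))\le C\sqrt d/\varepsilon$ is immediate from $\log(1+x)\le x$. All that remains is a comparison lemma: for every probability measure $\mu$ with a radially symmetric nonincreasing density $f(x)=\phi(|x|)$ and every star body $K$ with the origin in its interior,
\[
 \frac{\mu(K\triangle B_d)}{\mu(B_d)}\le\frac{|K\triangle B_d|}{|B_d|}.
\]
Applying this to $K=Q$ handles every $R>0$ at once. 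Indeed, $RQ\triangle RB_d=R(Q\triangle B_d)$, and the pushforward of $\mu$ under $x\mapsto x/R$ again has a radially symmetric nonincreasing density $R^d\phi(R|x|)$. The Gaussian case is $\phi(t)=(2\pi)^{-d/2}e^{-t^2/2}$, and the final bound $\varepsilon\gamma_d(RB_d)\le\varepsilon$ is trivial. We may assume $o\in\operatorname{int}Q$. This is automatic because $\Delta_d(Q)<1/2$ forces it, as in the covering argument; and for $\varepsilon\ge1/2$ we can instead use the construction at a smaller accuracy, or simply note that the symmetric random polytope contains the origin.

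To prove the comparison, write $\rho=\rho_K$ for the radial function and integrate in polar coordinates:
\[
 \mu(K\triangle B_d)=d\kappa_d\int_{\mathbb S^{d-1}}\Bigl|\int_1^{\rho(u)}\phi(t)t^{d-1}\,dt\Bigr|\,d\sigma(u),
 \qquad
 \mu(B_d)=d\kappa_d\int_0^1\phi(t)t^{d-1}\,dt .
\]
The volume analogue is the same formula with $\phi\equiv1$, namely $\Delta_d(K)=\int_{\mathbb S^{d-1}}|\rho(u)^d-1|\,d\sigma(u)$. It therefore suffices to prove the pointwise inequality, valid for each $\rho>0$,
\[
 \Bigl|\int_1^{\rho}\phi(t)t^{d-1}\,dt\Bigr|\le |\rho^d-1|\int_0^1\phi(t)t^{d-1}\,dt .
\]
Set $\Phi(s)=\int_0^s\phi(t)t^{d-1}\,dt$. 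In the variable $v=t^d$ this becomes $\Phi(s)=\frac1d\int_0^{s^d}\phi(v^{1/d})\,dv$, the integral of a nonincreasing function of $v$, so $\Phi$ is concave as a function of $v=s^d$ and vanishes at $0$. Hence the chord slope $\Phi(s)/s^d$ is nonincreasing in $v$, and the increment slopes of a concave function are bounded by the slope from $0$ to $1$. Concretely, for $\rho>1$ we get $\Phi(\rho)-\Phi(1)\le(\rho^d-1)\Phi(1)$. For $\rho<1$ the chord from $0$ to $\rho^d$ has slope at least $\Phi(1)$ while the total slope from $0$ to $1$ equals $\Phi(1)$, so the slope from $\rho^d$ to $1$ is at most $\Phi(1)$, which gives $\Phi(1)-\Phi(\rho)\le(1-\rho^d)\Phi(1)$. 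Integrating over $u$ and dividing yields the lemma.

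The only step that needs any care is this monotonicity in the variable $v=s^d$. It is what makes the comparison with volume exact, rather than lossy in $d$, and its proof is the elementary concavity argument just described, so no step here is a real obstacle.
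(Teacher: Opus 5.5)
Your proposal is correct and follows essentially the same route as the paper. You take the origin-symmetric $Q$ from \Cref{thm:finite-budget} and prove a radial comparison lemma by polar integration. Your concavity of $\Phi$ in $v=s^d$ yields exactly the paper's key fact that $\mu(rB_d)/r^d$ is nonincreasing in $r$. That fact gives the pointwise bound $|F(s)-F(R)|\le F(R)\,|(s/R)^d-1|$, which is then integrated over directions.

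The differences are cosmetic. The paper applies the lemma directly at each radius $R$ rather than rescaling $\mu$. It also needs only star-shapedness, so your digression about $o\in\operatorname{int}Q$ is unnecessary; in any case, a full-dimensional origin-symmetric convex body contains the origin in its interior.
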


The proof is given in \Cref{app:gaussian-vertex-approximation}.

\paragraph{Approximation of smooth convex bodies.}
Combining Ludwig's asymptotic approximation formula
\cite[Theorem 1]{Ludwig1999} with Theorem~\ref{mainThm} gives the
following bound.

\begin{corollary}\label{cor:convex-bodies-lower-bd}
For every sequence $(K_d)_{d\ge2}$ of convex bodies
$K_d\subset\R^d$ with $C^2$ boundary and everywhere
positive Gaussian curvature, we have, as $d\to\infty$,
\begin{equation}\label{eq:main-asymptotic-formula}
    \frac{1}{2\pi e}\sqrt{\frac{2}{\pi}}-o(1)
    \le
    \sqrt d\left(
        \lim_{N\to\infty}
        \frac{
            N^{\frac{2}{d-1}}
            \displaystyle\inf_{P\in\mathcal P_d(N)}
            |K_d\triangle P|
        }{
            \as(K_d)^{\frac{d+1}{d-1}}
        }
    \right)
    \le
    \frac{1}{\pi e}\sqrt{\frac{2}{\pi}}+o(1).
\end{equation}
\end{corollary}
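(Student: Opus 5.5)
The plan is to treat this as a direct consequence of Ludwig's asymptotic formula \cite[Theorem~1]{Ludwig1999} combined with \Cref{Thm:Main}; no new geometry is needed. For each fixed $d$ and each convex body $K\subset\R^d$ with $C^2$ boundary and positive Gaussian curvature, Ludwig's theorem asserts that the limit
\[
 \lim_{N\to\infty}N^{\frac{2}{d-1}}\inf_{P\in\mathcal P_d(N)}|K\triangle P|
 =\frac12\,\ldel_{d-1}\,\as(K)^{\frac{d+1}{d-1}}
\]
exists. Here $\as(K)$ is the affine surface area and $\ldel_{d-1}$ depends only on $d$. The first step is therefore to record this formula with its exact normalizing constant.

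I would check the factor $\tfrac12$ against the normalization \eqref{eq:intro-normalization} already used in the paper. Take $K=B_d$, so that $\as(B_d)=d\kappa_d$ and $\inf|B_d\triangle P|=\kappa_d\inf\Delta_d(P)$. The formula then gives
\[
 \mathcal E_d=\frac{1}{2\kappa_d}\ldel_{d-1}(d\kappa_d)^{\frac{d+1}{d-1}}
 =\frac d2(d\kappa_d)^{\frac{2}{d-1}}\ldel_{d-1}.
\]
This is exactly the relation $\ldel_{d-1}/\mathcal E_d=\frac2d(d\kappa_d)^{-\frac{2}{d-1}}$ in \eqref{eq:intro-normalization}. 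This consistency check fixes the constant.

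Next, for each $d$, I would divide Ludwig's limit for $K_d$ by $\as(K_d)^{\frac{d+1}{d-1}}$. This is positive and finite, since positive curvature makes the curvature integrand bounded below. The quotient inside the parentheses of \eqref{eq:main-asymptotic-formula} then equals exactly $\tfrac12\ldel_{d-1}$, independently of the particular body $K_d$. Multiplying by $\sqrt d$ and applying \eqref{eq:intro-main-classical} gives
\[
 \frac{1}{2\pi e}\sqrt{\frac2\pi}-o(1)\le\tfrac12\sqrt d\,\ldel_{d-1}\le\frac1{\pi e}\sqrt{\frac2\pi}+o(1).
\]
This is the claim.

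The only delicate point is bookkeeping of Ludwig's normalization, namely the factor $\tfrac12$ and the exponent $\frac{d+1}{d-1}$ on $\as(K)$. The ball computation above resolves it. Since the limit is body-independent after normalization, uniformity over the sequence $(K_d)$ is automatic.
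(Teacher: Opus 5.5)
Your proposal is correct and follows essentially the same route as the paper: for each $d$ the normalized quantity equals $\tfrac12\ldel_{d-1}$ by Ludwig's formula $\lim_{N\to\infty}N^{\frac{2}{d-1}}\inf_{P\in\mathcal P_d(N)}|K\triangle P|=\frac{\ldel_{d-1}}{2}\as(K)^{\frac{d+1}{d-1}}$, and halving the bounds of \Cref{Thm:Main} gives \eqref{eq:main-asymptotic-formula}. Your computation with $K=B_d$ also correctly confirms that this normalization agrees with \eqref{eq:intro-normalization}.
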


\paragraph{Surface area deviation.}
For a convex body $K\subset\R^d$ and a polytope
$P\subset\R^d$, define their surface area deviation by
\[
    \Delta_s(K,P)
    :=
    |\partial K|
    +|\partial P|
    -2\bigl|\partial(K\cap P)\bigr|.
\]
For each $d\ge2$ and all sufficiently large $N$,
\citet*{HSW} constructed a polytope
$P\in\mathcal P_d(N)$ satisfying
\begin{equation}\label{eq:HSW-SA-deviation}
    \Delta_s(B_d,P)
    \le
    C\,|\partial B_d|
    N^{-\frac{2}{d-1}},
\end{equation}
where $C>0$ is absolute.
The random construction underlying our upper bound
improves the dimensional dependence of this asymptotic
estimate by a factor of $\sqrt d$.

\begin{theorem}\label{thm:SA-deviation}
We have
\begin{equation}\label{eq:SA-deviation-upper}
    \limsup_{d\to\infty}\sqrt d
    \left(
        \limsup_{N\to\infty}
        \frac{
            N^{\frac{2}{d-1}}
            \displaystyle\inf_{P\in\mathcal P_d(N)}
            \Delta_s(B_d,P)
        }{
            |\partial B_d|
        }
    \right)
    \le 2\sqrt{\frac{2}{\pi}}.
\end{equation}
\end{theorem}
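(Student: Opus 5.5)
We use the same random construction as in Proposition~\ref{prop:expectation}, namely $P=R_{d,N}^{-1}P_N$ with $P_N=\conv\{X_1,\ldots,X_N\}$ and $R_{d,N}^d\kappa_d=\E|P_N|$. The plan is to bound $\E\Delta_s(B_d,P)$ by the same facet-cone integral that produced $C_d$. Since $\inf_P\Delta_s(B_d,P)\le\E\Delta_s(B_d,P)$, it then suffices to show
\[
 \limsup_{N\to\infty}N^{\frac2{d-1}}\frac{\E\Delta_s(B_d,P)}{|\partial B_d|}\le C_d+o(C_d)\qquad(d\to\infty),
\]
and to invoke \eqref{eq:expectation-limit-formula}.

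\textbf{Step 1: decompose $\Delta_s$ over facet cones.} On the high-probability event $o\in\operatorname{int}P$, the cones over the facets $F$ of $P$ partition $\R^d$. Inside $\cone(F)$, write $\partial(B_d\cap P)$ as the part of $F$ inside $B_d$ together with the part of $\partial B_d$ inside $P$. Then
\[
 \Delta_s(B_d,P)=|\partial B_d\setminus P|-|\partial B_d\cap\operatorname{int}\text{-side}\ldots|+\ldots,
\]
and concretely, per cone the contribution is
\[
 \bigl||F\setminus B_d|-|\partial B_d\cap \cone(F)\cap P|\bigr|+\bigl||F\cap B_d|-|\partial B_d\cap\cone(F)\setminus P|\bigr|.
\]
We bound this by $\int_F|1-\rho(y)|\,dy$ plus a tangential correction. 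Here $\rho(y)=h_F|y|^{-d}$ is the Jacobian of radial projection from $F$ to the sphere, so $|\partial B_d\cap\cone(F)\cap\{\ldots\}|=\int h_F|y|^{-d}\,dy$ over the corresponding part of $F$. Summing gives the pointwise-in-$F$ bound
\[
 \Delta_s\le\sum_F\int_F\Bigl|1-\frac{h_F}{|y|^{d}}\Bigr|\,dy .
\]
Under the facet law this can be rewritten as $\E_\nu$ of a function of $|Y|^2=H+(1-H)\mathcal Q_d$ with $h_F=\sqrt H$, exactly as in the proof of Proposition~\ref{prop:expectation}. The change of measure is to weight facets by area rather than cone volume, which differs by the factor $h_F/d$; this factor tends to $1/d$ uniformly because $1-H=O(N^{-2/(d-1)})$. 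The Blaschke--Petkantschin computation of Lemma~\ref{lem:Miles-BP-1} and \eqref{eq:J-s-integral} then goes through verbatim.

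\textbf{Step 2: localize.} Using $H=p^2$, $1-p^2=r^2$, and $|y|^2=1-r^2(1-z)$ with $z=\mathcal Q_d$, the integrand is
\[
 \bigl|1-(1-r^2)^{1/2}(1-r^2(1-z))^{-d/2}\bigr|\approx\Bigl|\tfrac d2 r^2(1-z)-\tfrac12r^2\Bigr|+O(r^4).
\]
After rescaling by the shrink factor $R_{d,N}$ (which shifts by $d\,c_{d,N}$, see \eqref{eq:shrinkage-leading-term}), this becomes $\tfrac d2\bigl|r^2(1-z)-2c_{d,N}\bigr|+O(r^2)$. Substituting $t=Ns$ and using dominated convergence as in \eqref{eq:fixed-d-positive-part} yields
\[
 \lim_N N^{\frac2{d-1}}\frac{\E\Delta_s}{|\partial B_d|}\le\frac{dA_d^{\frac2{d-1}}}2\E\bigl|T_d^{\frac2{d-1}}(1-\mathcal Q_d)-b_d\bigr|+O\bigl(A_d^{\frac2{d-1}}g_d\bigr)\cdot\frac{1}{d}\cdot\text{(bounded)}.
\]
The correction term, which comes from the factor $h_F=\sqrt{1-r^2}$ and the normalization of the area weighting, is of relative size $O(1/d)$ compared with the main term $\tfrac d2\cdot O(d^{-3/2})\cdot d^{0}$. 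Indeed the main term is of order $d^{-1/2}$ while the correction is of order $d^{-1}$.

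\textbf{Step 3: conclude.} The main term is $C_d$ by \eqref{eq:fixed-d-exact-absolute-moment}, and $\sqrt d\,C_d\to2\sqrt{2/\pi}$. The correction is $O(d^{-1})=o(d^{-1/2})$. This proves \eqref{eq:SA-deviation-upper}.

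The main obstacle is Step 2's correction term. The undilated term $\tfrac12 r^2$ coming from $h_F$ is uncentered, so we need its contribution to stay at order $d^{-1}$ rather than combining with the $d/2$ factor. We will handle this by keeping it inside the absolute value and applying the triangle inequality only afterwards, bounding it by $\tfrac12\E[r^2]N^{2/(d-1)}\to\tfrac12A_d^{2/(d-1)}g_d\approx\tfrac12$. Relative to $|\partial B_d|$ this is of the same order as the main term times $d^{-1/2}$, so its effect after multiplying by $\sqrt d$ vanishes as $d\to\infty$.
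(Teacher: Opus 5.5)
\textbf{There is a genuine gap in Step~2, at exactly the point you flagged.} Your Step~1 inequality $\Delta_s(B_d,P)\le\sum_F\int_F\bigl|1-h_F|y|^{-d}\bigr|\,dy$ is correct, and a direct facet-cone computation can prove the theorem. The error is in how the dilation enters: the surface Jacobian scales with $R_{d,N}^{d-1}$, not $R_{d,N}^{d}$. For $P=R_{d,N}^{-1}P_N$, a facet at unscaled height $p$ gives
\[
 h_F|y|^{-d}=R_{d,N}^{d-1}\,p\,(1-r^2(1-z))^{-d/2},
\]
so the shift is $(d-1)c_{d,N}$, not $dc_{d,N}$.

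With your shift, the linearized integrand is $\tfrac d2\bigl[r^2(1-z)-2c_{d,N}\bigr]-\tfrac12r^2$, and the residual $\tfrac12r^2$ is uncentered. Your own estimate says it contributes about $\tfrac12A_d^{2/(d-1)}g_d\approx\tfrac12$ after multiplying by $N^{2/(d-1)}$. That is of order one, whereas $C_d\asymp d^{-1/2}$. Both terms carry the same normalization by $|\partial B_d|$, so there is no extra factor $d^{-1/2}$. Your Step~2 claim that the correction is $O(d^{-1})$ therefore contradicts the bound in your final paragraph.

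Worse, by \eqref{eq:shrinkage-leading-term} and \eqref{eq:centering-identity}, $N^{2/(d-1)}\E[r^2(1-\mathcal Q_d)]$ and $N^{2/(d-1)}\cdot2c_{d,N}$ have the same limit $A_d^{2/(d-1)}b_d$. So an integrand of your form would have mean about $-\tfrac12r^2$, hence absolute mean at least about $\tfrac12N^{-2/(d-1)}$. No reordering of the triangle inequality can rescue it.

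\textbf{The fix.} What is missing is the cancellation between the correct shift and the factor $h_F=\sqrt{1-r^2}$. With the exponent $d-1$,
\[
 h_F|y|^{-d}-1\approx\tfrac d2r^2(1-z)-\tfrac12r^2-(d-1)c_{d,N}
 =\tfrac{d-1}2\bigl[r^2(1-z)-2c_{d,N}\bigr]-\tfrac12r^2z .
\]
After the substitution $t=Ns$, the first term contributes $\frac{d-1}{d}C_d$ in the limit. Here the area and cone-volume weightings agree in the limit, after discarding caps with $p\le\sqrt3/2$ as in the paper; note that $h_F\to1$ in probability, not uniformly. The second term contributes $\frac12A_d^{2/(d-1)}g_dq_d=\frac{(d-1)A_d^{2/(d-1)}g_d}{d(d+1)}\sim\frac1d$, because $\mathcal Q_d$ has mean $q_d\sim2/d$. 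This gives $C_d+O(d^{-1})$ and completes your route.

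\textbf{Comparison with the paper.} The paper reaches the same bound, with the identical second-order term, without any facet-level expansion. It applies the isoperimetric inequality to $B_d\cap P$ to get $\Delta_s(B_d,P)/|\partial B_d|\le\Delta_d(P)+|\partial P|/|\partial B_d|-|P|/\kappa_d$. For the volume-balanced $\widehat P_N$, the last two terms reduce in expectation to the surface-area excess. M\"uller's formulas (Lemmas~\ref{lem:muller-vol} and~\ref{lem:muller-SA}) evaluate this excess as $\frac{(d-1)A_d^{2/(d-1)}g_d}{d(d+1)}N^{-2/(d-1)}+o(N^{-2/(d-1)})$. Your corrected route is more hands-on, but it shows where the $1/d$ term comes from: it is exactly the tangential contribution $\frac12r^2\mathcal Q_d$.
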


The proof, given in Section~\ref{sec:SA-deviation-proof},
adapts the approach of \citet{Kur2017} by combining the
isoperimetric inequality with the bound on the expected
symmetric difference in Proposition~\ref{prop:expectation}.
For further results on surface area approximation, see
\citep{Boroczky-Csikos,HSW-IMRN,Kur2017} and the references
therein.

\paragraph{Approximation with a bounded number of $k$-faces.}
For $0\le k\le d-1$, let $\mathcal P_{d,k}(M)$ denote
the class of polytopes in $\R^d$ with at most $M$
$k$-faces.
The problem of approximating convex bodies under such
constraints was posed by \citet[p.~216]{GruberBook}.
For a convex body $K\subset\R^d$ with $C^2$ boundary,
\citet{Boroczky-2000} established the lower bound
\begin{equation}\label{eq:Boroczky-kfaces}
    |K\triangle P|
    \ge
    \frac{1}{67\pi e^2d}\,
    \as(K)^{\frac{d+1}{d-1}}
    M^{-\frac{2}{d-1}},
    \qquad P\in\mathcal P_{d,k}(M).
\end{equation}

To relate face budgets to vertex budgets, set
\[
    \rho(d,k)
    :=
    \frac12\left[
        \binom{\lceil d/2\rceil}{k}
        +\binom{\lfloor d/2\rfloor}{k}
    \right].
\]
For every full-dimensional polytope $P\subset\R^d$
and every $0\le k\le\lfloor d/2\rfloor$,
\citet{Hinman1,Hinman2} proved that
\begin{equation}\label{eq:Hinman}
    f_k(P)\ge\rho(d,k)f_0(P).
\end{equation}
Thus a full-dimensional polytope with at most $M$
$k$-faces has at most
$\lfloor M/\rho(d,k)\rfloor$ vertices.
Lower-dimensional polytopes have volume error
$|K|$ and do not affect the asymptotic infimum.

Combining this observation with the asymptotic vertex
approximation formula for $C^2$ convex bodies
\citep[Theorem~20]{BHK} gives the following result.
Related reductions from face counts to vertex counts
appear in \citep{HSW-IMRN,HSW-dual}.

\begin{theorem}\label{thm:k-faces}
Let $K\subset\R^d$ be a convex body with $C^2$ boundary.
For every $0\le k\le\lfloor d/2\rfloor$,
\begin{equation}
    \liminf_{M\to\infty}
    M^{\frac{2}{d-1}}
    \inf_{P\in\mathcal P_{d,k}(M)}
    |K\triangle P|
    \ge
    \frac{\ldel_{d-1}}{2}\,
    \rho(d,k)^{\frac{2}{d-1}}
    \as(K)^{\frac{d+1}{d-1}}.
\end{equation}
Consequently, for every sequence of convex bodies
$K_d\subset\R^d$ with $C^2$ boundary, $\as(K_d)>0$, and every sequence
of integers $0\le k_d\le\lfloor d/2\rfloor$,
\begin{equation}
    \liminf_{d\to\infty}\sqrt d
    \left(
        \liminf_{M\to\infty}
        \frac{
            M^{\frac{2}{d-1}}
            \displaystyle\inf_{P\in\mathcal P_{d,k_d}(M)}
            |K_d\triangle P|
        }{
            \rho(d,k_d)^{\frac{2}{d-1}}
            \as(K_d)^{\frac{d+1}{d-1}}
        }
    \right)
    \ge
    \frac{1}{2\pi e}\sqrt{\frac{2}{\pi}}.
\end{equation}
\end{theorem}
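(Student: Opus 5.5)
The plan is to reduce face budgets to vertex budgets with Hinman's inequality \eqref{Eq:RelK}-independent bound \eqref{eq:Hinman}, and then invoke the asymptotic vertex formula for $C^2$ bodies. Fix $K$ and $0\le k\le\lfloor d/2\rfloor$, and let $P\in\mathcal P_{d,k}(M)$. If $P$ is lower-dimensional, then $|K\triangle P|=|K|$. This exceeds any quantity of order $M^{-2/(d-1)}$ for large $M$, so such $P$ can be discarded. If $P$ is full-dimensional, \eqref{eq:Hinman} gives $f_0(P)\le M/\rho(d,k)$, so $P\in\mathcal P_d(\lfloor M/\rho(d,k)\rfloor)$. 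Hence, for all large $M$,
\[
 \inf_{P\in\mathcal P_{d,k}(M)}|K\triangle P|\ge\min\Bigl\{|K|,\ \inf_{P\in\mathcal P_d(\lfloor M/\rho(d,k)\rfloor)}|K\triangle P|\Bigr\}.
\]

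Next I would apply \citet[Theorem~20]{BHK}. In the form needed here, it says that for a convex body with $C^2$ boundary,
\[
 \liminf_{N\to\infty}N^{\frac2{d-1}}\inf_{P\in\mathcal P_d(N)}|K\triangle P|\ge\tfrac12\ldel_{d-1}\as(K)^{\frac{d+1}{d-1}}.
\]
This is Ludwig's formula \citep{Ludwig1999}, extended to $C^2$ bodies without a curvature assumption. The factor $\tfrac12$ is consistent with the normalization of $\ldel_{d-1}$ used in \eqref{eq:intro-normalization} and \Cref{cor:convex-bodies-lower-bd}, where the constant $\frac1{2\pi e}$ equals $\tfrac12\cdot\frac1{\pi e}$.

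Setting $N=\lfloor M/\rho\rfloor$ with $\rho=\rho(d,k)$ fixed, we have $N\to\infty$ and $M^{2/(d-1)}\sim\rho^{2/(d-1)}N^{2/(d-1)}$. Substituting this gives the first display. The only care needed is the $\liminf$ along the subsequence $N=\lfloor M/\rho\rfloor$, which is harmless because a $\liminf$ over all $N$ bounds the $\liminf$ along any sequence tending to infinity. If $\as(K)=0$, the bound is trivial.

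For the second statement, I would divide the first display by $\rho(d,k_d)^{2/(d-1)}\as(K_d)^{(d+1)/(d-1)}>0$ to get the lower bound $\tfrac12\ldel_{d-1}$ for each $d$. Then I would multiply by $\sqrt d$ and use the lower bound of \Cref{Thm:Main}, $\sqrt d\,\ldel_{d-1}\ge\frac1{\pi e}\sqrt{2/\pi}-o(1)$, which yields the constant $\frac1{2\pi e}\sqrt{2/\pi}$.

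The main obstacle is confirming that \citet[Theorem~20]{BHK} applies with exactly the constant $\tfrac12\ldel_{d-1}$ for merely $C^2$ boundaries, which may have zero curvature at some points. If that reference only gives Ludwig's limit under positive curvature, I would instead use a monotonicity argument: approximate $K$ from inside by bodies with positive curvature whose affine surface areas converge, a standard argument in the style of Ludwig and Böröczky. I expect to rely on the cited statement rather than redo this.
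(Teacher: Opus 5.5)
Your proposal is correct and follows the paper's proof essentially step for step. You use Hinman's inequality to place every full-dimensional $P\in\mathcal P_{d,k}(M)$ in $\mathcal P_d(\lfloor M/\rho(d,k)\rfloor)$, discard lower-dimensional polytopes because their error $|K|$ is never below the optimal vertex error, apply Theorem~20 of \cite{BHK} using $M/\lfloor M/\rho(d,k)\rfloor\to\rho(d,k)$, and finish with the lower bound of \Cref{Thm:Main}. The paper relies on the same citation for bodies with $C^2$ boundary, so your fallback approximation argument is not needed, and the stray equation reference in your first sentence is harmless.
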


Since $\rho(d,k)\ge1$ for
$0\le k\le\lfloor d/2\rfloor$,
Theorem~\ref{thm:k-faces} improves the asymptotic lower
coefficient in \eqref{eq:Boroczky-kfaces} by at least
a factor of order $\sqrt d$.
The proof is given in \Cref{app:k-faces}.

\section{Extended comparisons with prior work}
\label{sec:extended-context}
\subsection{From inscribed to unrestricted polytopal approximation}

For inscribed polytopes, the asymptotic formulas of
\citet{GruberI,GruberII} and the dimensional estimates of
\citet{GRS-umbrellas,MaS1,MankiewiczSchuett2001} yield
\begin{equation}\label{eq:intro-inscribed}
    \lim_{N\to\infty}N^{\frac2{d-1}}
    \inf_{\substack{P\in\mathcal P_d(N)\\P\subset B_d}}
    \Delta_d(P)
    =\frac d2\left(1+O\left(\frac{\log d}{d}\right)\right).
\end{equation}
The convex hull of independent uniform points on $\partial B_d$
attains the same leading dimensional coefficient in expectation
\citep{Muller-1990,BHK}. See also \citet{HK-DCG}.
A non-asymptotic bound of \citet[Theorem~3.1]{HSW-dual} is
\begin{equation}\label{eq:intro-inscribed-nonasymptotic-capped-form}
    \inf_{\substack{P\in\mathcal P_d(N)\\P\subset B_d}}
    \Delta_d(P)
    \ge\frac{1-e^{-1}}2
    \min\left\{1,\frac d2
    \left(\frac{|\partial B_d|}
               {4|B_{d-1}|}\right)^{\frac2{d-1}}
    N^{-\frac2{d-1}}\right\},
\end{equation}
where $| \partial A |$ denotes the surface area for every measurable $A \subset \R^d$. 
For $N\ge(c_1d)^{(d-1)/2}$ with a sufficiently large absolute
$c_1$, this recovers the bound $cdN^{-2/(d-1)}$ of
\citet{GRS-umbrellas} where $c>0$ is an absolute constant.

Without containment, \citet[Theorem~1]{LSW} showed that,
for $N\ge(C_1d)^{(d-1)/2}$, some $P\in\mathcal P_d(N)$ satisfies
\begin{equation}\label{eq:LSW-upper}
    \Delta_d(P)\le C N^{-\frac2{d-1}},
\end{equation}
where $C_1,C>0$ are absolute.
Their construction dilates a random inscribed polytope.
Conversely, \citet{Boroczky-2000} proved, for all sufficiently
large $N$ depending on $d$,
\begin{equation}\label{eq:Boroczky-lower}
    \Delta_d(P)\ge\frac{1}{67\pi e^2d}N^{-\frac2{d-1}}
    \qquad(P\in\mathcal P_d(N)).
\end{equation}
The exact low-dimensional constants are
$\ldel_1=1/16$ and $\ldel_2=1/(6\sqrt3)-1/(8\pi)$
\citep{Ludwig1999,BoroczkyLudwig1999}. For circumscribed polytopes, \citet{GruberII} and
\citet[Theorem~1.4]{HK-DCG} give
\begin{equation}\label{eq:intro-circumscribed}
    \lim_{M\to\infty}M^{\frac2{d-1}}
    \inf_{\substack{P\in\overline{\mathcal P}_d(M)\\B_d\subset P}}
    \Delta_d(P)
    =\frac d2\left(1+O\left(\frac{\log d}{d}\right)\right).
\end{equation}
The corresponding Dirichlet--Voronoi tiling number satisfies
$\operatorname{div}_{d-1}=(d+\log d)/(2\pi e)+O(1)$
\citep{HK-DCG}.
The non-asymptotic estimate of \citet*{HSW-IMRN} is
\begin{equation}\label{eq:intro-circumscribed-nonasymptotic}
    \inf_{\substack{P\in\overline{\mathcal P}_d(M)\\B_d\subset P}}
    \Delta_d(P)
    \ge\frac d8
    \left(\frac{|\partial B_d|}
               {4|B_{d-1}|}\right)^{\frac2{d-1}}
    M^{-\frac2{d-1}}.
\end{equation}
The factor involving volumes is $1+O((\log d)/d)$,
so the leading dimensional coefficient is one quarter
of the asymptotic optimum. 
For unrestricted facets, \eqref{eq:intro-facet-bounds}
comes from intersecting random halfspaces whose bounding
hyperplanes are shifted toward the origin.
A non-asymptotic bound with the coefficient
$C_{\mathrm{fac}}+O(d^{-1/2})$ holds for $M\ge d^d$, while a bound with a larger absolute constant holds already for $M\ge10^d$
\citep[Remark~2.2]{Kur2017}.

\subsection{Extremality and general convex bodies}
\label{sec:extremality-detailed}
Macbeath's theorem \citep{Macbeath} states that, for every
convex body $K\subset\R^d$,
\begin{equation}\label{eq:Macbeath}
    \min_{\substack{P\in\mathcal P_d(N)\\P\subset K}}
    \Delta_K(P)
    \le
    \min_{\substack{P\in\mathcal P_d(N)\\P\subset B_d}}
    \Delta_d(P).
\end{equation}
Thus the ball, and by affine invariance every ellipsoid,
maximizes the optimal relative error of inscribed polytopes
at each vertex budget.

For smooth bodies, affine surface area gives asymptotic
extremality in four approximation problems.
Suppose that $K$ has $C^2$ boundary and positive Gaussian
curvature $\kappa$, and write
\[
    \operatorname{as}(K)
    :=\int_{\partial K}\kappa(x)^{\frac1{d+1}}\,
      d\mu_{\partial K}(x),
\]
where $\mu_{\partial K}$ is surface measure.
The unrestricted approximation formulas of
\citet{Ludwig1999} are
\begin{equation*}
    \begin{aligned}
        \lim_{N\to\infty}N^{\frac2{d-1}}
        \inf_{P\in\mathcal P_d(N)}|K\triangle P|
        &=\frac{\ldel_{d-1}}2
          \operatorname{as}(K)^{\frac{d+1}{d-1}},\\
        \lim_{M\to\infty}M^{\frac2{d-1}}
        \inf_{P\in\overline{\mathcal P}_d(M)}|K\triangle P|
        &=\frac{\ldiv_{d-1}}2
          \operatorname{as}(K)^{\frac{d+1}{d-1}}.
    \end{aligned}
\end{equation*}
Restricting the first infimum to the inscribed subclass
$\{P\in\mathcal P_d(N):P\subset K\}$ and the second to
the circumscribed subclass
$\{P\in\overline{\mathcal P}_d(M):K\subset P\}$
gives the formulas of \citet{GruberI,GruberII}, with
$\ldel_{d-1}$ and $\ldiv_{d-1}$ replaced by
$\operatorname{del}_{d-1}$ and $\operatorname{div}_{d-1}$,
respectively. The affine isoperimetric inequality
\citep[Equation~(10.19)]{SchneiderBook},
\begin{equation}\label{eq:Gruber-asymptotic-volume}
    \operatorname{as}(K)^{d+1}
    \le d^{d+1}|B_d|^2|K|^{d-1},
\end{equation}
has equality precisely for ellipsoids.
Hence ellipsoids maximize all four asymptotic coefficients
among smooth bodies of a given volume, with the body and
dimension fixed as the budget tends to infinity.

This extremality need not hold at finite budgets.
For circumscribed simplices,
\citet{Hoehner2026CircumscribedSimplices} showed that, in
dimensions $2\le d\le7$, a parallelotope $Q$ and an
equal-volume ellipsoid $E$ satisfy
\begin{equation}\label{eq:disc-circumscribed-parallelotopes}
    \min_{\substack{P\in\overline{\mathcal P}_d(d+1)\\Q\subset P}}
    \Delta_Q(P)
    >
    \min_{\substack{P\in\overline{\mathcal P}_d(d+1)\\E\subset P}}
    \Delta_E(P).
\end{equation}
The extremizing bodies remain unknown.
Our cube obstruction gives a corresponding failure
for unrestricted vertex approximation.

\paragraph{Related extremality results.}
Analogues of Macbeath's theorem are known for mean width
\citep{BucurFragalaLamboley}, for functional versions of
volume and mean width
\citep{HoehnerNovaes2023,PivovarovRebolloBueno2020,
Hoehner2026Minkowski,HoehnerMussnig2026}, on the sphere
\citep{BasitHoehnerLangiLedford2026}, and in finite-dimensional
normed spaces \citep{LangiWang2026}.
\subsection{Gaussian approximation and statistical reconstruction}
\label{subsec:volume-gaussian}
\citet*{KOS08} relate Gaussian approximation to Gaussian
surface area. Together with the $O(d^{1/4})$ surface-area
bound for convex sets, their Hermite approximation theorem
gives, for every convex set $K$, a polynomial $q$ satisfying
\[
    \deg q=O(\sqrt d/\varepsilon^2),
    \qquad
    \|\mathbf1_K-q\|_{L^2(\gamma_d)}^2\le\varepsilon.
\]
For fixed accuracy, their learning algorithms use
$\exp(O_\varepsilon(\sqrt d\log d))$ labeled Gaussian
examples and time, including in the agnostic setting.
Their sample lower bound is $\exp(\Omega(\sqrt d))$
at sufficiently small fixed accuracy, even without noise.
The hypotheses are polynomial threshold functions,
not necessarily convex polytopes.

For polyhedral approximation, let $M_\gamma(K,\varepsilon)$
be the minimum number of halfspaces whose intersection has
Gaussian error at most $\varepsilon$.
For $0<\varepsilon<10^{-3}$,
\citet*[Theorem~19]{DNS24Gaussian} prove
\[
    M_\gamma(K,\varepsilon)
    \le
    \left[
        \frac{C}{\varepsilon}
        \left(d^{5/4}
        +2d^{3/4}\sqrt{\log(2/\varepsilon)}\right)
    \right]^{(d-1)/2}
\]
for every convex set $K\subset\R^d$.
This is $\exp(O_\varepsilon(d\log d))$ at fixed accuracy. A sharper $\exp(O_\varepsilon(d))$ bound follows from \citet[Theorem~1]{Nadimpalli2026} (see \Cref{thm:gaussian-exponential-halfspaces}).
Together with the lower bound of \citet*{DNS24Gaussian},
this gives, for sufficiently small fixed $\varepsilon>0$,
\begin{equation}\label{Eq:GausApprox}
    \exp(c_\varepsilon\sqrt d)
    \le\sup_{K\ \mathrm{convex}}M_\gamma(K,\varepsilon)
    \le\exp(C_\varepsilon d).
\end{equation}
The remaining approximation gap is between exponential
growth in $\sqrt d$ and in $d$ of the number of halfspaces.

For the ball $G_d:=\sqrt d\,B_d$, their Theorems~27 and~70
give the sharper bounds
\[
    \exp(c\sqrt d)
    \le M_\gamma(G_d,\varepsilon)
    \le
    \exp\left(
        C\frac{\sqrt d}{\varepsilon}\log\frac1\varepsilon
    \right),
\]
for sufficiently small $\varepsilon>0$ and large $d$,
where $c,C>0$ are absolute.
Hence the Gaussian facet budget is
$\exp(\Theta_\varepsilon(\sqrt d))$ at small fixed accuracy.
The same order holds for the centered ball of Gaussian
measure exactly $1/2$, since its Gaussian symmetric
difference from $G_d$ tends to zero. This is in contrast to relative-volume approximation
of the ball, which requires
$\exp(\Theta_\varepsilon(d))$ facets
\citep{Kur2017}; see \Cref{R:FacetsBall}.

For general convex bodies, the economical covering theorem
of \citet*{AFM2024} yields, after suitable centering,
\[
    K\subset P\subset(1+\eta)K,
    \qquad
    \Delta_K(P)\le(1+\eta)^d-1,
\]
with $2^{O(d)}\eta^{-(d-1)/2}$ vertices or facets,
in separate constructions.
Choosing $\eta=\varepsilon/(2d)$ gives relative error
at most $\varepsilon$ with
\[
    2^{O(d)}(d/\varepsilon)^{(d-1)/2}
    =\exp\bigl(O(d\log(d/\varepsilon))\bigr)
\]
vertices or facets.
Together with our cube obstruction, this leaves a gap
between $\exp(\Omega_\varepsilon(d))$ and
$\exp(O_\varepsilon(d\log d))$ for worst-case unrestricted
vertex approximation.
Whether $\exp(O_\varepsilon(d))$ suffices for every convex
body remains a natural question.

A related statistical problem is reconstruction from
independent uniform samples inside an unknown convex body.
Unlike Gaussian labeled learning, the sampling distribution
now depends on the body, and no exterior points are observed.
Using polyhedral approximation and VC theory,
\citet*{DiakonikolasKaneStewart2017} give a total-variation
sample bound of
$(Cd/\varepsilon)^{(d+5)/2}\log^2(1/\varepsilon)$
for log-concave densities.
An information-theoretic projection onto uniform
convex-body densities transfers this to relative-volume
reconstruction, up to constant factors in accuracy.
Thus fixed accuracy has an
$\exp(O_\varepsilon(d\log d))$ sample upper bound. On the lower-bound side, \citet{GoyalRademacher2009} showed
that, for $8/d\le\varepsilon\le1/8$, achieving error at most
$\varepsilon$ with success probability at least $1/2$ requires
$\exp(\Omega(\sqrt{d/\varepsilon}))$ total calls to
uniform-sampling and membership oracles.
For fixed accuracy, this gives an
$\exp(\Omega(\sqrt d))$ lower bound.
For learners using uniform samples alone,
\citet*{KurDaganRakhlin2020} strengthened the dimensional
lower bound to $\exp(\Omega(d))$ at sufficiently small
fixed accuracy.

\section{Remarks and open problems}
\label{sec:prior-work-remarks}
\subsection{Remarks}
\label{sec:additional-remarks}
\begin{remark}
The proof of the symmetric difference lower bound in \citet[Section~5]{Boroczky-2000} also starts from a local quadratic
approximation. However, the argument does not respect the structure of
individual simplices, instead decomposing the approximation into
barycentric stars around vertices and applying a radial estimate \citep[Proposition~4.1]{Boroczky-2000}. In this estimate, the centers and offsets of
the quadratic error are allowed to vary independently between
rays. This relaxation preserves the vertex count but discards
the fact that the affine approximation must be fixed across all
directions belonging to the same facet.
Crucially, the resulting radial inequality is already sharp in
dimensional order. On the unit ball in $\R^{d-1}$, the admissible
choice $a(tu)=(d-1)u/d$ and $r(tu)=0$ yields a normalized error
of order $d^{-1}$. Thus, more accurate evaluation of the radial
integrals cannot recover the missing factor $\sqrt d$.
\end{remark}
\begin{remark}
The volume-balancing dilation of \cite{LSW} is asymptotically optimal among
deterministic dilations of the random inscribed polytope. Indeed, set
\(Y_d=T_d^{2/(d-1)}(1-\mathcal Q_d)/g_d\) as in \Cref{sec:proof-outline}.
The fixed-dimensional limiting error for a given radial choice is proportional to
\(\E|Y_d-h|\), which is minimized at the median of \(Y_d\),
whereas the volume-balancing dilation corresponds to \(h=\E Y_d=1-q_d\).
The convergence
\[
 d^{3/2}(Y_d-\E Y_d)\Rightarrow\mathcal N(0,16)
\]
implies that every median \(\mu_d\) satisfies
\(\mu_d-\E Y_d=o(d^{-3/2})\). Consequently,
\[
 0\le
 \E|Y_d-\E Y_d|-\inf_h\E|Y_d-h|
 \le|\mu_d-\E Y_d|=o(d^{-3/2}).
\]
Thus optimizing the dilation factor changes \(C_d\) by
\(o(d^{-1/2})\), and the volume-balancing choice already attains
the optimal leading coefficient \(2\sqrt{2/\pi}\) within
this random construction.
\end{remark}
\begin{remark}
In the facet-constrained setting, \citet{Kur2017} showed that for every fixed $\varepsilon\in(0,1)$, an exponential number of facets $\exp(\Theta_\varepsilon(d))$ is both necessary and sufficient to approximate $B_d$ to relative volume error at most $\varepsilon$: with $\exp(o(d))$ facets, the optimal relative error tends to $1$ as $d\to\infty$. Under a vertex budget, the picture is markedly different. On the one hand, Theorem~\ref{thm:finite-budget} shows that $\exp(\Theta_\varepsilon(\sqrt d))$ vertices suffice for $B_d$. On the other hand, Theorem~\ref{thm:cube-obstruction} shows that the hypercube $[-1,1]^d$ (as well as smooth, positively curved approximations to it) still requires $\exp(\Omega_\varepsilon(d))$ vertices for any fixed $\varepsilon\in(0,1)$, just as in the inscribed setting. Thus, whereas in each fixed dimension Ludwig's formula and the affine isoperimetric inequality make ellipsoids asymptotically extremal among smooth positively curved bodies of a given volume, no Macbeath-type extremality property holds for unrestricted vertex approximation in the high-dimensional non-asymptotic regime.
\end{remark}
\begin{remark}
    For $k\in\{\lceil d/2\rceil-1,\ldots,d-1\}$, Hoehner, Sch\"utt, and Werner \cite[Theorem 2.13]{HSW-IMRN} improved B\"or\"oczky's estimate \eqref{eq:Boroczky-kfaces} by a factor of dimension. More specifically, they proved that, for this range of $k$, if $P_M\in\mathcal{P}_{d,k}(M)$ satisfies $|\partial P_M\cap B_d|\geq c|\partial B_d|$ for some absolute constant $c>0$, and if all facets of $P_M$ meet the interior of $B_d$, then
    \begin{equation}\label{eq:HSW-IMRN-lower-bd-kfaces-upper-half-fvector}
|B_d\triangle P_M| \geq c_1|B_d|M^{-\frac{2}{d-1}}
    \end{equation}
    for some absolute constant $c_1>0$. The assumptions on $P_M$ imposed here hold naturally for best-approximating polytopes when $M$ is large enough. Thus, under these assumptions, asymptotically this result improves \eqref{eq:Boroczky-kfaces} by a factor of $d$.
\end{remark}

\subsection{Open problems}
\begin{openproblem}[Macbeath-type comparison under a facet budget]
\label{op:facet-extremality}
We conjecture that there is an absolute constant $C\ge1$
such that, for every $d\ge2$, every convex body $K\subset\R^d$,
and every integer $M\ge d+1$,
\[
    \inf_{P\in\overline{\mathcal P}_d(M)}\Delta_K(P)
    \le
    C\inf_{Q\in\overline{\mathcal P}_d(M)}\Delta_d(Q).
\]
Thus the ball would maximize the optimal relative-volume
error at each facet budget up to an absolute multiplicative
constant. An affirmative answer would also yield an information-theoretic
learning guarantee: for every fixed $\varepsilon\in(0,1)$,
an unknown convex body $K\subset\R^d$ could be reconstructed
from $\exp(O_\varepsilon(d))$ independent uniform samples
by a polytope $\widehat P$ satisfying
\[
    \Pr\bigl\{\Delta_K(\widehat P)\le\varepsilon\bigr\}
    \ge\frac23.
\]
Indeed, the conjecture and the ball's facet approximation
bounds would provide approximants with
$\exp(O_\varepsilon(d))$ facets.
VC bounds for this polytope class, combined with
minimum-distance density estimation, would then give
the stated sample complexity.
\end{openproblem}

\begin{openproblem}[Are random constructions asymptotically optimal?]
The lower bound for $\ldiv_{d-1}$ proved in \Cref{sec:ldiv-transfer}, together with
\eqref{eq:intro-normalization}, improves the asymptotic lower bound in \eqref{eq:intro-facet-bounds} to
\[
 \liminf_{d\to\infty}\mathcal F_d\ge\log2,
\]
narrowing the asymptotic range for $\mathcal F_d$ to $[\log2, C_{\mathrm{fac}}]$, where
$C_{\mathrm{fac}}\approx0.968$.
A constant-factor gap remains between the best known upper and lower
bounds in both the facet and vertex problems. In the facet case,
\cite{HK-DCG} connected the optimality of random
polytopes to the geometric balls and bins problem, which concerns
the optimality of random partial coverings of the sphere (see also \cite{HK-PAMS} and \cite{Hoehner-Thale-balls-and-bins}).
Motivated by this connection, we conjecture that the random
construction with an optimally chosen common facet height is
asymptotically optimal: its expected symmetric volume difference,
divided by the smallest possible symmetric volume difference among
polytopes with the same number of facets, tends to one as
$M\to\infty$ followed by $d\to\infty$.

In the vertex case, it remains unclear whether the random
construction achieves the optimal leading constant, even after
optimizing the common sampling radius. We suspect that it is suboptimal. Indeed, the additional factor
$d^{-1/2}$ established here arises from fluctuations of the local
approximation error, suggesting that the leading constant may be
sensitive to the arrangement of the vertices. A more regular
configuration could conceivably reduce these fluctuations and
improve upon independent uniform sampling.
Since our lower bound for $\ldel_m$ is exact in dimensions $m=1$ and $m=2$ (see \Cref{rem:sharp-low-dim}), it is also natural to ask
whether $\sqrt m\,\ldel_m\to\frac{1}{\pi e}\sqrt{\frac{2}{\pi}}$ as $m\to\infty$.
\end{openproblem}

\begin{openproblem}
        Comparing Theorem~\ref{thm:k-faces} and \eqref{eq:HSW-IMRN-lower-bd-kfaces-upper-half-fvector} in their common range of applicability, we see that there is a gap of order $\sqrt{d}$ between these lower bounds. An interesting open question is then: What is the best possible constant $c(d,k)$ so that for a sequence of best-approximating polytopes $P_M\in\mathcal{P}_{d,k}(M)$ in arbitrary position, we have
        \begin{equation}
            |B_d\triangle P_M|=\Theta(c(d,k))M^{-\frac{2}{d-1}}\, ?
        \end{equation}
To the best of our knowledge, the analogous problems are also open for the inscribed and circumscribed models of approximation, as well as for $C^2$ convex bodies in $\R^d$ with positive Gaussian curvature.

In the endpoint cases $k=0$ and $k=d-1$, upper estimates with the sharp dimensional order have been obtained by using random polytopes, as can be seen from comparing the results in this paper and \cite{Kur2017,LSW}. When $1\leq k\leq d-2$, however, we are not aware of, nor do we know how to construct, models of random polytopes with at most $M$ $k$-faces which can be used to obtain good upper bounds via the expected symmetric volume difference. Moreover, in order to obtain asymptotically sharp lower bounds, probably one needs to use a more sophisticated approach than the simple combinatorial arguments used to prove Theorem~\ref{thm:k-faces} and the analogous results in \cite{HSW-IMRN}.
\end{openproblem}

\clearpage
\appendix
\addtocontents{toc}{\protect\setcounter{tocdepth}{1}}
\section{Auxiliary proofs}\label{sec-appendix}

This appendix collects the concentration estimates and geometric comparisons for the finite-budget regime, followed by the angular moment calculations and asymptotic transfer arguments.

\subsection{Concentration estimates for the finite-budget construction}
\label{sec:finite-concentration-proofs}

\begin{proof}[Proof of Lemma~\ref{lem:finite-tilted-simplex}]
To bound the lower tail of $\mathcal Q_d$, we introduce independent exponential weights so that the resulting characteristic function can be evaluated as a determinant. Let $W_1,\ldots,W_d\in\mathbb S^{d-2}$ be the random unit vectors in the representation $\mathcal Q_d=|\sum_{i=1}^d\Lambda_iW_i|^2$. Let $E_1,\ldots,E_d$ be independent mean-one exponential random variables, independent of $(W_1,\ldots,W_d)$, and set
\[
 Z=\sum_{i=1}^d E_iW_i\quad\text{and}\quad \Sigma=\sum_{i=1}^d E_i\sim\operatorname{Gamma}(d,1).
\]
By the standard Gamma--Dirichlet representation \citep[Section~XI.4.1, Theorem~4.1, p.~594]{Devroye1986}, the vector $(E_1/\Sigma,\ldots,E_d/\Sigma)$ has the $\operatorname{Dirichlet}(1,\ldots,1)$ distribution and is independent of $\Sigma$, so $|Z|^2=\Sigma^2\mathcal Q_d$ with $\Sigma$ independent of $\mathcal Q_d$.

Define $a(w)=(1,\sqrt{d-1}\,w_1,\ldots,\sqrt{d-1}\,w_{d-1})^{\mathsf T}$ for $w\in\mathbb S^{d-2}$. The joint density of $(W_1,\ldots,W_d)$ with respect to the product of uniform spherical measures is proportional to the squared determinant of the $d\times d$ matrix with columns $a(W_1),\ldots,a(W_d)$, and under the uniform measure on $\mathbb S^{d-2}$ we have $\E[a(w)a(w)^{\mathsf T}]=I_d$. Since $\E_E e^{ivEw_1}=(1-ivw_1)^{-1}$, Andr\'eief's identity \citep[Equation~(1.7)]{Forrester2019Andreief} gives for every $v\in\R^{d-1}$
\[
 \E e^{i\langle v,Z\rangle}
 =\det\E_w\frac{a(w)a(w)^{\mathsf T}}{1-i|v|w_1}
 =\mathsf D(|v|)\mathsf R(|v|)^{d-2},
\]
where, for $t\ge0$,
\[
 \mathsf A(t)=\E\frac1{1+t^2w_1^2},\quad
 \mathsf D(t)=(d-1)\E\frac{w_1^2}{1+t^2w_1^2},\quad
 \mathsf R(t)=(d-1)\E\frac{w_2^2}{1+t^2w_1^2}.
\]
Reflection symmetry in the coordinates $w_2,\ldots,w_{d-1}$ makes all off-diagonal entries outside the upper-left $2\times2$ block vanish, while the remaining $d-2$ diagonal entries are equal to $\mathsf R(|v|)$. In the first two coordinates, invariance under $w_1\mapsto -w_1$ eliminates the real parts of the off-diagonal entries and leaves the $2\times2$ block
\[
 \begin{pmatrix}
 \mathsf A(|v|)&i|v|\mathsf D(|v|)/\sqrt{d-1}\\
 i|v|\mathsf D(|v|)/\sqrt{d-1}&\mathsf D(|v|)
 \end{pmatrix}.
\]
Because $\mathsf A(|v|)+|v|^2\mathsf D(|v|)/(d-1)=1$, the determinant of this $2\times2$ block equals $\mathsf D(|v|)$. Note also that $\mathsf D(|v|)$ and $\mathsf R(|v|)$ both lie in $(0,1]$.

Let $g\sim\mathcal N(0,I_{d-1})$ be independent of $Z$. For $0\le\lambda\le1/40$, writing $e^{-\lambda|Z|^2}=\E_g e^{i\sqrt{2\lambda}\langle g,Z\rangle}$ gives the Laplace transform
\[
 \mathbb E e^{-\lambda|Z|^2}
 =\mathbb E\left[\mathsf D(\sqrt{2\lambda}|g|)
                 \mathsf R(\sqrt{2\lambda}|g|)^{d-2}\right].
\]
Using the inequality $(1+y)^{-1}\le 1-y+y^2$ for $y\ge0$ together with the spherical moments $\E w_2^2=\frac{1}{d-1}$, $\E(w_1^2w_2^2)=\frac{1}{(d-1)(d+1)}$, and $\E(w_1^4w_2^2)=\frac{3}{(d-1)(d+1)(d+3)}$, we obtain
\[
 \mathsf R(t)\le1-\frac{t^2}{d+1}
              +\frac{3t^4}{(d+1)(d+3)}.
\]
On the event $\{|g|^2\le2(d-1)\}$, setting $t=\sqrt{2\lambda}|g|$ shows that $\mathsf R(\sqrt{2\lambda}|g|)\le\exp(-(1-12\lambda)t^2/(d+1))$. Integrating this Gaussian bound over $\R^{d-1}$ and applying the inequality $\log(1+x)\ge x-x^2/2$ bounds the expectation on $\{|g|^2\le2(d-1)\}$ by
\[
 \left(1+\frac{4\lambda(1-12\lambda)(d-2)}{d+1}\right)^{-(d-1)/2}
 \le e^{-2\lambda(d-4)+28\lambda^2(d-1)}.
\]
On the complementary event $\{|g|^2>2(d-1)\}$, the integrand $\mathsf D\mathsf R^{d-2}$ is at most $1$, and the standard chi-square tail bound \cite[Lemma~1]{LaurentMassart2000} gives $\mathbb P(|g|^2>2(d-1))\le e^{-(d-1)/14}$. Choosing $\lambda=\min\{s/56,1/40\}$ and applying Markov's inequality to $e^{-\lambda|Z|^2}$ yields
\[
 \mathbb P\bigl(|Z|^2\le(2-s)(d-1)\bigr)\le Ce^{-c(d-1)s^2}.
\]

Finally, by the median bound for gamma distributions of \citet{ChenRubin1986}, we have $\mathbb P(\Sigma\le d)>1/2$. Since $|Z|^2=\Sigma^2\mathcal Q_d$ with $\Sigma$ independent of $\mathcal Q_d$, it follows that
\[
 \mathbb P\bigl((d-1)\mathcal Q_d\le2-s\bigr)
 \le2\mathbb P\left(|Z|^2\le\frac{(2-s)d^2}{d-1}\right).
\]
Because $\frac{(2-s)d^2}{d-1}$ exceeds $(2-s)(d-1)$ by at most $5$, the bound for $|Z|^2$ gives the desired inequality for $s\ge10/(d-1)$, and adjusting the constant $C$ covers $0<s<10/(d-1)$.
\end{proof}

\begin{proof}[Proof of Lemma~\ref{lem:finite-sphere-height}]
By Lemma~\ref{lem:finite-sphere-facets}, the logarithm of the unnormalized density of $L=-\log(1-H)$ is
\[
 \ell(z)=-\frac{d(d-1)}2 z+(M-d)\log G_d(1-e^{-z}),\qquad z>0.
\]
Writing the logarithmic derivative of $z\mapsto G_d(1-e^{-z})$ as
\[
 b(z):=\frac{e^{-z}G_d'(1-e^{-z})}{G_d(1-e^{-z})},
\]
direct differentiation gives
\begin{equation}
 \begin{aligned}
 \ell'(z)&=-\frac{d(d-1)}2+(M-d)b(z),\\
 b'(z)&=-b(z)\left(\frac{d-1}2+\frac1{2(e^z-1)}+b(z)\right).
 \end{aligned}
 \label{eq:finite-log-height-derivative}
\end{equation}
Since $b$ is strictly decreasing and $\ell'(L_0)=0$, for $z\le L_0$ we have $(M-d)b(z)\ge(M-d)b(L_0)=d(d-1)/2$, and hence $-\ell''(z)=-(M-d)b'(z)\ge d(d-1)^2/4\ge cd^3$. For $z\ge L_0$, the differential equation for $b$ implies that $b(z)\le b(L_0)e^{-(d-1)(z-L_0)/2}$. Integrating these bounds away from $L_0$ on either side yields, for all $v\ge0$,
\[
 \begin{aligned}
 \ell(L_0-v)-\ell(L_0)&\le-cd^3v^2,\\
 \ell(L_0+v)-\ell(L_0)&\le-c\min(d^3v^2,d^2v).
 \end{aligned}
\]
To lower-bound the normalizing integral $\int_0^\infty e^{\ell(z)}\,dz$, note that $G_d(t_0)\ge1/2$ and
\[
 b(L_0)\le C\sqrt{d/t_0}\,(1-t_0)^{(d-1)/2}\le1
\]
for large $d$. Since both $b(z)$ and $(e^z-1)^{-1}$ are decreasing on $[L_0,\infty)$, we have $-\ell''(z)\le Cd^3$ for $z\ge L_0$, and integrating over $[L_0,L_0+d^{-3/2}]$ shows that $\int_0^\infty e^{\ell(z)}\,dz\ge cd^{-3/2}e^{\ell(L_0)}$. Consequently, the normalized density $f_L$ of $L$ satisfies
\begin{equation}
 \begin{aligned}
 f_L(L_0-v)&\le Cd^{3/2}e^{-cd^3v^2},\\
 f_L(L_0+v)&\le Cd^{3/2}e^{-c\min(d^3v^2,d^2v)}.
 \end{aligned}
 \label{eq:finite-log-height-density}
\end{equation}
Integrating \eqref{eq:finite-log-height-density} yields the first two bounds in \eqref{eq:finite-log-height-tails}, and the third follows from the pointwise inequality
\[
 |H-t_0|=(1-t_0)|1-e^{L_0-L}|
 \le(1-t_0)\left((L-L_0)_++(L_0-L)_+e^{(L_0-L)_+}\right).
\]

To prove \eqref{eq:finite-sphere-small-height} on the event $\{H<t_0/2\}$, we examine the density $f_H$ of $H$ on $(0,t_0/2]$. Let
\[
 D:=\log\frac{1-t_0/2}{1-t_0}=L_0+\log(1-t_0/2)
\]
denote the distance in the logarithmic coordinate from $L_0$ to $-\log(1-t_0/2)$. Then $D\ge t_0/2\ge\frac12 d^{-1/2}$ and $L_0\le D+\log 2$. For $0<t\le t_0/2$, integrating the beta density of $G_d$ gives
\[
 G_d(t)\le\frac{2\sqrt t}{B(\frac12,\frac{d-1}2)}\quad\text{and}\quad
 \frac{G_d'(t)}{G_d(t)}\ge\frac{(1-t)^{(d-3)/2}}{2t}.
\]
Combining the mode relation \eqref{eq:finite-sphere-budget-exact} with $G_d(t_0)\ge1/2$ shows that for all $t\in(0,t_0/2]$,
\[
 \begin{aligned}
 \frac{M-d}{2}(1-t)^{(d-1)/2}
 &\ge cd^{3/2}\sqrt{t_0}
       \left(\frac{1-t_0/2}{1-t_0}\right)^{(d-1)/2}\\
 &=cd^{3/2}\sqrt{t_0}\,e^{(d-1)D/2}
 \ge\left(\frac{d(d-1)}2-1\right)t+d(1-t),
 \end{aligned}
\]
where the last inequality holds for all sufficiently large $d$ because the left-hand side is at least $cd^{5/4}e^{c\sqrt d}$ while the right-hand side is at most $Cd^2$. Rearranging this inequality shows that $(\log f_H)'(t)\ge d/t$ on $(0,t_0/2]$. Consequently,
\[
 f_H(t)\le f_H(t_0/2)\left(\frac{2t}{t_0}\right)^d
 \quad\text{for }0<t\le\frac{t_0}2,
 \qquad\text{with}\qquad
 f_H(t_0/2)\le Cd^{3/2}e^{-cd^3D^2}.
\]
Integrating this bound over $t\in(0,t_0/2]$ for any $s\in[t_0,1]$ yields
\[
 \mathbb P(H<t_0/2)
 +\mathbb E\left[(s/H)^{d/2}\mathbbm{1}\{H<t_0/2\}\right]
 \le C\sqrt d\,\left(\frac2{t_0}\right)^{d/2}e^{-cd^3D^2}.
\]
Since $D\ge\frac12 d^{-1/2}$, $\log(2/t_0)\le\log(2\sqrt d)$, and $1-t_0=e^{-L_0}\ge\frac12 e^{-D}$, the quadratic exponent $cd^3D^2$ dominates $D+\frac d2\log(2/t_0)+\log d$ for all sufficiently large $d$, which bounds the right-hand side by $C(1-t_0)/\sqrt d$ and establishes \eqref{eq:finite-sphere-small-height}.
\end{proof}

\subsection{Boundary second moments and the inradius at fixed accuracy}
\label{app:boundary-geometry}

The concentration estimates above also yield the geometric properties of the finite-budget construction discussed in \Cref{sec:discussion}. We retain the notation of \Cref{sec:finite-equal-radii}, including the mode $t_0$ and the comparison radius $r^2=t_0+(1-t_0)q_d$.

\begin{lemma}\label{lem:boundary-second-moment}
For $t_0\ge C_0/\sqrt d$ and all sufficiently large $d$,
\[
 \left\|\frac{|Y|^2}{r^2}-1\right\|_{L^2(\nu)}
 \asymp\frac{1-t_0}{r^2d^{3/2}}.
\]
In particular, when $(1-t_0)/t_0\asymp\varepsilon\sqrt d$, this $L^2(\nu)$ norm is of order $\varepsilon/d$.
\end{lemma}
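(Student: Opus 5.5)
We compute the second moment directly from the decomposition in Lemma~\ref{lem:finite-sphere-facets}. Since $|Y|^2=H+(1-H)\mathcal Q_d$ and $r^2=t_0+(1-t_0)q_d$, we write
\[
 |Y|^2-r^2=(H-t_0)(1-\mathcal Q_d)+(1-t_0)(\mathcal Q_d-q_d)=:A+B.
\]
Under $\nu$, $H$ and $\mathcal Q_d$ are independent. The task is therefore to show that $\E_\nu(A+B)^2\asymp(1-t_0)^2d^{-3}$; dividing by $r^4$ then gives the claim.

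For the upper bound we use $(A+B)^2\le2A^2+2B^2$. The term $B$ is handled by Lemma~\ref{lem:angular-moments}: $\E B^2=(1-t_0)^2\operatorname{Var}(\mathcal Q_d)\le12(1-t_0)^2(d-1)^{-3}$. For $A$, note $0\le1-\mathcal Q_d\le1$, so $\E A^2\le\E(H-t_0)^2$. The height lemma only records a first moment, so we go back to the density bounds \eqref{eq:finite-log-height-density} for $L$. We split $H-t_0=(1-t_0)(1-e^{L_0-L})$ into three regions.
\begin{itemize}
\item On $\{H\ge t_0/2\}$ with $L\ge L_0$, we have $|1-e^{L_0-L}|\le L-L_0$, and the tail bound $Cd^{3/2}e^{-c\min(d^3v^2,d^2v)}$ gives second moment $O(d^{-3})$. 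The linear part of that exponent contributes only $O(d^{-4})$.
\item On $\{H\ge t_0/2\}$ with $L<L_0$, we have $e^{L_0-L}-1\le(L_0-L)e^{L_0-L}$. The Gaussian tail $e^{-cd^3v^2}$ absorbs the exponential factor, again giving $O(d^{-3})$.
\item On $\{H<t_0/2\}$, $|H-t_0|\le1$, and the proof of \eqref{eq:finite-sphere-small-height} actually gives probability at most $C\sqrt d\,(2/t_0)^{d/2}e^{-cd^3D^2}$. This is superpolynomially small, hence $\le(1-t_0)^2d^{-3}$, using $1-t_0\ge\tfrac12e^{-D}$ exactly as in that proof.
\end{itemize}

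For the lower bound, independence gives
\[
 \E[AB]=(1-t_0)\,\E(H-t_0)\,\E[(1-\mathcal Q_d)(\mathcal Q_d-q_d)]=-(1-t_0)\E(H-t_0)\operatorname{Var}(\mathcal Q_d).
\]
Hence
\[
 \E(A+B)^2=\E A^2+\E B^2+2\E[AB]\ge\E B^2-2(1-t_0)\,|\E(H-t_0)|\operatorname{Var}(\mathcal Q_d).
\]
By the exact variance in Lemma~\ref{lem:angular-moments}, $\E B^2\ge c(1-t_0)^2d^{-3}$. By \eqref{eq:finite-log-height-tails}, $|\E(H-t_0)|\le C(1-t_0)d^{-3/2}$, so the cross term is $O((1-t_0)^2d^{-9/2})$, which is negligible for large $d$. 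This gives $\|\,|Y|^2-r^2\|_{L^2(\nu)}\ge c(1-t_0)d^{-3/2}$.

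The main obstacle is the upper bound for $\E(H-t_0)^2$. It requires a second-moment version of Lemma~\ref{lem:finite-sphere-height}, in particular control of the event $\{H<t_0/2\}$ at the polynomial scale $d^{-3}(1-t_0)^2$, where the stated lemma only gives $(1-t_0)/\sqrt d$. I would rework the final estimate of that proof: the bound $e^{-cd^3D^2}$ with $D\ge\tfrac12d^{-1/2}$ still decays like $e^{-c'd^2}$, which beats any polynomial times $e^{D}$ and $(2/t_0)^{d/2}$ factor.

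For the final remark, $r^2\asymp t_0$ since $q_d\le2/d\le t_0$. If $(1-t_0)/t_0\asymp\varepsilon\sqrt d$, then
\[
 \frac{1-t_0}{r^2d^{3/2}}\asymp\frac{\varepsilon\sqrt d}{d^{3/2}}=\frac{\varepsilon}{d}.
\]
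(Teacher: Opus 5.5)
Your argument is correct and is essentially the paper's: both rest on the independence of $H$ and $\mathcal Q_d$, on $\E(H-t_0)^2\le C(1-t_0)^2d^{-3}$ obtained by integrating the density bounds \eqref{eq:finite-log-height-density}, and on $\operatorname{Var}(\mathcal Q_d)\asymp d^{-3}$. The paper is slightly more economical in two places: it uses the exactly orthogonal split $|Y|^2-r^2=(1-q_d)(H-t_0)+(1-H)(\mathcal Q_d-q_d)$, so no cross term has to be estimated, and it needs no separate treatment of $\{H<t_0/2\}$ (your separate treatment is valid), because the left-tail bound $f_L(L_0-v)\le Cd^{3/2}e^{-cd^3v^2}$ already holds for every $0<v<L_0$.
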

\begin{proof}
Since $H$ and $\mathcal Q_d$ are independent under $\nu$, we have the orthogonal decomposition
\[
 \E_\nu(|Y|^2-r^2)^2
 =(1-q_d)^2\E(H-t_0)^2+\operatorname{Var}(\mathcal Q_d)\,\E(1-H)^2.
\]
Using $|H-t_0|\le(1-t_0)(|L-L_0|+|L-L_0|e^{|L-L_0|})$ and integrating the density bounds in \eqref{eq:finite-log-height-density} gives $\E(H-t_0)^2\le C(1-t_0)^2d^{-3}$, which also implies $\E(1-H)^2\asymp(1-t_0)^2$. Since $\operatorname{Var}(\mathcal Q_d)\asymp d^{-3}$ by Lemma~\ref{lem:angular-moments}, both the upper and lower bounds follow.
\end{proof}

\begin{lemma}\label{lem:fixed-accuracy-inradius}
For the symmetric volume-balanced random polytope $R^{-1}K$ with $\log M\asymp_\varepsilon\sqrt d$, there exists a constant $c_\varepsilon>0$ depending only on $\varepsilon$ such that
\[
 \Pr(c_\varepsilon B_d\subset R^{-1}K)
 \ge1-\exp(-M^{1/3})
\]
for all sufficiently large $d$. Consequently, $d_{\rm BM}(R^{-1}K,B_d)\asymp_\varepsilon d^{1/4}$ with probability tending to one as $d\to\infty$.
\end{lemma}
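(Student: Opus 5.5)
We need two facts: with probability at least $1-\exp(-M^{1/3})$ the polytope $R^{-1}K$ contains a ball of constant radius $c_\varepsilon$, and $d_{\rm BM}(R^{-1}K,B_d)\asymp_\varepsilon d^{1/4}$.

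\emph{Inradius.} Since $K=\conv\{\pm U_i\}$ is origin-symmetric, $\rho B_d\subset K$ holds if and only if $h_K(u)=\max_i|\langle U_i,u\rangle|\ge\rho$ for every $u\in\mathbb S^{d-1}$. The plan is a net argument.

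\begin{enumerate}
\item Fix a $\frac12$-net $\mathcal N$ of $\mathbb S^{d-1}$ with $|\mathcal N|\le5^d$.
\item For fixed $u$, independence gives
\[
\Pr\bigl(h_K(u)<2\rho\bigr)=G_d(4\rho^2)^M=\bigl(1-\sigma\{|u_1|>2\rho\}\bigr)^M .
\]
Choose $2\rho=a/\sqrt d$ with $a^2=\ell/2$, where $\ell=\log M\asymp_\varepsilon\sqrt d$. A lower cap estimate (e.g.\ via the Gaussian representation of $u_1$) gives $\sigma\{|u_1|>a/\sqrt d\}\ge c\,a^{-1}e^{-a^2/2}$. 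Hence
\[
\Pr\bigl(h_K(u)<2\rho\bigr)\le\exp\bigl(-cM\cdot M^{-1/4}/\sqrt\ell\bigr)\le\exp\bigl(-M^{2/3}\bigr).
\]
\item A union bound over $\mathcal N$ costs a factor $5^d=\exp(O(d))$. This is negligible, since $M^{2/3}=\exp(\tfrac23\ell)$ and $\ell\asymp\sqrt d\gg\log d$.
\item The standard net argument for support functions applies: if $h_K\ge2\rho$ on $\mathcal N$ and $K\subset B_d$, then $h_K(u)\ge2\rho-\frac12\cdot1\cdot$(correction). This needs care. I would instead use the usual inclusion argument: if $h_K\ge2\rho$ on a $\rho/2$-net, then $K\supset\rho B_d$, because otherwise a separating direction $u$ has $h_K(u)<\rho$, and its net neighbour $u'$ has $h_K(u')\le h_K(u)+|u-u'|\le\frac32\rho$.
\item A $\rho/2$-net has size $(6/\rho)^d=\exp(O(d\log d))$. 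This is still negligible against $\exp(M^{2/3})$, and the failure probability is at most $\exp(-M^{1/3})$ for large $d$.
\end{enumerate}

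Dividing by $R$ then gives $(\rho/R)B_d\subset R^{-1}K$. The discussion preceding the statement, made precise in Lemma~\ref{lem:vw-dilation-scales}, gives $R\sim\sqrt{2\ell/d}$. Since $\rho\asymp\sqrt{\ell/d}$, the ratio $\rho/R$ is bounded below by a constant $c_\varepsilon$. If Lemma~\ref{lem:vw-dilation-scales} is unavailable at this point, I would derive $R^2\asymp r^2\asymp t_0$ from the volume balancing \eqref{eq:finite-volume-balance} together with the mode equation \eqref{eq:finite-sphere-budget-exact}. At fixed accuracy $1-t_0\asymp1/(1+\varepsilon\sqrt d)$, and \eqref{eq:finite-sphere-budget-exact} links this to $\ell$ via $\ell\approx\frac d2\log\frac1{1-t_0}$. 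This step requires tracking $t_0$, which is where I expect most care to be needed.

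\emph{Banach--Mazur distance.} For the upper bound, note that $c_\varepsilon B_d\subset R^{-1}K\subset R^{-1}B_d$ with $R^{-1}\asymp_\varepsilon d^{1/4}$. Hence $d_{\rm BM}\le C_\varepsilon d^{1/4}$.

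For the lower bound, I would apply Ball's bound quoted in \Cref{sec:discussion}. The polytope is origin-symmetric with $2M$ vertices, so
\[
d_{\rm BM}\ge\sqrt{d/(2\log 2M)}\asymp_\varepsilon\sqrt{d/\sqrt d}=d^{1/4}.
\]
Combining the two bounds gives $d_{\rm BM}(R^{-1}K,B_d)\asymp_\varepsilon d^{1/4}$ with probability tending to one.

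The main obstacle is the quantitative matching of $\rho$ and $R$ to get a constant $c_\varepsilon$. This needs $R\lesssim\sqrt{\ell/d}$ (up to $\varepsilon$-dependent constants). I would obtain it from $R\le r(1+o(1))$ via \eqref{eq:finite-volume-balance}, together with $r^2\approx t_0$ and the mode relation, which at fixed $\varepsilon$ gives $t_0\asymp\ell/d$ in the relevant regime.
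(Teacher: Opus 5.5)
Your proposal is correct and follows essentially the same route as the paper. The shared steps are: a per-direction cap lower bound making $\Pr(h_K(u)<2\rho)\le\exp(-M^{2/3})$; a union bound over a $\rho/2$-net of size $e^{O(d\log d)}$, using that $h_K$ is $1$-Lipschitz (you were right that the initial $\tfrac12$-net would not suffice); then $t_0\asymp\ell/d$ from the mode equation and $R\asymp r\asymp\sqrt{t_0}$ from volume balancing; and Ball's bound for the matching Banach--Mazur lower bound. One small slip: at fixed $\varepsilon$ it is $t_0$, not $1-t_0$, that is of order $(1+\varepsilon\sqrt d)^{-1}$, which is what makes $\frac d2\log\frac1{1-t_0}\approx\frac d2t_0$ consistent with $\ell\asymp\sqrt d$.
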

\begin{proof}
Choose $h=c\sqrt{(\log M)/d}$ with a small absolute constant $c>0$. By the spherical cap lower bound \citep[proof of Corollary~3.2(i)]{BoroczkyWintsche2003}, for every $u\in\mathbb S^{d-1}$ a uniform random point $U\in\mathbb S^{d-1}$ satisfies
\[
 \Pr(|\langle U,u\rangle|\ge h)
 \ge\frac{(1-h^2)^{(d-1)/2}}{\sqrt{2\pi d}}
 \ge M^{-1/2}
\]
for all sufficiently large $d$, provided $c$ is chosen small enough, since $\log M\asymp_\varepsilon\sqrt d\gg\log d$. By independence of $U_1,\ldots,U_M$, the support function $h_K(u)=\max_{i\le M}|\langle U_i,u\rangle|$ satisfies $\Pr(h_K(u)<h)\le(1-M^{-1/2})^M\le e^{-M^{1/2}}$. Let $\mathcal N$ be an $(h/2)$-net of $\mathbb S^{d-1}$ of cardinality at most $(1+4/h)^d$. Since $K\subset B_d$, the support function $h_K$ is $1$-Lipschitz on $\mathbb S^{d-1}$, so a union bound over $\mathcal N$ implies that $\min_{u\in\mathbb S^{d-1}}h_K(u)\ge h/2$ with probability at least $1-(1+4/h)^de^{-M^{1/2}}\ge1-e^{-M^{1/3}}$.

By \eqref{eq:finite-sphere-budget-exact}, the mode $t_0$ satisfies $t_0\asymp_\varepsilon(\log M)/d$, and \eqref{eq:finite-volume-balance} implies that the volume-balancing radius $R$ satisfies $R\asymp_\varepsilon r\asymp_\varepsilon\sqrt{t_0}$. Therefore $h/(2R)\ge c_\varepsilon>0$, which proves that $c_\varepsilon B_d\subset R^{-1}K$ with probability at least $1-e^{-M^{1/3}}$. Since $R^{-1}K$ is contained in $R^{-1}B_d$ with outer radius $R^{-1}\asymp_\varepsilon d^{1/4}$, this gives the upper bound on $d_{\rm BM}(R^{-1}K,B_d)$. The matching lower bound of order $\sqrt{d/\log(2M)}\asymp_\varepsilon d^{1/4}$ holds deterministically for every symmetric polytope with $2M$ vertices by \citet[Theorem~2.1]{Ball1997} and polarity.
\end{proof}

  \subsection{Radial comparison and Gaussian vertex approximation}
  \label{app:gaussian-vertex-approximation}
  
  Passing from relative Euclidean volume to radially symmetric measures requires only star-shapedness and radial monotonicity of the density. In particular, a single approximating polytope works simultaneously for all centered balls $RB_d$ and all radially nonincreasing probability densities on $\R^d$.
  
  \begin{lemma}[Radial measure comparison]
  \label{lem:radial-measure-comparison}
  Let $\mu$ be a probability measure on $\mathbb R^d$ with density $x\mapsto\psi(\|x\|_2)$, where $\psi:[0,\infty)\to[0,\infty]$ is nonincreasing. If $L\subset\mathbb R^d$ is compact and star-shaped about the origin, then for every $R>0$,
  \begin{equation}
  \label{eq:radial-measure-comparison}
      \mu(L\triangle RB_d)
      \le
      \mu(RB_d)
      \frac{|L\triangle RB_d|}{|RB_d|}.
  \end{equation}
  \end{lemma}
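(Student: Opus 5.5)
We work ray by ray in polar coordinates. Since $L$ is compact and star-shaped about the origin, it has a radial function $\rho_L:\mathbb S^{d-1}\to[0,\infty)$, with $L=\{tu:0\le t\le\rho_L(u)\}$ up to a null set. Write $\Phi(s):=\int_0^s\psi(t)t^{d-1}\,dt$. Along the ray in direction $u$, the set $L\triangle RB_d$ is the radial interval between $\min\{\rho_L(u),R\}$ and $\max\{\rho_L(u),R\}$. Polar integration then gives
\[
 \mu(L\triangle RB_d)=d|B_d|\int_{\mathbb S^{d-1}}\bigl|\Phi(\rho_L(u))-\Phi(R)\bigr|\,d\sigma(u),
 \qquad
 \mu(RB_d)=d|B_d|\,\Phi(R),
\]
and
\[
 \frac{|L\triangle RB_d|}{|RB_d|}=\int_{\mathbb S^{d-1}}\left|\frac{\rho_L(u)^d}{R^d}-1\right|\,d\sigma(u),
\]
where $\sigma$ is normalized surface measure. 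It therefore suffices to prove, for every $s\ge0$, the pointwise one-dimensional inequality
\[
 |\Phi(s)-\Phi(R)|\le\Phi(R)\left|\frac{s^d}{R^d}-1\right|.
\]
Integrating this over $u\in\mathbb S^{d-1}$ gives \eqref{eq:radial-measure-comparison}.

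For the one-dimensional inequality we use the monotonicity of $\psi$. Since $\psi$ is nonincreasing, the average of $\psi(t)$ with respect to the probability density $dt^{d-1}/s^d$ on $[0,s]$, namely $\Phi(s)/s^d$, is nonincreasing in $s$.

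In the case $s\le R$, this monotonicity gives $\Phi(s)/s^d\ge\Phi(R)/R^d$. Hence
\[
 \Phi(R)-\Phi(s)\le\Phi(R)-\Phi(R)s^d/R^d=\Phi(R)(1-s^d/R^d).
\]

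In the case $s>R$, we write $\Phi(s)-\Phi(R)=\int_R^s\psi(t)t^{d-1}dt$. For $t\ge R$ we have $\psi(t)\le\psi(R)$. Also $\psi(R)\le\psi(t')$ for all $t'\le R$, which gives $\psi(R)\le\Phi(R)\,d/R^d$. Combining these,
\[
 \Phi(s)-\Phi(R)\le\psi(R)\frac{s^d-R^d}{d}\le\Phi(R)\left(\frac{s^d}{R^d}-1\right).
\]

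The main point needing care is measure-theoretic. We must justify the polar description of $L$ and $L\triangle RB_d$ when $L$ is merely compact and star-shaped. In that setting $\rho_L$ is upper semicontinuous, hence measurable, and each ray meets $L$ in a segment $[0,\rho_L(u)]$; the endpoint sets have measure zero. We must also allow $\psi$ to take the value $+\infty$ at the origin. This causes no problem, because $\mu$ is a probability measure, so $\Phi(R)<\infty$ for every $R>0$. If $\Phi(R)=0$, then $\psi\equiv0$ on $(0,R]$ and hence $\psi\equiv0$ everywhere by monotonicity, which is impossible; so $\Phi(R)>0$ and the normalization above is legitimate. Finally, the monotonicity of the averages $\Phi(s)/s^d$ should be verified directly from $\psi$ nonincreasing, either by the change of variables $t=s\tau$, which gives $\Phi(s)/s^d=\int_0^1\psi(s\tau)\tau^{d-1}\,d\tau$, or by the two-case comparison above, so that no differentiability of $\psi$ is needed.
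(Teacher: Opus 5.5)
Your proof is correct and follows essentially the same route as the paper. You use polar integration to reduce the claim to the one-dimensional inequality $|\Phi(s)-\Phi(R)|\le\Phi(R)\,|(s/R)^d-1|$, and then get that inequality from the fact that $\Phi(s)/s^d=\int_0^1\psi(s\tau)\tau^{d-1}\,d\tau$ is nonincreasing in $s$. Your separate argument for $s>R$ via $\psi(R)\le d\,\Phi(R)/R^d$ is valid but not needed. The same monotonicity gives $\Phi(s)\le (s/R)^d\,\Phi(R)$ directly, and that is how the paper handles this case.
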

  
  \begin{proof}
  Write $\kappa_d=|B_d|$ and $F(r)=\mu(rB_d)$. For $r>0$, integrating in polar coordinates gives
  \[
      \frac{F(r)}{r^d}
      =d\kappa_d\int_0^1\psi(rt)t^{d-1}\,dt,
  \]
  which is nonincreasing in $r$ because $\psi$ is nonincreasing. Since $F$ itself is nondecreasing and $F(0)=0$, it follows that for all $s\ge0$ and $R>0$,
  \begin{equation}
  \label{eq:radial-cdf-comparison}
      |F(s)-F(R)|
      \le
      F(R)\left|\left(\frac{s}{R}\right)^d-1\right|.
  \end{equation}
  Indeed, for $0<s\le R$ we have $F(s)\ge(s/R)^dF(R)$, whereas for $s\ge R$ the inequality is reversed, and the case $s=0$ is immediate.
  
  Let $\sigma$ denote normalized surface measure on $S^{d-1}$, and let $\rho_L(u)=\max\{r\ge0:ru\in L\}$ be the radial function of $L$. Because $L$ is compact and star-shaped about the origin, its intersection with the ray in direction $u$ is the segment $[0,\rho_L(u)]u$. Integrating \eqref{eq:radial-cdf-comparison} over $u\in S^{d-1}$ yields
  \begin{align*}
      \mu(L\triangle RB_d)
      &=
      \int_{S^{d-1}}
          |F(\rho_L(u))-F(R)|\,d\sigma(u)\\
      &\le
      F(R)\int_{S^{d-1}}
          \left|\left(\frac{\rho_L(u)}R\right)^d-1\right|
          \,d\sigma(u)\\
      &=
      \mu(RB_d)
      \frac{|L\triangle RB_d|}{|RB_d|}.
  \end{align*}
  \end{proof}
  
  \begin{proof}[Proof of \Cref{cor:gaussian-vertex-approximation}]
  Fix $\varepsilon\in(0,1)$ and $d\ge d_0$. By the upper bound in \Cref{thm:finite-budget}, there exists a origin-symmetric polytope $Q=Q_{d,\varepsilon}$ satisfying
  \[
      f_0(Q)\le\exp\!\left(Cd\log\left(1+\frac1{\varepsilon\sqrt d}\right)\right),
      \qquad
      \Delta_d(Q)\le\varepsilon.
  \]
  Since $Q$ is convex and origin-symmetric, it is star-shaped about the origin. For every $R>0$, dilating $Q$ by $R$ preserves both the vertex count and the relative-volume error:
  \[
      f_0(RQ)=f_0(Q),
      \qquad
      \frac{|RQ\triangle RB_d|}{|RB_d|}
      =\Delta_d(Q).
  \]
  Applying \Cref{lem:radial-measure-comparison} therefore gives
  \begin{equation}
  \label{eq:radial-vertex-approximation}
      \mu(RQ\triangle RB_d)
      \le
      \mu(RB_d)\Delta_d(Q)
      \le\varepsilon\,\mu(RB_d)
  \end{equation}
  simultaneously for every $R>0$ and every probability measure $\mu$ with a radially symmetric, nonincreasing density. Taking $\mu=\gamma_d$ proves \eqref{eq:gaussian-vertex-approximation}.
  \end{proof}
  
  \subsection{Proof of Theorem~\ref{thm:cube-obstruction}}
  \label{app:cube-obstruction}
  
  We first show that any polytope $P\subset\R^d$ whose symmetric-difference volume with the cube $H_d=[-1,1]^d$ is strictly below $2^d$ must be uniformly bounded along a large fraction of the coordinate axes, even without assuming $P\subset H_d$.
  
  \begin{lemma}\label{lem:cube-excursions}
  Let $0<\delta\le1$, and suppose a polytope $P\subset\mathbb R^d$ satisfies
  \[
   \alpha-\beta\ge\delta,\qquad
   \alpha:=\frac{|P\cap H_d|}{2^d},\quad
   \beta:=\frac{|P\setminus H_d|}{2^d}.
  \]
  For $R>1$, set $J_R:=\{i\in\{1,\ldots,d\}:\sup_{x\in P}|x_i|>R\}$. Then
  \[
   |J_R|\le\frac{4d}{\delta(R-1)}\log\frac2\delta.
  \]
  \end{lemma}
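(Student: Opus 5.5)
The plan is a contradiction argument. If too many coordinates admit large excursions, convexity forces $P$ to contain shifted, slightly shrunk copies of large pieces of $A:=P\cap H_d$, and these copies stick out of the cube. That would make $\beta$ too large relative to $\alpha$.

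For each $i\in J_R$, fix a point $p^{(i)}\in P$ with $|p^{(i)}_i|>R$. By the symmetry $x_i\mapsto -x_i$ of $H_d$ we may assume $p^{(i)}_i>R$. Fix a contraction parameter $\lambda=1-s/d$ with an absolute $s$ (say $s=1$), so that $\lambda^d\ge e^{-2s}$. Convexity gives $B_i:=\lambda A+(1-\lambda)p^{(i)}\subset P$. For $x\in A$ we have $x_i\ge -1$. Hence the $i$-th coordinate of $\lambda x+(1-\lambda)p^{(i)}$ exceeds $1$ as soon as
\[
 x_i>1-\eta,\qquad \eta\asymp\frac{s(R-1)}{d},
\]
up to the harmless factor $\lambda^{-1}$. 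So the image of $A\cap\{x_i>1-\eta\}$ under this homothety lies in $P\setminus H_d$. Its volume is $\lambda^d|A\cap\{x_i>1-\eta\}|$.

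The second step shows that $A$ must meet the slabs $S_i:=\{x\in H_d:x_i>1-\eta\}$, $i\in J_R$, substantially. Under the uniform measure on $H_d$ the slabs are independent, each of probability $\eta/2$. Hence
\[
 \frac{|H_d\setminus\bigcup_{i\in J_R}S_i|}{2^d}=(1-\eta/2)^{|J_R|}\le e^{-\eta|J_R|/2}.
\]
Suppose $|J_R|>\frac{2}{\eta}\log\frac2\delta$. Then this is below $\delta/2$, so $|A\cap\bigcup_i S_i|\ge(\alpha-\delta/2)2^d$, and each point of $A$ in the union maps outside the cube under at least one of the homotheties. Inserting $\eta\asymp(R-1)/d$ then gives precisely the shape $\frac{4d}{\delta(R-1)}\log\frac2\delta$; the factor $1/\delta$ is spent in the last step. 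The constants are tuned by choosing $s$ and the $\delta/2$ split.

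The main obstacle is converting this into a lower bound on $|P\setminus H_d|$ that beats $\alpha-\delta$. The images $B_i$ for different $i$ may overlap, and a naive union bound loses a factor $|J_R|$. I would first handle this by processing the slabs greedily. Assign each $x\in A\cap\bigcup S_i$ to the smallest index $i$ with $x\in S_i$, and map the piece $A_i$ assigned to $i$ by its homothety. Overlaps between the images of different pieces should be controlled by the fact that the homotheties are close to translations by $(1-\lambda)(p^{(i)}-x)$, which have large $i$-th component. I would then try to show directly that the images of the $A_i$ are essentially disjoint, or at least that their overlap carries only a $\lambda^d$-comparable fraction.

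If this fails, the fallback is to use only the $i$-th excursion. Take $\eta$ of order $\delta(R-1)/d$ and use that $A$, having density at least $\delta$ in the cube, must meet $\bigcup S_i$ in measure at least $\delta/2$. Combined with $\lambda^d\ge e^{-2s}$, this yields outside volume at least $c\delta 2^d$ from a single well-chosen $i$. One then has to check that this is incompatible with $\alpha-\beta\ge\delta$ once the cube part of $P$ is accounted for, which is where the extra $1/\delta$ in the stated bound originates.
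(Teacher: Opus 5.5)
Your architecture matches the paper's: contract toward an excursion point, look at thin slabs near the faces, and use the independence of the slabs under the uniform measure on $H_d$. However, the step you flag as the main obstacle is left open, and two of your concrete choices would not work.

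\textbf{Disjointness.} Write $T_i(x)=\lambda x+(1-\lambda)p^{(i)}$. With an arbitrary $p^{(i)}\in P$ satisfying $p^{(i)}_i>R$, the images need not be disjoint. The map $T_i^{-1}\circ T_j$ is translation by $\frac{1-\lambda}{\lambda}(p^{(j)}-p^{(i)})$, whose $i$-th component is positive when $p^{(j)}_i>p^{(i)}_i$. So for $i<j$, points of $A_j$ lying just below the slab $S_i$ can be carried into $A_i$, and the ``close to translations'' heuristic gives no quantitative control on this overlap.

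The missing idea is to take $p^{(i)}=z_i$ to be a maximizer of $\sigma_ix_i$ over the compact set $P$. The paper then partitions $P\cap H_d$ according to the index $i\in J_R$ at which $\sigma_ix_i$ is largest. For $x\in A_i$ and $y\in A_j$ this gives $\langle\sigma_ie_i-\sigma_je_j,T_i(x)-T_j(y)\rangle>0$: the $x,y$ part is positive by the argmax rule, and the $z_i,z_j$ part is nonnegative by maximality. With maximizers your smallest-index rule also works, since for $i<j$ the $i$-th coordinate alone separates $T_i(A_i)$ from $T_j(A_j)$.

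Your fallback cannot substitute for this. A single slab has measure only $\eta2^{d-1}$, far below $\delta2^d$. Moreover, an outside volume $\beta\ge c\delta$ with $c<1$ is compatible with $\alpha-\beta\ge\delta$, so it yields no contradiction.

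\textbf{Scale of the contraction.} Once disjointness holds, you get $\beta\ge\lambda^d(\alpha-q)$ with $q=2^{-d}|H_d\setminus\bigcup_iS_i|$, and hence
$\delta\le\alpha-\beta\le(1-\lambda^d)\alpha+\lambda^dq$.
With $1-\lambda=s/d$ and $s$ absolute, $1-\lambda^d$ is a fixed constant. For $\alpha$ near $1$ and small $\delta$, this inequality then holds regardless of $q$ and carries no information.

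You need $1-\lambda^d\le\delta/2$. The paper takes $t=1-\lambda=\delta/(2d)$ and deduces $q\ge\delta/2$. With slab width $\eta=t(R-1)/(1-t)$, the bound $q\le e^{-|J_R|\eta/2}$ then gives exactly $|J_R|\le\frac{4d}{\delta(R-1)}\log\frac2\delta$. This is the real source of the factor $1/\delta$; your main line, with $\eta\asymp(R-1)/d$, does not produce it.
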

  \begin{proof}
  Write $m=|J_R|$ and assume $m>0$. For each $i\in J_R$, choose a sign $\sigma_i\in\{-1,1\}$ and a point $z_i\in P$ such that $\sigma_i(z_i)_i=\max_{x\in P}\sigma_i x_i>R$. Up to Lebesgue-null sets of ties, the sets
  \[
   A_i=\bigl\{x\in P\cap H_d:\ \sigma_i x_i>\sigma_j x_j\text{ for all }j\in J_R\setminus\{i\}\bigr\},
   \qquad i\in J_R,
  \]
  partition $P\cap H_d$. Set $t=\delta/(2d)$ and define the affine maps $T_i(x)=(1-t)x+tz_i$ for $i\in J_R$. By the convexity of $P$, each image $T_i(A_i)$ is contained in $P$ and has volume $(1-t)^d|A_i|$. Moreover, the sets $T_i(A_i)$ are pairwise disjoint: for $i\ne j$, $x\in A_i$, and $y\in A_j$, we have
  \[
   \langle\sigma_i e_i-\sigma_j e_j,T_i(x)-T_j(y)\rangle
   =(1-t)\langle\sigma_i e_i-\sigma_j e_j,x-y\rangle
         +t\langle\sigma_i e_i-\sigma_j e_j,z_i-z_j\rangle>0,
  \]
  since the first inner product is strictly positive by the definition of $A_i$ and $A_j$, and the second is nonnegative by the maximality of $z_i$ and $z_j$.
  
  Put $u=\frac{1-tR}{1-t}=1-\frac{t(R-1)}{1-t}$. If $x\in A_i$ satisfies $\sigma_i x_i>u$, then $\sigma_i(T_i(x))_i>1$ and hence $T_i(x)\in P\setminus H_d$. Let $X$ be uniformly distributed on $H_d$, so that the signed coordinates $\sigma_i X_i$ are independent and uniformly distributed on $[-1,1]$. Setting $s_+:=\max\{s,0\}$, we have
  \[
   q:=\mathbb P\!\left(\max_{i\in J_R}\sigma_i X_i\le u\right)
    =\left(1-\frac{t(R-1)}{2(1-t)}\right)_+^m
    \le\exp\!\left(-\frac{mt(R-1)}{2(1-t)}\right).
  \]
  Because the sets $T_i(A_i\cap\{\sigma_i x_i>u\})$ are pairwise disjoint subsets of $P\setminus H_d$, their total volume satisfies
  \[
   \beta\ge(1-t)^d(\alpha-q).
  \]
  Since $1-(1-t)^d\le dt=\delta/2$ and $\alpha\le1$, it follows that
  \[
   \delta\le \alpha-\beta
   \le\bigl(1-(1-t)^d\bigr)\alpha+(1-t)^dq\le\frac\delta2+q.
  \]
  Thus $q\ge\delta/2$, and taking logarithms in the exponential upper bound for $q$ gives
  \[
   m\le\frac{2(1-t)}{t(R-1)}\log\frac2\delta
   \le\frac{4d}{\delta(R-1)}\log\frac2\delta.
  \]
  \end{proof}
  
  \begin{lemma}\label{lem:cube-bounded-hull}
  Let $L\subset[-R,R]^k$ be a polytope with at most $N$ vertices, where $R\ge1$. If $|L\cap H_k|\ge\gamma 2^k$, then
  \[
   N\ge(\gamma-e^{-k/8})\exp\!\left(\frac{k}{32R^2}\right).
  \]
  \end{lemma}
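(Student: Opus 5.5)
We follow the heuristic from the discussion section: each vertex of $L$ can only "serve" the orthants whose sign pattern is well aligned with it. Let $X$ be uniform on $H_k=[-1,1]^k$, let $w_1,\ldots,w_N\in[-R,R]^k$ be the vertices of $L$, and let $S=\operatorname{sign}(X)\in\{-1,1\}^k$, which is uniform on the hypercube and independent of $|X|=(|X_1|,\ldots,|X_k|)$. The goal is to show that, apart from an event of probability at most $e^{-k/8}$, every point $X\in L$ forces some vertex $w_j$ with $\langle S,w_j\rangle\ge k/4$.

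The first step is the linear test. If $X\in L$, then $X=\sum_j\lambda_jw_j$ is a convex combination. Hence
\[
\langle S,X\rangle=\sum_j\lambda_j\langle S,w_j\rangle\le\max_j\langle S,w_j\rangle .
\]
On the other hand, $\langle S,X\rangle=\sum_i|X_i|$ is a sum of $k$ i.i.d. uniform$[0,1]$ variables with mean $k/2$. By Hoeffding's inequality,
\[
\Pr\Bigl(\sum_i|X_i|<k/4\Bigr)\le\exp\bigl(-2(k/4)^2/k\bigr)=e^{-k/8}.
\]
So outside an event $B$ with $\Pr(B)\le e^{-k/8}$, the event $X\in L$ implies $\max_j\langle S,w_j\rangle\ge k/4$.

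The second step counts the sign patterns each vertex can serve. For a fixed $w\in[-R,R]^k$, the variable $\langle S,w\rangle=\sum_iS_iw_i$ is a sum of independent centered terms with $|S_iw_i|\le R$. Hoeffding gives
\[
\Pr\bigl(\langle S,w\rangle\ge k/4\bigr)\le\exp\Bigl(-\frac{2(k/4)^2}{4kR^2}\Bigr)=e^{-k/(32R^2)}.
\]
A union bound over the $N$ vertices yields
\[
\gamma\le\Pr(X\in L)\le\Pr(B)+\Pr\bigl(\exists j:\langle S,w_j\rangle\ge k/4\bigr)\le e^{-k/8}+Ne^{-k/(32R^2)}.
\]
Rearranging gives $N\ge(\gamma-e^{-k/8})e^{k/(32R^2)}$, which is the claim; when $\gamma\le e^{-k/8}$ the bound is trivial.

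The main point to check is that the events are arranged correctly. The vertices $w_j$ are deterministic, so the second Hoeffding bound applies to each fixed $w_j$ with $S$ uniform. We only need $S$ to be uniform; the independence of $S$ and $|X|$ is not used beyond this. The first bound depends only on $|X|$. No joint conditioning is needed, because we take a union bound of the two event probabilities.

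The remaining care is in the constants. The first Hoeffding bound uses range $1$ for each $|X_i|$, which gives exponent $2t^2/k$ with $t=k/4$. The second uses range $2R$ for each $S_iw_i$, which gives exponent $2t^2/(k\cdot4R^2)=k/(32R^2)$. This matches the stated constant. The hypothesis $R\ge1$ is not actually needed for the argument and only keeps the exponent below $k/32$.
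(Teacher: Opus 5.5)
Your proposal is correct and follows the paper's proof essentially step for step. The paper uses the same ingredients: the Hoeffding bound $\Pr(\|X\|_1<k/4)\le e^{-k/8}$, the convexity step $\max_j\langle S,w_j\rangle\ge\langle S,X\rangle=\|X\|_1$, the per-vertex Hoeffding bound $e^{-k/(32R^2)}$, and the union bound $\gamma\le e^{-k/8}+Ne^{-k/(32R^2)}$. Your side remark that the hypothesis $R\ge1$ is never used is also accurate.
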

  \begin{proof}
  Write $L=\operatorname{conv}\{w_1,\ldots,w_N\}$, repeating vertices if necessary, and let $X$ be uniformly distributed on $H_k$. The coordinates $|X_1|,\ldots,|X_k|$ are independent uniform random variables on $[0,1]$ with mean $1/2$, so Hoeffding's inequality \citep[Theorem~2.8]{BLM2013} gives
  \[
   \Pr(\|X\|_1<k/4)\le e^{-k/8}.
  \]
  Let $S=\operatorname{sign}(X)\in\{-1,1\}^k$, which is uniformly distributed on $\{-1,1\}^k$. Whenever $X\in L$ and $\|X\|_1\ge k/4$, convexity implies that $\max_{1\le j\le N}\langle S,w_j\rangle\ge\langle S,X\rangle=\|X\|_1\ge k/4$. For each fixed vertex $w_j\in[-R,R]^k$, applying Hoeffding's inequality to the sum $\langle S,w_j\rangle=\sum_{i=1}^k S_i(w_j)_i$ of independent centered random variables bounded by $R$ gives $\Pr(\langle S,w_j\rangle\ge k/4)\le e^{-k/(32R^2)}$. A union bound over $j=1,\ldots,N$ therefore yields
  \[
   \gamma\le\mathbb P(X\in L)
   \le e^{-k/8}+Ne^{-k/(32R^2)},
  \]
  from which the claim follows.
  \end{proof}
  
  \begin{proof}[Proof of Theorem~\ref{thm:cube-obstruction}]
  Fix $0<\varepsilon<1$ and set $\delta=1-\varepsilon$. If a polytope $P\subset\R^d$ with at most $N$ vertices satisfies $|P\triangle H_d|\le\varepsilon 2^d$, then the quantities $\alpha,\beta$ in Lemma~\ref{lem:cube-excursions} satisfy $\alpha-\beta=1-2^{-d}|P\triangle H_d|\ge\delta$. Set
  \[
   R_\delta=1+\frac8\delta\log\frac2\delta.
  \]
  By Lemma~\ref{lem:cube-excursions}, there exists a subset $I\subset\{1,\ldots,d\}$ of cardinality $k\ge d/2$ such that $|x_i|\le R_\delta$ for all $x\in P$ and $i\in I$. The orthogonal projection $L=\pi_I P\subset[-R_\delta,R_\delta]^k$ is a polytope with at most $N$ vertices, and Fubini's theorem gives
  \[
   |L\cap H_k|
   \ge\frac{|P\cap H_d|}{2^{d-k}}
   =\alpha 2^k\ge\delta2^k.
  \]
  For $d\ge16\log(2/\delta)$, we have $e^{-k/8}\le e^{-d/16}\le\delta/2$, and Lemma~\ref{lem:cube-bounded-hull} yields
  \begin{equation}\label{eq:cube-explicit-budget}
   N\ge\frac\delta2\exp\!\left(\frac{d}{64R_\delta^2}\right).
  \end{equation}
  Taking $c_\varepsilon=1/(128R_\delta^2)$ and $d_\varepsilon$ sufficiently large proves the theorem for the cube.
  To rule out \eqref{eq:failed-macbeath}, fix any $A>0$ and choose
  $0<\eta<\min\{1,1/(2A)\}$. By \Cref{thm:finite-budget}, a budget
  $N=\lceil\exp(C\sqrt d/\eta)\rceil$ achieves ball error at most
  $\eta$ for large $d$. The cube lower bound at accuracy $1/2$ shows
  that, at this same budget and all sufficiently large $d$, the optimal
  cube error is at least $1/2$. Hence it is strictly larger than
  $A$ times the optimal ball error, as claimed.
  \end{proof}
  
  \begin{proof}[Proof of the smooth-body extension]
  To construct a sequence of smooth, strictly convex bodies with the same exponential vertex barrier, set $r_d=(1-\frac{1}{d+1})^{1/d}$ and choose an integer $p_d\ge1$ large enough that $d r_d^{2p_d}\le1/2$. Define
  \[
   K_d=\left\{x\in\mathbb R^d:
     \sum_{i=1}^d x_i^{2p_d}+\frac{\|x\|_2^2}{4d}\le1\right\}.
  \]
  Since $d r_d^{2p_d}\le1/2$, we have $r_dH_d\subset K_d\subset H_d$ and hence $|K_d|\ge(1-\frac{1}{d+1})2^d$. Because the defining polynomial has Hessian bounded below by $\frac{1}{2d}I_d$ and nonvanishing gradient on the boundary, $\partial K_d$ is real analytic with everywhere positive Gaussian curvature.
  
  If a polytope $P$ satisfies $|P\triangle K_d|\le\varepsilon|K_d|$, then the triangle inequality gives
  \[
   \begin{aligned}
   |P\triangle H_d|
   &\le|P\triangle K_d|
            +|H_d\setminus K_d|\\
   &\le2^d-(1-\varepsilon)|K_d|\\
   &\le\left[1-(1-\varepsilon)\left(1-\frac1{d+1}\right)\right]2^d.
   \end{aligned}
  \]
  For every $d\ge1$, this implies $\alpha-\beta\ge(1-\varepsilon)/2$, and applying \eqref{eq:cube-explicit-budget} with $\delta=(1-\varepsilon)/2$ completes the proof.
  \end{proof}
  
  \subsection{Facet approximation of the ball}
  \label{ss:looseEndKur}
  
  In this subsection we establish the finite-budget facet bounds summarized in \Cref{R:FacetsBall}. Write $\kappa_j=|B_j|$, and for $d\ge2$, $M\ge d+1$, and $0<\varepsilon<1$, set
  \[
    \mathcal E^{\mathrm{fac}}_d(M)
    :=\inf_{P\in\overline{\mathcal P}_d(M)}\Delta_d(P),
    \qquad
    M_\varepsilon(d)
    :=\min\{M:\mathcal E^{\mathrm{fac}}_d(M)\le\varepsilon\}.
  \]
  As shown in Lemma~\ref{lem:app-facet-optimizer-existence} below, the infimum defining $\mathcal E^{\mathrm{fac}}_d(M)$ is attained, so $M_\varepsilon(d)$ coincides with the minimum number of facets of a polytope $P$ satisfying $\Delta_d(P)\le\varepsilon$.
  
  We prove that for every $0<a<(1-\varepsilon)/(1+\varepsilon)$,
  \begin{equation}\label{eq:app-facet-budget-threshold}
   M_\varepsilon(d)\ge
   \frac{d\kappa_d}{2\kappa_{d-1}}
   \bigl[1-\varepsilon-a(1+\varepsilon)\bigr]
   (1-a^2)^{-(d-1)/2},
  \end{equation}
  and, whenever
  \begin{equation}\label{eq:app-facet-budget-dimension-condition}
   d\left(\frac{1-\varepsilon}{1+\varepsilon}\right)^2\ge1,
  \end{equation}
  the sharper bound
  \begin{equation}\label{eq:app-facet-budget-optimized}
   M_\varepsilon(d)\ge
   \frac{(1+\varepsilon)d\kappa_d}{2\kappa_{d-1}}
   \left(\frac{1+\varepsilon}{2\sqrt\varepsilon}\right)^{d-1}.
  \end{equation}
  For every fixed $\varepsilon\in(0,1)$, it follows that
  \begin{equation}\label{eq:app-facet-budget-rate}
   \liminf_{d\to\infty}\frac{\log M_\varepsilon(d)}d
   \ge\log\frac{1+\varepsilon}{2\sqrt\varepsilon}>0,
   \qquad M_\varepsilon(d)=\exp(\Theta_\varepsilon(d)),
  \end{equation}
  and consequently
  \begin{equation}\label{eq:app-facet-subexponential-error}
   \log M_d=o(d),\quad M_d\ge d+1
   \quad\Longrightarrow\quad
   \mathcal E^{\mathrm{fac}}_d(M_d)\longrightarrow1.
  \end{equation}
  The qualitative threshold \eqref{eq:app-facet-subexponential-error} was noted in \citet[Remark~2.10]{Kur2017}. Our proof combines the facet-balance condition of \citet*[Lemma~9]{LSW} with a surface-area identity for optimal polytopes.
  
  \subsubsection{Existence and balance of a best approximant}
  
  \begin{lemma}\label{lem:app-facet-optimizer-existence}
  For every $M\ge d+1$, the infimum $\mathcal E^{\mathrm{fac}}_d(M)$ is attained by a full-dimensional polytope $Q$ with at most $M$ facets, and $\mathcal E^{\mathrm{fac}}_d(M)<1$.
  \end{lemma}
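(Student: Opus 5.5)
The plan has two parts. First we show that $\mathcal E^{\mathrm{fac}}_d(M)<1$ by exhibiting one admissible polytope. Then we prove attainment by a compactness argument on a minimizing sequence.

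\emph{The strict inequality.} Take a regular simplex $S$ centered at the origin with $d+1\le M$ facets, and dilate it by $\lambda>0$. The function $\lambda\mapsto|\lambda S\triangle B_d|/\kappa_d$ is continuous. It equals $1$ at $\lambda=0$, where the polytope degenerates. For small $\lambda>0$ we have $\lambda S\subset B_d$, so the error is $1-\lambda^d|S|/\kappa_d<1$. Hence $\mathcal E^{\mathrm{fac}}_d(M)<1$, and the infimum may be taken over polytopes with error at most some $\eta<1$.

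\emph{Boundedness of a minimizing sequence.} Let $P_n\in\overline{\mathcal P}_d(M)$ satisfy $\Delta_d(P_n)\to\mathcal E^{\mathrm{fac}}_d(M)<\eta<1$. Then $|P_n\cap B_d|\ge c\kappa_d$ with $c=(1-\eta)/2$, since $\Delta_d(P)\ge1-|P\cap B_d|/\kappa_d$. We also have $|P_n|\le(1+\eta)\kappa_d$.

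The main obstacle is to show that the $P_n$ stay in a fixed ball; a priori a polytope can have long thin spikes of small volume. To handle this, use the fact that $P_n\cap B_d$ is a convex body in $B_d$ of volume at least $c\kappa_d$. Such a body contains a ball of radius $\rho=\rho(c,d)>0$ centered at some point $x_n\in B_d$. The standard reason is that the inradius is bounded below in terms of volume and diameter, and the diameter is at most $2$.

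Now suppose $P_n$ contains a point $z$ with $|z|\ge T$. Then $P_n$ contains the cone $\conv(\{z\}\cup(x_n+\rho B_d))$, whose volume is at least $c'\rho^{d-1}(T-1)$. This is incompatible with $|P_n|\le(1+\eta)\kappa_d$ once $T$ is large. Consequently every $P_n$ lies in $T_0B_d$ for a fixed $T_0$.

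\emph{Compactness and lower semicontinuity.} Each $P_n$ is an intersection of at most $M$ halfspaces. We may drop redundant ones and represent $P_n=\bigcap_{j\le M}\{x:\langle u_j^n,x\rangle\le b_j^n\}$ with $|u_j^n|=1$. Since $P_n\subset T_0B_d$ and $P_n\ni x_n$, we may choose $|b_j^n|\le T_0$, padding with the trivial constraint $b=T_0$ if needed.

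Passing to a subsequence, $u_j^n\to u_j$, $b_j^n\to b_j$ and $x_n\to x_\ast$. Set $Q=\bigcap_j\{\langle u_j,x\rangle\le b_j\}$. Then $Q$ has at most $M$ facets and contains $x_\ast+\rho B_d$, so it is full-dimensional. The indicators $\mathbbm 1_{P_n}\to\mathbbm 1_Q$ converge almost everywhere, namely off the finitely many limiting hyperplanes. Dominated convergence on $T_0B_d$ therefore gives $|P_n\triangle B_d|\to|Q\triangle B_d|$.

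Hence $\Delta_d(Q)=\mathcal E^{\mathrm{fac}}_d(M)$, which proves attainment by a full-dimensional $Q\in\overline{\mathcal P}_d(M)$. It also gives $\mathcal E^{\mathrm{fac}}_d(M)<1$, as already shown in the first step.
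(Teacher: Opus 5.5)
Your proof is correct. The strict inequality is handled exactly as in the paper, via a simplex inside $B_d$. For attainment, however, you secure compactness by a different route.

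The paper never proves an a priori diameter bound. It notes that every offset of a near-optimal $P_j$ exceeds $-1$, since $|P_j\cap B_d|>0$. It also notes that lowering any offset $h_{j,i}>1$ to $1$ leaves $P_j\cap B_d$ unchanged and can only shrink $P_j\setminus B_d$. So one may assume all offsets lie in $(-1,1]$, and the parameters $(u_{j,i},h_{j,i})$ automatically range over a compact set. Because the limit $Q$ could a priori be unbounded, the paper then uses dominated convergence on $B_d$ to get $|Q\cap B_d|\ge(1-\delta_0)\kappa_d>0$. It uses Fatou's lemma to get $|Q|<\infty$, hence boundedness, and to get $|B_d\triangle Q|\le\liminf_j|B_d\triangle P_j|$.

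You instead keep the minimizing sequence unchanged and prove that it lies in a fixed ball $T_0B_d$. Your tools are an inradius lower bound for $P_n\cap B_d$ (volume at least $(1-\eta)\kappa_d$, diameter at most $2$) and the cone $\conv(\{z\}\cup(x_n+\rho B_d))$, whose volume is $\gtrsim\rho^{d-1}(|z|-1)$. This needs a standard but genuine geometric input that the paper's truncation trick avoids. In return it gives more:
\begin{itemize}
\item a quantitative diameter bound for every polytope with error below $\eta$;
\item an explicit ball $x_\ast+\rho B_d\subset Q$;
\item actual convergence $|P_n\triangle B_d|\to|Q\triangle B_d|$ rather than only lower semicontinuity.
\end{itemize}

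One point you leave implicit is that $Q$ is bounded, which is needed for $Q\in\overline{\mathcal P}_d(M)$. It follows at once. Any $y$ with $\langle u_j,y\rangle<b_j$ for all $j$ lies in $P_n\subset T_0B_d$ for all large $n$. Hence $\operatorname{int}Q\subset T_0B_d$, and since $Q$ is a full-dimensional convex set, $Q\subset T_0B_d$.
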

  \begin{proof}
  Any full-dimensional simplex contained in $B_d$ has positive volume and relative error strictly less than $1$. Let $(P_j)$ be a minimizing sequence with $\Delta_d(P_j)\le\delta_0<1$, and write
  \[
  P_j=\bigcap_{i=1}^M\{x\in\R^d:\langle u_{j,i},x\rangle\le h_{j,i}\},
  \qquad u_{j,i}\in S^{d-1},
  \]
  repeating constraints if $P_j$ has fewer than $M$ facets. Because $|P_j\cap B_d|>0$, every offset satisfies $h_{j,i}>-1$. Replacing any offset $h_{j,i}>1$ by $1$ leaves $P_j\cap B_d$ unchanged and can only decrease $|P_j\setminus B_d|$, so we may assume that $h_{j,i}\in(-1,1]$ for all $j$ and $i$. Passing to a subsequence along which $u_{j,i}\to u_i\in S^{d-1}$ and $h_{j,i}\to h_i\in[-1,1]$ for each $i\le M$, define $Q=\bigcap_{i=1}^M\{x:\langle u_i,x\rangle\le h_i\}$. Outside the union of the $M$ limiting hyperplanes, the indicator functions $\mathbf1_{P_j}$ converge pointwise to $\mathbf1_Q$. Dominated convergence on $B_d$ gives
  \[
  |Q\cap B_d|=\lim_{j\to\infty}|P_j\cap B_d|\ge(1-\delta_0)\kappa_d>0,
  \]
  and Fatou's lemma gives $|Q|\le\liminf_{j\to\infty}|P_j|\le(1+\delta_0)\kappa_d<\infty$. Since $Q$ is convex with positive, finite volume, it is a bounded, full-dimensional polytope with at most $M$ facets, and Fatou's lemma applied to $|\mathbf1_{B_d}-\mathbf1_{P_j}|$ shows that $|B_d\triangle Q|\le\liminf_{j\to\infty}|B_d\triangle P_j|$.
  \end{proof}
  
  Let $Q$ be a minimizer of $\Delta_d$ over $\overline{\mathcal P}_d(M)$, written in irredundant halfspace form as
  \begin{equation}\label{eq:app-facet-optimizer-representation}
  Q=\bigcap_{i=1}^m\{x\in\R^d:\langle u_i,x\rangle\le h_i\},
  \qquad m\le M,
  \end{equation}
  where $u_i\in S^{d-1}$ is the outward unit normal to the facet $F_i=Q\cap\{x:\langle u_i,x\rangle=h_i\}$ and $h_i\in\R$ is its signed support offset. Let $S_i=|F_i|$. By the first-order optimality condition for best facet approximation \citep[Lemma~9]{LSW}, each facet of $Q$ has equal $(d-1)$-dimensional measure inside and outside $B_d$:
  \begin{equation}\label{eq:app-facet-optimizer-balance}
   S_i=2|F_i\cap B_d|,\qquad -1<h_i<1.
  \end{equation}

  \subsubsection{A surface-area consequence of facet balance}
  Write $S(K)=|\partial K|$ for the surface area of a convex body $K$, and set
  \[
  \delta=\Delta_d(Q),\qquad
  A=|Q\cap\partial B_d|.
  \]
  
  \begin{lemma}\label{lem:app-facet-optimizer-surface}
  The optimal polytope $Q$ satisfies
  \begin{equation}\label{eq:app-facet-spherical-area-identity}
  \delta=1-\frac{2A}{d\kappa_d},
  \end{equation}
  and
  \begin{equation}\label{eq:app-facet-optimizer-surface}
  S(Q)\le d\kappa_d(1+\delta).
  \end{equation}
  \end{lemma}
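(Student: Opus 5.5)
The plan is to derive both statements from the facet balance condition \eqref{eq:app-facet-optimizer-balance} together with two elementary facts. The first is the divergence theorem for the vector field $x\mapsto x/d$. The second is monotonicity of surface area under inclusion of convex bodies.

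\emph{Step 1: the volume identity.} For any polytope (indeed any Lipschitz domain) $D$, the divergence theorem applied to $x/d$ gives $|D|=\frac1d\int_{\partial D}\langle x,n_D(x)\rangle\,d\mathcal H^{d-1}(x)$. This holds with signed support numbers and requires no assumption that the origin lies in $D$, so negative $h_i$ cause no trouble. On the facet $F_i$ the integrand is the constant $h_i$, and on $\partial B_d$ it equals $1$.

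Applying this to $Q$ gives $|Q|=\frac1d\sum_i h_iS_i$. Applying it to the convex body $Q\cap B_d$, whose boundary splits (up to null sets) into $\bigcup_i(F_i\cap B_d)$ and $Q\cap\partial B_d$, gives
\[
|Q\cap B_d|=\frac1d\sum_i h_i|F_i\cap B_d|+\frac Ad.
\]
Subtracting the second identity from the first yields $|Q\setminus B_d|=\frac1d\sum_i h_i\bigl(S_i-|F_i\cap B_d|\bigr)-\frac Ad$. By the balance condition, $S_i-|F_i\cap B_d|=|F_i\cap B_d|$, and hence
\[
|Q\setminus B_d|-|Q\cap B_d|=-\frac{2A}{d}.
\]
Then $\delta\kappa_d=|Q\setminus B_d|+\kappa_d-|Q\cap B_d|=\kappa_d-2A/d$, which is \eqref{eq:app-facet-spherical-area-identity}.

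\emph{Step 2: the surface bound.} Summing the balance condition over the facets gives $S(Q)=\sum_iS_i=2\sum_i|F_i\cap B_d|=2|\partial Q\cap B_d|$. Since $Q\cap B_d\subset B_d$ are convex bodies, monotonicity of surface area under inclusion (for example via Cauchy's formula) gives
\[
|\partial Q\cap B_d|+A=S(Q\cap B_d)\le d\kappa_d .
\]
Therefore $S(Q)\le2(d\kappa_d-A)$. Substituting $A=\frac{d\kappa_d}{2}(1-\delta)$ from Step 1 gives $S(Q)\le d\kappa_d(1+\delta)$, which is \eqref{eq:app-facet-optimizer-surface}.

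\emph{Main obstacle.} The only delicate point is justifying the boundary decomposition of $Q\cap B_d$ used in Step 1. The set $\partial(Q\cap B_d)$ is $(\partial Q\cap B_d)\cup(Q\cap\partial B_d)$, and the overlap $\partial Q\cap\partial B_d$ is $\mathcal H^{d-1}$-null because each facet meets the sphere in a lower-dimensional set. Together with full-dimensionality of $Q\cap B_d$ (Lemma~\ref{lem:app-facet-optimizer-existence} gives $\delta<1$), this makes both the divergence computation and the surface monotonicity legitimate.
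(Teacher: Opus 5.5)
Your proof is correct and is essentially the paper's argument: the divergence theorem for $x\mapsto x$ on $Q$ and on $Q\cap B_d$, combined with facet balance, gives $|Q|-2|Q\cap B_d|=-2A/d$, and $S(Q\cap B_d)=\tfrac12 S(Q)+A\le d\kappa_d$ then follows from monotonicity of surface area. You also check that $\partial Q\cap\partial B_d$ is $\mathcal H^{d-1}$-null, which the paper uses without comment.
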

  \begin{proof}
  Applying the divergence theorem to the vector field $x\mapsto x$ on $Q$ gives
  \begin{equation}\label{eq:app-facet-cone-volume}
  d|Q|=\sum_{i=1}^m h_iS_i.
  \end{equation}
  Applying the divergence theorem to $x\mapsto x$ on $Q\cap B_d$ and using \eqref{eq:app-facet-optimizer-balance} together with the fact that the outward normal on $Q\cap\partial B_d$ is $x$ gives
  \[
  d|Q\cap B_d|=\sum_{i=1}^m h_i|F_i\cap B_d|+A
  =\frac d2|Q|+A.
  \]
  Substituting this relation into $\delta=1+(|Q|-2|Q\cap B_d|)/\kappa_d$ yields \eqref{eq:app-facet-spherical-area-identity}.
  
  Similarly, decomposing $\partial(Q\cap B_d)$ and using \eqref{eq:app-facet-optimizer-balance} gives
  \[
  S(Q\cap B_d)=\sum_{i=1}^m|F_i\cap B_d|+A
  =\frac12S(Q)+A.
  \]
  By the monotonicity of surface area for convex bodies \citep{SchneiderBook}, we have $S(Q\cap B_d)\le |\partial B_d|=d\kappa_d$, which combined with \eqref{eq:app-facet-spherical-area-identity} proves \eqref{eq:app-facet-optimizer-surface}.
  \end{proof}
  
  \subsubsection{The finite-dimensional facet bound}
  \begin{proof}[Proof of \eqref{eq:app-facet-budget-threshold}]
  Let $Q$ be an optimal polytope with $m\le M$ facets and error $\delta\le\varepsilon$, and fix $0<a<1$. We split the sum in \eqref{eq:app-facet-cone-volume} into facets with $h_i\le a$ and facets with $h_i>a$. The facets with $h_i\le a$ contribute at most $aS(Q)$. For each facet with $h_i>a$, \eqref{eq:app-facet-optimizer-balance} and the inclusion $F_i\cap B_d\subset B_d\cap\{x:\langle u_i,x\rangle=h_i\}$ give
  \begin{align*}
  h_iS_i&\le S_i
  =2|F_i\cap B_d|\\
  &\le 2\kappa_{d-1}(1-h_i^2)^{(d-1)/2}
  \le2\kappa_{d-1}(1-a^2)^{(d-1)/2}.
  \end{align*}
  Summing over all facets and applying \eqref{eq:app-facet-optimizer-surface}, we obtain
  \[
  d|Q|\le ad\kappa_d(1+\delta)
  +2M\kappa_{d-1}(1-a^2)^{(d-1)/2}.
  \]
  Since $|Q|\ge|Q\cap B_d|\ge(1-\delta)\kappa_d$ and $\delta\le\varepsilon$, it follows that
  \begin{equation}\label{eq:app-facet-budget-key}
  1-\varepsilon-a(1+\varepsilon)
  \le 1-\delta-a(1+\delta)
  \le\frac{2M\kappa_{d-1}}{d\kappa_d}
  (1-a^2)^{(d-1)/2}.
  \end{equation}
  For $0<a<(1-\varepsilon)/(1+\varepsilon)$, the left-hand side is positive and rearranging yields \eqref{eq:app-facet-budget-threshold}.
  \end{proof}
  
  \subsubsection{Optimizing the finite bound and consequences}
  Keeping the factor $h_i$ inside the maximum over $h_i>a$ refines \eqref{eq:app-facet-budget-threshold} to \eqref{eq:app-facet-budget-optimized}.
  
  \begin{proof}[Proof of \eqref{eq:app-facet-budget-optimized}]
  Set
  \[
  b=\frac{1-\varepsilon}{1+\varepsilon},\qquad
   a=\frac{db^2-1}{(d-1)b}.
  \]
  Under the condition \eqref{eq:app-facet-budget-dimension-condition}, we have $a\ge0$ and
  \begin{equation}\label{eq:app-facet-height-gap}
  b-a=\frac{1-b^2}{(d-1)b}>0.
  \end{equation}
  For an optimal polytope $Q$ with error $\delta\le\varepsilon$, combining $|Q|\ge(1-\delta)\kappa_d$ with \eqref{eq:app-facet-optimizer-surface} and $a\ge0$ gives
  \begin{align}
  d|Q|-aS(Q)
  &\ge d\kappa_d\bigl[1-\delta-a(1+\delta)\bigr]\notag\\
  &\ge d\kappa_d(1+\varepsilon)(b-a).
  \label{eq:app-facet-weighted-lower}
  \end{align}
  On the other hand, by \eqref{eq:app-facet-cone-volume} and \eqref{eq:app-facet-optimizer-balance},
  \begin{align}
  d|Q|-aS(Q)
  &=\sum_{i=1}^m(h_i-a)S_i\notag\\
  &\le 2M\kappa_{d-1}
  \max_{a\le h\le1}(h-a)(1-h^2)^{(d-1)/2},
  \label{eq:app-facet-weighted-upper}
  \end{align}
  since facets with $h_i\le a$ give nonpositive summands and each remaining facet satisfies $S_i\le 2\kappa_{d-1}(1-h_i^2)^{(d-1)/2}$. The derivative of $g(h)=(h-a)(1-h^2)^{(d-1)/2}$ on $[a,1]$ has the sign of $1-dh^2+(d-1)ah$, whose unique positive root is $h=b$ by our choice of $a$. Thus the maximum in \eqref{eq:app-facet-weighted-upper} is attained at $h=b$ and equals $(b-a)(1-b^2)^{(d-1)/2}$. Comparing \eqref{eq:app-facet-weighted-lower} and \eqref{eq:app-facet-weighted-upper} and dividing by $b-a>0$ gives
  \[
  M\ge\frac{(1+\varepsilon)d\kappa_d}{2\kappa_{d-1}}
  (1-b^2)^{-(d-1)/2}.
  \]
  Since $1-b^2=4\varepsilon/(1+\varepsilon)^2$, this establishes \eqref{eq:app-facet-budget-optimized}.
  \end{proof}
  
  For any fixed $\varepsilon\in(0,1)$, condition \eqref{eq:app-facet-budget-dimension-condition} holds for all sufficiently large $d$, and Stirling's formula gives
  \begin{equation}\label{eq:app-facet-ball-volume-ratio}
  \frac{d\kappa_d}{\kappa_{d-1}}=\sqrt{2\pi d}\,(1+O(d^{-1})).
  \end{equation}
  Taking logarithms in \eqref{eq:app-facet-budget-optimized} and dividing by $d$ yields the exponential lower bound in \eqref{eq:app-facet-budget-rate}.
  
  In the opposite direction, Kur's construction \citep[Remark~2.2]{Kur2017} provides an absolute constant $C\ge1$ such that for every $d\ge2$ and every $M\ge10^d$, there exists a polytope $P$ with at most $M$ facets satisfying
  \begin{equation}\label{eq:app-facet-Kur-upper}
  \Delta_d(P)\le C M^{-2/(d-1)}.
  \end{equation}
  Choosing $M=\lceil\max\{10^d,(C/\varepsilon)^{(d-1)/2}\}\rceil$ gives $M_\varepsilon(d)\le\exp(C_\varepsilon d)$, which together with the lower bound establishes $M_\varepsilon(d)=\exp(\Theta_\varepsilon(d))$. Finally, if $M_d\ge d+1$ satisfies $\log M_d=o(d)$, then any subsequence along which $\mathcal E^{\mathrm{fac}}_d(M_d)\le\varepsilon<1$ would force $M_d\ge M_\varepsilon(d)$, contradicting \eqref{eq:app-facet-budget-rate}. Hence $\mathcal E^{\mathrm{fac}}_d(M_d)\to 1$, proving \eqref{eq:app-facet-subexponential-error}.

  \subsection{Gaussian approximation by exponentially many halfspaces}
  \label{app:gaussian-exponential-halfspaces}
  
  Let $\gamma_d$ denote standard Gaussian measure on $\mathbb R^d$, and let $\mathcal H_d(m)$ be the class of intersections of at most $m$ closed halfspaces in $\R^d$, including $\emptyset$ and $\mathbb R^d$. The following bound is a consequence of the Gaussian-process sparsification theorem of \citet[Theorem~1]{Nadimpalli2026} (which strengthens \citet*{DNOS2024} and improves upon the $\exp(O_\varepsilon(d\log d))$ general bound of \citet*[Theorem~19]{DNS24Gaussian}). We include the short reduction from arbitrary convex sets to bounded-width gauges for completeness.
  
  \begin{corollary}[Consequence of Nadimpalli's sparsification theorem]
  \label{thm:gaussian-exponential-halfspaces}
  For every $\varepsilon\in(0,1/2)$, there is a constant $C_\varepsilon>0$ such that for every $d\ge2$ and every convex set $K\subset\mathbb R^d$, there exists
  \[
      P\in\mathcal H_d\!\left(
          \left\lceil\exp(C_\varepsilon d)\right\rceil
      \right)
      \quad\text{with}\quad
      \gamma_d(K\triangle P)\le\varepsilon.
  \]
  One may take $C_\varepsilon\le C\varepsilon^{-14}\log(2/\varepsilon)$ for an absolute constant $C>0$, and $P$ may be chosen to be a bounded polytope with the same dimensional order of facets.
  \end{corollary}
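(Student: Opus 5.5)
The plan is to derive the corollary from Nadimpalli's sparsification theorem \citep[Theorem~1]{Nadimpalli2026}. That theorem handles convex sets given as sublevel sets of a gauge, i.e.\ a supremum of a Gaussian-type process over a bounded family of halfspace functionals. So we first reduce an arbitrary convex $K$ to a body whose defining halfspaces have boundaries at bounded distance from the origin, and then apply the theorem.

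\textbf{Step 1: trivial cases.} If $\gamma_d(K)\le\varepsilon$, take $P=\emptyset$. If $\gamma_d(K)\ge1-\varepsilon$, take $P=\R^d$. Both lie in $\mathcal H_d(1)$.

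\textbf{Step 2: truncation.} Otherwise $\gamma_d(K)\in(\varepsilon,1-\varepsilon)$, and we replace $K$ by $K'=K\cap rB_d$ with $r=\sqrt d+C\sqrt{\log(2/\varepsilon)}$. This costs Gaussian measure at most $\varepsilon/4$ by Gaussian norm concentration.

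\textbf{Step 3: bounded-width representation.} Write $K'$ (or its closure, a null-set change) as an intersection of supporting halfspaces $\{\langle u,x\rangle\le h_{K'}(u)\}$. Discard the halfspaces with $h_{K'}(u)>R_\varepsilon$ for $R_\varepsilon=C\sqrt{\log(2/\varepsilon)}$, since each omitted constraint removes little Gaussian mass. This cannot be done by a naive union bound over infinitely many $u$. Instead we use the following comparison. The set $K''=\bigcap_{u:\,h_{K'}(u)\le R_\varepsilon}\{\langle u,x\rangle\le h_{K'}(u)\}$ contains $K'$. On $K''\setminus K'$, some point $x$ violates a constraint with $h>R_\varepsilon$, so $x$ must be far out. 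Since $K'\supset$ a point of depth controlled by the measure bounds, a Gaussian isoperimetric or Ehrhard argument bounds $\gamma_d(K''\setminus K')\le\varepsilon/4$.

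This is the step I expect to be the main obstacle. It may be cleaner to take the reduction directly from the form in \citet{DNOS2024}: for convex $K$ with $\gamma_d(K)\in(\varepsilon,1-\varepsilon)$, there is an intersection of halfspaces of offsets in $[-R_\varepsilon,R_\varepsilon]$ within $\varepsilon/4$ in Gaussian measure. The lower offset bound follows because halfspaces with offset $<-R_\varepsilon$ would force $\gamma_d(K)<\varepsilon$.

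\textbf{Step 4: sparsification.} Apply \citet[Theorem~1]{Nadimpalli2026} to this bounded-offset body. It yields $m\le\exp(C\varepsilon^{-14}\log(2/\varepsilon)\,d)$ of the halfspaces whose intersection is $\varepsilon/2$-close in $\gamma_d$. The total error is then at most $\varepsilon$, with $C_\varepsilon$ as claimed after tracking $R_\varepsilon$ through the theorem's dependence.

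\textbf{Step 5: boundedness.} To make $P$ bounded, intersect with the $2d$ halfspaces of the cube $[-r',r']^d$, where $r'=C\sqrt{\log(d/\varepsilon)}$. By a union bound over coordinates this costs at most a further $\varepsilon/4$ after rebalancing constants, and it adds only $2d$ facets, which leaves the order $\exp(C_\varepsilon d)$ unchanged.
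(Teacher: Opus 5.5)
Your Step~3 is false as stated, not merely hard, and the bounded-offset route fails with it. Take $K=\rho B_d$ with $\gamma_d(K)=1/2$, so $\rho\approx\sqrt d$. Then $K'=K$ and every supporting halfspace has offset $\rho\gg R_\varepsilon$. Your $K''$ is therefore all of $\R^d$, and $\gamma_d(K''\setminus K')=1/2$. The variant you attribute to \citet{DNOS2024} also cannot hold. Combined with their sparsification lemma, it would approximate every convex set by $O_\varepsilon(1)$ halfspaces independently of $d$. That contradicts the $\exp(c\sqrt d)$ lower bound of \citet{DNS24Gaussian} for the ball at small fixed $\varepsilon$. Bounded offsets give dimension-free counts, but general convex sets have offsets of order $\sqrt d$ and cannot be reduced to that case.

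Step~4 also misreads the tool. Nadimpalli's theorem is a statement about functions: for an index set $T$ of Gaussian width $w$, it approximates $f=\sup_{a\in T}\langle a,\cdot\rangle$ within $\delta$ in $L^2(\gamma_d)$ by a maximum of $\exp(Cw^2/\delta^2)$ affine functions. Nothing in your argument turns this into a set approximation, and nothing produces the factor $d$ in the exponent; you only assert the final count.

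The paper controls the Gaussian width of the polar body rather than the offsets. It centers $K$ at its Gaussian barycenter $z$ and shows $L=K-z\supset p\sqrt{\pi/2}\,B_d$, where $p=\gamma_d(K)$. The gauge $f$ of $L$ is then $O(1/p)$-Lipschitz. Gaussian concentration together with $\Pr(f(G)\le1)\ge p$ gives $w=\E f(G)=O(p^{-1}\sqrt{\log(2/p)})$, which is dimension-free even though the offsets may be of order $\sqrt d$.

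To pass from $\|f-g\|_{L^2(\gamma_d)}\le\delta$ to $\gamma_d(L\triangle\{g\le1\})\le\varepsilon_1$, the paper needs the anticoncentration bound $\Pr(|f(G)-1|\le\tau)\le4\sqrt{2d}\,\tau$. This comes from $\frac{d}{ds}\gamma_d(sL)=s^{-1}\int_{sL}(d-|x|^2)\,d\gamma_d$, followed by Chebyshev's inequality. It forces $\delta\asymp\varepsilon_1^{3/2}d^{-1/2}$, and this is the source of the count $\exp(Cw^2d/\varepsilon_1^3)$. Shifting back costs $\gamma_d(A+z)\le p^{-1}\sqrt{\gamma_d(A)}$, which forces $\varepsilon_1=(\varepsilon p)^2$; with $p>\varepsilon/2$ this yields the exponent $\varepsilon^{-14}$. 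Your Steps~1, 2 and~5 are fine, but they do not address this core.
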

  
  \begin{proof}
  Let $p=\gamma_d(K)$. If $p\le\varepsilon/2$ or $p\ge1-\varepsilon/2$, we may take $P=\emptyset$ or $P=\R^d$, so we assume $\varepsilon/2<p<1-\varepsilon/2$ and take $K$ closed. Let $z=\E[G\mid G\in K]$ be the Gaussian barycenter of $K$, where $G\sim\gamma_d$, and set $L=K-z$. By a change of variables and Jensen's inequality,
  \begin{equation}\label{eq:gaussian-centering}
   \gamma_d(L)\ge p e^{|z|^2/2},\qquad |z|^2\le2\log(1/p).
  \end{equation}
  For every supporting halfspace $\{x\in\R^d:\langle u,x\rangle\le b\}$ of $L$ with $|u|=1$, the non-negative random variable $b-\langle u,G-z\rangle$ conditioned on $G-z\in L$ has mean $b$ and one-dimensional marginal density bounded above by $(p\sqrt{2\pi})^{-1}$, so $b\ge p\sqrt{\pi/2}$. Consequently,
  \begin{equation}\label{eq:gaussian-centroid-inradius}
   rB_d\subset L,\qquad r=p\sqrt{\pi/2}.
  \end{equation}

  The Minkowski functional $f=h_{L^\circ}$ of $L$ is $r^{-1}$-Lipschitz because $L^\circ\subset r^{-1}B_d$. Applying Gaussian concentration \citep[Theorem~5.6]{BLM2013} to $-f$ and using $\Pr(f(G)\le1)=\gamma_d(L)\ge p$, we obtain
  \begin{equation}\label{eq:gaussian-gauge-mean}
   w:=\E f(G)\le1+r^{-1}\sqrt{2\log(1/p)},\qquad
   w^2\le Cp^{-2}\log(2/p).
  \end{equation}
  Moreover, differentiating $\gamma_d(sL)$ with respect to $s>0$ gives
  \[
   \left|\frac d{ds}\gamma_d(sL)\right|
   =\left|s^{-1}\int_{sL}(d-|x|^2)\,d\gamma_d(x)\right|
   \le\frac{\sqrt{2d}}{s},
  \]
  and hence, for every $0<\tau\le1/2$,
  \begin{equation}\label{eq:gaussian-radial-anticoncentration}
   \Pr(|f(G)-1|\le\tau)\le4\sqrt{2d}\,\tau.
  \end{equation}

  We now apply \citet[Theorem~1]{Nadimpalli2026} to the bounded index set $T=L^\circ\subset\R^d$ with Gaussian width $w(T)=w$. For any $\delta>0$, it yields a finite subset $S\subset L^\circ$ and offsets $c_a\in\R$ such that $g(x)=\max_{a\in S}(\langle a,x\rangle+c_a)$ satisfies
  \[
   |S|\le\exp(Cw^2/\delta^2),\qquad
   \|f-g\|_{L^2(\gamma_d)}\le\delta.
  \]
  Given $\varepsilon_1\in(0,1)$, choose $\tau=\varepsilon_1/(8\sqrt{2d})$ and $\delta=\tau\sqrt{\varepsilon_1/2}$. Setting $Q=\{x\in\R^d:g(x)\le1\}\in\mathcal H_d(|S|)$ and combining \eqref{eq:gaussian-radial-anticoncentration} with Chebyshev's inequality gives
  \begin{equation}\label{eq:gaussian-centered-facet-count}
   \gamma_d(L\triangle Q)\le
   4\sqrt{2d}\,\tau+\frac{\delta^2}{\tau^2}=\varepsilon_1,
   \qquad |S|\le\exp(Cw^2d/\varepsilon_1^3).
  \end{equation}

  Finally, for any measurable set $A\subset\R^d$, the Cauchy--Schwarz inequality and \eqref{eq:gaussian-centering} give $\gamma_d(A+z)\le e^{|z|^2/2}\sqrt{\gamma_d(A)}\le p^{-1}\sqrt{\gamma_d(A)}$. Choosing $\varepsilon_1=(\varepsilon p)^2$ and setting $P=Q+z$ yields $\gamma_d(K\triangle P)\le\varepsilon$ with
  \[
   |S|\le\exp(Cd\varepsilon^{-6}p^{-8}\log(2/p))
        \le\exp(Cd\varepsilon^{-14}\log(2/\varepsilon)).
  \]
  To ensure that the approximant is a bounded polytope, we apply this construction at accuracy $\varepsilon/2$ and intersect the resulting polyhedron with a sufficiently large coordinate cube, adding $2d$ facets and at most $\varepsilon/2$ to the Gaussian error.
  \end{proof}

  \begin{corollary}[Gaussian learning with polyhedral hypotheses]
  \label{cor:gaussian-learning-halfspaces}
  For every fixed $\varepsilon\in(0,1/2)$, every convex set $K\subset\mathbb R^d$ can be learned from $\exp(O_\varepsilon(d))$ independent labeled examples
  \[
      (G_i,\mathbf1_K(G_i)),
      \qquad G_i\sim\gamma_d,
  \]
  with success probability at least $2/3$, using a hypothesis in $\mathcal H_d(\lceil\exp(O_\varepsilon(d))\rceil)$.
  \end{corollary}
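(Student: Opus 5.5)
Combine \Cref{thm:gaussian-exponential-halfspaces} with a standard uniform-convergence argument for the class $\mathcal H_d(m)$ with $m=\lceil\exp(C_\varepsilon d)\rceil$. Fix $\varepsilon\in(0,1/2)$ and apply \Cref{thm:gaussian-exponential-halfspaces} at accuracy $\varepsilon/3$. This gives some $P^\ast\in\mathcal H_d(m)$ with $\gamma_d(K\triangle P^\ast)\le\varepsilon/3$. The learning problem is then realizable up to error $\varepsilon/3$ within $\mathcal H_d(m)$. The learner is empirical risk minimization: output any $\widehat P\in\mathcal H_d(m)$ minimizing the empirical disagreement $\frac1n\sum_i\mathbf 1\{\mathbf 1_{\widehat P}(G_i)\ne\mathbf 1_K(G_i)\}$. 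We only need an information-theoretic bound, not computational efficiency, so existence of a minimizer among finitely many labelings suffices.

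The key step is bounding the VC dimension of $\mathcal H_d(m)$. Halfspaces in $\R^d$ have VC dimension $d+1$, and intersections of $m$ sets from a class of VC dimension $v$ have VC dimension $O(vm\log m)$, by the standard Sauer--Shelah composition argument (Blumer--Ehrenfeucht--Haussler--Warmuth). Hence $\mathrm{VC}(\mathcal H_d(m))\le Cdm\log m=\exp(O_\varepsilon(d))$, since $m=\exp(O_\varepsilon(d))$ and the polynomial factors $d\log m$ are absorbed into the exponent.

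The agnostic uniform convergence theorem then gives the following: with $n\ge C(\mathrm{VC}+\log 3)/\varepsilon^2$ samples, with probability at least $2/3$ every $P\in\mathcal H_d(m)$ has empirical error within $\varepsilon/3$ of its true error $\gamma_d(K\triangle P)$. On this event,
\[
\gamma_d(K\triangle\widehat P)\le\widehat{\mathrm{err}}(\widehat P)+\tfrac\varepsilon3\le\widehat{\mathrm{err}}(P^\ast)+\tfrac\varepsilon3\le\gamma_d(K\triangle P^\ast)+\tfrac{2\varepsilon}3\le\varepsilon.
\]
The sample size is $n=\exp(O_\varepsilon(d))\cdot\varepsilon^{-2}=\exp(O_\varepsilon(d))$.

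The main thing to check is that the VC composition bound applies to the class as defined, which includes $\emptyset$ and $\R^d$ and allows fewer than $m$ halfspaces. Repeating a halfspace handles fewer than $m$ constraints. Adding the degenerate halfspaces $\emptyset$ and $\R^d$ to the base class raises its VC dimension by at most a constant. Measurability of the supremum in uniform convergence is standard for this class, since it is parametrized by finitely many reals. Everything else is bookkeeping of constants depending on $\varepsilon$.
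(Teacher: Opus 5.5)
Your proposal is correct and follows essentially the same route as the paper: apply \Cref{thm:gaussian-exponential-halfspaces} at reduced accuracy, bound $\operatorname{VCdim}(\mathcal H_d(m))$ by the Blumer--Ehrenfeucht--Haussler--Warmuth composition bound, and run empirical risk minimization under agnostic uniform convergence at confidence $2/3$. The only differences are cosmetic: you split the error as $\varepsilon/3+2\varepsilon/3$ where the paper uses $\varepsilon/2+2\cdot\varepsilon/4$, and you state the uniform-convergence sample size without the extra $\log(2/\varepsilon)$ factor.
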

  
  \begin{proof}
  Applying \Cref{thm:gaussian-exponential-halfspaces} at accuracy $\varepsilon/2$ gives $P_*\in\mathcal H_d(m)$ with $m=\lceil\exp(C_{\varepsilon/2}d)\rceil$ such that $\gamma_d(K\triangle P_*)\le\varepsilon/2$. By \citet{BEHW1989}, the Vapnik--Chervonenkis dimension of $\mathcal H_d(m)$ satisfies $\operatorname{VCdim}(\mathcal H_d(m))\le C(d+1)m\log(2m)$. By standard VC uniform convergence \citep{VapnikChervonenkis1971}, a sample of size
  \[
      n\ge\frac{C}{\varepsilon^2}
      \left(\operatorname{VCdim}(\mathcal H_d(m))\log\frac2\varepsilon+\log\frac2\delta\right)
  \]
  ensures that with probability at least $1-\delta$, the empirical and population classification errors differ by at most $\varepsilon/4$ uniformly over $\mathcal H_d(m)$. On this event, any empirical risk minimizer $\widehat P\in\mathcal H_d(m)$ satisfies $\gamma_d(K\triangle\widehat P)\le\gamma_d(K\triangle P_*)+\varepsilon/2\le\varepsilon$. Taking $\delta=1/3$ completes the proof.
  \end{proof}

\subsection{Dilation scales for volume and mean-width approximation}
\label{app:volume-mean-width-normalization}

We compare deterministic dilations of the symmetric random polytope
\[
    K=\operatorname{conv}\{\pm U_1,\ldots,\pm U_M\},
    \qquad
    U_1,\ldots,U_M\ \text{independent and uniform on }S^{d-1},
\]
with $N=2M$ vertices in the regime $\log M\sim A\sqrt d$, where $A$ is a sufficiently large fixed constant and $d\to\infty$. Let $\sigma$ denote normalized surface measure on $S^{d-1}$, and write
\[
    D_w(C,D)
    :=\int_{S^{d-1}}|h_C(u)-h_D(u)|\,d\sigma(u)
\]
for the $L^1(S^{d-1})$ distance between the support functions of two convex bodies $C,D\subset\R^d$, with mean width normalized by $w(C)=2\int_{S^{d-1}}h_C\,d\sigma$.

Write $\kappa_d=|B_d|$, and define the volume-balancing radius $R$ and the expected support height $a$ by
\[
    R^d\kappa_d=\E|K|,
    \qquad
    S=\max_{1\le i\le M}|\langle U_i,e_1\rangle|,
    \qquad a=\E S,
\]
so that the rescaled polytopes
\[
    P_{\mathrm{vol}}=R^{-1}K,
    \qquad P_{\mathrm w}=a^{-1}K
\]
satisfy $\E|P_{\mathrm{vol}}|=\kappa_d$ and $\E w(P_{\mathrm w})=2$.

\begin{lemma}[Comparison of the two dilation scales]
\label{lem:vw-dilation-scales}
With the notation above,
\begin{equation}
    R^{-1}\sim a^{-1}\sim\sqrt{\frac{d}{2\log M}},
    \qquad
    \frac aR-1=(1+o(1))\frac{\log d}{2\log M}.
    \label{eq:vw-dilation-gap}
\end{equation}
Moreover,
\begin{equation}
    \begin{aligned}
        \E D_w(P_{\mathrm{vol}},B_d)
        &=(1+o(1))\frac{\log d}{2\log M},\\
        \E D_w(P_{\mathrm w},B_d)
        &\asymp\frac1{\log M},
        \qquad
        \inf_{\lambda>0}\E D_w(\lambda K,B_d)
        \asymp\frac1{\log M}.
    \end{aligned}
    \label{eq:vw-errors}
\end{equation}
\end{lemma}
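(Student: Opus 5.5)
The plan is to put both dilation scales on the same explicit ``extreme-value'' footing, working with squared quantities $s^2$ on the scale $2\ell/d$, where $\ell:=\log M$. At that scale an additive shift of $\log d$ in the governing equation moves $s^2$ by about $2\log d/d$, which is a relative shift of $\log d/(2\ell)$. Throughout, the target precision is absolute error $o(\log d/d)$ in the squared scales, equivalently relative error $o(\log d/\ell)$.

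\emph{Step 1: the width scale $a$.} For a single uniform $U$, the tail $\Pr(|\langle U,e_1\rangle|\ge s)=1-G_d(s^2)$ is given by the beta integral. In the range $s^2\asymp d^{-1/2}$ it equals $(1+o(1))(1-s^2)^{(d-1)/2}/(s\sqrt{\pi d/2})$. Hence $\Pr(S\le s)=(1-\Pr(|U_1|\ge s))^M$, and $S$ concentrates at the solution $s_*$ of
\[
\tfrac{d-1}{2}\log\tfrac{1}{1-s_*^2}=\ell-\tfrac14\log d+O(\log\ell).
\]
Let $p(s):=\Pr(|U_1|\ge s)$. The logarithmic derivative of $p$ is $\approx-ds$. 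So on the Gumbel scale $S-s_*$ fluctuates by $O(1/(ds_*))$, which is a relative fluctuation of order $1/(ds_*^2)\asymp1/\ell$. Integrating the Gumbel-type tails (upper tail $\le Mp(s)$, lower tail $\le e^{-Mp(s)}$) gives
\[
a=s_*\bigl(1+O(1/\ell)\bigr),\qquad \E|S/a-1|\asymp1/\ell .
\]
For the lower bound in $\E|S/a-1|\asymp1/\ell$, and also for $\inf_\lambda\E|\lambda S-1|\gtrsim1/\ell$, I would use anticoncentration. The density of $S$ on the Gumbel window is $\lesssim ds_*$, so $\Pr(|S-c|\le\eta)\lesssim ds_*\eta$ for every $c$. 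Choosing $\eta=c'/(ds_*)$ then forces $\E|S-c|\gtrsim1/(ds_*)$.

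\emph{Step 2: the volume scale $R$.} From \eqref{eq:finite-sphere-budget-exact} with $t_0\asymp\ell/d$, the mode satisfies
\[
\tfrac{d-1}{2}\log\tfrac1{1-t_0}=\ell-\log d-\tfrac12\log(dt_0)+O(1)=\ell-\tfrac54\log d+O(\log\ell).
\]
Next, \eqref{eq:finite-volume-balance} and \Cref{prop:finite-equal-radii} give
\[
|R^d/r^d-1|\lesssim\E_\nu|1-(r/|Y|)^d|=O(1/A),
\]
so $R/r=1+O(1/(Ad))$. Moreover $r^2=t_0+(1-t_0)q_d=t_0+O(1/d)$, which is a relative correction $O(1/\ell)$. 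Hence $R^2=t_0(1+O(1/\ell))$.

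\emph{Step 3: comparing the scales.} Subtracting the two displayed equations gives
\[
\tfrac{d-1}{2}\log\tfrac{1-t_0}{1-s_*^2}=\log d+O(\log\ell),
\]
so $s_*^2-t_0=(2+o(1))\log d/d$. Dividing by $2t_0\sim4\ell/d$ yields $a/R-1=(1+o(1))\log d/(2\ell)$. This uses $\log d\to\infty$, so that the $O(1/\ell)$ errors from Steps 1--2 and the $O(\log\ell)$ terms are negligible. Note that $\log\ell\asymp\log d$ only up to a constant factor, so I must track the $\log\ell$ corrections explicitly enough to see that they cancel between the two equations. I expect this bookkeeping to be the main obstacle. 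The worry is that the $\frac12\log(dt_0)$ term and the $\log(s\sqrt d)$ term in the tail each contribute $\frac14\log d+\frac12\log\ell$-type pieces. They should cancel except for the clean $\log d$ coming from $\log((M-d)/d)$, and I would first verify this by writing both equations with all $O(\log\ell)$ terms explicit. It follows that both $R^{-1}$ and $a^{-1}$ are $\sim\sqrt{d/(2\ell)}$.

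\emph{Step 4: width errors.} By rotation invariance, $\E D_w(\lambda K,B_d)=\E|\lambda S-1|$, since $h_K(u)$ has the law of $S$ for every $u$. For $\lambda=a^{-1}$, Step 1 gives $\asymp1/\ell$, and the infimum over $\lambda$ is also $\asymp1/\ell$ by the anticoncentration bound. For $\lambda=R^{-1}$, write
\[
S/R-1=(a/R-1)+(a/R)(S/a-1).
\]
The first term is deterministic and of order $\log d/(2\ell)$ by Step 3, while the second has $L^1$ norm $O(1/\ell)=o(\log d/\ell)$. This gives
\[
\E D_w(P_{\mathrm{vol}},B_d)=(1+o(1))\frac{\log d}{2\ell},
\]
which completes \eqref{eq:vw-errors}.
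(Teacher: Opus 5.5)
Your argument is correct and follows the same skeleton as the paper. Your $s_*$ is the paper's extreme-value quantile $b$, defined by $M\overline G_d(b^2)=1$. As in the paper, $R^2=t_0+O(1/d)$ comes from the radial comparison \eqref{eq:finite-volume-balance}, the width error reduces to $\E|\lambda S-1|$ by rotation invariance, and $P_{\mathrm{vol}}$ is handled by the same splitting $S/R-1=(a/R-1)+(a/R)(S/a-1)$.

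The real difference is Step~3. The paper never writes two separate asymptotic equations. Integrating by parts in the beta density gives $\overline G_d(t)=\frac{2(1-t)}{d-1}G_d'(t)\bigl(1+O(1/(dt))\bigr)$, which converts the mode equation into $M\overline G_d(t_0)=d(1+o(1))$. Hence $\log d+o(1)=\log\bigl(\overline G_d(t_0)/\overline G_d(b^2)\bigr)=\int_{t_0}^{b^2}G_d'/\overline G_d\,dt$, with hazard $\frac d2(1+o(1))$. Because both endpoints are quantiles of the same tail function, no prefactor bookkeeping arises.

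Your two-equation route also works, but your sentence claiming the $O(\log\ell)$ terms are negligible is false here, since $\log\ell=\frac12\log d+O(1)$. What actually closes the step is that the exact prefactor terms are $-\frac12\log(ds_*^2)$ in Step~1 and $-\frac12\log(dt_0)$ in Step~2. These differ by $\frac12\log(t_0/s_*^2)=O(1)$ because $t_0\asymp s_*^2$. All remaining errors are $O(1)$, so the difference of the two equations is $\log d+O(1)$, as you hoped.

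For the lower bound on the fluctuations of $S$, the paper uses quantiles $b_\pm$ with $M\overline G_d(b_-^2)=2$ and $M\overline G_d(b_+^2)=1/2$, together with their Gumbel probabilities. Your density-based anticoncentration is an equivalent alternative, with two provisos. You must bound the density of $S$ on all of $[0,2s_*]$, not only on the window. For $\inf_\lambda$, you must treat separately the case $\lambda a\notin[1/2,2]$, where Jensen gives $\E|\lambda S-1|\ge1/2$.
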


\begin{proof}
\textit{The volume-related height and the support maximum.}
Set
\[
    G_d(t)
    :=\mathbb P\bigl(|\langle U_1,e_1\rangle|^2\le t\bigr),
    \qquad \overline G_d=1-G_d.
\]
From the mode equation \eqref{eq:finite-sphere-log-mode} and the radial comparison in \Cref{sec:finite-equal-radii}, the mode parameter $t_0$ satisfies
\begin{equation}
    (M-d)\frac{(1-t_0)G_d'(t_0)}{G_d(t_0)}
    =\frac{d(d-1)}2,
    \qquad t_0\sim\frac{2\log M}{d},
    \label{eq:vw-height-parameter}
\end{equation}
and, with $r^2=t_0+(1-t_0)\frac{2(d-1)}{d(d+1)}$,
\begin{equation}
    \left|\left(\frac rR\right)^d-1\right|
    \le\frac{C(1-t_0)}{t_0\sqrt d}.
    \label{eq:vw-boundary-input}
\end{equation}
For $A$ sufficiently large, the right-hand side of \eqref{eq:vw-boundary-input} is at most $1/2$, so taking logarithms yields $R/r=1+O_A(d^{-1})$ and hence
\begin{equation}
    R^2=t_0+O_A(d^{-1}).
    \label{eq:vw-volume-radius}
\end{equation}

Integrating by parts in the beta density of $G_d$ gives
\begin{equation}
    \overline G_d(t)
    =\frac{2(1-t)}{d-1}G_d'(t)
      \left(1+O\!\left(\frac1{dt}\right)\right)
    \label{eq:vw-beta-tail}
\end{equation}
for $t\asymp(\log M)/d$. Substituting \eqref{eq:vw-beta-tail} at $t=t_0$ into \eqref{eq:vw-height-parameter} shows that $M\overline G_d(t_0)=d(1+o(1))$. Define $b>0$ by the extreme-value quantile relation
\[
    M\overline G_d(b^2)=1.
\]
By \eqref{eq:vw-beta-tail}, we also have $b^2\sim(2\log M)/d$, and uniformly for $t$ between $t_0$ and $b^2$,
\[
    \frac{G_d'(t)}{\overline G_d(t)}
    =\frac d2(1+o(1)).
\]
Integrating this logarithmic derivative from $t_0$ to $b^2$ gives
\[
    \log d+o(1)
    =\log\frac{\overline G_d(t_0)}{\overline G_d(b^2)}
    =\int_{t_0}^{b^2}\frac{G_d'(t)}{\overline G_d(t)}\,dt,
\]
and therefore
\begin{equation}
    b^2-t_0=(2+o(1))\frac{\log d}{d}.
    \label{eq:vw-height-gap}
\end{equation}

\medskip
\noindent
\textit{Support fluctuations.}
The same integration by parts implies that for all $t\in(0,1)$,
\[
    \frac{G_d'(t)}{\overline G_d(t)}
    \ge\frac{d-1}{2(1-t)}
    \ge\frac{d-1}{2}.
\]
Since $\mathbb P(S^2\le t)=G_d(t)^M$ and $M\overline G_d(b^2)=1$, this hazard-rate bound yields, for all $x\ge0$,
\[
    \mathbb P\!\left(S^2>b^2+\frac{2x}{d}\right)
    \le e^{-(d-1)x/d},
    \qquad
    \mathbb P\!\left(S^2<b^2-\frac{2x}{d}\right)
    \le\exp\!\left(-e^{(d-1)x/d}\right).
\]
Integrating these tail bounds gives, for $q\in\{1,2\}$,
\[
    \E|S^2-b^2|^q=O(d^{-q}),
    \qquad
    \E|S-b|^q=O\bigl((db)^{-q}\bigr),
\]
where the second bound uses $|S-b|\le|S^2-b^2|/b$. In particular,
\begin{equation}
    a^2=b^2+O(d^{-1}).
    \label{eq:vw-support-mean}
\end{equation}
To obtain a matching lower bound on the fluctuations of $S$, define $b_-<b_+$ by $M\overline G_d(b_-^2)=2$ and $M\overline G_d(b_+^2)=1/2$. Then $b_+-b_-\asymp(db)^{-1}$, while $\mathbb P(S\le b_-)\to e^{-2}$ and $\mathbb P(S\ge b_+)\to 1-e^{-1/2}$. Because every $c\in\R$ lies at distance at least $(b_+-b_-)/2$ from either $(-\infty,b_-]$ or $[b_+,\infty)$, we have
\begin{equation}
    \inf_{c\in\R}\E|S-c|
    \asymp\frac1{db},
    \qquad
    \E|S-a|\asymp\frac1{db}.
    \label{eq:vw-support-fluctuation}
\end{equation}

\medskip
\noindent
\textit{Dilation and approximation error.}
Combining \eqref{eq:vw-volume-radius}, \eqref{eq:vw-height-gap}, and \eqref{eq:vw-support-mean} gives
\[
    a^2-R^2=(2+o(1))\frac{\log d}{d},
    \qquad
    R^2\sim a^2\sim\frac{2\log M}{d},
\]
which proves \eqref{eq:vw-dilation-gap}. By rotational invariance and Fubini's theorem, for every deterministic $\lambda>0$ we have
\begin{equation}
    \E D_w(\lambda K,B_d)=\E|\lambda S-1|.
    \label{eq:vw-rotation-identity}
\end{equation}
Taking $\lambda=R^{-1}$ gives $\E D_w(P_{\mathrm{vol}},B_d)=a/R-1+O(\E|S-a|/R)=(1+o(1))\frac{\log d}{2\log M}$, while taking $\lambda=a^{-1}$ gives $\E D_w(P_{\mathrm w},B_d)=\E|S-a|/a\asymp(\log M)^{-1}$. Finally, if $\lambda a\notin[1/2,2]$, then $\E|\lambda S-1|\ge|\lambda a-1|\ge1/2$, whereas if $\lambda a\in[1/2,2]$, then \eqref{eq:vw-support-fluctuation} gives $\E|\lambda S-1|=\lambda\E|S-\lambda^{-1}|\ge\frac{c}{adb}\asymp(\log M)^{-1}$, completing the proof of \eqref{eq:vw-errors}.
\end{proof}

\begin{corollary}[Incompatible dilation scales]
\label{cor:vw-incompatible-dilations}
Let $\lambda_d>0$ be deterministic. For every fixed $\varepsilon\in(0,1)$,
\begin{equation}
    \E\Delta_d(\lambda_dK)\le\varepsilon
    \quad\Longrightarrow\quad
    \E D_w(\lambda_dK,B_d)
    \ge(1-o(1))\frac{\log d}{2\log M}.
    \label{eq:vw-volume-implies-width}
\end{equation}
Conversely,
\begin{equation}
    \E D_w(\lambda_dK,B_d)=O\bigl((\log M)^{-1}\bigr)
    \quad\Longrightarrow\quad
    \E|\Delta_d(\lambda_dK)-1|\longrightarrow0.
    \label{eq:vw-width-implies-volume-loss}
\end{equation}
Thus, for $N=2M=\exp(\Theta(\sqrt d))$, no deterministic dilation of $K$ can simultaneously achieve expected relative-volume error bounded away from $1$ and the optimal expected $L^1$ support-function error of order $d^{-1/2}$ within this dilation family.
\end{corollary}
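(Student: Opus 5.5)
Both implications reduce, via \eqref{eq:vw-rotation-identity}, to statements about the scalar $\lambda_d$ relative to the two scales $R$ and $a$ of Lemma~\ref{lem:vw-dilation-scales}. Recall that $\E D_w(\lambda K,B_d)=\E|\lambda S-1|$, and that $S$ concentrates at $a$ with fluctuations $\E|S-a|\asymp(db)^{-1}$, i.e.\ of relative order $(\log M)^{-1}$. On the volume side, the identity \eqref{eq:finite-error-identity} together with the concentration of $|Y|$ around $r$ (hence around $R$ up to $1+O(d^{-1})$) shows that $\E|\lambda K|/\kappa_d=(\lambda R)^d$. The plan is to show two things. First, small volume error forces $\lambda_d\le R^{-1}(1+O(1/d))$. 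Second, small width error forces $\lambda_d a=1+O((\log M)^{-1})$, i.e.\ $\lambda_d R=1+(1+o(1))\frac{\log d}{2\log M}$, which makes the expected volume blow up.

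\textbf{First implication.} Since $\Delta_d(\lambda K)\ge|\lambda K|/\kappa_d-1$, taking expectations gives $\varepsilon\ge(\lambda_dR)^d-1$. Hence $\lambda_dR\le(1+\varepsilon)^{1/d}=1+O(1/d)$. Then
\[
\E|\lambda_dS-1|\ge1-\lambda_d a\ge1-\frac{a}{R}(1+O(d^{-1})).
\]
This bound is negative, so it runs the wrong way, and the lower bound must come from the other side. I will therefore instead use $\E|\lambda_d S-1|\ge\lambda_d a-1$ after showing that $\lambda_d$ cannot be too small: a small $\lambda_d$ loses volume, since $\Delta_d(\lambda K)\ge1-|\lambda K\cap B_d|/\kappa_d\ge1-(\lambda R)^d$ in expectation. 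Thus $\lambda_dR\ge(1-\varepsilon)^{1/d}=1-O(1/d)$. Combining this with \eqref{eq:vw-dilation-gap},
\[
\lambda_d a-1\ge\frac aR(1-O(d^{-1}))-1=(1-o(1))\frac{\log d}{2\log M},
\]
because $\log d/\log M\asymp\log d/\sqrt d\gg 1/d$. Jensen's inequality, $\E|\lambda S-1|\ge|\lambda a-1|$, then finishes this implication.

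\textbf{Second implication.} If $\E|\lambda_dS-1|\le C/\log M$, then $|\lambda_d a-1|\le C/\log M$ by Jensen. Hence
\[
\lambda_dR=\frac Ra(1+O(1/\log M))=1-(1+o(1))\frac{\log d}{2\log M},
\]
where the $O(1/\log M)$ term is absorbed since $\log d\to\infty$. Consequently $(\lambda_dR)^d\le\exp\bigl(-(1+o(1))\frac{d\log d}{2\log M}\bigr)\to0$, because $d/\log M\asymp\sqrt d$. So $\E|\lambda_dK|/\kappa_d\to0$. Moreover $1-\Delta_d(\lambda_dK)\le 2|\lambda_dK\cap B_d|/\kappa_d\le 2|\lambda_dK|/\kappa_d$, and $\Delta_d\le 1+|\lambda_d K|/\kappa_d$. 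Together these give $\E|\Delta_d-1|\le 2\E|\lambda_dK|/\kappa_d\to0$.

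The main obstacle is justifying $\E|\lambda K|=\lambda^d R^d\kappa_d$ cleanly together with the two elementary volume inequalities in expectation. The first is exact by the definition of $R$, so the step is mild. The only delicate point is keeping the error terms $O(1/d)$ and $O(1/\log M)$ separated from the gap $\log d/(2\log M)$. This requires $\log M=o(d/\log d)$ and $\log d\to\infty$, both of which hold in the regime $\log M\sim A\sqrt d$.
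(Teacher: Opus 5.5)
Your proof is correct and follows the paper's argument essentially step by step. You reduce to $|\lambda_d a-1|$ via the rotation identity and Jensen, combine this with $1-|Q|/\kappa_d\le\Delta_d(Q)$ and $|\Delta_d(Q)-1|\lesssim|Q|/\kappa_d$ using $\E|\lambda_dK|/\kappa_d=(\lambda_dR)^d$ (exact by the definition of $R$), and finish with the dilation gap $a/R-1=(1+o(1))\frac{\log d}{2\log M}$. Only your opening plan has direction slips, and your detailed steps already fix them: small volume error yields the lower bound $\lambda_dR\ge1-O(1/d)$, and small width error makes $(\lambda_dR)^d$ collapse to zero rather than blow up.
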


\begin{proof}
For every convex body $Q\subset\R^d$, the triangle inequality gives
\[
    1-\frac{|Q|}{\kappa_d}
    \le\Delta_d(Q),
    \qquad
    |\Delta_d(Q)-1|
    \le\frac{|Q|}{\kappa_d}.
\]
If $\E\Delta_d(\lambda_dK)\le\varepsilon$, the first inequality implies $(\lambda_dR)^d=\E|\lambda_dK|/\kappa_d\ge1-\varepsilon$, and hence by \eqref{eq:vw-rotation-identity}, Jensen's inequality, and \eqref{eq:vw-dilation-gap},
\[
    \E D_w(\lambda_dK,B_d)
    \ge\lambda_da-1
    \ge\frac aR(1-\varepsilon)^{1/d}-1
    =(1+o(1))\frac{\log d}{2\log M}.
\]
Conversely, if $\E D_w(\lambda_dK,B_d)=O((\log M)^{-1})$, then $|\lambda_da-1|\le\E D_w(\lambda_dK,B_d)=O((\log M)^{-1})$, so \eqref{eq:vw-dilation-gap} gives
\[
    \log\frac{\E|\lambda_dK|}{\kappa_d}
    =d\log(\lambda_da)-d\log\frac aR
    =O\!\left(\frac{d}{\log M}\right)
      -(1+o(1))\frac{d\log d}{2\log M}
      \longrightarrow-\infty,
\]
and therefore $\E|\Delta_d(\lambda_dK)-1|\le\E|\lambda_dK|/\kappa_d\to0$. In particular, for $P_{\mathrm w}=a^{-1}K$,
\begin{equation}
    \frac{\E|P_{\mathrm w}|}{\kappa_d}
    =\left(\frac Ra\right)^d
    =\exp\!\left(
        -(1+o(1))\frac{d\log d}{2\log M}
      \right)
    \longrightarrow0.
    \label{eq:vw-volume-collapse}
\end{equation}
\end{proof}

\subsection{Angular moments and the Gaussian comparison}
\label{app:angular-calculations}

This subsection contains the proofs of Lemmas~\ref{lem:angular-moments} and~\ref{lem:angular-clt} for the weighted angular law \eqref{eq:weighted-simplex-Q}.

\begin{proof}[Proof of Lemma~\ref{lem:angular-moments}]
Let $\E_0$ denote expectation with respect to independent uniform random vectors $u_1,\ldots,u_d\in\partial B_{d-1}$. Expanding the augmented determinant along its last row gives the signed $(d-1)\times(d-1)$ minors $D_i(U)$:
\[
 \Delta(U):=\det\begin{pmatrix}u_1&\cdots&u_d\\1&\cdots&1\end{pmatrix}
          =\sum_{i=1}^d D_i(U),\qquad
 c_{d-1}:=\E_0 D_i(U)^2=\frac{(d-1)!}{(d-1)^{d-1}}.
\]
Here the formula for $\E_0 D_i(U)^2$ follows by expanding the determinant as a sum over permutations and using $\E_0[u_j u_j^{\mathsf T}]=\frac{1}{d-1}I_{d-1}$, so that only diagonal permutation pairs survive. For $i\ne j$, the product $D_i(U)D_j(U)$ is odd in $u_i$ and therefore has mean zero under $\E_0$. Consequently, $\E_0\Delta(U)^2=dc_{d-1}$, and expectation under the squared-volume density in \eqref{eq:weighted-simplex-Q} takes the form
\begin{equation}\label{eq:delta-normalizer}
 \E f(U)=\frac{\E_0[\Delta(U)^2f(U)]}{dc_{d-1}}.
\end{equation}

Set $g_{ij}=\langle u_i,u_j\rangle$ for $i\ne j$. By spherical symmetry, under $\E_0$ we have $\E_0 g_{ij}^2=\frac{1}{d-1}$ and $\E_0 g_{ij}^4=\frac{3}{(d-1)(d+1)}$. Because each off-diagonal minor product $D_a(U)D_b(U)$ ($a\ne b$) is odd in both $u_a$ and $u_b$, in the expansion of $\E_0[\Delta(U)^2 g_{ij}]$ only the term $2\E_0[D_i(U)D_j(U)g_{ij}]$ is nonzero. Conditioning on all columns except $u_i$ and $u_j$, let $w\in\R^{d-1}$ be the cofactor vector along which both $D_i(U)$ and $D_j(U)$ expand, so that $D_i(U)D_j(U)=-\langle w,u_i\rangle\langle w,u_j\rangle$ and $D_i(U)^2=\langle w,u_j\rangle^2$. Integrating over $u_i$ and $u_j$ yields
\[
 \E_0\bigl(D_i(U)D_j(U)g_{ij}\,\big|\,w\bigr)
 =-\frac{|w|^2}{(d-1)^2}
 =-\frac{\E_0(D_i(U)^2\mid w)}{d-1}.
\]
Similarly, in $\E_0[\Delta(U)^2 g_{ij}^2]$ all off-diagonal minor products $D_a(U)D_b(U)$ vanish by parity. For the diagonal terms $D_k(U)^2 g_{ij}^2$, if $k\in\{i,j\}$, then $D_k(U)$ does not depend on $u_k$, so integrating $u_k$ first gives $\E_0[D_k(U)^2 g_{ij}^2]=c_{d-1}/(d-1)$. If $k\notin\{i,j\}$ (which occurs when $d\ge3$), conditioning on $u_i$ and $u_j$, rotating them into the first two coordinate directions, and expanding $D_k(U)^2$ along those two columns gives
\[
 \E_0\bigl(D_k(U)^2\,\big|\,u_i,u_j\bigr)
 =\frac{(d-3)!}{(d-1)^{d-3}}(1-g_{ij}^2).
\]
Multiplying by $g_{ij}^2$ and using the second and fourth spherical moments of $g_{ij}$ under $\E_0$, we obtain
\begin{equation}\label{eq:individual-minor-gsquare}
 \E_0\bigl(D_k(U)^2 g_{ij}^2\bigr)=
 \begin{cases}
  c_{d-1}/(d-1),&k\in\{i,j\},\\
  c_{d-1}/(d+1),&k\notin\{i,j\}.
 \end{cases}
\end{equation}
For a product $g_{ij}g_{ik}$ with three distinct indices $i,j,k$, reflection parity in $u_j$ and $u_k$ leaves only $2\E_0[D_j(U)D_k(U)g_{ij}g_{ik}]$; conditioning on the columns other than $u_j$ and $u_k$, the corresponding cofactor vector is orthogonal to the shared column $u_i$, so the conditional expectation vanishes. For $g_{ij}g_{k\ell}$ with four distinct indices $i,j,k,\ell$, four columns appear to an odd power and every minor product $D_a(U)D_b(U)$ has expectation zero. Dividing by $dc_{d-1}$ as in \eqref{eq:delta-normalizer}, we arrive at
\begin{equation}\label{eq:angular-moments}
 \begin{aligned}
 \E g_{ij}&=-\frac2{d(d-1)},&
 \E g_{ij}^2&=\frac{d^2-d+4}{d(d-1)(d+1)},\\
 \E(g_{ij}g_{ik})&=0,&\E(g_{ij}g_{k\ell})&=0,
 \end{aligned}
\end{equation}
for all distinct indices $i,j,k,\ell$. Note that the mixed expectations $\E_0[D_a(U)D_b(U)g_{ij}g_{ik}]$ and $\E_0[D_a(U)D_b(U)g_{ij}g_{k\ell}]$ vanish term by term for every pair $(a,b)$, a property that we will use again in the proof of Lemma~\ref{lem:angular-clt}.

We now decompose $\mathcal Q_d=|\sum_{i=1}^d\Lambda_i U_i|^2$ as $\mathcal Q_d=Q_d+J_d$, where
\[
 Q_d=\sum_{i=1}^d\Lambda_i^2
 \qquad\text{and}\qquad
 J_d=2\sum_{1\le i<j\le d}\Lambda_i\Lambda_j g_{ij}.
\]
Combining the Dirichlet moment formula \eqref{eq:expanded-dirichlet-moment-formula} with \eqref{eq:angular-moments} and the independence of $\Lambda$ and $U$, we find
\[
 \E Q_d=\frac2{d+1},\qquad
 \E J_d=-\frac2{d(d+1)},
\]
and for the second moments,
\begin{align*}
 \E Q_d^2&=\frac{4(d+5)}{(d+1)(d+2)(d+3)},\\
 2\E(Q_d J_d)&=-\frac{8(d+4)}{d(d+1)(d+2)(d+3)},\\
 \E J_d^2&=\frac{8(d^2-d+4)}{d(d+1)^2(d+2)(d+3)}.
\end{align*}
Summing these expressions gives \eqref{eq:Q-mean} and \eqref{eq:Q-second-moment}, and subtracting $q_d^2$ yields \eqref{eq:Q-variance}. Finally, to verify the upper bound $\operatorname{Var}(\mathcal Q_d)\le 12/(d-1)^3$, observe that the ratio of the exact variance in \eqref{eq:Q-variance} to $12/(d-1)^3$ is
\[
 \left(\frac{d-1}{d+1}\right)^2
 \frac{(d-1)^2}{(d+2)(d+3)}
 \frac{3d^2-d+6}{3d^2},
\]
which is at most $1$ for all $d\ge6$, while the cases $d=2,3,4,5$ are checked directly.
\end{proof}

\begin{proof}[Proof of Lemma~\ref{lem:angular-clt}]
    Because the vertices $U_1,\ldots,U_d$ are coupled through the squared-volume weight $\Delta(U)^2$ at the fluctuation scale $d^{-3/2}$ of $\mathcal Q_d$, we analyze their joint law by comparing it with a Gaussian model carrying the same determinantal weight. Let $G_1,\ldots,G_d$ be independent standard Gaussian vectors in $\R^{d-1}$ under a reference probability measure $\prob_0$ with expectation $\E_0$, and define a tilted probability measure $\prob_*$ with expectation $\E_*$ by the density $\Delta(G)^2/\E_0\Delta(G)^2$, where
    \[
        \Delta(G):=\det
        \begin{pmatrix}
            G_1&\cdots&G_d\\
            1&\cdots&1
        \end{pmatrix}.
    \]
    The Dirichlet vector $\Lambda=(\Lambda_1,\ldots,\Lambda_d)$ remains independent of $(G_1,\ldots,G_d)$ under $\prob_*$. Write $\rho_i:=|G_i|$ and $V_i:=G_i/\rho_i$. Under $\prob_0$, the radii $\rho_1,\ldots,\rho_d$ are independent $\chi_{d-1}$ random variables, the directions $V_1,\ldots,V_d$ are independent uniform unit vectors in $\partial B_{d-1}$, and $(\rho_1,\ldots,\rho_d)$ is independent of $(V_1,\ldots,V_d)$. Expanding $\Delta(G)$ along the last row gives
    \[
        \Delta(G)
        =\sum_{i=1}^d D_i(V)\prod_{j\neq i}\rho_j,
    \]
    and squaring yields
    \[
        \Delta(G)^2
        =\sum_{i=1}^d D_i(V)^2\prod_{j\neq i}\rho_j^2
        +\sum_{i\neq j}
        D_i(V)D_j(V)\rho_i\rho_j
        \prod_{\ell\not\in\{i,j\}}\rho_\ell^2.
    \]
    Conditioning on $V=(V_1,\ldots,V_d)$ and using $\E_0\rho_i^2=d-1$, we see that the diagonal terms contribute $(d-1)^{d-1}\sum_{i=1}^d D_i(V)^2$, while each off-diagonal term has radial expectation $(\E_0\chi_{d-1})^2 (d-1)^{d-2}$. Setting
    \[
        \omega_{d-1}:=\frac{(\E_0\chi_{d-1})^2}{d-1}
        =\frac{2\Gamma(d/2)^2}{(d-1)\Gamma((d-1)/2)^2}
    \]
    and using $\Delta(V)^2=\sum_{i=1}^d D_i(V)^2+\sum_{i\neq j}D_i(V)D_j(V)$, we obtain
    \[
        \E_0[\Delta(G)^2\mid V]
        =(d-1)^{d-1}
        \left\{\omega_{d-1}\Delta(V)^2
        +(1-\omega_{d-1})\sum_{i=1}^d D_i(V)^2\right\}.
    \]
    Since both $\Delta(V)^2$ and $\sum_{i=1}^d D_i(V)^2$ have expectation $dc_{d-1}$ under $\E_0$, the marginal law of $(V_1,\ldots,V_d)$ under $\prob_*$ is the convex combination
    \begin{equation}\label{eq:angular-mixture}
        \mathcal{L}_*(V_1,\ldots,V_d)
        =\omega_{d-1}\mathcal{L}(U_1,\ldots,U_d)
        +(1-\omega_{d-1})\nu_{d-1},
    \end{equation}
    where $\nu_{d-1}$ is the probability measure on $(\partial B_{d-1})^d$ with density $(\sum_{i=1}^d D_i(V)^2)/(dc_{d-1})$ with respect to $\sigma_{d-1}^d$. By the Cauchy--Schwarz inequality and Stirling's formula, $0<\omega_{d-1}\le1$ and $\omega_{d-1}=1-\frac{1}{2(d-1)}+O(d^{-2})$, so the total variation distance between $\mathcal L_*(V_1,\ldots,V_d)$ and $\mathcal L(U_1,\ldots,U_d)$ is $O(d^{-1})$.

    We next show that replacing the unit vectors $V_i$ by the rescaled Gaussian vectors $G_i/\sqrt{d-1}$ produces an error that is negligible at the $d^{-3/2}$ scale. Define
    \[
        \mathcal Q_d^{\mathrm{ang}}
        :=\left|\sum_{i=1}^d\Lambda_i V_i\right|^2
        \qquad\text{and}\qquad
        \mathcal Q_d^{\mathrm G}
        :=\frac{1}{d-1}
        \left|\sum_{i=1}^d\Lambda_i G_i\right|^2.
    \]
    We claim that
    \begin{equation}\label{eq:column-normalization-error}
        \E_*\left[
        (\mathcal Q_d^{\mathrm G}-\mathcal Q_d^{\mathrm{ang}})^2
        \right]=O(d^{-4}).
    \end{equation}
    To prove \eqref{eq:column-normalization-error}, for each $k\in\{1,\ldots,d\}$ let $\mathcal{R}_k$ denote the product distribution of independent radii under which $\rho_k\sim\chi_{d-1}$ and $\rho_i\sim\chi_{d+1}$ for all $i\neq k$. For any nonnegative measurable function $f(\rho_1,\ldots,\rho_d)$ of the radii alone, the orthogonality $\E_0[D_i(V)D_j(V)]=0$ for $i\neq j$ gives
    \[
        \E_0[\Delta(G)^2f]
        =c_{d-1}\sum_{k=1}^d\E_0\left[f\prod_{i\neq k}\rho_i^2\right].
    \]
    Multiplying the $\chi_{d-1}$ density by $\rho^2/(d-1)$ gives the $\chi_{d+1}$ density. Hence
    \[
      \E_0\left[f\prod_{i\neq k}\rho_i^2\right]=(d-1)^{d-1}\E_{\mathcal R_k}f,
      \qquad \E_0\Delta(G)^2=dc_{d-1}(d-1)^{d-1},
    \]
    and therefore
    \begin{equation}\label{eq:radial-mixture}
        \E_*f=\frac{1}{d}\sum_{k=1}^d\E_{\mathcal R_k}f.
    \end{equation}
    Similarly, if we weight $f(\rho_1,\ldots,\rho_d)$ by $\langle V_i,V_j\rangle^2$ for $i\ne j$, then \eqref{eq:individual-minor-gsquare} and the vanishing of the off-diagonal minor integrals yield
    \begin{equation}\label{eq:radial-angular-square}
        \E_*[f\langle V_i,V_j\rangle^2]
        =\frac{1}{d}\left\{
        \frac{\E_{\mathcal R_i}f+\E_{\mathcal R_j}f}{d-1}
        +\frac{\sum_{k\notin\{i,j\}}\E_{\mathcal R_k}f}{d+1}\right\}.
    \end{equation}
    Furthermore, the term-by-term vanishing noted after \eqref{eq:angular-moments} implies that for every integrable radial function $f$,
    \begin{equation}\label{eq:radial-angular-cross-zero}
        \E_*[f\langle V_i,V_j\rangle
        \langle V_k,V_\ell\rangle]=0
        \quad\text{whenever }\{i,j\}\neq\{k,\ell\},
    \end{equation}
    with $i\neq j$ and $k\neq\ell$.

    Set $\alpha_i:=\frac{\rho_i^2}{d-1}-1$ and $\beta_{ij}:=\frac{\rho_i\rho_j}{d-1}-1$. Under each $\mathcal{R}_k$, the variables $\rho_1,\ldots,\rho_d$ are independent, with $\rho_i^2\sim\chi^2_{d-1}$ for $i=k$ and $\rho_i^2\sim\chi^2_{d+1}$ for $i\ne k$. Consequently, $|\E_{\mathcal R_k}\alpha_i|\leq 2/(d-1)$ and $\operatorname{Var}_{\mathcal{R}_k}(\alpha_i)\leq C/(d-1)$. Conditioning on $\Lambda$ and using the independence of $\alpha_1,\ldots,\alpha_d$ under $\mathcal R_k$ gives
    \[
        \begin{aligned}
        \E_{\mathcal R_k}
        \left(\sum_{i=1}^d\Lambda_i^2\alpha_i\right)^2
        &=\sum_{i=1}^d\Lambda_i^4
        \operatorname{Var}_{\mathcal{R}_k}(\alpha_i)
        +\left(\sum_{i=1}^d\Lambda_i^2\E_{\mathcal{R}_k}\alpha_i
        \right)^2\\
        &\leq\frac{C}{d-1}\sum_{i=1}^d\Lambda_i^4+\frac{C}{(d-1)^2}Q_d^2.
        \end{aligned}
    \]
    By \eqref{eq:expanded-dirichlet-moment-formula}, we have
    \[
        \E Q_d^2=\frac{4(d+5)}{(d+1)(d+2)(d+3)}=O(d^{-2})
        \qquad\text{and}\qquad
        \E\sum_{i=1}^d\Lambda_i^4=\frac{24}{(d+1)(d+2)(d+3)}=O(d^{-3}).
    \]
    Averaging over $\Lambda$ and applying \eqref{eq:radial-mixture} yields
    \begin{equation}\label{eq:diagonal-normalization-error}
        \E_*
        \left(
        \sum_{i=1}^d\Lambda_i^2\alpha_i
        \right)^2
        =O(d^{-4}).
    \end{equation}

    To bound the off-diagonal terms involving $\beta_{ij}$, observe that since $|\sqrt{x}-1|\leq|x-1|$ for all $x\geq 0$, under both $\chi_{d-1}$ and $\chi_{d+1}$ we have
    \[
        \E\left(\frac{\rho}{\sqrt{d-1}}-1\right)^2
        \leq\E\left(\frac{\rho^2}{d-1}-1\right)^2
        \leq\frac{C}{d-1}.
    \]
    Writing $X_i:=\rho_i/\sqrt{d-1}$, the decomposition $\beta_{ij}=X_i X_j-1=(X_i-1)X_j+(X_j-1)$ gives $\beta_{ij}^2\leq 2(X_i-1)^2X_j^2+2(X_j-1)^2$. Since $\rho_i$ and $\rho_j$ are independent under every $\mathcal R_k$ and $\E_{\mathcal R_k}X_j^2\le 1+\frac{2}{d-1}$, it follows that $\E_{\mathcal{R}_k}\beta_{ij}^2\leq C/(d-1)$ uniformly in $i,j,k$. Applying \eqref{eq:radial-angular-square} with $f=\beta_{ij}^2$ then gives $\E_*[\beta_{ij}^2\langle V_i,V_j\rangle^2]\leq C/(d-1)^2$. Moreover, when the off-diagonal sum is squared, all cross terms between distinct pairs $\{i,j\}\ne\{k,\ell\}$ vanish by \eqref{eq:radial-angular-cross-zero}. Therefore,
    \begin{equation}\label{eq:off-diagonal-normalization-error}
        \begin{aligned}
        \E_*\left(2\sum_{i<j}
        \Lambda_i\Lambda_j\beta_{ij}\langle V_i,V_j\rangle\right)^2
        &=4\sum_{i<j}\E(\Lambda_i^2\Lambda_j^2)
        \E_*[\beta_{ij}^2\langle V_i,V_j\rangle^2]\\
        &\leq\frac{C}{(d-1)^2}
        \binom{d}{2}\frac{4}{d(d+1)(d+2)(d+3)}
        =O(d^{-4}).
        \end{aligned}
    \end{equation}
    Since
    \[
        \mathcal Q_d^{\mathrm G}-\mathcal Q_d^{\mathrm{ang}}
        =\sum_{i=1}^d\Lambda_i^2\alpha_i+2\sum_{i<j}
        \Lambda_i\Lambda_j\beta_{ij}\langle V_i,V_j\rangle,
    \]
    combining \eqref{eq:diagonal-normalization-error} and \eqref{eq:off-diagonal-normalization-error} establishes \eqref{eq:column-normalization-error}.

    We now determine the exact distribution of $\mathcal Q_d^{\mathrm G}$ under $\prob_*$. Let $e:=d^{-1/2}(1,\ldots,1)^{\mathsf T}\in\R^d$, and choose a $d\times d$ orthogonal matrix $O$ with first column $e$. Viewing $G=\begin{pmatrix}G_1&\cdots&G_d\end{pmatrix}$ as a $(d-1)\times d$ matrix, we write $GO=\begin{pmatrix}g&M\end{pmatrix}$, where $g\in\R^{d-1}$ is the first column and $M$ is the remaining $(d-1)\times(d-1)$ submatrix. Since $(1,\ldots,1)O=(\sqrt d,0,\ldots,0)$, multiplying the augmented matrix in the definition of $\Delta(G)$ on the right by $O$ shows that $\Delta(G)^2=d\det(M)^2$. By the orthogonal invariance of the standard Gaussian distribution on $\R^{(d-1)\times d}$, under $\prob_0$ the vector $g$ and the matrix $M$ are independent with standard Gaussian entries. Because the weight $\Delta(G)^2=d\det(M)^2$ depends only on $M$, under $\prob_*$ the vector $g\sim\mathcal N(0,I_{d-1})$ remains independent of $M$, while $M$ has density proportional to
    \[
        \det(M)^2\exp\left(-\frac{1}{2}\operatorname{tr}(M^\top M)\right).
    \]

    To find the marginal distribution of a single column of $M$ under $\prob_*$, condition on the first column $x\in\R^{d-1}$ of $M$. Under $\prob_0$, the remaining $d-2$ columns are independent standard Gaussian vectors. Rotating $x$ to $|x|e_1$ and expanding the determinant along the first column gives $\det(M)^2=|x|^2\det(M')^2$, where $M'$ is a standard $(d-2)\times(d-2)$ Gaussian matrix satisfying $\E_0\det(M')^2=(d-2)!$, while $\E_0\det(M)^2=(d-1)!$. Consequently, under $\prob_*$ the first column of $M$ has density
    \[
        \frac{|x|^2}{d-1}
        (2\pi)^{-(d-1)/2}e^{-|x|^2/2}
    \]
    on $\R^{d-1}$. Differentiating the standard Gaussian characteristic function $\phi(t)=e^{-|t|^2/2}$ via $-\Delta_t\phi(t)=(d-1-|t|^2)e^{-|t|^2/2}$ shows that this density has characteristic function
    \[
        \frac{1}{d-1}
        \E_0\left[|X|^2 e^{\mathrm i\langle t,X\rangle}\right]
        =\left(1-\frac{|t|^2}{d-1}\right)e^{-|t|^2/2}.
    \]
    Moreover, because both $\det(M)^2$ and $\operatorname{tr}(M^\top M)$ are invariant under right multiplication by orthogonal matrices, the law of $Mw$ under $\prob_*$ for any fixed vector $w\in\R^{d-1}$ coincides with that of $|w|$ times the first column of $M$, and hence
    \begin{equation}\label{eq:Mw-characteristic}
        \E_*e^{\mathrm{i}\langle t,Mw\rangle}
        =\left(1-\frac{|w|^2|t|^2}{d-1}\right)e^{-|w|^2|t|^2/2}.
    \end{equation}

    Now write $O^\top\Lambda=(d^{-1/2},w^{\mathsf T})^{\mathsf T}$, where the first coordinate equals $\langle e,\Lambda\rangle=d^{-1/2}\sum_{i=1}^d\Lambda_i=d^{-1/2}$ and $|w|^2=|\Lambda|^2-\frac1d=Q_d-\frac1d$. Then
    \[
        G\Lambda=GOO^\top\Lambda=\frac{g}{\sqrt d}+Mw,
    \]
    and since $g$ and $M$ are independent of each other and of $\Lambda$, \eqref{eq:Mw-characteristic} gives the conditional characteristic function
    \[
        \begin{aligned}
        \E_*\left[e^{\mathrm i\langle t,G\Lambda\rangle}
        \,\middle|\,\Lambda\right]
        &=e^{-|t|^2/(2d)}
        \left(1-\frac{(Q_d-1/d)|t|^2}{d-1}
        \right)e^{-(Q_d-1/d)|t|^2/2}\\
        &=\left(1-\frac{(Q_d-1/d)|t|^2}{d-1}
        \right)e^{-Q_d|t|^2/2}.
        \end{aligned}
    \]
    By the Cauchy--Schwarz inequality, $Q_d=\sum_{i=1}^d\Lambda_i^2\ge\frac1d$, so $\theta_d:=1-\frac{1}{dQ_d}\in[0,1]$. Using $\theta_d Q_d=Q_d-\frac1d$, we rewrite the conditional characteristic function of $G\Lambda$ given $\Lambda$ as the mixture
    \[
        (1-\theta_d)e^{-Q_d|t|^2/2}
        +\theta_d
        \left(1-\frac{Q_d|t|^2}{d-1}\right)
        e^{-Q_d|t|^2/2}.
    \]
    The first component is the characteristic function of $\mathcal N(0,Q_d I_{d-1})$, whose squared norm has law $Q_d\chi_{d-1}^2$, while the second component is the characteristic function of $\mathcal N(0,Q_d I_{d-1})$ weighted by $\frac{|x|^2}{(d-1)Q_d}$, whose squared norm has law $Q_d\chi_{d+1}^2$. Representing $\chi_{d+1}^2$ as $Y_{d-1}+W$ with independent $Y_{d-1}\sim\chi_{d-1}^2$ and $W\sim\chi_2^2$ independent of $\Lambda$, and letting $B_d$ be conditionally Bernoulli$(\theta_d)$ given $\Lambda$ and independent of $(Y_{d-1},W)$, we obtain the exact distributional identity
    \begin{equation}\label{eq:gaussian-Q-representation}
        \mathcal Q_d^{\mathrm G}\stackrel{\mathrm{law}}=
        Q_d\frac{Y_{d-1}+B_d W}{d-1}.
    \end{equation}

    By the Greenwood--Moran central limit theorem \eqref{eq:expanded-greenwood-clt}, we have
    \begin{equation}\label{eq:lambda-square-clt}
      d^{3/2}\left(Q_d-\frac2d\right)\Rightarrow\mathcal N(0,4)
      \qquad\text{and}\qquad
      dQ_d\xrightarrow{\prob}2.
    \end{equation}
    Independently of $\Lambda$, the central limit theorem for $Y_{d-1}\sim\chi_{d-1}^2$ gives
    \[
        \frac{Y_{d-1}-(d-1)}{\sqrt{d-1}}
        \Rightarrow
        \mathcal{N}(0,2),
    \]
    and hence, jointly with \eqref{eq:lambda-square-clt},
    \[
        d^{3/2}Q_d\left(\frac{Y_{d-1}}{d-1}-1\right)
        =(dQ_d)\sqrt{\frac{d}{d-1}}
        \frac{Y_{d-1}-(d-1)}{\sqrt{d-1}}
        \Rightarrow\mathcal{N}(0,8),
    \]
    with the two Gaussian limits independent. Meanwhile, since $0\leq B_d\leq 1$, $dQ_d=O_{\prob}(1)$, and $W=O_{\prob}(1)$, the remainder term satisfies
    \[
        0\leq d^{3/2}Q_d\frac{B_d W}{d-1}
        \leq(dQ_d)\frac{\sqrt d}{d-1}W
        \xrightarrow{\prob}0.
    \]
    Decomposing \eqref{eq:gaussian-Q-representation} as
    \[
        \mathcal Q_d^{\mathrm G}-\frac{2}{d}
        =\left(Q_d-\frac{2}{d}\right)
        +Q_d\left(\frac{Y_{d-1}}{d-1}-1\right)
        +Q_d\frac{B_d W}{d-1}
    \]
    shows that under $\prob_*$,
    \[
        d^{3/2}\left(\mathcal Q_d^{\mathrm G}-\frac{2}{d}\right)
        \Rightarrow\mathcal{N}(0,12).
    \]
    By \eqref{eq:column-normalization-error}, $d^{3/2}(\mathcal Q_d^{\mathrm G}-\mathcal Q_d^{\mathrm{ang}})\to 0$ in $L^2(\prob_*)$, so Slutsky's theorem gives
    \[
        d^{3/2}\left(\mathcal Q_d^{\mathrm{ang}}-\frac{2}{d}\right)
        \Rightarrow\mathcal{N}(0,12)
    \]
    under $\prob_*$. Finally, since $(V_1,\ldots,V_d)$ under $\prob_*$ and $(U_1,\ldots,U_d)$ have total variation distance $1-\omega_{d-1}=O(d^{-1})$ by \eqref{eq:angular-mixture} and $q_d-\frac2d=-\frac{4}{d(d+1)}=o(d^{-3/2})$, the same central limit theorem holds for $d^{3/2}(\mathcal Q_d-q_d)$, completing the proof.
\end{proof}

  \subsection{A Dirichlet--Voronoi transfer bound}
  \label{sec:ldiv-transfer}
  
  The annihilation principle from \Cref{sec:simplex-tools} also yields a lower bound when the number of facets is prescribed. As in the vertex case, we construct on each cell a bounded sign function of integral zero that annihilates all affine functions while retaining a definite fraction of the quadratic paraboloid error. Because the maximal affine regions of a convex piecewise-affine function are arbitrary convex polytopes rather than simplices, we construct this sign using a central homothet of each cell about its centroid.
  
  Keep $q(x)=|x|^2$ and $I=[0,1]^m$. The facet analogues of \eqref{eq:expanded-finite-del}--\eqref{eq:expanded-finite-ldel} are
  \begin{align}
   e_N^{\operatorname{div}}(I)
   &:=\inf_{a_1,\ldots,a_N\in\R^m}
     \int_I\min_{1\le i\le N}|x-a_i|^2\,dx,
   \label{eq:ldiv-finite-div}\\
   e_N^{\ldiv}(I)
   &:=\inf_{\substack{h:I\to\R\text{ convex, continuous, piecewise affine}\\
                      \#\mathcal A(h)\le N}}
     \int_I|h-q|\,dx,
   \label{eq:ldiv-finite-ldiv}
  \end{align}
  where $\mathcal A(h)$ denotes the collection of maximal full-dimensional affine regions of $h$ clipped to $I$. The first quantity corresponds to circumscribed approximation of $q$ by tangent supporting hyperplanes, whereas the second allows arbitrary affine hyperplanes. By the local characterization of $\operatorname{div}_m$ \citep{GruberII} and Ludwig's local characterization of $\ldiv_m$ \citep[Section~2, Lemma~1]{Ludwig1999}, for each fixed $m\ge1$ we have
  \begin{equation}
  \label{eq:ldiv-localization}
   \operatorname{div}_m
   =\lim_{N\to\infty}N^{2/m}e_N^{\operatorname{div}}(I)
   \qquad\text{and}\qquad 
   \ldiv_m
   =\lim_{N\to\infty}N^{2/m}e_N^{\ldiv}(I).
  \end{equation}
  
  Let $K\subset\R^m$ be a convex body with centroid and centered second moment
  \[
   g=\frac1{|K|}\int_Kx\,dx\qquad\text{and}\qquad
   J(K)=\int_K|x-g|^2\,dx,
  \]
  and define the half-volume homothet $K^-$ and sign function $\sigma_K$ by
  \begin{equation}
   K^-:=g+2^{-1/m}(K-g),\qquad
   \sigma_K:=\mathbf1_K-2\mathbf1_{K^-}.
   \label{eq:ldiv-centroid-sign}
  \end{equation}
  Since $g\in K$ and $K$ is convex, we have $K^-\subset K$, $|K^-|=\frac12|K|$, and the centroid of $K^-$ is also $g$.
  
  \begin{lemma}
  \label{lem:ldiv-centroid-annihilator}
  For every affine function $\ell:\R^m\to\R$,
  \begin{equation}
   \int_K\sigma_K\ell\,dx=0\qquad\text{and}\qquad 
   \int_K\sigma_K q\,dx=(1-2^{-2/m})J(K).
   \label{eq:ldiv-annihilation}
  \end{equation}
  Consequently,
  \begin{equation}
   \int_K|q-\ell|\,dx\ge(1-2^{-2/m})J(K).
   \label{eq:ldiv-cell-transfer}
  \end{equation}
  \end{lemma}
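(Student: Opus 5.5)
The plan is to compute $\int_K\sigma_K\,f\,dx$ directly for $f$ affine and for $f=q$, using only that $K^-$ is a homothet of $K$ about the centroid $g$ with ratio $s:=2^{-1/m}$. Then I will deduce \eqref{eq:ldiv-cell-transfer} from $|\sigma_K|\le1$, in the same way the simplex certificate was used in Theorem~\ref{thm:expanded-finite-transfer}.

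\emph{Affine annihilation.} Substitute $x=g+s(y-g)$, $y\in K$, with Jacobian $s^m=\tfrac12$. For any integrable $f$ this gives
\[
 \int_{K^-}f\,dx=\tfrac12\int_K f\bigl(g+s(y-g)\bigr)\,dy.
\]
First take $f\equiv1$: then $|K^-|=\tfrac12|K|$, so $\int_K\sigma_K\,dx=|K|-2|K^-|=0$. Next take $f$ linear. Since $g$ is the centroid of $K$, $\int_K(y-g)\,dy=0$, so $\int_{K^-}(x-g)\,dx=0$ as well; that is, $K^-$ also has centroid $g$. Writing an affine $\ell$ as $\ell(x)=\ell(g)+\langle v,x-g\rangle$ then gives $\int_K\sigma_K\ell=\ell(g)\int_K\sigma_K+\int_K\sigma_K\langle v,x-g\rangle=0$.

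\emph{Quadratic term.} Expand $q(x)=|x-g|^2+\bigl(2\langle g,x\rangle-|g|^2\bigr)$. The bracketed part is affine and is annihilated by the previous step, so only $|x-g|^2$ contributes. The same substitution gives
\[
 \int_{K^-}|x-g|^2\,dx=\tfrac12 s^2 J(K).
\]
Hence $\int_K\sigma_K q=J(K)-2\cdot\tfrac12 s^2J(K)=(1-2^{-2/m})J(K)$, which is \eqref{eq:ldiv-annihilation}.

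\emph{The lower bound.} On $K^-$ we have $\sigma_K=-1$, and on $K\setminus K^-$ we have $\sigma_K=1$, so $|\sigma_K|\le1$ on $K$. Therefore
\[
 \int_K|q-\ell|\ge\int_K\sigma_K(q-\ell)=\int_K\sigma_Kq-\int_K\sigma_K\ell=(1-2^{-2/m})J(K),
\]
using both identities, which is \eqref{eq:ldiv-cell-transfer}.

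I expect no real obstacle here. The only points to check are that $K^-\subset K$ (convexity together with $g\in K$, both already noted in the text) and that the centroid of $K^-$ is $g$, which holds because the homothety fixes $g$.
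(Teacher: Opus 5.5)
Your proposal is correct and follows essentially the same route as the paper. You use the homothety substitution about the centroid to get $|K^-|=\tfrac12|K|$, the centroid $g$ of $K^-$, and $\int_{K^-}|x-g|^2\,dx=2^{-(m+2)/m}J(K)$, and then the bound $|\sigma_K|\le1$ gives the cell inequality.
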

  
  \begin{proof}
  Making the change of variables $x=g+2^{-1/m}(y-g)$ on $K^-$ gives
  \[
   |K^-|=2^{-1}|K|
   \qquad\text{and}\qquad 
   \int_{K^-}x\,dx=\frac12|K|g=\frac12\int_Kx\,dx.
  \]
  Consequently, $\int_K\sigma_K\,dx=0$ and $\int_K x\sigma_K(x)\,dx=0$, which implies the first identity in \eqref{eq:ldiv-annihilation} for every affine function $\ell$.
  
  Expanding $q(x)=|x-g|^2+2\langle g,x\rangle-|g|^2$, the affine part $2\langle g,x\rangle-|g|^2$ integrates to zero against $\sigma_K$, while the same homothety change of variables gives
  \[
   \int_{K^-}|x-g|^2\,dx=2^{-(m+2)/m}J(K).
  \]
  Therefore,
  \[
   \int_K\sigma_Kq\,dx
   =J(K)-2\int_{K^-}|x-g|^2\,dx
   =(1-2^{-2/m})J(K),
  \]
  and since $|\sigma_K|\le1$ on $K$, we obtain
  \[
   \int_K|q-\ell|\,dx
   \ge\int_K\sigma_K(q-\ell)\,dx
   =(1-2^{-2/m})J(K).
  \]
  \end{proof}
  
  Note that \eqref{eq:ldiv-cell-transfer} is attained with equality whenever $K=g+RB_m$ is a Euclidean ball and $\ell(x)=2\langle g,x\rangle-|g|^2+2^{-2/m}R^2$, for which $\operatorname{sign}(q-\ell)=\sigma_K$.
  
  \begin{theorem}[Dirichlet--Voronoi transfer]
  \label{thm:ldiv-multiplicative-transfer}
  For every $m,N\ge1$, we have
  \begin{equation}
      e_N^{\ldiv}(I)\ge(1-2^{-2/m})e_N^{\operatorname{div}}(I).
      \label{eq:ldiv-finite-transfer}
  \end{equation}
  Consequently,
  \begin{equation}
      \ldiv_m\ge(1-2^{-2/m})\operatorname{div}_m.
      \label{eq:ldiv-main-transfer}
  \end{equation}
  \end{theorem}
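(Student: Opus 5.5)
The plan mirrors the proof of Theorem~\ref{thm:expanded-main-transfer}, with Lemma~\ref{lem:ldiv-centroid-annihilator} taking the place of Theorem~\ref{thm:expanded-finite-transfer}. Fix a convex, continuous, piecewise-affine $h$ on $I$ with $\#\mathcal A(h)\le N$. Let $K_1,\ldots,K_n$, $n\le N$, be its maximal affine regions clipped to $I$, and write $h=\ell_j$ on $K_j$. Each $K_j$ is a convex polytope and together they tile $I$ up to null sets. Applying \eqref{eq:ldiv-cell-transfer} on each cell and summing gives
\[
 \int_I|h-q|\,dx=\sum_{j=1}^n\int_{K_j}|q-\ell_j|\,dx\ge(1-2^{-2/m})\sum_{j=1}^nJ(K_j).
\]

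Next I compare $\sum_jJ(K_j)$ with the circumscribed (quantization) error. Let $g_j$ be the centroid of $K_j$ and take the $N$ points to be $g_1,\ldots,g_n$, padded with copies of $g_1$ if $n<N$. Then
\[
 \int_I\min_i|x-g_i|^2\,dx\le\sum_j\int_{K_j}|x-g_j|^2\,dx=\sum_jJ(K_j),
\]
so $\sum_jJ(K_j)\ge e_N^{\operatorname{div}}(I)$. The infimum over $h$ then gives \eqref{eq:ldiv-finite-transfer}.

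Two points need a little care. First, $e_N^{\operatorname{div}}$ must be nonincreasing in $N$, or equivalently padding with repeated points must be allowed; this holds because \eqref{eq:ldiv-finite-div} imposes no distinctness requirement. Second, degenerate cells need handling: the clipped cells counted in $\mathcal A(h)$ are full-dimensional convex bodies, so each one has a centroid inside it and the homothet $K^-$ is well defined. Cells of measure zero contribute nothing to the integral.

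Finally, I multiply \eqref{eq:ldiv-finite-transfer} by $N^{2/m}$ and let $N\to\infty$, using both limits in \eqref{eq:ldiv-localization}, to obtain \eqref{eq:ldiv-main-transfer}. I expect no serious obstacle here: the substantive step, the annihilating certificate on a general convex cell, is already done in Lemma~\ref{lem:ldiv-centroid-annihilator}. The only remaining point to check is that the vertex- and region-counting conventions in \eqref{eq:ldiv-finite-ldiv} match Ludwig's localization, and that is the content of \eqref{eq:ldiv-localization}.
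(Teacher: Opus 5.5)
Your proposal is correct and follows the paper's own proof essentially step for step. You apply Lemma~\ref{lem:ldiv-centroid-annihilator} on each maximal affine region and bound $\sum_j J(K_j)$ below by the quantization error at the cell centroids. You then take the infimum over $h$ and pass to the limit via \eqref{eq:ldiv-localization}. Your remarks on padding with repeated centers and on the full-dimensionality of the counted cells fill in details the paper leaves implicit.
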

  
  \begin{proof}
  Let $h$ be an admissible convex piecewise-affine function in \eqref{eq:ldiv-finite-ldiv}, and let $K_1,\ldots,K_r$ ($r\le N$) be its maximal affine regions in $I$, with centroids $g_1,\ldots,g_r$. Applying Lemma~\ref{lem:ldiv-centroid-annihilator} to the affine restriction of $h$ on each $K_i$ gives
  \begin{align*}
   \int_I|h-q|\,dx
   &=\sum_{i=1}^r\int_{K_i}|h-q|\,dx
   \ge(1-2^{-2/m})\sum_{i=1}^r\int_{K_i}|x-g_i|^2\,dx\\
   &\ge(1-2^{-2/m})\int_I\min_{1\le i\le r}|x-g_i|^2\,dx
   \ge(1-2^{-2/m})e_N^{\operatorname{div}}(I).
  \end{align*}
  Taking the infimum over $h$ proves \eqref{eq:ldiv-finite-transfer}, and multiplying by $N^{2/m}$ and letting $N\to\infty$ via \eqref{eq:ldiv-localization} yields \eqref{eq:ldiv-main-transfer}.
  \end{proof}
  
  Combining \eqref{eq:ldiv-main-transfer} with Zador's asymptotic formula \cite{Zador1982},
  \[
   \operatorname{div}_m=\frac{m}{2\pi e}(1+o(1))
   \qquad\text{as }m\to\infty,
  \]
  and $1-2^{-2/m}=\frac{2\log2}{m}+O(m^{-2})$ gives
  \begin{equation}
   \liminf_{m\to\infty}\ldiv_m\ge\frac{\log2}{\pi e},
   \label{eq:ldiv-asymptotic-lower}
  \end{equation}
  which by \eqref{eq:intro-normalization} is equivalent to $\liminf_{d\to\infty}\mathcal F_d\ge\log2$. In dimension $m=1$, \eqref{eq:ldiv-main-transfer} is an equality: $(1-2^{-2})\operatorname{div}_1=\frac34\cdot\frac1{12}=\frac1{16}=\ldiv_1$.

  
  \subsection{A finite-dimensional quantization bound}
  \label{A:Zador}
  
  By the finite-budget quantization lower bound of \citet[Lemmas~2.8--2.9]{GrafLuschgy2000} applied to the uniform distribution on $I=[0,1]^m$, every finite set $V\subset\mathbb R^m$ of cardinality at most $N$ satisfies
  \begin{equation}\label{eq:B}
   \int_I\operatorname{dist}(x,V)^2\,dx
   \ge\frac{m}{m+2}\kappa_m^{-2/m}N^{-2/m}.
  \end{equation}
  Combining \eqref{eq:expanded-finite-transfer} with \eqref{eq:B} and taking the infimum over convex piecewise-affine competitors $h$ in \eqref{eq:expanded-finite-ldel} gives
  \[
   N^{2/m}e_N^{\operatorname{ldel}}(I)
   \ge\beta_m\frac{m}{m+2}\kappa_m^{-2/m}
   =\kappa_m^{-2/m}\E|Q_{m+1}-\mu_{m+1}|,
  \]
  where the factors $\frac{m+2}{m}$ in the definition of $\beta_m$ and $\frac{m}{m+2}$ in \eqref{eq:B} cancel. Letting $N\to\infty$ via \eqref{eq:expanded-localization} yields the explicit finite-dimensional lower bound
  \begin{equation}
   \ldel_m
   \ge\beta_m\frac{m}{m+2}\kappa_m^{-2/m}
   =\kappa_m^{-2/m}\E|Q_{m+1}-\mu_{m+1}|.
   \label{eq:finite-dimension}
  \end{equation}
  
  To evaluate $\E|Q_k-\mu_k|$ in low dimensions, writing $F_k(t)=\Pr(Q_k\le t)$ and integrating by parts around the median $\mu_k$ gives
  \[
   \E|Q_k-\mu_k|=\int_{1/k}^1\min\{F_k(t),1-F_k(t)\}\,dt.
  \]
  Explicit representations and tabulated quantiles of $F_k$ are given in \citet{Royen2010}.

  \subsection{Proof of Theorem~\ref{thm:k-faces}}
  \label{app:k-faces}
  
  \begin{proof}
  Fix $d\ge2$ and $k\in\{0,1,\ldots,\lfloor d/2\rfloor\}$, and let $K\subset\R^d$ be a convex body with $C^2$ boundary. Set
  \[
      \rho(d,k)
      =\frac12\left[
          \binom{\lceil d/2\rceil}{k}
          +\binom{\lfloor d/2\rfloor}{k}
      \right],
      \qquad
      n_M=\left\lfloor\frac{M}{\rho(d,k)}\right\rfloor.
  \]
  For every full-dimensional polytope $P\in\mathcal P_{d,k}(M)$, Hinman's inequality \citep{Hinman1,Hinman2} gives
  \[
      f_0(P)\le\frac{f_k(P)}{\rho(d,k)}
      \le\frac{M}{\rho(d,k)},
  \]
  and hence $P\in\mathcal P_d(n_M)$. Any lower-dimensional polytope $P$ has $|K\triangle P|=|K|$, which is at least the optimal $n_M$-vertex error whenever $n_M\ge1$. Consequently, for all sufficiently large $M$,
  \begin{equation}
  \label{eq:k-face-vertex-budget-comparison}
      \inf_{P\in\mathcal P_{d,k}(M)}|K\triangle P|
      \ge
      \inf_{P\in\mathcal P_d(n_M)}|K\triangle P|.
  \end{equation}
  
  By the asymptotic formula for best vertex approximation of $C^2$ convex bodies \citep[Theorem~20]{BHK}, we have
  \[
      \lim_{n\to\infty}n^{\frac2{d-1}}
      \inf_{P\in\mathcal P_d(n)}|K\triangle P|
      =
      \frac{\ldel_{d-1}}2
      \operatorname{as}(K)^{\frac{d+1}{d-1}}.
  \]
  Since $n_M\to\infty$ and $M/n_M\to\rho(d,k)$ as $M\to\infty$, multiplying \eqref{eq:k-face-vertex-budget-comparison} by $M^{2/(d-1)}$ and passing to the lower limit gives
  \[
      \liminf_{M\to\infty}M^{\frac2{d-1}}
        \inf_{P\in\mathcal P_{d,k}(M)}|K\triangle P|
      \ge
        \rho(d,k)^{\frac2{d-1}}
        \frac{\ldel_{d-1}}2
        \operatorname{as}(K)^{\frac{d+1}{d-1}},
  \]
  which proves the first assertion of \Cref{thm:k-faces}. Substituting the lower bound for $\ldel_{d-1}$ from \Cref{Thm:Main} (or \Cref{cor:convex-bodies-lower-bd}) yields the second assertion.
  \end{proof}

    \subsection{Proof of Theorem~\ref{Thm:Simplicial}}
\label{sec:simplicial-facets}

The lower bound follows by applying the single-simplex comparison from \Cref{sec:simplex-tools} to the orthogonal projection of each facet onto its supporting hyperplane, together with an isoperimetric inequality for the interpolation gap on a simplex. For the upper bound, we use the same volume-balanced random polytope as in the proof of Theorem~\ref{mainThm} and convert its vertex count into a facet count.

\paragraph{Lower bound.}

Let $T=\conv\{v_1,\ldots,v_d\}\subset\R^{d-1}$ be a $(d-1)$-dimensional simplex, and recall that $W_T=\mathcal I_T q-q$, where $q(x)=|x|^2$. By Theorem~\ref{thm:expanded-finite-transfer}, every affine function $h$ on $T$ satisfies
\begin{equation}\label{eq:simplicial-reused-transfer}
    \int_T |h-q|\,dx
    \ge
    \beta_{d-1}\int_T W_T(x)\,dx .
\end{equation}

We next bound the integrated interpolation gap $\int_T W_T(x)\,dx$ from below in terms of the volume $|T|$ alone. Let $\lambda_1,\ldots,\lambda_d$ denote the barycentric coordinates on $T$. By \eqref{eq:expanded-power-identities}, we have $W_T(x)=\sum_{i<j}\lambda_i(x)\lambda_j(x)|v_i-v_j|^2$, while for a point sampled uniformly from $T$ we have $\E[\lambda_i\lambda_j]=1/(d(d+1))$ for all $i\ne j$. Writing $c=\frac1d\sum_{i=1}^d v_i$ for the centroid of $T$ and using the identity $\sum_{i<j}|v_i-v_j|^2=d\sum_{i=1}^d|v_i-c|^2$, we obtain
\[
    \int_T W_T(x)\,dx
    =
    \frac{|T|}{d+1}\tr M_T,
    \qquad\text{where}\qquad 
    M_T:=\sum_{i=1}^d(v_i-c)(v_i-c)^{\mathsf T}.
\]
Let $E=[v_1-v_d\ \cdots\ v_{d-1}-v_d]\in\R^{(d-1)\times(d-1)}$ and $\mathbf1=(1,\ldots,1)^{\mathsf T}\in\R^{d-1}$. A direct calculation shows that
\[
    M_T
    =
    E\left(I-\frac{\mathbf1\mathbf1^{\mathsf T}}{d}\right)E^{\mathsf T}
    \qquad\text{and}\qquad 
    \det M_T
    =
    \frac{((d-1)!)^2|T|^2}{d},
\]
since the matrix $I-\frac1d\mathbf1\mathbf1^{\mathsf T}$ has eigenvalue $1$ with multiplicity $d-2$ and eigenvalue $1/d$ with multiplicity $1$, while $|\det E|=(d-1)!\,|T|$. Applying the arithmetic--geometric mean inequality to the eigenvalues of $M_T$ gives
\[
    \tr M_T
    \ge
    (d-1)(\det M_T)^{\frac{1}{d-1}},
\]
with equality if and only if $T$ is a regular simplex. Combining this with \eqref{eq:simplicial-reused-transfer}, we arrive at
\begin{equation}\label{eq:simplicial-local-bound}
    \int_T |h-q|\,dx
    \ge
    \alpha_{d-1}^{\mathrm{s}}|T|^{1+\frac{2}{d-1}},
    \qquad\text{where}\qquad 
    \alpha_{d-1}^{\mathrm{s}}
    :=
    \beta_{d-1}
    \frac{(d-1)((d-1)!)^{\frac{2}{d-1}}}
         {(d+1)d^{\frac{1}{d-1}}}.
\end{equation}
By Proposition~\ref{prop:expanded-beta-asymptotic} and Stirling's formula,
\begin{equation}\label{eq:simplicial-a-asymptotic}
    \alpha_{d-1}^{\mathrm{s}}
    =
    \left(
        \frac{2}{e^2}\sqrt{\frac2\pi}+o(1)
    \right)\sqrt d.
\end{equation}

Now fix $d$ and consider a sequence of simplicial polytopes $P_M$ with at most $M$ facets along which the normalized error $s_{d,M}$ approaches $\underline{\operatorname{ldiv}}{}^s_{d-1}$. Since $\overline{\operatorname{ldiv}}{}^s_{d-1}<\infty$, we have $|B_d\mathbin\triangle P_M|\to0$ as $M\to\infty$. By convexity, $P_M\to B_d$ in the Hausdorff metric, so for all sufficiently large $M$ the origin lies in $\operatorname{int}P_M$ and $|\partial P_M|\to|\partial B_d|$. For each facet $F$ of $P_M$ with outward unit normal $u_F$, write its supporting hyperplane as $\langle x,u_F\rangle=p_F$ and set $T_F=F-p_Fu_F\subset u_F^\perp$. Integrating in polar coordinates over the facet cones gives
\[
 |B_d\mathbin\triangle P_M|
 =\sum_F\frac{p_F}{d}\int_{T_F}
       \left|1-(p_F^2+|y|^2)^{-d/2}\right|\,dy.
\]
As $M\to\infty$, we have $p_F\to1$ and $p_F^2+|y|^2\to1$ uniformly over all facets $F$ of $P_M$ and all $y\in T_F$. Expanding $t\mapsto 1-t^{-d/2}$ at $t=1$, applying \eqref{eq:simplicial-local-bound} on each $T_F$ with the constant function $h=1-p_F^2$, and then applying Jensen's inequality across the $f_{d-1}(P_M)\le M$ facets yields
\[
\begin{aligned}
 |B_d\mathbin\triangle P_M|
 &\ge\frac{1-o_M(1)}2\,\alpha_{d-1}^{\mathrm{s}}
       \sum_F|T_F|^{1+\frac{2}{d-1}}\\
 &\ge\frac{1-o_M(1)}2\,\alpha_{d-1}^{\mathrm{s}}
       |\partial P_M|^{\frac{d+1}{d-1}}
       M^{-\frac{2}{d-1}}.
\end{aligned}
\]
Multiplying by $2M^{\frac{2}{d-1}}|\partial B_d|^{-\frac{d+1}{d-1}}$ and letting $M\to\infty$ shows that $\underline{\operatorname{ldiv}}{}^s_{d-1}\ge\alpha_{d-1}^{\mathrm{s}}$. Together with \eqref{eq:simplicial-a-asymptotic}, this proves
\begin{equation}\label{eq:simplicial-lower-conclusion}
    \liminf_{d\to\infty}
    \frac{\underline{\ldivs}}{\sqrt d}
    \ge
    \frac{2}{e^2}\sqrt{\frac2\pi}.
\end{equation}

\paragraph{Upper bound.}

For the upper bound, we use the volume-balanced random polytope $P_{d,N}$ defined in \eqref{eq:intro-random-construction}, which is almost surely simplicial and has the same combinatorial structure as the unscaled convex hull $P_N$. Set
\[
    X_{d,N}:=|B_d\triangle P_{d,N}|
    \qquad\text{and}\qquad 
    F_{d,N}:=f_{d-1}(P_{d,N}).
\]
For every fixed $d$ the expected number of facets is known to satisfy \citep{kelly1981expected,BuchtaMullerTichy1985,Buchta1987}
\begin{equation}\label{eq:simplicial-EF}
    \E F_{d,N}=\eta_d N(1+o_N(1))
\end{equation}
as $N\to\infty$, where the explicit coefficient $\eta_d$ from \cite{BuchtaMullerTichy1985} (see also \citet[Theorem~4.6 and Section~5.1]{BonnetOReilly2022}) is given by
\begin{equation}\label{eq:intro-Kd}
    \eta_d
    :=\frac{2^d\pi^{d/2-1}}{d(d-1)^2}
    \frac{\Gamma\!\left(\frac{(d-1)^2+1}{2}\right)}
         {\Gamma\!\left(\frac{(d-1)^2}{2}\right)}
    \left(
        \frac{\Gamma\!\left(\frac{d+1}{2}\right)}
             {\Gamma\!\left(\frac d2\right)}
    \right)^{d-1}.
\end{equation}
Applying Stirling's formula to the gamma ratios in \eqref{eq:intro-Kd} yields
\begin{equation}\label{eq:intro-Kd-asymptotic}
    \eta_d
    =\frac{e^{-1/4}}{\pi}(2\pi)^{d/2}d^{(d-5)/2}
    \left(1+\frac{5}{4d}+O(d^{-2})\right),
    \qquad
    \eta_d^{\frac{2}{d-1}}
    =2\pi d\,(1+o(1)).
\end{equation}

Since the random variable $\frac{X_{d,N}}{\E X_{d,N}}+\frac{1}{d-1}\frac{F_{d,N}}{\E F_{d,N}}$ has expectation $\frac{d}{d-1}$, there exists a realization $Q_{d,N}$ of $P_{d,N}$ satisfying
\[
    \frac{|B_d\triangle Q_{d,N}|}{\E X_{d,N}}
    +
    \frac{1}{d-1}\frac{f_{d-1}(Q_{d,N})}{\E F_{d,N}}
    \le \frac{d}{d-1},
\]
and hence simultaneously
\begin{equation}\label{eq:simplicial-selection}
    |B_d\triangle Q_{d,N}|
    \le\frac{d}{d-1}\E X_{d,N}
    \qquad\text{and}\qquad 
    f_{d-1}(Q_{d,N})\le d\,\E F_{d,N}.
\end{equation}
Set $M_N:=\lceil d\,\E F_{d,N}\rceil$. By \eqref{eq:simplicial-EF}, we have $M_N=d\eta_d N(1+o_N(1))$, and $Q_{d,N}$ is a simplicial polytope with at most $M_N$ facets. Consequently,
\begin{equation}
    s_{d,M_N}
    \le
    \frac{d}{d-1}d^{\frac{2}{d-1}}
    \eta_d^{\frac{2}{d-1}}(1+o_N(1))
    \frac{
        2N^{\frac{2}{d-1}}\E X_{d,N}
    }{
        |\partial B_d| ^{\frac{d+1}{d-1}}
    }.
    \label{eq:simplicial-upper-selection}
\end{equation}
Because $M_{N+1}/M_N\to1$ as $N\to\infty$ for each fixed $d$, monotonicity of $M\mapsto M^{-\frac{2}{d-1}}s_{d,M}$ implies that for every $M\in[M_N,M_{N+1})$,
\[
    s_{d,M}
    \le
    \left(\frac{M}{M_N}\right)^{\frac{2}{d-1}}s_{d,M_N}
    \le
    \left(\frac{M_{N+1}}{M_N}\right)^{\frac{2}{d-1}}s_{d,M_N}.
\]
Passing to the upper limit in $M$ and substituting \eqref{eq:intro-random-error} and \eqref{eq:intro-Kd-asymptotic}, we obtain
\[
    \overline{\ldivs}
    \le
    \frac{d}{d-1}d^{\frac{2}{d-1}}
    \eta_d^{\frac{2}{d-1}}
    \limsup_{N\to\infty}
    \frac{
        2N^{\frac{2}{d-1}}\E X_{d,N}
    }{
        |\partial B_d|^{\frac{d+1}{d-1}}
    }
    =
    \left(
        \frac4e\sqrt{\frac2\pi}+o(1)
    \right)\sqrt d.
\]
Therefore,
\begin{equation}\label{eq:simplicial-upper-conclusion}
    \limsup_{d\to\infty}
    \frac{\overline{\ldivs}}{\sqrt d}
    \le
    \frac4e\sqrt{\frac2\pi}.
\end{equation}
Combining \eqref{eq:simplicial-lower-conclusion} and \eqref{eq:simplicial-upper-conclusion} completes the proof of Theorem~\ref{Thm:Simplicial}.\qed

\subsection{Proof of Theorem~\ref{thm:SA-deviation}}
\label{sec:SA-deviation-proof}

\begin{proof}
Since $|B_d\cap P|\le\kappa_d$ and $|\partial B_d|=d\kappa_d$, the isoperimetric inequality applied to $B_d\cap P$ gives
\[
 \frac{|\partial(B_d\cap P)|}{|\partial B_d|}
 \ge\left(\frac{|B_d\cap P|}{\kappa_d}\right)^{(d-1)/d}
 \ge\frac{|B_d\cap P|}{\kappa_d}.
\]
Substituting this bound into the definition of $\Delta_s(B_d,P)$ yields the deterministic comparison
\[
 \frac{\Delta_s(B_d,P)}{|\partial B_d|}
 \le\Delta_d(P)+\frac{|\partial P|}{|\partial B_d|}
                  -\frac{|P|}{\kappa_d}.
\]
We apply this inequality to the volume-balanced random polytope $\widehat P_N=R_{d,N}^{-1}P_N$ from Proposition~\ref{prop:expectation}. Because $\E|\widehat P_N|=\kappa_d$, taking expectations gives
\begin{equation}\label{eq:SA-main-step}
 \frac{\E\Delta_s(B_d,\widehat P_N)}{|\partial B_d|}
 \le\E\Delta_d(\widehat P_N)
       +\frac{\E|\partial\widehat P_N|}{|\partial B_d|}-1.
\end{equation}

For fixed $d$ and $A_d=d\kappa_d/\kappa_{d-1}$, Lemmas~\ref{lem:muller-vol} and~\ref{lem:muller-SA} give the asymptotic expansions
\begin{align*}
 \frac{\E|P_N|}{\kappa_d}
 &=1-\frac d2 A_d^{\frac{2}{d-1}}\frac{d-1}{d+1}
     \frac{\Gamma(d+1+\frac{2}{d-1})}{\Gamma(d+1)}N^{-\frac{2}{d-1}}
     +o(N^{-\frac{2}{d-1}}),\\
 \frac{\E|\partial P_N|}{|\partial B_d|}
 &=1-\frac{d-1}{2(d+1)}A_d^{\frac{2}{d-1}}
     \frac{\Gamma(d+\frac{2}{d-1})}{(d-2)!}N^{-\frac{2}{d-1}}
     +o(N^{-\frac{2}{d-1}}).
\end{align*}
Since $|\partial\widehat P_N|=R_{d,N}^{-(d-1)}|\partial P_N|$ and $R_{d,N}^d=\E|P_N|/\kappa_d$, expanding $R_{d,N}^{-(d-1)}$ and applying the functional equation $\Gamma(d+1+\frac{2}{d-1})=(d+\frac{2}{d-1})\Gamma(d+\frac{2}{d-1})$ yields
\begin{equation}\label{eq:RdN-SA}
 \frac{\E|\partial\widehat P_N|}{|\partial B_d|}
 =1+\frac{A_d^{\frac{2}{d-1}}\Gamma(d+\frac{2}{d-1})}{d(d+1)(d-2)!}
    N^{-\frac{2}{d-1}}+o(N^{-\frac{2}{d-1}}).
\end{equation}
Multiplying \eqref{eq:SA-main-step} by $N^{\frac{2}{d-1}}$ and letting $N\to\infty$ via Proposition~\ref{prop:expectation} and \eqref{eq:RdN-SA} gives
\[
 \limsup_{N\to\infty}N^{\frac{2}{d-1}}
 \frac{\E\Delta_s(B_d,\widehat P_N)}{|\partial B_d|}
 \le C_d+\frac{A_d^{\frac{2}{d-1}}\Gamma(d+\frac{2}{d-1})}{d(d+1)(d-2)!}.
\]
By Stirling's formula, the second term on the right-hand side is $\frac{1+o(1)}{d}=o(d^{-1/2})$, while $\sqrt d\,C_d\to 2\sqrt{2/\pi}$ by Proposition~\ref{prop:expectation}. Choosing a realization of $\widehat P_N$ whose surface-area deviation does not exceed the expectation, multiplying by $\sqrt d$, and taking the upper limit as $d\to\infty$ completes the proof.
\end{proof}

\subsection{Numerical comparison of random vertex and facet budgets}
\label{app:numerical-budgets}

\Cref{tab:numerical-budgets} reports numerical evaluations of two origin-symmetric random polytope models. For an even budget $N=2M$, let $U_1,\ldots,U_M$ be independent uniform random points on $\mathbb S^{d-1}$. The vertex model is the dilated convex hull $cK$, where $K=\conv\{\pm U_i:1\le i\le M\}$, and the facet model is the intersection of symmetric slabs $\{x\in\R^d:|\langle U_i,x\rangle|\le h,\ 1\le i\le M\}$. For each dimension $d$ and budget $N$, the deterministic scale parameters $c>0$ and $h>0$ are chosen separately to minimize the expected relative volume error.

For the vertex model, \eqref{eq:finite-error-identity} expresses the expected relative error as
\[
 \E\Delta_d(cK)
 =\frac{\E_\nu\bigl|1-(c|Y|)^{-d}\bigr|}
        {\E_\nu\bigl[(c|Y|)^{-d}\bigr]},
 \qquad |Y|^2=H+(1-H)\mathcal Q_d.
\]
The one-dimensional integral over the density of $H$ is evaluated by numerical quadrature, while the distribution of $\mathcal Q_d$ is approximated by a Gibbs sampler targeting the squared-simplex-volume density on $(\mathbb S^{d-2})^d$ (using random seed \texttt{20261011}, with a $4001$-point height grid and $2000$ angular samples in dimensions $4$ and $8$, and a $1501$-point height grid and $600$ angular samples in dimensions $16$, $32$, and $64$). For the facet model, the radial function in any fixed direction $u\in\mathbb S^{d-1}$ is $h/\max_{1\le i\le M}|\langle U_i,u\rangle|$, so rotational invariance reduces the expected relative volume error to a one-dimensional integral, which we evaluate using $2000$ quadrature nodes. Bisection in $\log N$ then locates the budget at which the expected relative error crosses the target accuracy $\varepsilon$; small budgets are rounded up to the nearest even integer, and larger values are reported to two significant digits.

\clearpage
\section*{AI methodology statement}
\label{sec:ai-methodology}
\addcontentsline{toc}{section}{AI methodology statement}
The first author used ChatGPT (GPT-5.6 and GPT-6) to explore the lower bound
argument for \Cref{Thm:Main}. The initial idea emerged from these
interactions, and the argument was subsequently simplified and developed
by the authors. AI tools also assisted with the extension to non-asymptotic regimes. The remaining proofs and the connection to approximation
by polytopes with simplicial facets originated with the authors. The authors take full
responsibility for the mathematical content and the final manuscript.
\printbibliography[heading=bibintoc,title={References}]

\end{document}